\documentclass[preprint,12pt]{elsarticle}


\usepackage{amssymb,amsmath,amsthm,latexsym}
\usepackage{graphicx}
\usepackage{epsfig}
\usepackage{epstopdf}






\newcommand{\sysn}{\left\{\begin{array}{rcl}}
\newcommand{\sysk}{\end{array}\right.}

\newcommand{\ingrw}[2]{\includegraphics[width=#1mm]{#2}}

\newtheorem{theorem}{Theorem}[section]

\theoremstyle{example}

\newtheorem{proposition}[theorem]{Proposition}
\theoremstyle{definition}
\newtheorem{definition}[theorem]{Definition}

\newtheorem{corollary}[theorem]{Corollary}

\journal{Topology and its Applications}

\begin{document}

\title{Projective versions of the properties \\ in the Scheepers Diagram}

\author{Alexander V. Osipov}

\address{Krasovskii Institute of Mathematics and Mechanics, \\ Ural Federal
 University, Ural State University of Economics, Yekaterinburg, Russia}

\ead{OAB@list.ru}

\begin{abstract}
Let $\mathcal{P}$ be a topological property. A.V. Arhangel'skii
calls $X$ {\it projectively $\mathcal{P}$} if every second
countable continuous image of $X$ is $\mathcal{P}$. Lj.D.R.
Ko$\check{c}$inac characterized the classical covering properties
of Menger, Rothberger, Hurewicz and Gerlits-Nagy in term of
continuous images in $\mathbb{R}^{\omega}$.  In this paper we
study the functional characterizations of all projective versions
of the selection properties in the Scheepers Diagram.
\end{abstract}

\begin{keyword}

projectively Rothberger space \sep projectively Menger space \sep
projectively Hurewicz space \sep projectively
 Gerlits-Nagy space \sep function spaces \sep selection principles \sep
 $C_p$-theory \sep Scheepers Diagram

\MSC[2010] 54C35 \sep 54C05 \sep 54C65 \sep 54A20 \sep 54D65

\end{keyword}

\maketitle 


\section{Introduction}

Many topological properties are characterized in terms
 of the following classical selection principles.
 Let $\mathcal{A}$ and $\mathcal{B}$ be sets consisting of
families of subsets of an infinite set $X$. Then:

$S_{1}(\mathcal{A},\mathcal{B})$ is the selection hypothesis: for
each sequence $(A_{n}: n\in \mathbb{N})$ of elements of
$\mathcal{A}$ there is a sequence $(b_{n}: n\in\mathbb{N})$ such
that for each $n$, $b_{n}\in A_{n}$, and $\{b_{n}: n\in\mathbb{N}
\}$ is an element of $\mathcal{B}$.

$S_{fin}(\mathcal{A},\mathcal{B})$ is the selection hypothesis:
for each sequence $(A_{n}: n\in \mathbb{N})$ of elements of
$\mathcal{A}$ there is a sequence $(B_{n}: n\in \mathbb{N})$ of
finite sets such that for each $n$, $B_{n}\subseteq A_{n}$, and
$\bigcup_{n\in\mathbb{N}}B_{n}\in\mathcal{B}$.

$U_{fin}(\mathcal{A},\mathcal{B})$ is the selection hypothesis:
whenever $\mathcal{U}_1$, $\mathcal{U}_2, ... \in \mathcal{A}$ and
none contains a finite subcover, there are finite sets
$\mathcal{F}_n\subseteq \mathcal{U}_n$, $n\in \mathbb{N}$, such
that $\{\bigcup \mathcal{F}_n : n\in \mathbb{N}\}\in \mathcal{B}$.

The papers \cite{jmss,ko,sash,sch3,scheep,sch1,tss1,bts,tszd} have
initiated the simultaneous
 consideration of these properties in the case where $\mathcal{A}$ and
 $\mathcal{B}$ are important families of open covers of a
 topological space $X$.

\medskip
In this paper, by a cover we mean a nontrivial one, that is,
$\mathcal{U}$ is a cover of $X$ if $X=\bigcup \mathcal{U}$ and
$X\notin \mathcal{U}$.

 An open cover $\mathcal{U}$ of a space $X$ is:

 $\bullet$ an {\it $\omega$-cover} if every finite subset of $X$ is contained in a
 member of $\mathcal{U}$.

$\bullet$ a {\it $\gamma$-cover} if it is infinite and each $x\in
X$ belongs to all but finitely many elements of $\mathcal{U}$.

\bigskip
For a topological space $X$ we denote:

$\bullet$ $\mathcal{O}$ --- the family of all open covers of $X$;

$\bullet$ $\mathcal{O}^{\omega}_{cz}$ --- the family of all
countable cozero covers of $X$;

$\bullet$ $\Gamma$ --- the family of all open $\gamma$-covers of
$X$;

$\bullet$ $\Gamma_{cz}$ --- the family of all cozero
$\gamma$-covers of $X$;

$\bullet$ $\Omega$ --- the family of all open $\omega$-covers of
$X$;

$\bullet$ $\Omega^{\omega}_{cz}$ --- the family of countable
cozero $\omega$-covers of $X$;

$\bullet$ $\Omega^{\omega}_{cl}$ --- the family of all countable
clopen $\omega$-covers of $X$;

$\bullet$ $\mathcal{D}$ --- the family of all  dense subsets of
$X$;

$\bullet$ $\mathcal{S}$ --- the family of all sequentially dense
subsets of $X$;

$\bullet$ $\mathcal{D}^{\omega}$ --- the family of all countable
dense subsets of $X$;

$\bullet$ $\mathcal{S}^{\omega}$ --- the family of all countable
sequentially dense subsets of $X$.

\bigskip

Many equivalences hold among the selection properties, and the
surviving ones appear in the following the Scheepers Diagram
(where an arrow denotes implication), to which no arrow can be
added except perhaps from $U_{fin}(\mathcal{O}, \Gamma)$ or
$U_{fin}(\mathcal{O}, \Omega)$ to $S_{fin}(\Gamma, \Omega)$
\cite{jmss}.

\begin{center}

\ingrw{90}{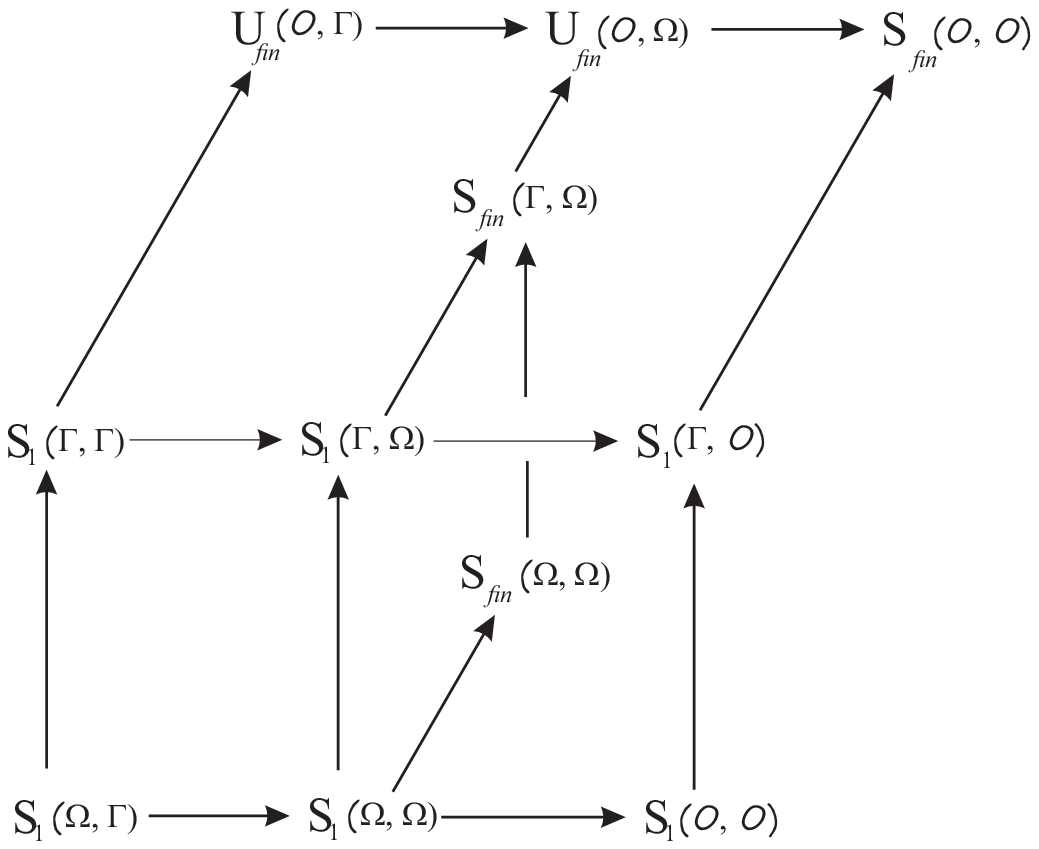}

\medskip

Fig.~1. The Scheepers Diagram for Lindel$\ddot{o}$f spaces.

\end{center}
\bigskip

\medskip
Let $\mathcal{P}$ be a topological property. A.V. Arhangel'skii
calls $X$ {\it projectively $\mathcal{P}$ } if every second
countable continuous image of $X$ is $\mathcal{P}$ \cite{arch3}.

A.V. Arhangel'skii consider projective $\mathcal{P}$ for
$\mathcal{P}=\sigma$-compact, analytic and other properties  in
\cite{arh}. The projective selection principles were introduced
and first time considered in \cite{koc}. Lj.D.R. Ko$\check{c}$inac
characterized the classical covering properties of Menger,
Rothberger, Hurewicz and Gerlits-Nagy in term of continuous images
in $\mathbb{R}^{\omega}$. Characterizations of the classical
covering properties in terms a selection principle restricted to
countable covers by cozero sets are given in \cite{bcm}.

In this paper we study the functional characterizations of the
projective versions of the properties in the Scheepers Diagram
(Fig. 1).

\section{Main definitions and notation}

Let $X$ be a topological space, and $x\in X$. A subset $A$ of $X$
{\it converges} to $x$, $x=\lim A$, if $A$ is infinite, $x\notin
A$, and for each neighborhood $U$ of $x$, $A\setminus U$ is
finite. Consider the following collection:

$\bullet$ $\Omega_x=\{A\subseteq X : x\in \overline{A}\setminus
A\}$;

$\bullet$ $\Gamma_x=\{A\subseteq X : x=\lim A\}$;

$\bullet$ $\Omega^{\omega}_x=\{A\subseteq X : |A|=\aleph_0$ and
$x\in \overline{A}\setminus A\}$;

$\bullet$ $\Gamma^{\omega}_x=\{A\subseteq X : |A|=\aleph_0$ and
$x=\lim A\}$.

\bigskip

We write $\Pi (\mathcal{A}_x, \mathcal{B}_x)$ (resp., $\Pi
(\mathcal{A}, \mathcal{B}_x)$) without specifying $x$, we mean
$(\forall x) \Pi (\mathcal{A}_x, \mathcal{B}_x)$ (resp., $(\forall
x) \Pi (\mathcal{A}, \mathcal{B}_x)$).

Throughout this paper, all spaces are assumed to be Tychonoff. The
set of positive integers is denoted by $\mathbb{N}$. Let
$\mathbb{R}$ be the real line, we put $\mathbb{I}=[0,1]\subset
\mathbb{R}$, and let $\mathbb{Q}$ be the rational numbers. For a
space $X$, we denote by $C_p(X)$ the space of all real-valued
continuous functions on $X$ with the topology of pointwise
convergence. The symbol $\bf{0}$ stands for the constant function
to $0$. Since $C_p(X)$ is homogenous space we may always consider
the point $\bf{0}$ when studying local properties of this space.

A basic open neighborhood of $\bf{0}$ is of the form $[F,
(-\epsilon, \epsilon)]=\{f\in C(X): f(F)\subset (-\epsilon,
\epsilon)\}$, where $F$ is a finite subset of $X$ and
$\epsilon>0$.

 We recall that a subset of $X$ that is the
 complete preimage of zero for a certain function from~$C(X)$ is called a zero-set.
A subset $O\subseteq X$  is called  a cozero-set (or functionally
open) of $X$ if $X\setminus O$ is a zero-set.

Recall that the cardinal $\mathfrak{p}$ is the smallest cardinal
so that there is a collection of $\mathfrak{p}$ many subsets of
the natural numbers with the strong finite intersection property
but no infinite pseudo-intersection. Note that $\omega_1 \leq
\mathfrak{p} \leq \mathfrak{c}$.

For $f,g\in \mathbb{N}^{\mathbb{N}}$, let $f\leq^{*} g$ if
$f(n)\leq g(n)$ for all but finitely many $n$. $\mathfrak{b}$ is
the minimal cardinality of a $\leq^{*}$-unbounded subset of
$\mathbb{N}^{\mathbb{N}}$. A set $B\subset [\mathbb{N}]^{\infty}$
is unbounded if the set of all increasing enumerations of elements
of $B$ is unbounded in $\mathbb{N}^{\mathbb{N}}$, with respect to
$\leq^{*}$. It follows that $|B|\geq \mathfrak{b}$ (See \cite{do}
for more on small cardinals including $\mathfrak{p}$).

\medskip

\begin{theorem}(Noble \cite{nob})\label{th31}  A space $C_{p}(X)$ is separable
if and only if $X$ has a coarser second countable topology.
\end{theorem}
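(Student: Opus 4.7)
The plan is to prove each direction by passing through a continuous injection $X\to\mathbb{R}^{\omega}$, using the countable separable structure of $\mathbb{R}^{\omega}$ as a bridge between dense subsets of $C_p(X)$ and coarser second countable topologies on $X$.

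For necessity, I would fix a countable dense set $D=\{f_n:n\in\mathbb{N}\}\subset C_p(X)$ and first verify that $D$ separates the points of $X$: if distinct $x,y\in X$ satisfied $f_n(x)=f_n(y)$ for all $n$, then complete regularity of $X$ would furnish $g\in C_p(X)$ with $g(x)=0$ and $g(y)=1$, and the basic neighborhood $\{h:|h(x)|<1/3,\ |h(y)-1|<1/3\}$ of $g$ would be disjoint from $D$, contradicting density. With point-separation in hand, the diagonal map $\Delta(x)=(f_n(x))_{n\in\mathbb{N}}$ is a continuous injection $X\to\mathbb{R}^{\omega}$, and the topology it induces on $X$ is second countable (inherited from $\mathbb{R}^{\omega}$) and coarser than the original (by continuity of $\Delta$).

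For sufficiency, suppose $X$ admits a coarser second countable topology $\tau'$, which we may take to be Tychonoff, hence separable metrizable. Then $(X,\tau')$ embeds in $\mathbb{R}^{\omega}$, so composing with the identity $(X,\tau)\to(X,\tau')$ produces a continuous injection $e:X\to\mathbb{R}^{\omega}$. As the countable dense candidate in $C_p(X)$ I would take
\[
D=\bigl\{q(\pi_{n_1},\ldots,\pi_{n_k})\circ e:\ q\in\mathbb{Q}[t_1,\ldots,t_k],\ k\in\mathbb{N},\ n_1,\ldots,n_k\in\mathbb{N}\bigr\},
\]
where $\pi_n:\mathbb{R}^{\omega}\to\mathbb{R}$ is the $n$-th coordinate projection.

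To verify density, fix $f\in C_p(X)$ and a basic neighborhood of $f$ determined by a finite $F=\{x_1,\ldots,x_m\}\subset X$ and $\varepsilon>0$. Injectivity of $e$ makes the $e(x_j)$ pairwise distinct, so finitely many projections $\pi_{n_1},\ldots,\pi_{n_k}$ separate them in $\mathbb{R}^{k}$; Lagrange interpolation then produces a polynomial $q_0\in\mathbb{R}[t_1,\ldots,t_k]$ with $q_0\bigl(\pi_{n_1}(e(x_j)),\ldots,\pi_{n_k}(e(x_j))\bigr)=f(x_j)$ for every $j$, and a rational perturbation $q\in\mathbb{Q}[t_1,\ldots,t_k]$ keeps the error below $\varepsilon$ on $F$. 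The principal obstacle lies precisely in this step: any $g\circ e$ is $\tau'$-continuous, whereas an arbitrary $f\in C_p(X,\tau)$ need not descend to $\tau'$, so no \emph{global} approximation is possible; the argument works only because pointwise convergence requires coincidence on finite sets alone, a regime in which injectivity of $e$ combined with polynomial interpolation suffices.
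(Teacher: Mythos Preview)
The paper does not supply its own proof of this theorem: it is quoted verbatim from Noble and used thereafter only as a black box, so there is nothing in the paper to compare your argument against.

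Your argument itself is correct and is essentially the classical one. For necessity, the density-plus-complete-regularity step showing that a countable dense $D\subset C_p(X)$ separates points, followed by the diagonal embedding into $\mathbb{R}^{\omega}$, is exactly how this direction is usually done. For sufficiency, the reading ``we may take $\tau'$ to be Tychonoff'' is the intended one in this context (the statement is really $iw(X)\le\omega$, i.e.\ $X$ condenses onto a second countable Tychonoff, hence separable metrizable, space); with that understood, your injection $e:X\to\mathbb{R}^{\omega}$ exists, and the rational-polynomial family you describe is countable and dense. The interpolation step is sound: finitely many pairs of distinct points $e(x_j)$ are separated by finitely many coordinate projections, so a one-variable Lagrange polynomial composed with a generic linear form on those coordinates (then rationally perturbed) lands in the prescribed basic neighborhood. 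Your closing remark that global approximation of an arbitrary $f\in C_p(X,\tau)$ by $\tau'$-continuous functions is impossible in general, and that only finite-set matching is needed for pointwise density, identifies precisely why the argument works.
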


  If $X$ is a space and $A\subseteq X$, then the sequential closure of $A$,
 denoted by $[A]_{seq}$, is the set of all limits of sequences
 from $A$. A set $D\subseteq X$ is said to be sequentially dense
 if $X=[D]_{seq}$. If $D$ is a countable sequentially dense subset
 of $X$ then $X$ call sequentially separable space.

 Call $X$ strongly sequentially separable if $X$ is separable and
 every countable dense subset of $X$ is sequentially dense.
 Clearly, every strongly sequentially separable space is
 sequentially separable, and every sequentially separable space is
 separable.

\begin{definition} A space $X$ has the {\it $V$-property} ($X$ $\models$ $V$), if there
 exists  a condensation (= a continuous bijection) $f: X \mapsto Y$ from a space $X$ on a
 separable metric space $Y$ such that $f(U)$ is an $F_{\sigma}$-set
 of $Y$ for any cozero-set $U$ of $X$.
\end{definition}

\begin{theorem} \label{th38} (Velichko \cite{vel}). A space $C_p(X)$ is
sequentially separable if and only if  $X$ $\models$ $V$.
\end{theorem}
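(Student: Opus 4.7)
The plan is to prove both implications by translating sequential density in $C_p(X)$ into first-Baire-class behavior on the metric image of $X$ under the condensation. The key tool is the Lebesgue--Hausdorff characterization: on a separable metrizable space $Y$, a function $\tilde h:Y\to\mathbb R$ is of the first Baire class iff it is a pointwise limit of a sequence of continuous functions iff $\tilde h^{-1}(V)$ is $F_\sigma$ in $Y$ for every open $V\subseteq\mathbb R$.

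For $(\Rightarrow)$, I would start from a countable sequentially dense family $\{f_n:n\in\mathbb N\}\subset C_p(X)$ and form the diagonal map $\phi:X\to\mathbb R^\omega$, $\phi(x)=(f_n(x))_n$. Continuity is automatic from continuity of each $f_n$. For injectivity, take $x\neq y$ and use the Tychonoff property to pick $h\in C_p(X)$ with $h(x)\neq h(y)$; any $f_{n_k}\to h$ pointwise satisfies $f_{n_k}(x)\neq f_{n_k}(y)$ for large $k$. Set $Y:=\phi(X)\subset\mathbb R^\omega$, a separable metric space. For a cozero set $U=\{h\neq 0\}$, the factored map $\tilde h:=h\circ\phi^{-1}:Y\to\mathbb R$ is the pointwise limit on $Y$ of the continuous coordinate projections $\pi_{n_k}|_Y$; hence $\tilde h$ is first Baire class on the metric space $Y$ and $\phi(U)=\tilde h^{-1}(\mathbb R\setminus\{0\})$ is $F_\sigma$, as required.

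For $(\Leftarrow)$, let $\phi:X\to Y$ be a $V$-condensation. For $h\in C_p(X)$ and open $V\subseteq\mathbb R$, the set $h^{-1}(V)$ is cozero in $X$, so $\tilde h^{-1}(V)=\phi(h^{-1}(V))$ is $F_\sigma$ in $Y$; thus $\tilde h=h\circ\phi^{-1}$ is first Baire class on $Y$ and admits continuous approximants $g_n\in C(Y)$ with $g_n\to\tilde h$ pointwise, whence $g_n\circ\phi\to h$ pointwise on $X$. This already shows that $\phi^{*}(C_p(Y))$ is sequentially dense in $C_p(X)$. To produce a \emph{countable} sequentially dense $D_0\subset C_p(X)$, I would set $D_0:=\{d\circ\phi:d\in\mathcal A\}$ for a carefully chosen countable $\mathcal A\subset C(Y)$---for example, the $\mathbb Q$-algebra generated by Urysohn-type functions $\min(1,m\cdot d(\,\cdot\,,Y\setminus U_n))$ indexed by a countable base $\{U_n\}$ of $Y$---and verify that $\mathcal A$ is not merely sequentially dense in $C_p(Y)$ but also captures every first Baire class function on $Y$ as a pointwise limit.

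The main obstacle is precisely this last verification: pointwise convergence is not closed under iteration, so the naive diagonal ``approximate each $g_n$ pointwise by elements of $\mathcal A$, then diagonalize'' fails. The resolution is to upgrade pointwise approximation to \emph{uniform} approximation on compacta: by Stone--Weierstrass, polynomials in a point-separating family are uniformly dense in $C(K)$ for each compact $K\subseteq Y$, so one selects $d_n\in\mathcal A$ with $\|d_n-g_n\|_K\le 1/n$; the estimate $|d_n(y)-\tilde h(y)|\le\|d_n-g_n\|_K+|g_n(y)-\tilde h(y)|$ then forces $d_n\to\tilde h$ pointwise on $K$, and a diagonal across an exhausting sequence of compacta gives pointwise convergence on all of $Y$. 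The delicate point is that $Y$ need not be $\sigma$-compact (e.g.\ $Y=\mathbb R\setminus\mathbb Q$); here one instead constructs $\mathcal A$ directly from the countable base of $Y$ by step-function approximations that encode the Baire class 1 structure explicitly, reducing the argument to a bookkeeping exercise on the countable base.
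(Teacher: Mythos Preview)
The paper does not supply its own proof of this theorem; it is quoted as Velichko's result and used as a black box throughout, so there is no in-paper argument to compare your attempt against.

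Your forward direction $(\Rightarrow)$ is correct and is the natural argument: the diagonal map $\phi$ along a countable sequentially dense family is a condensation onto a subspace of $\mathbb R^\omega$, and for any $h\in C(X)$ the factored map $\tilde h=h\circ\phi^{-1}$ is a pointwise limit of coordinate projections on $Y$, hence Baire-1, hence $\phi(U)=\tilde h^{-1}(\mathbb R\setminus\{0\})$ is $F_\sigma$ by Lebesgue--Hausdorff. (Your remark that $h^{-1}(V)$ is cozero for every open $V\subseteq\mathbb R$ is also correct, since preimages of open intervals under a real-valued continuous function are cozero and countable unions of cozero sets are cozero.)

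The backward direction has a genuine gap. You correctly show that $\phi^{*}(C(Y))$ is sequentially dense in $C_p(X)$, and you correctly isolate the remaining difficulty: producing a \emph{countable} sequentially dense set when pointwise limits do not iterate. Your $\sigma$-compact workaround via Stone--Weierstrass plus a diagonal over an exhaustion is fine where it applies, and you are right that it fails for $Y$ such as $\mathbb R\setminus\mathbb Q$. But your closing sentence --- ``step-function approximations that encode the Baire class~1 structure explicitly, reducing the argument to a bookkeeping exercise on the countable base'' --- is a promissory note, not a proof. What is actually required is the lemma that for every separable metric $Y$ there exists a single countable $\mathcal A\subset C(Y)$ from which \emph{every} first Baire class function on $Y$ is a pointwise limit. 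This is true, but proving it means constructing the approximants directly from the $F_\sigma$ decompositions of the level sets $\tilde h^{-1}((q,\infty))$ and $\tilde h^{-1}((-\infty,q))$, using Urysohn functions attached to pairs of disjoint closed sets drawn from a fixed countable pool (for instance finite unions of basic closed sets) that does not depend on $\tilde h$, and then verifying pointwise convergence. That construction is the substantive content of the $(\Leftarrow)$ direction; without it your argument is incomplete.
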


\section{The projectively Rothberger property}

\begin{definition}(\cite{os1}) Let $n\in \mathbb{N}$. A set $A\subseteq C_p(X)$ is called {\it $n$-dense} in $C_p(X)$, if for each $n$-finite set
$\{x_1,...,x_n\}\subset X$ such that $x_i\neq x_j$ for $i\neq j$
and an open sets $W_1,..., W_n$ in $\mathbb{R}$ there is $f\in A$
such that $f(x_i)\in W_i$ for $i\in \overline{1,n}$.

\end{definition}

Obviously, that if $A$ is a $n$-dense set of $C_p(X)$ for each
$n\in \mathbb{N}$ then $A$ is a dense set of $C_p(X)$.

For a space $C_p(X)$ we denote:

$\mathcal{D}[n]$
--- the family of all $n$-dense subsets of $C_p(X)$;

$\mathcal{D}^{\omega}[n]$
--- the family of all countable $n$-dense subsets of $C_p(X)$.

\begin{definition} Let $f\in C(X)$ and $n\in \mathbb{N}$. A set $B\subseteq C_p(X)$ is called {\it $n$-dense at point $f$}, if for each $n$-finite set
$\{x_1,...,x_n\}\subset X$ and $\epsilon>0$ there is $h\in B$ such
that $h(x_i)\in (f(x_i)-\epsilon, f(x_i)+\epsilon)$ for $i\in
\overline{1,n}$.
\end{definition}

Obviously, that if $B$ is a $n$-dense at point $f$ for each $n\in
\mathbb{N}$ then $f\in \overline{B}$.

For a space $C_p(X)$ and $f\in C_p(X)$ we denote:

$\mathcal{D}_{f}[n]$
--- the family of all $n$-dense at point $f$ subsets of $C_p(X)$;

$\mathcal{D}^{\omega}_{f}[n]$
--- the family of all countable $n$-dense at point $f$ subsets of $C_p(X)$.

\medskip

 By Theorem 11.3 in \cite{os4}, we proved the following result
 where the symbol $\bf{0}$ stands for the constant function
to $0$.

\begin{theorem}\label{th143} For a space $X$, the following statements are
equivalent:

\begin{enumerate}

\item  $C_p(X)$ satisfies $S_{1}(\mathcal{D}[1],\mathcal{D}[1])$;

\item $X$ satisfies $S_{1}(\mathcal{O}, \mathcal{O})$
$[$Rothberger property$]$;

\item $C_p(X)$ satisfies $S_{1}(\mathcal{D}_{\bf
0}[1],\mathcal{D}_{\bf 0}[1])$;

\item  $C_p(X)$ satisfies $S_{1}(\mathcal{D}[1],\mathcal{D}_{\bf
0}[1])$;

\item  $C_p(X)$ satisfies $S_{1}(\mathcal{D},\mathcal{D}[1])$.

\end{enumerate}

\end{theorem}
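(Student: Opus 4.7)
The equivalence (1)$\Leftrightarrow$(2) is Theorem~11.3 of~\cite{os4}, which I would invoke as a black box; the task is to weave (3), (4), (5) into the cycle.

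Three of the implications come for free from the family inclusions $\mathcal{D} \subseteq \mathcal{D}[1] \subseteq \mathcal{D}_{\bf 0}[1]$. The first holds because every evaluation $\pi_x \colon C_p(X) \to \mathbb{R}$ is a continuous surjection, so a dense $A \subseteq C_p(X)$ projects to a dense subset of $\mathbb{R}$ at each $x$; the second is immediate, since being $1$-dense is strictly more restrictive than being $1$-dense at $\bf 0$. Since $S_1(\mathcal{A},\mathcal{B})$ becomes a weaker statement as $\mathcal{A}$ shrinks or as $\mathcal{B}$ grows, these inclusions give (1)$\Rightarrow$(5), (1)$\Rightarrow$(4), and (3)$\Rightarrow$(4) with no further argument.

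For (2)$\Rightarrow$(3) I would replay the classical Rothberger-to-$C_p$ translation, but with target point $\bf 0$. Partition $\mathbb{N} = \bigsqcup_{k\in\mathbb{N}} N_k$ into infinite blocks. Given $1$-dense-at-$\bf 0$ families $(A_n)$, for each $n \in N_k$ the collection $\{f^{-1}((-1/k,1/k)) : f \in A_n\}$ is an open cover of $X$; applying Rothberger along $(n \in N_k)$ yields $f_n \in A_n$ whose preimages already cover $X$, and concatenating over $k$ produces the desired $1$-dense-at-$\bf 0$ selection. Dually, (4)$\Rightarrow$(2) uses a Tychonoff construction: given open covers $(\mathcal{U}_n)$ of $X$, for each triple $(U,x_0,r)$ with $U \in \mathcal{U}_n$, $x_0 \in U$, and $r \in \mathbb{R}$, pick $f_{U,x_0,r} \in C_p(X)$ with $f_{U,x_0,r}(x_0) = r$ and $f_{U,x_0,r}|_{X\setminus U} \equiv 1$; the set $A_n$ of all such functions is $1$-dense, so (4) supplies a $1$-dense-at-$\bf 0$ selection $(f_n) = (f_{U_n,x_{0,n},r_n})$. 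Applying the $\bf 0$-density with $\epsilon = 1$ at each $x \in X$ produces some $n$ with $|f_n(x)| < 1$, which rules out $x \notin U_n$ (otherwise $f_n(x) = 1$) and shows that $\{U_n\}$ covers $X$.

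The main obstacle is (5)$\Rightarrow$(2), because a dense family in $C_p(X)$ naturally encodes $\omega$-cover information while (2) is phrased over plain open covers. I would pass to the $\omega$-cover $\widetilde{\mathcal{U}}_n$ of all nontrivial finite unions of elements of $\mathcal{U}_n$ and repeat the Tychonoff construction, this time over quadruples $(V,F,w)$ with $V \in \widetilde{\mathcal{U}}_n$, $F \subseteq V$ finite, and $w \in \mathbb{R}^F$, choosing $f$ with $f|_F = w$ and $f|_{X\setminus V} \equiv 1$; the resulting $A_n$ is genuinely dense in $C_p(X)$. Applying (5) and repeating the argument of the previous paragraph extracts finite unions $V_n$ covering $X$, i.e.\ a Menger-style selection from $(\mathcal{U}_n)$. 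To upgrade this to a one-element-per-cover Rothberger selection I would either appeal to the classical equivalence $S_1(\Omega,\mathcal{O}) = S_1(\mathcal{O},\mathcal{O})$ or, more pedestrianly, employ a block-partition of $\mathbb{N}$ that reindexes each $\mathcal{U}_n$ across infinitely many slots so that the finite unions $V_n$ can be decomposed back into individual members of the $\mathcal{U}_n$.
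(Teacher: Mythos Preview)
The paper does not prove this theorem; it records the full statement (all five equivalences, not only (1)$\Leftrightarrow$(2)) as Theorem~11.3 of \cite{os4}. Your proposal therefore supplies strictly more than the paper does, and your attribution of only (1)$\Leftrightarrow$(2) to \cite{os4} undersells what that reference already contains.

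That said, the arguments you outline are sound. The family inclusions $\mathcal{D}\subseteq\mathcal{D}[1]\subseteq\mathcal{D}_{\bf 0}[1]$ hold for the reasons you state, and the monotonicity of $S_1$ in its two slots gives the three ``free'' implications as claimed. Your (2)$\Rightarrow$(3) via the block decomposition and (4)$\Rightarrow$(2) via the Tychonoff construction are the standard translations and work exactly as written. The only place the exposition wobbles is (5)$\Rightarrow$(2): having passed to the $\omega$-covers $\widetilde{\mathcal{U}}_n$ and selected one $V_n$ from each, you label the outcome ``Menger-style'' and then look for an upgrade. But what you have actually proved is $S_1(\Omega,\mathcal{O})$ --- your dense-set construction works for an \emph{arbitrary} $\omega$-cover, not only those of the form $\widetilde{\mathcal{U}}_n$ --- so the cleanest route is to start from $\omega$-covers directly, obtain $S_1(\Omega,\mathcal{O})$, and then invoke the classical equality $S_1(\Omega,\mathcal{O})=S_1(\mathcal{O},\mathcal{O})$. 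The block-partition alternative you sketch is essentially the standard proof of that very equality (partition $\mathbb{N}=\bigsqcup N_k$, build $\omega$-covers from finite unions taken across indices in $N_k$, select, then fill in the unused indices arbitrarily), so both options converge to the same argument; just present it without the Menger detour.
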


 In (\cite{bcm}, Theorem 37), M. Bonanzinga, F. Cammaroto, M.
 Matveev proved

\begin{theorem}\label{bcm} The following
conditions are equivalent for a space $X$:

\begin{enumerate}

\item $X$ is projectively $S_{1}(\mathcal{O},\mathcal{O})$
$[projectively Rothberger]$;

\item every Lindel$\ddot{o}$f continuous image of $X$ is
Rothberger;

\item for every continuous mapping $f: X \mapsto
\mathbb{R}^{\omega}$, $f(X)$ is Rothberger;

\item for every continuous mapping $f: X \mapsto \mathbb{R}$,
$f(X)$ is Rothberger;

\item  $X$ satisfies
$S_{1}(\mathcal{O}^{\omega}_{cz},\mathcal{O})$.

\end{enumerate}

\end{theorem}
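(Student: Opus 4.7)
The plan is to prove the five-way equivalence via the cyclic chain
\[
(1) \Rightarrow (2) \Rightarrow (3) \Rightarrow (4) \Rightarrow (5) \Rightarrow (1),
\]
with the load carried by $(1) \Rightarrow (2)$, $(5) \Rightarrow (1)$, and above all by $(4) \Rightarrow (5)$.

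For $(1) \Rightarrow (2)$: the class of projectively Rothberger spaces is closed under continuous images (any second countable continuous image of $g(X)$ is automatically a second countable continuous image of $X$), so a Lindel\"of continuous image $g(X)$ of $X$ is itself Lindel\"of and projectively Rothberger; by Ko\v{c}inac's theorem, a Lindel\"of projectively Rothberger space is Rothberger. Next, $(2) \Rightarrow (3)$ is immediate since $f(X) \subseteq \mathbb{R}^{\omega}$ is second countable metric, hence Lindel\"of; and $(3) \Rightarrow (4)$ follows by viewing $\mathbb{R}$ as a coordinate subspace of $\mathbb{R}^{\omega}$.

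For $(5) \Rightarrow (1)$, let $\phi: X \to Y$ be a continuous surjection onto a second countable Tychonoff, hence separable metric, space $Y$, and let $(\mathcal{W}_n)_{n\in\mathbb{N}}$ be a sequence of open covers of $Y$. Hereditary Lindel\"ofness of $Y$ lets me refine each $\mathcal{W}_n$ to a countable $\mathcal{W}'_n$ still covering $Y$; since every open subset of a metric space is cozero, each $\phi^{-1}(\mathcal{W}'_n) = \{\phi^{-1}(W) : W \in \mathcal{W}'_n\}$ is a countable cozero cover of $X$. Applying $S_{1}(\mathcal{O}^{\omega}_{cz}, \mathcal{O})$ yields $W_n \in \mathcal{W}'_n$ with $\{\phi^{-1}(W_n) : n \in \mathbb{N}\}$ covering $X$; pushing forward along $\phi$ gives $\bigcup_n W_n = Y$, so $Y$ is Rothberger.

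The main obstacle is $(4) \Rightarrow (5)$. Given a sequence $(\mathcal{U}_n)_n$ of countable cozero covers of $X$, say $\mathcal{U}_n = \{U^n_k : k \in \mathbb{N}\}$ with $U^n_k = \mathrm{coz}(f^n_k)$ and $f^n_k: X \to [0,1]$ continuous, I assemble the diagonal map $F = (f^n_k)_{n,k} : X \to \mathbb{I}^{\omega}$; the basic open sets $V^n_k = \{y \in \mathbb{I}^{\omega} : y_{n,k} > 0\}$ then furnish countable open covers $\{V^n_k \cap F(X) : k \in \mathbb{N}\}$ of $F(X)$, and a Rothberger selection on these covers pulls back via $F$ to the desired choice $U^n_{k_n} = F^{-1}(V^n_{k_n})$ covering $X$. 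It thus suffices to deduce from (4) that every continuous image of $X$ in $\mathbb{R}^{\omega}$ is Rothberger, i.e.\ the implication $(4) \Rightarrow (3)$. The delicate point is that the Rothberger property is not productive, so the coordinate projections $f^n_k(X)$ provided by (4) cannot be naively combined; I would bridge the gap by an encoding argument: reduce each open cover of $F(X)$ to a cover by basic cylinders $\pi_I^{-1}(W)$ with finite $I \subseteq \mathbb{N} \times \mathbb{N}$, synchronise the resulting sequence of finitely-coordinated cozero covers of $X$ into a single continuous real-valued function whose image in $\mathbb{R}$ is Rothberger by (4), and transport the selection back through the encoding. Verifying that this synchronisation preserves the cover-selection structure is the technical heart of the proof and the place I expect to spend the most effort.
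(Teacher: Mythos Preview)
The paper does not supply its own proof of this theorem; it is quoted from Bonanzinga--Cammaroto--Matveev (\cite{bcm}, Theorem~37). I therefore comment only on the soundness of your outline.

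Your implications $(1)\Rightarrow(2)\Rightarrow(3)\Rightarrow(4)$ and $(5)\Rightarrow(1)$ are fine, and invoking Ko\v{c}inac's result for $(1)\Rightarrow(2)$ is legitimate since that result predates \cite{bcm} and is cited there. The gap is exactly where you locate it, at $(4)\Rightarrow(5)$, but your proposed ``encoding argument'' will not close it as described: you cannot synchronise a countable family of cozero covers into a \emph{single} continuous map $X\to\mathbb{R}$ in any way that lets you recover the Rothberger selection, because your diagonal map $F$ lands in the Hilbert cube $\mathbb{I}^\omega$, which does not embed in $\mathbb{R}$. A direct ``one real-valued function carries all the data'' scheme is a dead end here.

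The missing idea is zero-dimensionality. Under $(4)$, every continuous image of $X$ in $\mathbb{R}$ is Rothberger, hence contains no interval, hence is a zero-dimensional subspace of $\mathbb{R}$. Now fix $g:X\to\mathbb{R}^\omega$ and put $Y=g(X)$. Every continuous $h:Y\to\mathbb{R}$ has $h(Y)=(h\circ g)(X)$ zero-dimensional by the above; a short Urysohn argument (given disjoint closed $A,B\subseteq Y$, take $h:Y\to[0,1]$ with $h(A)=\{0\}$, $h(B)=\{1\}$, pick $c\in(0,1)\setminus h(Y)$, and separate $A$ from $B$ by the clopen set $h^{-1}[0,c)$) then shows that $Y$ itself is zero-dimensional. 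Being zero-dimensional separable metrizable, $Y$ embeds in $2^\omega\subseteq\mathbb{R}$; this embedding, composed with $g$, is a continuous map $X\to\mathbb{R}$ whose image is Rothberger by $(4)$, and since the embedding is a homeomorphism, $Y$ is Rothberger. This proves $(4)\Rightarrow(3)$, after which your own diagonal-map reduction gives $(3)\Rightarrow(5)$ and the cycle closes.
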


Then, we have the next result.

\begin{theorem}\label{th144} For a space $X$, the following statements are
equivalent:

\begin{enumerate}

\item  $C_p(X)$ satisfies
$S_{1}(\mathcal{D}^{\omega}[1],\mathcal{D}[1])$;

\item $X$ is projectively $S_{1}(\mathcal{O},\mathcal{O})$;

\item $C_p(X)$ satisfies $S_{1}(\mathcal{D}^{\omega}_{\bf
0}[1],\mathcal{D}_{\bf 0}[1])$;

\item  $C_p(X)$ satisfies
$S_{1}(\mathcal{D}^{\omega}[1],\mathcal{D}_{\bf 0}[1])$;

\item  $C_p(X)$ satisfies
$S_{1}(\mathcal{D}^{\omega},\mathcal{D}[1])$.

\end{enumerate}

\end{theorem}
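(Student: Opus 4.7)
The plan is to run a cycle of implications that pivots on Theorem~\ref{bcm}(5), which identifies projective Rothberger-ness of $X$ with the cozero-cover selection principle $S_{1}(\mathcal{O}^{\omega}_{cz},\mathcal{O})$, together with the homogeneity of $C_{p}(X)$ as a topological group under pointwise addition. Several implications come for free from monotonicity of $S_{1}(\cdot,\cdot)$ combined with the inclusions $\mathcal{D}\subseteq\mathcal{D}[1]\subseteq\mathcal{D}_{\mathbf 0}[1]$ and $\mathcal{D}^{\omega}\subseteq\mathcal{D}^{\omega}[1]\subseteq\mathcal{D}^{\omega}_{\mathbf 0}[1]$: these inclusions immediately yield $(1)\Rightarrow(4)$, $(1)\Rightarrow(5)$, and $(3)\Rightarrow(4)$.

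The transitions among the $C_{p}(X)$-statements use a partition-and-translate device. For $(2)\Rightarrow(3)$, fix a partition $\mathbb{N}=\bigsqcup_{m}N_{m}$ into infinite pieces and, given countable $A_{n}=\{f_{n,k}\}_{k}$ that are $1$-dense at $\mathbf 0$, observe that for $n\in N_{m}$ the family $\{f_{n,k}^{-1}(-1/m,1/m):k\in\mathbb{N}\}$ is a countable cozero cover of $X$; applying Theorem~\ref{bcm}(5) once per $m$ yields $k_{n}$ such that, for each $x\in X$ and $\epsilon>0$, picking $m$ with $1/m<\epsilon$ produces $n\in N_{m}$ with $|f_{n,k_{n}}(x)|<\epsilon$, whence $\{f_{n,k_{n}}\}_{n}\in\mathcal{D}^{\omega}_{\mathbf 0}[1]$. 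For $(3)\Rightarrow(1)$, enumerate $\mathbb{Q}=\{r_{m}\}$ and partition $\mathbb{N}$ again; when $A_{n}\in\mathcal{D}^{\omega}[1]$ and $n\in N_{m}$, the shift $A_{n}-r_{m}$ lies in $\mathcal{D}^{\omega}_{\mathbf 0}[1]$, so applying (3) block-by-block and translating back yields a selection whose full $1$-density is forced by the density of $\mathbb{Q}$ in $\mathbb{R}$.

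The main obstacle is closing the cycle with $(1)\Rightarrow(2)$, which (with minor variants) also serves $(4)\Rightarrow(2)$ and $(5)\Rightarrow(2)$: one must encode an arbitrary sequence $\mathcal{U}_{n}=\{U_{n,k}\}_{k}$ of countable cozero covers of $X$ as countable $1$-dense subsets of $C_{p}(X)$ in such a way that the $C_{p}$-selection extracts a genuine cover. My plan is, for each $U_{n,k}$, to fix $\chi_{n,k}\in C(X,[0,1])$ with $\operatorname{coz}(\chi_{n,k})=U_{n,k}$ and to take $A_{n}=\{q\chi_{n,k}:q\in\mathbb{Q},\,k\in\mathbb{N}\}$; because $\mathcal{U}_{n}$ covers $X$ and $\mathbb{Q}$ is dense in $\mathbb{R}$, $A_{n}$ is countable and $1$-dense in $C_{p}(X)$. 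Applying (1) returns $g_{n}=q_{n}\chi_{n,k_{n}}\in A_{n}$ with $\{g_{n}\}\in\mathcal{D}[1]$, and for each $x\in X$ the density of $\{g_{n}(x)\}_{n}$ in $\mathbb{R}$ supplies $n$ with $g_{n}(x)\in(\tfrac12,\tfrac32)$, forcing $q_{n}\chi_{n,k_{n}}(x)\neq 0$ and hence $x\in U_{n,k_{n}}$; Theorem~\ref{bcm}(5) then delivers (2). For $(4)\Rightarrow(2)$ the encoding $A_{n}=\{1-q\chi_{n,k}:q\in\mathbb{Q}\setminus\{0\},\,k\in\mathbb{N}\}$ turns $1$-density at $\mathbf 0$ into the constraint $q_{n}\chi_{n,k_{n}}(x)\approx 1$, which again forces $x\in U_{n,k_{n}}$. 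The delicate point throughout is to rule out degenerate selections (notably $g_{n}\equiv 0$) that would satisfy the $C_{p}$-selection vacuously; this is arranged by the condition $q\neq 0$ in the encoding, and for the dense-source statement (5) by further enriching $A_{n}$ with a countable dense backbone of $C_{p}(X)$ so that $A_{n}$ becomes genuinely dense rather than merely $1$-dense.
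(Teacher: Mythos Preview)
Your cycle through $(1)\Leftrightarrow(2)\Leftrightarrow(3)\Leftrightarrow(4)$ matches the paper's approach closely: the paper runs $(1)\Rightarrow(2)\Rightarrow(3)\Rightarrow(4)\Rightarrow(1)$ with the same renumbering and rational-translation devices you describe, and your encoding $A_n=\{q\chi_{n,k}:q\in\mathbb{Q},\,k\in\mathbb{N}\}$ for $(1)\Rightarrow(2)$ is a clean variant of the paper's (which uses functions equal to $1$ off $U^n_i$ and equal to a prescribed rational on a zero-set inside $U^n_i$). The monotonicity reductions $(1)\Rightarrow(4)$, $(1)\Rightarrow(5)$, $(3)\Rightarrow(4)$ are correct.

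The genuine gap is in your treatment of $(5)\Rightarrow(2)$. Adjoining a fixed countable dense ``backbone'' $D$ to the $1$-dense encoding $A_n$ and applying $S_1(\mathcal{D}^{\omega},\mathcal{D}[1])$ to $A_n\cup D$ does not work: the selection principle is free to choose $g_n\in D$ for every $n$, and $\{g_n:n\in\mathbb{N}\}=D$ is certainly $1$-dense, yet this outcome recovers no information whatsoever about the covers $\mathcal{U}_n$. The ``$q\neq 0$'' device is irrelevant here since the backbone functions are unrelated to the $\chi_{n,k}$. What one needs instead is an encoding in which \emph{every} member of the countable dense set $A_n$ is tied to some $U_{n,k}$, so that any selection necessarily carries cover data; the paper does not spell this out but defers the equivalence with (5) to the argument for Theorem~11.3 in \cite{os4}. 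Note also that your backbone construction tacitly assumes $C_p(X)$ is separable, a hypothesis absent from the theorem; without it $\mathcal{D}^{\omega}$ may be empty and (5) becomes vacuous (e.g.\ take $X$ discrete of size $\mathfrak{c}$, which maps continuously onto $[0,1]$ and hence is not projectively Rothberger), so some care with hypotheses is warranted here in any case.
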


\begin{proof} $(1)\Rightarrow(2)$.  Let $( \mathcal{O}_n : n\in \mathbb{N})$ be a
sequence of countable cozero covers of $X$. Let
$\mathcal{O}_n=\{U^n_i: i\in \mathbb{N}\}$ for every $n\in
\mathbb{N}$, $U^n_i=\bigcup\limits_{k\in \mathbb{N}} F^n_{i,k}$
where $F^n_{i,k}$ is a zero-set of $X$ for any $n,i,k\in
\mathbb{N}$. Renumber the rational numbers $\mathbb{Q}$ as $\{q_k
: k\in \mathbb{N}\}$.

We set $A_n=\{f^n_{i,k}\in C(X): f^n_{i,k}\upharpoonright
(X\setminus U^n_i)=1$ and $f^n_{i,k}\upharpoonright F^n_{i,k}=q_k$
for $U^n_i\in \mathcal{O}_n$ , the zero-set  set $F^n_{i,k}\subset
U^n_i$ and $q_k\in \mathbb{Q}\}$. It is not difficult to see that
each $A_n$ is a countable $1$-dense subset of $C_p(X)$ because
each $\mathcal{O}_n$ is a cover of $X$. By the assumption there
exists $f^n_{i(n),k(n)}\in A_n$ such that $\{f^n_{i(n),k(n)} :
n\in \mathbb{N}\}\in \mathcal{D}^{\omega}[1]$.

 For each $f^n_{i(n),k(n)}$ we
take $U^n_{i(n)}\in \mathcal{O}_n$ such that
$f^n_{i(n),k(n)}\upharpoonright(X\setminus U^n_{i(n)})=1$.

 Set $\mathcal{U}=\{ U^n_{i(n)} : n\in \mathbb{N}\}$. For $x\in X$ we consider the basic open neighborhood
 $[x, W]$ of $\bf{0}$, where $W=(-\frac{1}{2},\frac{1}{2})$.

 Note that there is $m\in \mathbb{N}$ such that
$[x, W]$ contains $f^m_{i(n),k(n)}\in \{f^n_{i(n),k(n)} : n\in
\mathbb{N}\}$. This means $x\in
 U^m_{i(m)}$. Consequently $\mathcal{U}$ is a countable cozero cover
of $X$. By Theorem \ref{bcm}, $X$ is projectively
$S_{1}(\mathcal{O},\mathcal{O})$.

$(2)\Rightarrow(3)$. Let $B_n\in \mathcal{D}^{\omega}_{f}[1]$ for
each $n\in \mathbb{N}$. We renumber $\{B_n\}_{n\in \mathbb{N}}$ as
$\{B_{i,j}\}_{i,j\in \mathbb{N}}$.  Since $C(X)$ is homogeneous,
we may think that $f=\bf{0}$.  We set
$\mathcal{U}_{i,j}=\{g^{-1}[(-1/i, 1/i)] : g\in B_{i,j}\}$ for
each $i,j\in \mathbb{N}$. Since $B_{i,j}\in
\mathcal{D}^{\omega}_{\bf 0}[1]$, $\mathcal{U}_{i,j}$ is a
countable cozero cover of $X$ for each $i,j\in \mathbb{N}$. In
case the set $M=\{i\in \mathbb{N}: X\in \mathcal{U}_{i,j} \}$ is
infinite, choose $g_{m}\in B_{m,j}$ $m\in M$ so that
$g^{-1}[(-1/m, 1/m)]=X$, then $\{g_m : m\in \mathbb{N}\}\in
\mathcal{D}_{\bf 0}[1]$.

So we may assume that there exists $i'\in \mathbb{N}$ such that
for each $i\geq i'$ and $g\in B_{i,j}$ we have that $g^{-1}[(-1/i,
1/i)]$ is not $X$.

For the sequence $\mathcal{V}_i=(\mathcal{U}_{i,j} : j\in
\mathbb{N})$ of cozero covers there exists $f_{i,j}\in B_{i,j}$
such that $\mathcal{U}_i=\{f^{-1}_{i,j}[(-1/i,1/i)]:  j\in
\mathbb{N}\}$ is a cover of $X$.  Let $[x, W]$ be any basic open
neighborhood of $\bf{0}$, where $W=(-\epsilon, \epsilon)$,
$\epsilon>0$. There exists $m\geq i'$ and $j\in \mathbb{N}$  such
that $1/m<\epsilon$ and $x\in f^{-1}_{m,j}[(-1/m, 1/m)]$. This
means $\{f_{i,j}: i,j\in \mathbb{N}\}\in \mathcal{D}^{\omega}_{\bf
0}[1]$.

$(3)\Rightarrow(4)$ is immediate.

$(4)\Rightarrow(1)$. Let $A_n\in \mathcal{D}^{\omega}[1]$ for each
$n\in \mathbb{N}$. We renumber $\{A_n\}_{n\in \mathbb{N}}$ as
$\{A_{i,j}\}_{i,j\in \mathbb{N}}$. Renumber the rational numbers
$\mathbb{Q}$ as $\{q_i : i\in \mathbb{N}\}$.  Fix
$i\in\mathbb{N}$. By the assumption there exists $f_{i,j}\in
A_{i,j}$ such that $\{f_{i,j} : j\in \mathbb{N}\}\in
\mathcal{D}_{q_i}[1]$ where $q_i$  is the constant function to
$q_i$. Then $\{f_{i,j} : i,j\in \mathbb{N}\}\in
\mathcal{D}^{\omega}[1]$.

The remaining implications are proved in the same way as in the
proof of Theorem 11.3 in \cite{os4} by replacing $n$-dense (dense)
subsets of $C_p(X)$ with countable $n$-dense (dense) subsets of
$C_p(X)$.

\end{proof}

\begin{proposition}(Proposition 38 in \cite{bcm})\label{pro1}

\begin{enumerate}

\item  A space is Rothberger iff it is Lindel$\ddot{o}$f and
projectively Rothberger \cite{koc}.

\item Every projectively Rothberger space is zero-dimensional.

\item Every space of cardinality less than $cov(\mathcal{M})$ is
projectively Rothberger.

\item The projectively Rothberger property is preserved by
continuous images, by countably unions, by $C^*$-embedded
zero-sets, and by cozero sets.

\end{enumerate}

\end{proposition}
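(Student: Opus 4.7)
The plan is to prove the four clauses in turn, using Theorem \ref{bcm} (the functional characterization of projective Rothberger via $S_{1}(\mathcal{O}^{\omega}_{cz},\mathcal{O})$ and via continuous images into $\mathbb{R}$ and $\mathbb{R}^{\omega}$) as the principal technical tool.

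For (1), the forward direction is routine: a Rothberger space is Lindel$\ddot{o}$f (apply the $S_{1}$-selection to a constant sequence of copies of a given cover) and the Rothberger property is preserved by continuous surjections, so every second countable continuous image of a Rothberger space is Rothberger. For the converse, I would refine each open cover of a Lindel$\ddot{o}$f projectively Rothberger $X$ to a countable cozero cover --- using Tychonoff-ness (cozero sets form a base) together with Lindel$\ddot{o}$f (countable subcovers) --- and then invoke clause (5) of Theorem \ref{bcm}. For (2), the classical fact that any Rothberger subset of $\mathbb{R}$ has strong measure zero, and in particular is totally disconnected, does the work: given $x \in X$ and a closed $F \not\ni x$, a Urysohn function $f \in C(X,[0,1])$ with $f(x)=0$ and $f(F)\subseteq\{1\}$ has Rothberger image by clause (4) of Theorem \ref{bcm}, so a clopen set $W \subseteq f(X)$ separating $0$ from $1$ pulls back to a clopen neighborhood $f^{-1}(W)$ of $x$ missing $F$. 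For (3), I would cite the classical result that every subset of a separable metric space of cardinality less than $cov(\mathcal{M})$ has the Rothberger property; any second countable continuous image of $X$ then has cardinality at most $|X|<cov(\mathcal{M})$, so the conclusion follows from clause (3) of Theorem \ref{bcm}.

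For (4), each closure property reduces to a manipulation of cozero covers via clause (5) of Theorem \ref{bcm}. Continuous images are immediate from composition into $\mathbb{R}^{\omega}$. For a countable union $X=\bigcup_{n} X_{n}$ and a continuous $f: X \to \mathbb{R}$, $f(X) = \bigcup_{n} f(X_{n})$ is a countable union of Rothberger subsets of $\mathbb{R}$, hence Rothberger. For a $C^{*}$-embedded zero-set $Z \subseteq X$, given countable cozero covers $(\mathcal{V}_{n})$ of $Z$ with $\mathcal{V}_{n}=\{V^{n}_{i}:i\in\mathbb{N}\}$, extend each bounded defining function $g^{n}_{i} \in C^{*}(Z)$ of $V^{n}_{i}$ via the $C^{*}$-embedding to $\tilde{g}^{n}_{i} \in C^{*}(X)$; then $\{\tilde{V}^{n}_{i} \cup (X \setminus Z) : i \in \mathbb{N}\}$ is a countable cozero cover of $X$ (note $X \setminus Z$ is cozero since $Z$ is a zero-set), and the selection obtained from clause (5) of Theorem \ref{bcm} intersects with $Z$ to give the desired Rothberger selection for $(\mathcal{V}_{n})$. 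For a cozero set $U = \{h > 0\} \subseteq X$, the bounded-extension trick $\tilde{g}(x)=h(x)g(x)$ (with $g$ first replaced by $g/(1+g^{2})$) lifts cozero sets of $U$ to cozero sets of $X$, and one combines this with the exhaustion $U=\bigcup_{m}\{h\geq 1/m\}$.

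The main obstacle I anticipate is the cozero-set case of (4): unlike in the $C^{*}$-embedded zero-set case, the lifted cozero cover of $X$ does not simply restrict back to a selection covering $U$, because the auxiliary pieces absorbing $X \setminus U$ may consume selections that ought to have covered points in $U$. The fix is to restructure the lifted cover so that each element pairs a lifted cozero set with an auxiliary term (forming $\tilde{V} \cup (X \setminus F)$ rather than listing $X \setminus F$ as an independent element), and to index the auxiliary zero-sets $F_{m} = \{h \geq 1/m\}$ carefully against the cover indices so the selection, when intersected with $U$, still covers every point of $U$. This bookkeeping is the only genuinely non-formal step in the whole proof.
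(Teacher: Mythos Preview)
The paper does not give its own proof of this proposition: it is stated verbatim as ``Proposition 38 in \cite{bcm}'' and left as a citation, with clause (1) additionally attributed to \cite{koc}. There is therefore nothing in the paper to compare your argument against; your proposal supplies a proof where the paper simply quotes one from the literature.

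That said, your sketch is essentially correct and follows the standard lines. Clauses (1)--(3) are fine as written. In clause (4), the continuous-image and countable-union cases are routine, and your treatment of the $C^{*}$-embedded zero-set case is sound. For the cozero-set case your instinct about the obstacle is right, but the bookkeeping you describe at the end is still not quite enough: pairing $\tilde V$ with a single $X\setminus F_{m}$ at stage $n$ can still lose points $u\in U\setminus F_{m}$. The clean way to finish is to note that every cozero subset of $U$ is already cozero in $X$ (your multiplication trick shows exactly this), and then argue via images: given continuous $f\colon U\to\mathbb{R}$, the map $x\mapsto (h(x),\,h(x)\arctan f(x))$ extends continuously by $0$ to all of $X$, its image in $\mathbb{R}^{2}$ is Rothberger by Theorem \ref{bcm}, the set $F(U)$ is an $F_{\sigma}$ (hence Rothberger) subset of that image, and $f(U)$ is a continuous image of $F(U)$. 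Alternatively, once you have the countable-union and $C^{*}$-embedded zero-set cases, write $U=\bigcup_{m}\{h\ge 1/m\}$ and check that each $\{h\ge 1/m\}$, being a level set of a globally defined continuous function, pushes forward to a closed subset of any second countable image and is thereby handled without a separate $C^{*}$-embedding hypothesis.
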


Note that for a Tychonoff space $X$ always there exists a
countable $1$-dense subset in $C_p(X)$. Namely, let $A=\{f_q\in
C(X) :$ where $f_q(x)=q$ for $\forall x\in X$ and $q\in
\mathbb{Q}\}$.

\begin{theorem}\label{th32} A space $X$ is
Lindel$\ddot{o}$f if and only if each $1$-dense set in $C_p(X)$
contains a countable $1$-dense subset.
\end{theorem}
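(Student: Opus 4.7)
The plan is to prove both implications directly, exploiting the equivalence between $1$-density of $A\subseteq C_p(X)$ and the fact that for every $x\in X$ the set $\{f(x):f\in A\}$ is dense in $\mathbb{R}$.

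For the forward direction, assume $X$ is Lindel\"of and let $A\subseteq C_p(X)$ be $1$-dense. For each pair $(q,n)\in\mathbb{Q}\times\mathbb{N}$, the family $\{f^{-1}((q-1/n,q+1/n)):f\in A\}$ is an open cover of $X$, because some $f\in A$ satisfies $f(x)\in(q-1/n,q+1/n)$ for each $x\in X$. Applying Lindel\"ofness, I extract a countable subcover indexed as $\{f_{q,n,k}^{-1}((q-1/n,q+1/n)):k\in\mathbb{N}\}$, and set $B=\{f_{q,n,k}:q\in\mathbb{Q},\,n,k\in\mathbb{N}\}\subseteq A$. Then $B$ is countable and $1$-dense: given $x\in X$ and a nonempty open $W\subseteq\mathbb{R}$, choose $q\in\mathbb{Q}$ and $n\in\mathbb{N}$ with $(q-1/n,q+1/n)\subseteq W$, find $k$ with $x\in f_{q,n,k}^{-1}((q-1/n,q+1/n))$, and conclude $f_{q,n,k}(x)\in W$.

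For the reverse direction, assume every $1$-dense subset of $C_p(X)$ contains a countable $1$-dense subset. Since $X$ is Tychonoff, it suffices to produce a countable subcover of an arbitrary cozero cover $\mathcal{U}$ of $X$: any open cover can be refined by cozero sets, and a countable subcover of the refinement yields a countable subcover of the original cover. For $U\in\mathcal{U}$, write $U=\bigcup_{k\in\mathbb{N}}F_k^U$ with each $F_k^U$ a zero-set (e.g.\ $F_k^U=\{g_U\ge 1/k\}$ for a witnessing $g_U\ge 0$ with $U=\{g_U>0\}$). For each $(U,k,q)\in\mathcal{U}\times\mathbb{N}\times\mathbb{Q}$, the zero-sets $F_k^U$ and $X\setminus U$ are disjoint, so in the Tychonoff space $X$ they can be functionally separated, which lets me pick $f_{U,k,q}\in C(X)$ with $f_{U,k,q}\upharpoonright F_k^U\equiv q$ and $f_{U,k,q}\upharpoonright(X\setminus U)\equiv 1$. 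The family $A=\{f_{U,k,q}:U\in\mathcal{U},\,k\in\mathbb{N},\,q\in\mathbb{Q}\}$ is then $1$-dense: any $x\in X$ lies in some $F_k^U$, and given an open $W\subseteq\mathbb{R}$ one picks $q\in W\cap\mathbb{Q}$ to get $f_{U,k,q}(x)=q\in W$.

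By the hypothesis, $A$ has a countable $1$-dense subset $B$. Let $\mathcal{U}_B=\{U\in\mathcal{U}:\exists\,k,q\text{ with }f_{U,k,q}\in B\}$; this is a countable subfamily of $\mathcal{U}$. To see it covers $X$, suppose toward contradiction that $x\in X$ lies outside every $U\in\mathcal{U}_B$. Then $f(x)=1$ for every $f\in B$, so $\{f(x):f\in B\}\subseteq\{1\}$, contradicting $1$-density of $B$ at the open set $W=(-1/2,1/2)$. Hence $\mathcal{U}_B$ is a countable subcover, and $X$ is Lindel\"of.

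The principal obstacle is on the reverse side: one must design the test family $A$ so that its $1$-density genuinely encodes the covering of $X$ by $\mathcal{U}$. The key design choice is to force $f_{U,k,q}\equiv 1$ off $U$, so that failure of $\mathcal{U}_B$ to cover some point immediately produces a point where all functions of $B$ agree at the value $1$, defeating $1$-density. The remaining work---writing cozero sets as countable unions of zero-sets and functionally separating disjoint zero-sets---is standard Tychonoff machinery.
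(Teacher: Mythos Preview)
Your proof is correct and follows essentially the same approach as the paper: in both directions the key idea matches (open covers from preimages of rational intervals in the forward direction; encoding a cover via a $1$-dense family of functions that are constant off the cover elements in the reverse direction). The only difference is cosmetic: in the reverse implication the paper works directly with an arbitrary open cover and uses complete regularity to produce, for each $x\in U_\lambda$ and $q\in\mathbb{Q}$, a function taking the value $q$ at $x$ and vanishing on $X\setminus U_\lambda$, whereas you take a slight detour through a cozero refinement and zero-set separation---both routes yield the same argument.
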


\begin{proof} $(\Rightarrow).$ Let $B$ be a $1$-dense set in
$C_p(X)$ and let $A$ be a countable $1$-dense in $C_p(X)$. Fix
$m\in \mathbb{N}$. For each $x\in X$ there is $f_{q,m,x}\in A$
such that $f_{q,m,x}(x)\in (-\frac{1}{m}+q, \frac{1}{m}+q)$ where
$q\in \mathbb{Q}$. Fix $q\in \mathbb{Q}$. Consider
$\gamma_{q,m}=\{V_{f_{q,m,x}} :$ $x\in X \}$ where
$V_{f_{q,m,x}}=f^{-1}_{q,m,x}[(-\frac{1}{m}+q, \frac{1}{m}+q)]$
for each $x\in X$. Then $\gamma_{q,m}$ is an open cover of $X$,
hence, there is countable subcover
$\gamma'_{q,m}=\{V_{f_{q,m,x_{i}}}: i\in \mathbb{N} \}\subset
\gamma_{q,m}$ of $X$. Consider $\gamma=\bigcup\limits_{m\in
\mathbb{N}, q\in \mathbb{Q}} \gamma'_{q,m}$.

Claim that $C_q=\{f_{q,m,x_{i}} : i\in \mathbb{N}, m\in
\mathbb{N}\}\in B_q$, i.e. $C$ is a $1$-dense set in the point
$f_q(x)=q$. Let $y\in X$ and $\epsilon>0$. Then there are $m'\in
\mathbb{N}$ and $f_{q,m',x_{i'}}\in C$ such that
$\frac{1}{m'}<\epsilon$ and $f_{q,m',x_{i'}}(y_j)\in
(-\frac{1}{m'}+q,\frac{1}{m'}+q)\subset (-\epsilon+q,\epsilon+q)$.
Define $A=\bigcup\limits_{q\in \mathbb{Q}} C_q$. Clearly, that
$A\subseteq B$ and $A$ is a countable $1$-dense subset of
$C_p(X)$.

$(\Leftarrow)$. Let $\gamma=\{U_{\lambda} : \lambda\in \Lambda\}$
be an open cover of $X$. Consider a set $B=\{f_{x,\lambda}\in C(X)
: f_{x,\lambda}(x)=q$ and $f(X\setminus U_{\lambda})\subset \{0\}$
where $x\in U_{\lambda}$ and $q\in \mathbb{Q}\}$. Since the space
$X$ is Tychonoff and $\gamma$ is an open cover of $X$, $B$ is a
$1$-dense subset of $C_p(X)$. There is a countable $1$-dense
subset $A=\{f_{x_i,\lambda_i}\in C(X) : i\in \mathbb{N} \}\subset
B$. Then $\beta=\{U_{\lambda_i}: i\in \mathbb{N} \}$ is a
countable cover of $X$.
\end{proof}

By Proposition \ref{pro1} and Theorem \ref{th32}, we have the next

\begin{proposition}\label{pr12}

\begin{enumerate}

\item A space $C_p(X)$ has the property
$S_{1}(\mathcal{D}[1],\mathcal{D}[1])$ if and only if it is has
the property $S_{1}(\mathcal{D}^{\omega}[1],\mathcal{D}[1])$ and
each $1$-dense subset of $C_p(X)$  contains a countable $1$-dense
subset of $C_p(X)$.

\item If a space $X$ has cardinality less than $cov(\mathcal{M})$
then $C_p(X)$ has the property
$S_{1}(\mathcal{D}^{\omega}[1],\mathcal{D}[1])$.

\item  If $f:X \rightarrow Y$ is continuous mapping from a
Tychonoff space $X$ onto a Tychonoff space $Y$ and $C_p(X)$ has
the property $S_{1}(\mathcal{D}^{\omega}[1],\mathcal{D}[1])$, then
$C_p(Y)$ has the property
$S_{1}(\mathcal{D}^{\omega}[1],\mathcal{D}[1])$.

\item  If $C_p(X)$ has the property
$S_{1}(\mathcal{D}^{\omega}[1],\mathcal{D}[1])$, then
$C_p(X)^{\omega}$ has the property
$S_{1}(\mathcal{D}^{\omega}[1],\mathcal{D}[1])$.

\item  If $X$ has the property projectively Rothberger and $Y$ is
$C^*$-embedded zero-set in $X$ (or cozero set of $X$), then
$C_p(Y)$ has the property
$S_{1}(\mathcal{D}^{\omega}[1],\mathcal{D}[1])$.

\end{enumerate}

\end{proposition}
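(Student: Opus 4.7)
The strategy is uniform: invoke Theorem \ref{th144}, which identifies $S_{1}(\mathcal{D}^{\omega}[1],\mathcal{D}[1])$ on $C_p(Z)$ with projective Rothbergerness of $Z$, together with the matching clause of Proposition \ref{pro1}.

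For (1), one direction is immediate. Since $\mathcal{D}^{\omega}[1]\subseteq \mathcal{D}[1]$, the property $S_{1}(\mathcal{D}[1],\mathcal{D}[1])$ trivially implies $S_{1}(\mathcal{D}^{\omega}[1],\mathcal{D}[1])$, and by Theorem \ref{th143} it also forces $X$ to be Rothberger, hence Lindel\"of, so Theorem \ref{th32} supplies a countable $1$-dense subset inside every $1$-dense set of $C_p(X)$. Conversely, given a sequence $(A_n)$ of $1$-dense subsets of $C_p(X)$, I would extract countable $1$-dense $A'_n\subseteq A_n$ using the refinement hypothesis and then apply $S_{1}(\mathcal{D}^{\omega}[1],\mathcal{D}[1])$ to $(A'_n)$; the resulting selectors $f_n\in A'_n\subseteq A_n$ witness $S_{1}(\mathcal{D}[1],\mathcal{D}[1])$.

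For (2), Proposition \ref{pro1}(3) yields that $|X|<cov(\mathcal{M})$ forces $X$ to be projectively Rothberger, and Theorem \ref{th144} converts this to the desired property of $C_p(X)$. For (3), if $f:X\to Y$ is continuous and onto, Theorem \ref{th144} reinterprets the hypothesis as $X$ being projectively Rothberger; Proposition \ref{pro1}(4) transfers this to $Y$ via the continuous-image clause, and Theorem \ref{th144} then delivers the conclusion for $C_p(Y)$. For (5), the same translation reduces the claim to the preservation of projective Rothbergerness under $C^{*}$-embedded zero-sets and under cozero sets, which is again Proposition \ref{pro1}(4).

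The main place requiring more than a direct translation is (4); this is the step I expect to be the chief obstacle. Here I would use the canonical identification $C_p(X)^{\omega}\cong C_p\bigl(\bigsqcup_{n\in\omega} X_n\bigr)$ with each $X_n$ a copy of $X$, and observe that the topological sum realises $\bigcup_{n\in\omega} X_n$ as a countable union of pairwise disjoint clopen subspaces, each homeomorphic to $X$. Since $X$ is projectively Rothberger (by Theorem \ref{th144} applied to the hypothesis), and projective Rothbergerness is preserved under countable unions by Proposition \ref{pro1}(4), the sum $\bigsqcup_{n\in\omega} X_n$ is projectively Rothberger, whence Theorem \ref{th144} finishes the argument. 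The only mildly delicate point is to confirm that the topological sum qualifies as a countable union in the sense of Proposition \ref{pro1}(4), which holds because each summand is simultaneously open and closed in the sum and therefore a bona fide subspace of the kind to which the union clause applies.
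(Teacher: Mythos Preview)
Your proposal is correct and matches the paper's own approach: the paper simply writes ``By Proposition \ref{pro1} and Theorem \ref{th32}'' and leaves the translation via Theorems \ref{th143} and \ref{th144} implicit, whereas you spell out exactly those steps. In particular, your handling of (4) via the identification $C_p(X)^{\omega}\cong C_p\bigl(\bigsqcup_{n\in\omega}X\bigr)$ together with the countable-union clause of Proposition \ref{pro1}(4) is precisely what the paper intends.
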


By Theorem 40 in \cite{bcm} and Theorem \ref{th144}, we have the
next result.

\begin{proposition} If $C_p(X^n)$ has the property
$S_{1}(\mathcal{D}^{\omega}[1],\mathcal{D}[1])$ for every $n\in
\mathbb{N}$, then all countable subspaces of $C_p(X)$ have
countable strong fan tightness.
\end{proposition}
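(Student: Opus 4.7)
The plan is to reduce the statement to Sakai's classical theorem, which characterizes countable strong fan tightness of $C_p(X)$ in terms of $X$ satisfying $S_1(\Omega,\Omega)$, via a factorization through $\mathbb{R}^{\omega}$. The first step is to apply Theorem \ref{th144} to translate the hypothesis into the assertion that $X^n$ is projectively $S_1(\mathcal{O},\mathcal{O})$ for every $n\in\mathbb{N}$; that is, every second countable continuous image of every finite power of $X$ is Rothberger.

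Next, I fix a countable subspace $Y=\{f_k:k\in\mathbb{N}\}\subseteq C_p(X)$ and an arbitrary $f\in Y$. I would form the diagonal evaluation $e\colon X\to\mathbb{R}^{\omega}$, $e(x)=(f_k(x))_{k\in\mathbb{N}}$, and set $X'=e(X)\subseteq\mathbb{R}^{\omega}$, a second countable Tychonoff space. Because $e^n\colon X^n\to (X')^n$ is a continuous surjection onto a second countable space, the hypothesis yields that every finite power of $X'$ is Rothberger; by the classical result of Just--Miller--Scheepers--Szeptycki (\cite{jmss}), this is equivalent to $X'$ satisfying $S_1(\Omega,\Omega)$. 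Sakai's theorem then provides that $C_p(X')$ has countable strong fan tightness.

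The final step is to transport this back. The dual map $e^{*}\colon C_p(X')\to C_p(X)$ given by $e^{*}(h)=h\circ e$ is a topological embedding, and for each $k$ one has $f_k=(\pi_k\!\upharpoonright\!X')\circ e\in e^{*}(C_p(X'))$, so $Y$ sits inside a homeomorphic copy of $C_p(X')$ inside $C_p(X)$. Countable strong fan tightness is hereditary — for $A\subseteq Y$ and $y\in Y$, the closure of $A$ in $Y$ coincides with the intersection of its closure in the ambient space with $Y$ — so $Y$ inherits countable strong fan tightness from $e^{*}(C_p(X'))$, which is what we wanted.

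The only point requiring care is the bookkeeping of the factorization: one must confirm that projective Rothberger of $X^n$ transfers to $(X')^n$ (immediate, since $(X')^n\subseteq\mathbb{R}^{\omega}$ is second countable and a continuous image of $X^n$) and then invoke the two cited classical characterizations. No essential obstacle should arise; the cited Theorem~40 in \cite{bcm} is presumably designed to package exactly this passage and can be cited directly to shorten the argument.
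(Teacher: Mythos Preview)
Your proposal is correct and coincides with the paper's approach: the paper's proof is simply the two citations ``By Theorem~40 in \cite{bcm} and Theorem~\ref{th144}'', and what you have written is precisely an unpacking of Theorem~40 in \cite{bcm} (the diagonal factorization through $\mathbb{R}^{\omega}$, followed by Sakai's theorem) after first applying Theorem~\ref{th144} to translate the hypothesis into projective Rothberger for all finite powers. Your final remark already recognizes this.
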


\section{The projectively Menger property}

 By Theorem 12.1 in \cite{os4}, we have the following result.

\begin{theorem}\label{th1444} For a space $X$, the following statements are
equivalent:

\begin{enumerate}

\item  $C_p(X)$ satisfies
$S_{fin}(\mathcal{D}[1],\mathcal{D}[1])$;

\item $X$ satisfies $S_{fin}(\mathcal{O}, \mathcal{O})$ $[$Menger
property$]$;

\item $C_p(X)$ satisfies $S_{fin}(\mathcal{D}_{\bf
0}[1],\mathcal{D}_{\bf 0}[1])$;

\item  $C_p(X)$ satisfies $S_{fin}(\mathcal{D}[1],\mathcal{D}_{\bf
0}[1])$;

\item  $C_p(X)$ satisfies $S_{fin}(\mathcal{D},\mathcal{D}[1])$.

\end{enumerate}

\end{theorem}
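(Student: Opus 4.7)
The plan is to establish the cycle $(1) \Rightarrow (2) \Rightarrow (3) \Rightarrow (4) \Rightarrow (1)$ and then separately show $(2) \Leftrightarrow (5)$, following the template of Theorem \ref{th143} but with $S_{fin}$ replacing $S_{1}$. Two general observations drive the analogy: first, $\mathcal{D}[1] \subseteq \mathcal{D}_{\bf 0}[1]$ (take $W=(-\epsilon,\epsilon)$ in the definition of $1$-dense), and $\mathcal{D} \subseteq \mathcal{D}[1]$ (a finite-point witness specializes to a single point); second, the homogeneity of $C_p(X)$ lets us reduce any argument about $\mathcal{D}_f[1]$ to the case $f={\bf 0}$. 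Both facts are used exactly as in the Rothberger case.

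For $(1) \Rightarrow (2)$ I would start with a sequence $(\mathcal{U}_n)$ of open covers of $X$ and, for each $n$, form the $1$-dense set
\[
A_n=\{f_{U,F,q}\in C(X): f_{U,F,q}\!\upharpoonright(X\setminus U)=1,\ f_{U,F,q}\!\upharpoonright F=q,\ U\in\mathcal{U}_n,\ F\subseteq U\text{ zero-set},\ q\in\mathbb{Q}\},
\]
which is $1$-dense because Tychonoff's axiom supplies such a function for any $x\in U$, $U\in\mathcal{U}_n$, and any target rational near a prescribed real. Applying (1) yields finite $B_n\subseteq A_n$ with $\bigcup_n B_n\in\mathcal{D}[1]$. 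For each $f\in B_n$ fix a witnessing $U(f)\in\mathcal{U}_n$, and set $\mathcal{F}_n=\{U(f):f\in B_n\}$. Using the basic neighborhood $[\{x\},(-\tfrac12,\tfrac12)]$ of ${\bf 0}$: some member $f$ of some $B_n$ must land in it, forcing $f(x)\ne 1$, hence $x\in U(f)\in\mathcal{F}_n$. Thus $\bigcup_n\mathcal{F}_n$ is a cover, which by the Lindelöf-style machinery combined with Theorem \ref{bcm} (for a similar cozero reduction) yields the Menger selection. For $(2)\Rightarrow(3)$, given $B_n\in\mathcal{D}^{}_{\bf 0}[1]$, re-enumerate as $B_{i,j}$ and consider the cozero covers $\mathcal{U}_{i,j}=\{g^{-1}((-1/i,1/i)):g\in B_{i,j}\}$; after disposing of the degenerate case where $X$ itself occurs as a member infinitely often (which is handled directly as in the proof of Theorem \ref{th144}), apply Menger to each sequence $(\mathcal{U}_{i,j})_j$ to extract finite $C_{i,j}\subseteq B_{i,j}$ whose images cover, and verify $\bigcup_{i,j}C_{i,j}\in\mathcal{D}_{\bf 0}[1]$ by the standard $1/i<\epsilon$ computation.

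The implication $(3)\Rightarrow(4)$ is immediate from $\mathcal{D}[1]\subseteq\mathcal{D}_{\bf 0}[1]$. For $(4)\Rightarrow(1)$, re-index a sequence of countable $1$-dense sets as $A_{i,j}$ and, using the constant rational functions $q_i$ as base points, apply (4) fiberwise (after translating via homogeneity) to obtain finite $C_{i,j}\subseteq A_{i,j}$ with $\bigcup_j C_{i,j}\in\mathcal{D}_{q_i}[1]$; since the rationals are dense in $\mathbb{R}$, the union $\bigcup_{i,j}C_{i,j}$ then lies in $\mathcal{D}[1]$. The equivalence of (5) with the rest uses $\mathcal{D}\subseteq\mathcal{D}[1]$ in one direction and, in the other, the fact that every construction above producing a $1$-dense witness can be refined to produce a dense one by varying the rational constant $q$ through all of $\mathbb{Q}$; the remaining technicalities parallel those of Theorem 11.3 in \cite{os4}.

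The main obstacle is the step $(1)\Rightarrow(2)$: the $1$-denseness of $\bigcup_n B_n$ only controls values at single points of $X$, whereas we need to recover, for every $x$, membership in some selected $U\in\mathcal{F}_n$. The trick is the rigid design of $A_n$, whose elements are identically $1$ off the chosen member of $\mathcal{U}_n$; this forces the neighborhood $[\{x\},(-\tfrac12,\tfrac12)]$ of ${\bf 0}$ to detect covering. A secondary subtlety is that in $(2)\Rightarrow(3)$ the cozero covers $\mathcal{U}_{i,j}$ may fail to be nontrivial when $X$ itself appears, and this must be handled separately before invoking Menger, exactly as in the analogous case of Theorem \ref{th144}.
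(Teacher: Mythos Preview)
Your proposal is correct and follows exactly the template the paper itself invokes: the paper does not prove this theorem in-text but simply cites Theorem~12.1 in \cite{os4}, which is the $S_{fin}$ analogue of Theorem~11.3 in \cite{os4} (the latter being the source for Theorem~\ref{th143}). Your adaptation of the Rothberger argument to the Menger setting is the intended one.

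Two minor slips, neither fatal. In $(1)\Rightarrow(2)$, once you have shown that the finite families $\mathcal{F}_n\subseteq\mathcal{U}_n$ satisfy $X=\bigcup_n\bigcup\mathcal{F}_n$, you are \emph{done}: that is precisely the Menger selection $S_{fin}(\mathcal{O},\mathcal{O})$, and no appeal to Theorem~\ref{bcm} or any ``Lindel\"of-style machinery'' is needed. In $(4)\Rightarrow(1)$ you wrote ``a sequence of countable $1$-dense sets'', but in statement~(1) the sets in $\mathcal{D}[1]$ are not assumed countable; drop the word ``countable'' and the argument goes through unchanged.
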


 In (\cite{bcm}, Theorem 6), M. Bonanzinga, F. Cammaroto, M.
 Matveev proved

\begin{theorem} The following
conditions are equivalent for a space $X$:

\begin{enumerate}

\item $X$ is projectively $S_{fin}(\mathcal{O},\mathcal{O})$
$[projectively Menger]$;

\item every Lindel$\ddot{o}$f continuous image of $X$ is Menger;

\item for every continuous mapping $f: X \mapsto
\mathbb{R}^{\omega}$, $f(X)$ is Menger;

\item for every continuous mapping $f: X \mapsto
\mathbb{R}^{\omega}$, $f(X)$ is not dominating;

\item  $X$ satisfies
$S_{fin}(\mathcal{O}^{\omega}_{cz},\mathcal{O})$.

\end{enumerate}

\end{theorem}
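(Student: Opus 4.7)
The plan is to establish the five-way equivalence through the short cycle $(2) \Rightarrow (1) \Rightarrow (3) \Rightarrow (5) \Rightarrow (2)$, handling $(3) \Leftrightarrow (4)$ separately via Hurewicz's classical theorem. The trivial endpoints come first: $(2) \Rightarrow (1)$ holds because any second countable Tychonoff space is Lindel\"of, while $(1) \Rightarrow (3)$ holds because $f(X) \subseteq \mathbb{R}^{\omega}$ is separable metric and hence second countable.

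For the side equivalence $(3) \Leftrightarrow (4)$, I would invoke Hurewicz's result that a separable metric space is Menger if and only if no continuous image of it in $\mathbb{N}^{\mathbb{N}}$ is dominating. The forward direction $(3) \Rightarrow (4)$ is then immediate since any dominating subset of $\mathbb{N}^{\mathbb{N}} \subset \mathbb{R}^{\omega}$ fails to be Menger. Conversely, if some $f(X) \subseteq \mathbb{R}^{\omega}$ were not Menger, Hurewicz would supply a continuous $g\colon f(X) \to \mathbb{N}^{\mathbb{N}}$ with dominating image, so that $g \circ f\colon X \to \mathbb{R}^{\omega}$ violates $(4)$.

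The substance of the proof lies in $(3) \Rightarrow (5)$ and $(5) \Rightarrow (2)$. For $(3) \Rightarrow (5)$, given a sequence of countable cozero covers $\mathcal{U}_n = \{U^n_m : m \in \mathbb{N}\}$, I would select continuous $h^n_m \colon X \to [0,1]$ with $U^n_m = (h^n_m)^{-1}((0,1])$ and bundle them into a single continuous $h \colon X \to \mathbb{R}^{\mathbb{N}\times\mathbb{N}} \cong \mathbb{R}^{\omega}$. For each $n$, the family $\mathcal{V}_n = \{h(X) \cap \{y : y_{(n,m)} > 0\} : m \in \mathbb{N}\}$ is an open cover of the Menger space $h(X)$; applying $S_{fin}(\mathcal{O},\mathcal{O})$ on $h(X)$ and pulling back through $h^{-1}$ yields the required finite $\mathcal{F}_n \subseteq \mathcal{U}_n$.

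For $(5) \Rightarrow (2)$, let $Y = f(X)$ be a Lindel\"of continuous image and $(\mathcal{V}_n)$ a sequence of open covers of $Y$. Being Tychonoff and Lindel\"of, $Y$ admits a countable cozero refinement $\mathcal{W}_n$ of each $\mathcal{V}_n$; pulling back through $f$ yields a sequence of countable cozero covers of $X$, to which $(5)$ applies. Pushing the resulting finite subfamilies forward through $f$ and then enlarging each cozero set to an element of $\mathcal{V}_n$ that contains it produces the required finite subcovers of $Y$. The main obstacle I anticipate is the $(4) \Rightarrow (3)$ step, whose content is genuinely Hurewicz's characterization; the remainder is the cozero-indexed bookkeeping that transfers $S_{fin}(\mathcal{O}^{\omega}_{cz},\mathcal{O})$ between $X$ and its second countable continuous images via maps into $\mathbb{R}^{\omega}$.
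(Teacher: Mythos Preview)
The paper does not actually prove this theorem; it is quoted verbatim from \cite{bcm} (Theorem~6) with the attribution ``M.~Bonanzinga, F.~Cammaroto, M.~Matveev proved'', and no argument is supplied. So there is no in-paper proof to compare against directly.

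That said, your argument is correct and matches the template the paper uses repeatedly for the analogous equivalences it \emph{does} prove (see the proofs of Proposition~\ref{pr1} for $S_1(\Omega,\Omega)$ and Theorem~\ref{th422} for $U_{fin}(\mathcal{O},\Omega)$): bundle the countably many cozero-defining functions into a single diagonal map $h\colon X\to\mathbb{R}^{\omega}$ to pass from the cozero selection principle to the projective property, and in the other direction pull back countable cozero refinements through $f$ and push the selected finite families forward. Your handling of $(3)\Leftrightarrow(4)$ via Hurewicz's characterization of Menger subsets of $\mathbb{N}^{\mathbb{N}}$ is the standard route and is exactly what \cite{bcm} invokes. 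One cosmetic point: in your $(5)\Rightarrow(2)$ step you need $Y$ Tychonoff to guarantee cozero refinements, which is fine here since $Y$ is a continuous image of the Tychonoff space $X$ inside the category of Tychonoff spaces (or one simply notes that a Lindel\"of regular space is normal, so open covers have cozero refinements); this is implicit in the paper's parallel arguments as well.
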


\begin{theorem}\label{th243} For a space $X$, the following statements are
equivalent:

\begin{enumerate}

\item  $C_p(X)$ satisfies
$S_{fin}(\mathcal{D}^{\omega}[1],\mathcal{D}[1])$;

\item $X$ is projectively $S_{fin}(\mathcal{O},\mathcal{O})$;

\item $C_p(X)$ satisfies $S_{fin}(\mathcal{D}^{\omega}_{\bf
0}[1],\mathcal{D}_{\bf 0}[1])$;

\item  $C_p(X)$ satisfies
$S_{fin}(\mathcal{D}^{\omega}[1],\mathcal{D}_{\bf 0}[1])$;

\item  $C_p(X)$ satisfies
$S_{fin}(\mathcal{D}^{\omega},\mathcal{D}[1])$.

\end{enumerate}

\end{theorem}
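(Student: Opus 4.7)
The plan is to reproduce, step by step, the argument that proved Theorem \ref{th144}, simply replacing the single-selection hypothesis $S_1$ with the finite-selection hypothesis $S_{fin}$ throughout, and invoking the characterization of projectively Menger via $S_{fin}(\mathcal{O}^{\omega}_{cz},\mathcal{O})$ (the theorem of Bonanzinga--Cammaroto--Matveev stated just above) in place of its Rothberger counterpart. The five conditions form exactly the same cycle as in Theorem \ref{th144}, so the same implication order $(1)\Rightarrow(2)\Rightarrow(3)\Rightarrow(4)\Rightarrow(1)$ plus the remaining equivalences will suffice.

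For $(1)\Rightarrow(2)$, I would take a sequence $(\mathcal{O}_n)$ of countable cozero covers $\mathcal{O}_n=\{U^n_i:i\in\mathbb{N}\}$, write each $U^n_i=\bigcup_k F^n_{i,k}$ as a union of zero-sets, enumerate $\mathbb{Q}=\{q_k\}$, and build exactly the countable $1$-dense sets $A_n=\{f^n_{i,k}\}$ of the Rothberger proof. Apply $S_{fin}(\mathcal{D}^{\omega}[1],\mathcal{D}[1])$ to obtain finite $B_n\subseteq A_n$ with $\bigcup_n B_n\in \mathcal{D}[1]$; then, extracting for each chosen $f^n_{i,k}$ its associated $U^n_i$, the finite families $\mathcal{F}_n\subseteq\mathcal{O}_n$ satisfy $\{\bigcup\mathcal{F}_n:n\in\mathbb{N}\}\in\mathcal{O}$ by the same basic-neighborhood-of-$\mathbf 0$ argument, so $X$ satisfies $S_{fin}(\mathcal{O}^{\omega}_{cz},\mathcal{O})$, hence is projectively Menger.

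For $(2)\Rightarrow(3)$, I would reindex a given sequence $(B_n)\subset\mathcal{D}^{\omega}_{\mathbf 0}[1]$ as $(B_{i,j})_{i,j}$, form the countable cozero covers $\mathcal{U}_{i,j}=\{g^{-1}[(-1/i,1/i)]:g\in B_{i,j}\}$, dispose of the trivial case where some $g^{-1}[(-1/i,1/i)]=X$, and for each $i$ apply $S_{fin}(\mathcal{O}^{\omega}_{cz},\mathcal{O})$ to the sequence $(\mathcal{U}_{i,j})_j$, producing finite subfamilies whose unions cover $X$. The finite subsets of $B_{i,j}$ that gave rise to those subfamilies have union $1$-dense at $\mathbf 0$ by the identical basic-neighborhood verification. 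The implication $(3)\Rightarrow(4)$ is immediate from the inclusion $\mathcal{D}^{\omega}[1]\subseteq\mathcal{D}^{\omega}_{\mathbf 0}[1]$. For $(4)\Rightarrow(1)$, given $(A_n)\subset\mathcal{D}^{\omega}[1]$ reindexed as $(A_{i,j})_{i,j}$ and $\mathbb{Q}=\{q_i\}$, I would use the homogeneity of $C_p(X)$: translate each $A_{i,j}$ by $-q_i$, apply (4) row-by-row to obtain finite selections whose union is $1$-dense at $\mathbf 0$, translate back to get finite $B'_{i,j}\subseteq A_{i,j}$ with $\bigcup_j B'_{i,j}\in\mathcal{D}_{q_i}[1]$, and then assemble $\bigcup_{i,j}B'_{i,j}\in\mathcal{D}[1]$ by density of $\mathbb{Q}$ in $\mathbb{R}$.

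The remaining equivalences involving $\mathcal{D}$ (not just $\mathcal{D}[1]$) are handled exactly as in the proof of Theorem 12.1 in \cite{os4}, with countable $n$-dense subsets replacing arbitrary $n$-dense ones, as already indicated at the end of the proof of Theorem \ref{th144}. I do not expect any real obstacle here; the main thing to be careful about is the $(2)\Rightarrow(3)$ step, where one must ensure that taking the finite subfamilies of $B_{i,j}$ across all $i$ genuinely produces a set $1$-dense at $\mathbf 0$ (and not just covers of $X$ for each $i$ separately), which is handled by choosing, for a given $(x,\epsilon)$, an index $m$ large enough that $1/m<\epsilon$ before selecting a $j$ so that $x$ lies in some selected $f_{m,j}^{-1}[(-1/m,1/m)]$.
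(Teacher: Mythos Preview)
Your proposal is correct and follows essentially the same approach as the paper, whose own proof is the single line ``Similarly to the proofs of Theorem \ref{th1444} and Theorem 12.1 in \cite{os4}.'' You have simply supplied the details of that analogy, tracking the proof of Theorem \ref{th144} with $S_{fin}$ in place of $S_1$ and the Bonanzinga--Cammaroto--Matveev characterization of projectively Menger in place of its Rothberger counterpart; this is exactly what the paper intends.
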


\begin{proof} Similarly to the proofs of Theorem \ref{th1444}
and Theorem 12.1 in \cite{os4}.
\end{proof}

\begin{proposition}(Proposition 8 in \cite{bcm})\label{pro2}

\begin{enumerate}

\item  A space is Menger if and only if it is Lindel$\ddot{o}$f
and projectively Menger \cite{koc}.

\item Every $\sigma$-pseudocompact space is projectively Menger.

\item Every space of cardinality less than $\mathfrak{d}$ is
projectively Menger.

\item The projectively Menger property is preserved by continuous
images, by countably unions, by $C^*$-embedded zero-sets
(Proposition 14 in \cite{bcm}), and by cozero sets (Proposition 16
in \cite{bcm}).
\end{enumerate}

\end{proposition}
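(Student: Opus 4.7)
The plan is to establish each of the four parts by leaning on the characterization of the projective Menger property established immediately before the proposition: $X$ is projectively Menger iff $X$ satisfies $S_{fin}(\mathcal{O}^{\omega}_{cz},\mathcal{O})$ iff every continuous image of $X$ in $\mathbb{R}^{\omega}$ is Menger. All four items become short reductions to this equivalence together with well-known facts about selection principles on separable metric spaces.

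For (1), the forward direction is immediate: a Menger space is Lindel\"of, and the Menger property is preserved under continuous images, so in particular every second countable continuous image of $X$ is Menger. Conversely, suppose $X$ is Lindel\"of and projectively Menger, and let $(\mathcal{U}_n : n\in\mathbb{N})$ be a sequence of open covers of $X$. Using Tychonoff-ness and Lindel\"ofness, refine each $\mathcal{U}_n$ to a countable cozero cover $\mathcal{V}_n$ of $X$. Applying $S_{fin}(\mathcal{O}^{\omega}_{cz},\mathcal{O})$ to $(\mathcal{V}_n)$ yields finite $\mathcal{F}_n\subseteq \mathcal{V}_n$ whose union covers $X$; replacing each $V\in \mathcal{F}_n$ by some $U\in \mathcal{U}_n$ with $V\subseteq U$ produces a Menger selection.

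For (2), I would first show every pseudocompact Tychonoff space is projectively Menger: a second countable continuous image of a pseudocompact space is a pseudocompact metrizable space, hence compact, hence Menger. The $\sigma$-pseudocompact case then follows from the countable-unions clause of (4). For (3), a continuous image $f(X)$ of $X$ in $\mathbb{R}^{\omega}$ satisfies $|f(X)|\le |X|<\mathfrak{d}$, and every separable metrizable space of cardinality strictly less than $\mathfrak{d}$ is Menger, since $\mathfrak{d}$ is the least cardinality of a non-Menger set of reals. Hence $X$ is projectively Menger.

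For (4), continuous images and countable unions are formal: if $g:X\to Y$ is continuous onto $Y$ and $h:Y\to Z$ has second countable range, then $h\circ g$ witnesses a second countable continuous image of $X$ whose range equals $h(Y)$, and if $X=\bigcup_n X_n$ with each $X_n$ projectively Menger, then for any continuous $f:X\to Z$ with $Z$ second countable, $f(X)=\bigcup_n f(X_n)$ is a countable union of Menger subspaces of a Lindel\"of space and so is Menger. The main obstacle is the clauses about $C^{*}$-embedded zero-sets and cozero sets, since continuous real-valued functions on the subspace need not extend to $X$; the route I would take is to transfer the problem to the $S_{fin}(\mathcal{O}^{\omega}_{cz},\mathcal{O})$ side of the characterization, where countable cozero covers of the subspace can either be extended (via $C^{*}$-embedding in the zero-set case) or pulled back (using that cozero subsets of a cozero subspace are cozero in $X$) to countable cozero covers of $X$, and then a finite-union selection on $X$ restricts to the required selection on the subspace.
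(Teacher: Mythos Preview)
The paper does not prove this proposition at all: it is quoted verbatim as Proposition~8 (together with Propositions~14 and~16) from \cite{bcm}, so there is no in-paper argument to compare against. Your sketch therefore supplies something the paper deliberately omits, and for items (1)--(3) and the first two clauses of (4) your reductions to the characterization $S_{fin}(\mathcal{O}^{\omega}_{cz},\mathcal{O})$ are correct and standard.

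There is, however, a genuine gap in your treatment of the cozero clause of (4). You write that countable cozero covers of the cozero subspace $U$ can be ``pulled back \dots\ to countable cozero covers of $X$'' using that cozero subsets of a cozero subspace are cozero in $X$. That last fact is true, but it does not produce covers of $X$: the members of $\mathcal{V}_n$ cover only $U$, and the missing piece $X\setminus U$ is a \emph{zero}-set, not a cozero set, so you cannot simply adjoin it as you can in the $C^{*}$-embedded zero-set case (where $X\setminus Z$ is cozero). A naive attempt to patch this by adjoining $X\setminus Z_k$ for an exhaustion $U=\bigcup_k Z_k$ by zero-sets also fails, because the Menger selection on $X$ may cover a given $x\in U$ only via one of the added sets $X\setminus Z_k$ with $x\notin Z_k$.

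A correct route is to work instead with the ``every continuous image in $\mathbb{R}^{\omega}$ is Menger'' characterization. Given continuous $g=(g_i):U\to\mathbb{R}^{\omega}$ and $h\in C(X)$ with $U=h^{-1}((0,\infty))$, the functions $x\mapsto h(x)\cdot g_i(x)/(1+|g_i(x)|)$ extend by $0$ to continuous functions on $X$; together with $h$ they define $\tilde g:X\to\mathbb{R}^{\omega}$ with $\tilde g(U)=\tilde g(X)\cap(\{t>0\}\times\mathbb{R}^{\omega})$, an $F_{\sigma}$ (hence Menger) subset of the Menger space $\tilde g(X)$, and $g(U)$ is then a continuous image of $\tilde g(U)$. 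This is presumably close to what \cite{bcm} does; your sketch should be amended along these lines.
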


By Proposition \ref{pro2} and Theorem \ref{th243}, we have the
next

\begin{proposition}\label{pr32}

\begin{enumerate}

\item A space $C_p(X)$ has the property
$S_{fin}(\mathcal{D}[1],\mathcal{D}[1])$ iff is has the property
$S_{fin}(\mathcal{D}^{\omega}[1],\mathcal{D}[1])$ and each
$1$-dense set in $C_p(X)$  contains a countable $1$-dense set in
$C_p(X)$.

\item If a space $X$ has cardinality less than $\mathfrak{d}$ then
$C_p(X)$ has the property
$S_{1}(\mathcal{D}^{\omega}[1],\mathcal{D}[1])$.

\item  If $f:X\mapsto Y$ is continuous mapping from a Tychonoff
space $X$ onto a Tychonoff space $Y$ and $C_p(X)$ has the property
$S_{fin}(\mathcal{D}^{\omega}[1],\mathcal{D}[1])$, then $C_p(Y)$
has the property
$S_{fin}(\mathcal{D}^{\omega}[1],\mathcal{D}[1])$.

\item  If $C_p(X)$ has the property
$S_{fin}(\mathcal{D}^{\omega}[1],\mathcal{D}[1])$, then
$C_p(X)^{\omega}$ has the property
$S_{fin}(\mathcal{D}^{\omega}[1],\mathcal{D}[1])$.

\item  If $X$ has the projectively Menger property and $Y$ is
$C^*$-embedded zero-set in $X$ (or cozero set of $X$), then
$C_p(Y)$ has the property
$S_{fin}(\mathcal{D}^{\omega}[1],\mathcal{D}[1])$.

\end{enumerate}

\end{proposition}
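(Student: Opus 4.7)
The plan is to reduce every clause of Proposition \ref{pr32} to the corresponding clause of Proposition \ref{pro2} (which is already available) by passing through the equivalence in Theorem \ref{th243}, which says that $C_p(X) \models S_{fin}(\mathcal{D}^{\omega}[1],\mathcal{D}[1])$ if and only if $X$ is projectively Menger. This follows the same template as the proof of Proposition \ref{pr12} in the Rothberger case, with Proposition \ref{pro1} and Theorem \ref{th144} replaced by their Menger analogues.

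For clause (1), I would combine three equivalences: Theorem \ref{th1444} gives that $C_p(X) \models S_{fin}(\mathcal{D}[1],\mathcal{D}[1]) \iff X$ is Menger; Proposition \ref{pro2}(1) gives $X$ Menger $\iff X$ Lindel\"of and projectively Menger; and Theorem \ref{th243} converts projective Menger-ness back to $S_{fin}(\mathcal{D}^{\omega}[1],\mathcal{D}[1])$. Theorem \ref{th32} then rephrases Lindel\"of-ness as the statement that every $1$-dense subset of $C_p(X)$ contains a countable $1$-dense subset. For clause (2) (where the intended small cardinal is $\mathfrak{d}$, matching Proposition \ref{pro2}(3)), I would simply invoke Proposition \ref{pro2}(3) followed by Theorem \ref{th243}.

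Clauses (3) and (5) are equally immediate: if $f : X \to Y$ is a continuous surjection, then every second countable continuous image of $Y$ is a second countable continuous image of $X$, so projective Menger-ness transfers from $X$ to $Y$ (this is really the continuous-image part of Proposition \ref{pro2}(4)), and Theorem \ref{th243} closes the argument; for (5) one applies the $C^*$-embedded zero-set and cozero-set parts of Proposition \ref{pro2}(4) to the subspace $Y$ and then Theorem \ref{th243}.

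The only clause requiring a small observation is (4). I would use the standard identification $C_p(X)^{\omega} \cong C_p\bigl(\bigsqcup_{n\in\mathbb{N}} X_n\bigr)$, where each $X_n$ is a disjoint copy of $X$. By Theorem \ref{th243} the hypothesis gives that $X$ is projectively Menger, hence each $X_n$ is; by the countable-union part of Proposition \ref{pro2}(4) the disjoint union $\bigsqcup_n X_n$ is projectively Menger, and another application of Theorem \ref{th243} yields $S_{fin}(\mathcal{D}^{\omega}[1],\mathcal{D}[1])$ for $C_p(X)^{\omega}$. No step presents a real obstacle; the only thing to watch is that ``countable union'' in Proposition \ref{pro2}(4) is used in the disjoint-union form, which is fine because disjoint unions embed as a clopen partition into a single space whose projective Menger-ness follows from closure of the class under countable closed unions.
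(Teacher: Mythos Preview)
Your proposal is correct and is exactly the approach the paper intends: the paper's entire justification is the sentence ``By Proposition~\ref{pro2} and Theorem~\ref{th243}'', and you have simply spelled out how those two results (together with Theorems~\ref{th1444} and~\ref{th32} for clause~(1), and the identification $C_p(X)^{\omega}\cong C_p\bigl(\bigsqcup_n X\bigr)$ for clause~(4)) combine to give each item. Your handling of clause~(4) via the countable-union part of Proposition~\ref{pro2}(4) is the right reading; note also that clause~(2) as printed has $S_1$ where the Menger analogue should read $S_{fin}$, and your argument through Proposition~\ref{pro2}(3) and Theorem~\ref{th243} indeed yields $S_{fin}(\mathcal{D}^{\omega}[1],\mathcal{D}[1])$.
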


By Theorem 18 in \cite{bcm} and Theorem \ref{th243}, we have the
next proposition.

\begin{proposition} If $C_p(X^n)$ has the property
$S_{fin}(\mathcal{D}^{\omega}[1],\mathcal{D}[1])$ for every $n\in
\mathbb{N}$,  then all countable subspaces of $C_p(X)$ have
countable fan tightness.
\end{proposition}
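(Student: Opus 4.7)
The plan is to obtain the conclusion by chaining the two cited results. First, I apply Theorem \ref{th243} to each power $X^n$ separately (which is legitimate since $X^n$ is Tychonoff whenever $X$ is). This translates the hypothesis that $C_p(X^n)$ satisfies $S_{fin}(\mathcal{D}^{\omega}[1],\mathcal{D}[1])$ for every $n \in \mathbb{N}$ into the equivalent external statement: every finite power $X^n$ is projectively $S_{fin}(\mathcal{O},\mathcal{O})$, i.e.\ projectively Menger.

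Second, I invoke Theorem 18 of \cite{bcm}, which is the projective analog of Arhangel'skii's classical equivalence between countable fan tightness of $C_p(X)$ and Menger-ness of all $X^n$. In the projective setting it asserts that projective Menger-ness of every finite power of $X$ implies (in fact, characterizes) that every countable subspace of $C_p(X)$ has countable fan tightness. Feeding the translated hypothesis into Theorem 18 yields the desired conclusion directly.

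There is essentially no genuine obstacle at this level; the work is already absorbed into the two cited results. The only point demanding any attention is the parallel with the preceding Rothberger/strong fan tightness proposition, which is proved by exactly the same two-step citation (Theorem 40 of \cite{bcm} together with Theorem \ref{th144}); the present statement is the $S_{fin}$ variant of that pattern. If one were to unpack Theorem 18, the main content would be the standard $C_p$-theoretic bookkeeping: a basic neighborhood $[F,(-\epsilon,\epsilon)]$ of $\mathbf{0}$ is parameterized by a finite tuple from $X^{|F|}$, so sets accumulating at a point of $C_p(X)$ are translated into sequences of countable cozero covers of finite powers of $X$, and $S_{fin}(\mathcal{O}^{\omega}_{cz},\mathcal{O})$ on $X^n$ (which is the cozero reformulation of projective Menger supplied by Theorem \ref{th243}) extracts the required finite subselections. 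Since all of this is packaged in Theorem 18, the proof in the present paper reduces to quoting these two results.
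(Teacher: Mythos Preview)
Your proposal is correct and follows exactly the paper's approach: the paper simply cites Theorem~18 in \cite{bcm} together with Theorem~\ref{th243}, and your two-step chaining (translate the hypothesis on each $C_p(X^n)$ into projective Menger-ness of $X^n$, then invoke Theorem~18 of \cite{bcm}) is precisely that argument spelled out.
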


\section{The projectively Hurewicz property}

\begin{definition} (Sakai)
An $\gamma$-cover $\mathcal{U}$ of cozero sets ({\bf F}unctionally
open sets) of $X$ is {\it $\gamma_{F}$-shrinkable} if there exists
a $\gamma$-cover $\{F_U : U\in \mathcal{U}\}$ of zero-sets of $X$
with $F_U\subset U$ for every $U\in \mathcal{U}$.
\end{definition}

For a topological space $X$ we denote:

$\bullet$ $\Gamma_F$ --- the family of all $\gamma_F$-shrinkable
covers of $X$.

By Theorem 4.1 in \cite{os10},  $X$ has the Hurewicz property if
and only if $X$ satisfies $U_{fin}(\Gamma_F, \Gamma)$ and $X$ is
Lindel$\ddot{o}$f.

\begin{definition} A countable set $A\subset C(X)$ is called {\it weakly
sequential dense subset of $C_p(X)$} if $A=\{\mathcal{F}_n :
\mathcal{F}_n\in [A]^{<\omega}$, $n\in \mathbb{N}\}$  and for each
$f\in C(X)$ there is $\{\mathcal{F}_{n_k} : k\in
\mathbb{N}\}\subset A$ such that $\{\min\limits_{h\in
\mathcal{F}_{n_k}} |h-f| : k\in \mathbb{N}\}\in \Gamma_{\bf 0}$.
\end{definition}

Clearly that any countable sequential dense subset of $C_p(X)$ is
weakly sequential dense.

For a topological space $X$ and $f\in C(X)$ we denote:

$\bullet$ $w\mathcal{S}$ --- the family of all countable weakly
sequential dense subset of $C_p(X)$.

$\bullet$ $w\Gamma_{\bf f}=\{A$ : $A=\{\mathcal{F}_n :
\mathcal{F}_n\in [A]^{<\omega}$, $n\in \mathbb{N}\}\subset C(X)$
such that $\{\min\limits_{h\in \mathcal{F}_{n}}|h-f| : n\in
\mathbb{N} \}\in \Gamma_{\bf 0}\}$.

$\bullet$ $w\Omega_{\bf f}=\{A$: $A=\{\mathcal{F}_n :
\mathcal{F}_n\in [A]^{<\omega}$, $n\in \mathbb{N}\}\subset C(X)$
such that $\{\min\limits_{h\in \mathcal{F}_{n}}|h-f| : n\in
\mathbb{N}\}\in \Omega_{\bf 0}\}$.

$\bullet$ $w\mathcal{D}=\{A$: $A=\{\mathcal{F}_n :
\mathcal{F}_n\in [A]^{<\omega}$, $n\in \mathbb{N}\}\subset C(X)$
such that $A\in w\Omega_{\bf g}$ for each $g\in C(X)\}.$

Note that $\mathcal{S}\subset w\mathcal{S}$, $\Gamma_{\bf
0}\subset w\Gamma_{\bf 0}$, $\Omega_{\bf 0}\subset w\Omega_{\bf
0}$ and $\mathcal{D}\subset w\mathcal{D}$.

\medskip

 In (\cite{bcm}, Theorem 30), M. Bonanzinga, F. Cammaroto, M.
 Matveev proved

\begin{theorem}\label{th33} The following
conditions are equivalent for a space $X$:

\begin{enumerate}

\item $X$ is projectively $U_{fin}(\mathcal{O},\Gamma)$
$[projectively Hurewicz]$;

\item Every Lindel$\ddot{o}$f continuous image of $X$ is Hurewicz;

\item for every continuous mapping $f: X \mapsto
\mathbb{R}^{\omega}$, $f(X)$ is Hurewicz;

\item for every continuous mapping $f: X \mapsto
\mathbb{R}^{\omega}$, $f(X)$ is bounded;

\item  $X$ satisfies $U_{fin}(\mathcal{O}^{\omega}_{cz},\Gamma)$.

\end{enumerate}

\end{theorem}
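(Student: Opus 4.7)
The plan is to prove the equivalences via the cycle $(1)\Rightarrow(2)\Rightarrow(3)\Rightarrow(4)\Rightarrow(5)\Rightarrow(1)$, which mirrors the strategy used for the analogous Menger and Rothberger theorems cited earlier in the paper.

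For $(1)\Rightarrow(2)$, let $Y$ be a Lindel\"of continuous image of $X$. Every second countable continuous image of $Y$ is also a second countable continuous image of $X$, and hence Hurewicz by (1). It then suffices to argue that a Lindel\"of Tychonoff space all of whose second countable continuous images are Hurewicz is itself Hurewicz: given a sequence of open covers, refine each to a countable cozero refinement (using Lindel\"ofness), encode the countable cozero covers as a single continuous map into $\mathbb{R}^{\omega}$, and transfer the finite-union $\gamma$-cover obtained in the second countable image back to $Y$. Then $(2)\Rightarrow(3)$ is essentially free, since any $f(X)\subseteq \mathbb{R}^{\omega}$ is second countable and hence Lindel\"of. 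For $(3)\Rightarrow(4)$, I would invoke the classical fact that a Hurewicz subspace of $\mathbb{R}^{\omega}$ is bounded; the standard argument is that an unbounded (equivalently, dominating) set in $\mathbb{N}^{\mathbb{N}}\subset \mathbb{R}^{\omega}$ cannot satisfy $U_{fin}(\mathcal{O},\Gamma)$, because the covers $\mathcal{U}_n=\{\{g\in \mathbb{N}^{\mathbb{N}} : g(n)\le k\} : k\in \mathbb{N}\}$ prevent any finite-union $\gamma$-cover from existing.

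The heart of the proof is $(4)\Rightarrow(5)$. Given a sequence $(\mathcal{U}_n)_{n\in\mathbb{N}}$ of countable cozero covers $\mathcal{U}_n=\{U^n_k : k\in \mathbb{N}\}$ of $X$, I would choose continuous $f^n_k\colon X\to [0,1]$ with $(f^n_k)^{-1}((0,1])=U^n_k$, and then define $\varphi\colon X\to \mathbb{R}^{\omega}$ coordinatewise (using a bijection $\mathbb{N}\leftrightarrow \mathbb{N}\times\mathbb{N}$) by
\[
\varphi(x)(n,k) = \min\{m\in\mathbb{N} : x\in U^n_1\cup\cdots\cup U^n_m\},
\]
normalized to lie in $\mathbb{R}$ in a continuous way (e.g.\ by smoothing with the functions $f^n_k$ so that the coordinate is continuous rather than a strict minimum). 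By (4), $\varphi(X)$ is bounded in $\mathbb{R}^{\omega}$, so for each $n$ there is $m_n$ with $\varphi(x)(n,k)\le m_n$ for all $x\in X$ and all $k$. Then $\mathcal{F}_n=\{U^n_1,\dots,U^n_{m_n}\}$ is a finite subfamily of $\mathcal{U}_n$, and $\{\bigcup \mathcal{F}_n : n\in \mathbb{N}\}$ is a $\gamma$-cover of $X$, witnessing $U_{fin}(\mathcal{O}^{\omega}_{cz},\Gamma)$.

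Finally, for $(5)\Rightarrow(1)$, let $g\colon X\to Y$ be a continuous map onto a second countable Tychonoff space $Y$. Given a sequence of open covers of $Y$, each may be refined to a countable cozero cover of $Y$ (as $Y$ is Lindel\"of and Tychonoff); pulling these back through $g$ yields a sequence of countable cozero covers of $X$, to which (5) applies. Pushing the resulting finite subfamilies forward through $g$ yields finite subfamilies of the original $Y$-covers whose unions form a $\gamma$-cover of $Y$. Hence $Y$ is Hurewicz and $X$ is projectively Hurewicz. The main obstacle is $(4)\Rightarrow(5)$: choosing the coordinates of $\varphi$ so that the map is genuinely continuous (the naive minimum-index function is only upper semicontinuous), while still arranging that boundedness of $\varphi(X)$ translates exactly into the existence of a uniform finite truncation of each $\mathcal{U}_n$ that covers $X$ for all but finitely many $n$. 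This is the step requiring the most care in the coordinate design.
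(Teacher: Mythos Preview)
The paper does not prove this theorem at all: it is quoted verbatim as Theorem~30 of \cite{bcm}, so there is no ``paper's own proof'' to compare against. Your cycle $(1)\Rightarrow(2)\Rightarrow(3)\Rightarrow(4)\Rightarrow(5)\Rightarrow(1)$ is the natural route and most of the steps are fine, but $(4)\Rightarrow(5)$ as you wrote it has a genuine error, not just the continuity wrinkle you flagged.

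The problem is your reading of ``bounded''. In this context (see the paper's preliminaries on $\le^{*}$ and $\mathfrak{b}$), ``$f(X)$ is bounded in $\mathbb{R}^{\omega}$'' means $\le^{*}$-bounded: there is a single $g\in\mathbb{N}^{\mathbb{N}}$ such that for every $x\in X$ one has $\varphi(x)(n)\le g(n)$ for all but finitely many $n$. You instead deduce ``for each $n$ there is $m_n$ with $\varphi(x)(n)\le m_n$ for all $x\in X$'', which is coordinate-wise boundedness --- a strictly stronger statement that would force each $\mathcal{U}_n$ to have a finite subcover, contradicting the standing hypothesis in $U_{fin}$. With the correct reading, one simply sets $\mathcal{F}_n=\{U^n_1,\dots,U^n_{g(n)}\}$ and obtains that each $x$ lies in $\bigcup\mathcal{F}_n$ for cofinitely many $n$, which is exactly the $\gamma$-cover conclusion. (Relatedly, your parenthetical ``unbounded (equivalently, dominating)'' in $(3)\Rightarrow(4)$ is false; unbounded is weaker than dominating, though the implication Hurewicz $\Rightarrow$ bounded that you need is correct.)

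On the continuity issue: your index map $\varphi(x)(n)=\min\{m: x\in U^n_1\cup\cdots\cup U^n_m\}$ (the second index $k$ in your formula is idle) is only upper semicontinuous, as you note. The clean fix is not to smooth $\varphi$ but to bypass it: send $x\mapsto (f^n_k(x))_{n,k}\in[0,1]^{\mathbb{N}\times\mathbb{N}}$, where $f^n_k$ witnesses that $U^n_k$ is cozero. This map \emph{is} continuous, the sets $U^n_k$ are pullbacks of open sets in the image, and then you can run the Hurewicz/bounded argument in the second countable image directly. Your sketch of $(5)\Rightarrow(1)$ already contains this idea; it is the same mechanism that makes $(4)\Rightarrow(5)$ go through.
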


\begin{theorem}\label{th22} A space $X$ is projectively Hurewicz  if and only if $X$ has the property $U_{fin}(\Gamma_F,
\Gamma)$.
\end{theorem}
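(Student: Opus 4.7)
The plan is to apply Theorem \ref{th33}(5), which identifies projective Hurewicz with the property $U_{fin}(\mathcal{O}^{\omega}_{cz},\Gamma)$, and then to prove
\[
U_{fin}(\mathcal{O}^{\omega}_{cz},\Gamma)\ \Longleftrightarrow\ U_{fin}(\Gamma_F,\Gamma).
\]
The forward implication is the straightforward one. Given a sequence $(\mathcal{U}_n)$ of $\gamma_F$-shrinkable covers with no finite subcover, I pick any countably infinite subfamily $\mathcal{V}_n\subseteq\mathcal{U}_n$. Since an infinite subfamily of a $\gamma$-cover is again a $\gamma$-cover, $\mathcal{V}_n$ is a countable cozero $\gamma$-cover, hence $\mathcal{V}_n\in\mathcal{O}^{\omega}_{cz}$, and it still has no finite subcover (any finite subcover of $\mathcal{V}_n$ would be one of $\mathcal{U}_n$). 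Applying $U_{fin}(\mathcal{O}^{\omega}_{cz},\Gamma)$ then produces finite $\mathcal{F}_n\subseteq\mathcal{V}_n\subseteq\mathcal{U}_n$ with $\{\bigcup\mathcal{F}_n:n\in\mathbb{N}\}\in\Gamma$.

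For the converse, I would start with a sequence $(\mathcal{O}_n)$ of countable cozero covers $\mathcal{O}_n=\{O^n_i:i\in\mathbb{N}\}$, none with a finite subcover, and manufacture from each a $\gamma_F$-shrinkable cover. Writing each cozero-set as a countable union of zero-sets $O^n_i=\bigcup_m Z^n_{i,m}$, I set
\[
V^n_k:=O^n_1\cup\cdots\cup O^n_k,\qquad F^n_k:=\bigcup_{i,m\le k} Z^n_{i,m}.
\]
Then $V^n_k$ is a cozero-set and $F^n_k$ is a zero-set (a finite union of zero-sets), with $F^n_k\subseteq V^n_k$. The family $\{V^n_k:k\in\mathbb{N}\}$ is a $\gamma$-cover by cozero-sets (the sets are nested, exhaust $X$, and no $V^n_k$ equals $X$ by the no-finite-subcover hypothesis), and the diagonal construction makes $\{F^n_k:k\in\mathbb{N}\}$ a $\gamma$-cover by zero-sets, since any $x\in X$ lies in some $Z^n_{i_0,m_0}$ and hence in $F^n_k$ for all $k\ge\max(i_0,m_0)$. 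So $\{V^n_k\}\in\Gamma_F$. Applying $U_{fin}(\Gamma_F,\Gamma)$ yields finite $\mathcal{G}_n\subseteq\{V^n_k\}$ whose unions form a $\gamma$-cover; since the $V^n_k$ are nested, $\bigcup\mathcal{G}_n=V^n_{k_n}=O^n_1\cup\cdots\cup O^n_{k_n}$, and taking $\mathcal{F}_n:=\{O^n_1,\ldots,O^n_{k_n}\}\subseteq\mathcal{O}_n$ verifies $U_{fin}(\mathcal{O}^{\omega}_{cz},\Gamma)$.

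The main obstacle I anticipate is the diagonal shrinking in the converse direction. Forming the cumulative unions $V^n_k$ out of a countable cozero cover is routine and immediately gives a cozero $\gamma$-cover, but there is no a priori reason why a cozero $\gamma$-cover should admit a zero-set shrinking that is still a $\gamma$-cover. The essential move is to diagonalize simultaneously across the index $i$ (used for the unions $V^n_k$) and the index $m$ (enumerating the zero-set approximations of each $O^n_i$); this is exactly what $F^n_k=\bigcup_{i,m\le k}Z^n_{i,m}$ accomplishes. Everything else---pulling the finite selections back and verifying the $\gamma$-cover properties---is bookkeeping.
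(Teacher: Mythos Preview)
Your proposal is correct and follows essentially the same route as the paper. The paper only spells out the implication $U_{fin}(\Gamma_F,\Gamma)\Rightarrow U_{fin}(\mathcal{O}^{\omega}_{cz},\Gamma)$, using precisely your cumulative-union construction (with the increasing zero-set approximations absorbed into a single diagonal index), and leaves the converse implicit; your argument for the easy direction via countable $\gamma$-subcovers is the natural one and your presentation of the $\gamma_F$-shrinking is in fact cleaner than the paper's, which writes the new cover $\mathcal{S}_i$ in terms of the zero-sets rather than the cozero-sets.
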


\begin{proof} Assume that $X$ has the property $U_{fin}(\Gamma_F,
\Gamma)$. We claim that $X$ satisfies
$U_{fin}(\mathcal{O}^{\omega}_{cz},\Gamma)$. Let $(\mathcal{V}_i:
i\in \mathbb{N})$ be a sequence of countable cozero covers of $X$
where $\mathcal{V}_i=\{V^n_i : n\in \mathbb{N}\}$ for each $i\in
\mathbb{N}$. Since $V^n_i$ is a cozero set, we can represent
$V^n_i=\bigcup\limits_{j=1}^{\infty} F^n_{i,j}$ where $F^n_{i,j}$
is a zero-set of $X$ for each $i,j,n \in \mathbb{N}$ and
$F^n_{i,j}\subset F^n_{i,j+1}$ for $j\in \mathbb{N}$. Consider
$\mathcal{S}_i=\{S^n_i:=\bigcup\limits_{p=1}^{n} F^p_{i,n}: n\in
\mathbb{N} \}$ for each $i\in \mathbb{N}$. Note that
$\mathcal{S}_i\in \Gamma_F$. Since $X$ has the property
$U_{fin}(\Gamma_F, \Gamma)$, there are finite sets
$\mathcal{D}_i\subseteq \mathcal{S}_i$, $n\in \mathbb{N}$, such
that $\{\bigcup \mathcal{D}_i : i\in \mathbb{N}\}\in \Gamma$. It
follows that $X$ satisfies
$U_{fin}(\mathcal{O}^{\omega}_{cz},\Gamma)$.

\end{proof}

By Theorem 4.2 in \cite{os10} and Theorem \ref{th22},  we have the
next theorem.

\begin{theorem}\label{th194} For a space $X$, the following statements are
equivalent:

\begin{enumerate}

\item $C_p(X)$ satisfies $S_{fin}(\Gamma_{\bf 0},w\Gamma_{\bf
0})$;

\item $X$ satisfies $U_{fin}(\Gamma_{F}, \Gamma)$;

\item $X$ is projectively Hurewicz.

\end{enumerate}

\end{theorem}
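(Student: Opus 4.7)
The plan is to assemble the three equivalences from two already-available pieces: Theorem~\ref{th22} of this paper and Theorem~4.2 of \cite{os10}. The statement is structured so that the chain $(1)\Leftrightarrow(2)\Leftrightarrow(3)$ passes through the combinatorial cover-selection property $U_{fin}(\Gamma_F,\Gamma)$, so once those two references are identified the proof becomes a one-line concatenation.

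First I would dispose of $(2)\Leftrightarrow(3)$ by a direct citation of Theorem~\ref{th22}: that theorem already establishes that projective Hurewicz is precisely the property $U_{fin}(\Gamma_F,\Gamma)$ on $X$. The underlying mechanism there, which I would briefly recall for motivation, is that an arbitrary countable cozero cover $\{V^n_i\}_n$ can be ``thickened'' into a $\gamma_F$-shrinkable cover $\mathcal{S}_i$ by taking the zero-set unions $S^n_i=\bigcup_{p\le n}F^p_{i,n}$, after which the hypothesis $U_{fin}(\Gamma_F,\Gamma)$ produces the required finite subselections, and conversely one appeals to Theorem~\ref{th33}(5) characterizing projective Hurewicz by $U_{fin}(\mathcal{O}^{\omega}_{cz},\Gamma)$.

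Next I would handle $(1)\Leftrightarrow(2)$ by invoking Theorem~4.2 of \cite{os10}, which provides exactly the function-space translation: the selection principle $S_{fin}(\Gamma_{\bf 0},w\Gamma_{\bf 0})$ on $C_p(X)$ is equivalent to $U_{fin}(\Gamma_F,\Gamma)$ on $X$. The passage from a sequence of elements of $\Gamma_{\bf 0}$ to a $\gamma_F$-shrinkable cover of $X$ uses the standard trick of taking preimages of $(-1/n,1/n)$ for the functions in the sequence (cozero sets above the zero-sets where the functions are pointwise $0$), while in the reverse direction one lifts $\gamma_F$-shrinkable covers to sequences in $C_p(X)$ witnessing $\Gamma_{\bf 0}$ via Urysohn-type functions separating the shrinkage zero-sets from complements of the cozero covers.

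Since both ingredients are already in place, the main obstacle is not a mathematical one but an expository one: making transparent that $w\Gamma_{\bf 0}$, despite its somewhat involved definition in terms of finite subfamilies $\mathcal{F}_n$ with $\min_{h\in\mathcal{F}_n}|h-f|\in\Gamma_{\bf 0}$, is precisely the receptacle that converts the $U_{fin}$-style finite unions in $X$ into finite ``minima'' in $C_p(X)$. Once that dictionary is acknowledged, chaining Theorem~\ref{th22} and \cite[Theorem~4.2]{os10} closes the loop $(1)\Rightarrow(2)\Rightarrow(3)\Rightarrow(2)\Rightarrow(1)$ with no further work.
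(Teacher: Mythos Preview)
Your proposal is correct and follows exactly the paper's own approach: the paper proves Theorem~\ref{th194} simply by citing Theorem~4.2 of \cite{os10} for $(1)\Leftrightarrow(2)$ and Theorem~\ref{th22} for $(2)\Leftrightarrow(3)$. Your additional commentary on the mechanisms behind those cited results is accurate and helpful exposition, but the underlying argument is identical.
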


By Theorem 4.5 in \cite{os10} and Theorem \ref{th22} we have the
next theorem.

\begin{theorem}\label{th1534} Assume that $X$ has the $V$-property. Then the following statements are equivalent:

\begin{enumerate}

\item $C_p(X)$ satisfies $S_{fin}(\mathcal{S},w\mathcal{S})$;

\item $X$ satisfies $U_{fin}(\Gamma_F, \Gamma)$;

\item $X$ is projectively Hurewicz;

\item  $C_p(X)$ satisfies $S_{fin}(\Gamma_{\bf 0}, w\Gamma_{\bf
0})$;

\item  $C_p(X)$ satisfies $S_{fin}(\mathcal{S}, w\Gamma_{\bf 0})$.

\end{enumerate}

\end{theorem}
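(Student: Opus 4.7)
The plan is to reduce to the already-established equivalences (2) $\Leftrightarrow$ (3) (Theorem \ref{th22}) and (2) $\Leftrightarrow$ (4) (Theorem \ref{th194}) by inserting (5) and (1) into the cycle. The $V$-property enters through Theorem \ref{th38}: it guarantees that $C_p(X)$ is sequentially separable, so there is a countable sequentially dense set $D = \{d_i : i \in \mathbb{N}\} \subseteq C_p(X)$. Without this $D$, the selection principles involving $\mathcal{S}$ would have nothing to choose from. I would therefore establish $(4) \Rightarrow (5) \Rightarrow (1) \Rightarrow (4)$, since $(2) \Leftrightarrow (3) \Leftrightarrow (4)$ is already in place.

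First, for $(4) \Rightarrow (5)$: given a sequence $(S_n)_n$ of sequentially dense subsets of $C_p(X)$, I use sequential density to extract from each $S_n$ a countable subset $T_n \subseteq S_n$ with $T_n \in \Gamma_{\mathbf{0}}$ (choose a sequence in $S_n$ converging to $\mathbf{0}$), then apply (4) to $(T_n)$ to obtain finite $\mathcal{F}_n \subseteq T_n \subseteq S_n$ with $\{\mathcal{F}_n\} \in w\Gamma_{\mathbf{0}}$. Next, for $(5) \Rightarrow (1)$: I re-enumerate the input as $(S_{i,j})_{i,j \in \mathbb{N}}$. For each fixed $i$, the shifted sequence $(S_{i,j} - d_i)_j$ still lies in $\mathcal{S}$ by homogeneity of $C_p(X)$ as a topological group, so (5) yields finite $\mathcal{G}_{i,j} \subseteq S_{i,j} - d_i$ such that $\{\mathcal{G}_{i,j}\}_j \in w\Gamma_{\mathbf{0}}$. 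Translating back, $\mathcal{F}_{i,j} := \mathcal{G}_{i,j} + d_i \subseteq S_{i,j}$ satisfies $\min_{h \in \mathcal{F}_{i,j}}|h - d_i| \to \mathbf{0}$ in $C_p(X)$ as $j \to \infty$. For an arbitrary $f \in C(X)$, sequential density of $D$ provides $d_{i_k} \to f$; I then diagonally select $j_k$ so that $\min_{h \in \mathcal{F}_{i_k,j_k}}|h - f| \to \mathbf{0}$, showing the collected family belongs to $w\mathcal{S}$.

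Finally, for $(1) \Rightarrow (4)$: given $(T_n)$ in $\Gamma_{\mathbf{0}}$, the enlarged families $S_n := T_n \cup D$ are countable and sequentially dense, so (1) returns finite $\mathcal{F}_n \subseteq S_n$ with $\{\mathcal{F}_n\} \in w\mathcal{S}$; using the $f = \mathbf{0}$ clause of $w\mathcal{S}$ together with the fact that $T_n$ already converges to $\mathbf{0}$, I refine the selection to finite subsets of $T_n$ delivering the full-sequence convergence required by $w\Gamma_{\mathbf{0}}$. The main obstacle is the diagonal step inside $(5) \Rightarrow (1)$: since $C_p(X)$ is in general not first countable, producing a single sequence of indices $(j_k)$ that drives pointwise convergence at every $x \in X$ simultaneously is delicate, and this is precisely where the $V$-property---through the separable metric condensation of $X$---is used to keep the pointwise convergence requirements manageable.
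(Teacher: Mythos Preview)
The paper does not argue directly here: it simply cites Theorem~4.5 of \cite{os10} for the equivalence of (1), (2), (4), (5) and adds (3) via Theorem~\ref{th22}. Your $(4)\Rightarrow(5)$ is correct, but the other two links in your cycle both founder on the same obstruction: sequential closure in $C_p(X)$ is not idempotent.

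In $(5)\Rightarrow(1)$ you need, for each $f$, indices $j_k$ with $\min_{h\in\mathcal{F}_{i_k,j_k}}|h-f|\to\mathbf{0}$, knowing only that $d_{i_k}\to f$ and that, for each fixed $k$, $\min_{h\in\mathcal{F}_{i_k,j}}|h-d_{i_k}|\to\mathbf{0}$ as $j\to\infty$. That is precisely a diagonal-sequence selection in $C_p(X)$; it is available when $C_p(X)$ is Fr\'echet--Urysohn or has one of Arhangel'skii's $\alpha_i$-properties, but none of these follows from the $V$-property, which yields only the existence of the countable sequentially dense set $D$. The condensation $g\colon X\to Y$ does not reduce the pointwise test to countably many points, because functions in $C_p(X)$ are not determined by their values on $g^{-1}$ of a countable dense subset of $Y$. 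In $(1)\Rightarrow(4)$ the difficulty is dual: $w\mathcal{S}$ at $f=\mathbf{0}$ delivers only a \emph{subsequence} $(\mathcal{F}_{n_k})$ with $\min_{h\in\mathcal{F}_{n_k}}|h|\to\mathbf{0}$, whereas $w\Gamma_{\mathbf{0}}$ demands convergence of the full sequence; moreover each $\mathcal{F}_{n_k}$ may lie in $D$ rather than $T_{n_k}$, so ``refining to $T_n$'' is exactly the step that needs an argument, and the convergence $T_n\to\mathbf{0}$ for each individual $n$ gives no control across $n$. A self-contained proof should instead route the implications through the covering property (2) on $X$, encoding sequentially dense sets as $\gamma_F$-shrinkable covers (this is where the $F_\sigma$-clause of the $V$-property actually does work) and performing the selection there rather than attempting iterated limits inside $C_p(X)$.
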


\begin{proposition}(Proposition 31 in \cite{bcm})\label{pro4}

\begin{enumerate}

\item  A space is Hurewicz iff it is Lindel$\ddot{o}$f and
projectively Hurewicz \cite{koc}.

\item Every $\sigma$-pseudocompact space is projectively Hurewicz.

\item Every space of cardinality less than $\mathfrak{b}$ is
projectively Hurewicz.

\item The projectively Hurewicz property is preserved by
continuous images, by countably unions, by $C^*$-embedded
zero-sets, and by cozero sets.

\end{enumerate}

\end{proposition}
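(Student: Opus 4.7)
The plan is to derive each of the four clauses from the characterizations of projective Hurewicz given in Theorem \ref{th33}. For (1), the forward direction is standard: every Hurewicz space is Lindel\"of and the Hurewicz property is preserved by continuous images, so every second countable continuous image of a Hurewicz space is Hurewicz. For the converse, given $X$ Lindel\"of and projectively Hurewicz and a sequence $(\mathcal{U}_n)$ of open covers of $X$, I would first use Lindel\"ofness to pass to countable subcovers and then, using the Tychonoff property, shrink each open set to a cozero set lying inside it, producing countable cozero covers $\mathcal{V}_n$. Theorem \ref{th33}(5) then delivers finite subfamilies $\mathcal{D}_n \subseteq \mathcal{V}_n$ whose unions $\{\bigcup \mathcal{D}_n : n\in\mathbb{N}\}$ form a $\gamma$-cover of $X$; tracking back through the shrinkings yields the Hurewicz property $U_{fin}(\mathcal{O}, \Gamma)$.

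For (2) I would invoke Theorem \ref{th33}(3): given a continuous $f : X \to \mathbb{R}^{\omega}$ with $X = \bigcup_n Y_n$ and each $Y_n$ pseudocompact, each $f(Y_n)$ is a pseudocompact subset of the metrizable space $\mathbb{R}^{\omega}$ and hence compact, so $f(X)$ is $\sigma$-compact and therefore Hurewicz. For (3) I would invoke Theorem \ref{th33}(4): any continuous image $f(X) \subseteq \mathbb{R}^{\omega}$ has cardinality at most $|X| < \mathfrak{b}$, and via the standard translation $\mathbb{R}^{\omega} \to \mathbb{N}^{\mathbb{N}}$ (for instance $h \mapsto (\lceil |h(n)| \rceil)_n$), a family of cardinality strictly less than $\mathfrak{b}$ cannot be $\leq^{*}$-dominating, so $f(X)$ is not dominating in $\mathbb{R}^{\omega}$.

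For (4), preservation under continuous images is immediate by composition, and preservation under countable unions follows from Theorem \ref{th33}(3) together with the fact that a countable union of Hurewicz spaces is Hurewicz: if $X = \bigcup_n X_n$ with each $X_n$ projectively Hurewicz and $f : X \to \mathbb{R}^{\omega}$ is continuous, then $f(X) = \bigcup_n f(X_n)$ is a countable union of Hurewicz spaces. The core of the argument is the $C^*$-embedded zero-set case, which I would prove through Theorem \ref{th33}(5): if $Z$ is a $C^*$-embedded zero-set of $X$, every cozero set of $Z$ extends to a cozero set of $X$, so a countable cozero cover $\mathcal{V}_n = \{V_{n,k}\}_k$ of $Z$ yields a countable cozero cover $\mathcal{U}_n = \{\widetilde{V_{n,k}}\}_k \cup \{X \setminus Z\}$ of $X$. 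Applying $U_{fin}(\mathcal{O}^{\omega}_{cz}, \Gamma)$ to $(\mathcal{U}_n)$ and discarding the $X \setminus Z$ summands from the chosen finite subfamilies produces finite subfamilies of $\mathcal{V}_n$ whose unions form a $\gamma$-cover of $Z$. Finally, preservation under cozero sets follows from writing $W = \bigcup_n Z_n$ as an increasing union of zero-sets of $X$ and combining the $C^*$-embedded zero-set case with the countable-union case.

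The step I expect to be the main obstacle is the $C^*$-embedded zero-set case: one must verify carefully that cozero sets of $Z$ really do lift to cozero sets of $X$ (which requires both the $C^*$-embedding of $Z$ and the fact that $Z$ is itself a zero-set, so that $X \setminus Z$ is cozero), and that after removing the $X \setminus Z$ summand from each finite subfamily, the resulting union still contains every point of $Z$ for all but finitely many $n$.
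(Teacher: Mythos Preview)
The paper does not supply its own proof of this proposition; it is quoted verbatim from Bonanzinga--Cammaroto--Matveev (Proposition~31 in \cite{bcm}), with item~(1) attributed further to Ko\v{c}inac \cite{koc}. So there is no argument in the present paper to compare your proposal against. Your overall plan via Theorem~\ref{th33} is sound for items (1), (2), and most of (4). One small slip: in (3) you appeal to Theorem~\ref{th33}(4), but that clause for the Hurewicz case reads ``$f(X)$ is \emph{bounded}'' rather than ``not dominating'' (the latter is the Menger characterization); the fix is immediate, since by the definition of $\mathfrak{b}$ any subset of $\mathbb{N}^{\mathbb{N}}$ of size below $\mathfrak{b}$ is $\leq^{*}$-bounded.

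There is, however, a genuine gap in your treatment of cozero sets in (4). You write $W=\bigcup_n Z_n$ as an increasing union of zero-sets of $X$ and invoke the $C^*$-embedded zero-set case for each $Z_n$, but the $Z_n$ need not be $C^*$-embedded in $X$: zero-sets of Tychonoff spaces are not $C^*$-embedded in general, and nothing about the particular level sets $Z_n=f^{-1}([1/n,\infty))$ forces this. A direct argument avoids the issue. First observe that a cozero set of a cozero set is cozero in $X$: if $W=h^{-1}(\mathbb{R}\setminus\{0\})$ with $0\le h\le 1$ and $V=g^{-1}(\mathbb{R}\setminus\{0\})$ with $g\in C(W)$, $0\le g\le 1$, then the function equal to $g\cdot h$ on $W$ and $0$ off $W$ is continuous on $X$ and has $V$ as its cozero set. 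Now, given countable cozero covers $\mathcal{V}_n$ of $W$, each member of $\mathcal{V}_n$ is cozero in $X$, and $\mathcal{U}_n:=\mathcal{V}_n\cup\{X\setminus Z_n\}$ is a countable cozero cover of $X$. Apply $U_{fin}(\mathcal{O}^{\omega}_{cz},\Gamma)$ on $X$ to obtain finite $\mathcal{F}_n\subseteq\mathcal{U}_n$ with $\{\bigcup\mathcal{F}_n:n\in\mathbb{N}\}\in\Gamma$, and set $\mathcal{F}'_n=\mathcal{F}_n\setminus\{X\setminus Z_n\}\subseteq\mathcal{V}_n$. For each $w\in W$ there is $m$ with $w\in Z_n$ for all $n\ge m$, so dropping $X\setminus Z_n$ is eventually harmless at $w$; hence $\{\bigcup\mathcal{F}'_n:n\in\mathbb{N}\}$ is a $\gamma$-cover of $W$, as required.
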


By Proposition \ref{pro4} and Theorem \ref{th1534}, we have the
next

\begin{proposition}\label{pr42}

\begin{enumerate}

\item A space $C_p(X)$ has the property
$S_{fin}(\mathcal{D}[1],w\mathcal{S})$ iff it has the property
$S_{fin}(\mathcal{S},w\mathcal{S})$ and each $1$-dense set in
$C_p(X)$ contains a countable $1$-dense set in $C_p(X)$.

\item If a space $X$ has cardinality less than $\mathfrak{b}$ then
$C_p(X)$ has the property $S_{fin}(\mathcal{S},w\mathcal{S})$.

\item  If $f:X\mapsto Y$ is a continuous mapping from a Tychonoff
space $X$ onto a Tychonoff space $Y$ and $C_p(X)$ has the property
$S_{fin}(\mathcal{S},w\mathcal{S})$, then $C_p(Y)$ has the
property $S_{fin}(\mathcal{S},w\mathcal{S})$.

\item  If $C_p(X)$ has the property
$S_{fin}(\mathcal{S},w\mathcal{S})$, then $C_p(X)^{\omega}$ has
the property $S_{fin}(\mathcal{S}^{\omega},w\mathcal{S})$.

\item  If $X$ has the projectively Hurewicz property and $Y$ is a
$C^*$-embedded zero-set in $X$ (or cozero set of $X$), then
$C_p(Y)$ has the property $S_{fin}(\mathcal{S},w\mathcal{S})$.

\end{enumerate}

\end{proposition}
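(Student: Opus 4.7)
The plan is to deduce Proposition \ref{pr42} by routing each clause through the equivalence $S_{fin}(\mathcal{S},w\mathcal{S}) \Leftrightarrow$ projectively Hurewicz supplied by Theorem \ref{th1534}, and then applying the structural items collected in Proposition \ref{pro4}, in complete analogy with the proofs of Propositions \ref{pr12} and \ref{pr32}.

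For part (1), the implication $S_{fin}(\mathcal{D}[1],w\mathcal{S}) \Rightarrow S_{fin}(\mathcal{S},w\mathcal{S})$ is immediate from $\mathcal{S} \subseteq \mathcal{D}[1]$. To extract the $1$-dense refinement condition, I would apply $S_{fin}(\mathcal{D}[1],w\mathcal{S})$ to a constant sequence $A_n \equiv A$ with $A \in \mathcal{D}[1]$: the resulting countable union $\bigcup_n B_n \subseteq A$ lies in $w\mathcal{S}$, hence is dense in $C_p(X)$, hence $1$-dense, so each $1$-dense set contains a countable $1$-dense subset. For the converse, combine the hypothesis with Theorem \ref{th32} (which identifies this refinement condition with Lindel\"ofness of $X$) and Proposition \ref{pro4}(1) (Hurewicz $=$ Lindel\"of $+$ projectively Hurewicz) to conclude that $X$ is Hurewicz; given any $(A_n)$ in $\mathcal{D}[1]$, first thin each $A_n$ to a countable $1$-dense $A'_n$, and then use the Hurewicz property of $X$ to build the required selection inside $C_p(X)$ as in the cozero-cover construction of Theorem \ref{th194}.

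Parts (2)--(5) are direct translations via Theorem \ref{th1534} of the corresponding items of Proposition \ref{pro4}: (2) from the cardinality bound below $\mathfrak{b}$; (3) from preservation under continuous surjections; (4) from closure under countable unions, together with the identification $C_p(X)^{\omega} \cong C_p(X \times \omega)$ where $X \times \omega$ (with $\omega$ discrete) is a countable topological sum of copies of $X$; and (5) from preservation under $C^*$-embedded zero-sets and cozero subsets.

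The principal technical obstacle is the converse half of (1): a countable $1$-dense subset need not lie in $\mathcal{S}$, so one cannot naively feed the refined sequence $(A'_n)$ into the hypothesis $S_{fin}(\mathcal{S},w\mathcal{S})$. The cleanest route is to descend to $X$: by Theorems \ref{th33} and \ref{th22}, Hurewicz of $X$ coincides with $U_{fin}(\mathcal{O}^{\omega}_{cz},\Gamma)$. Concretely, take preimages of symmetric neighborhoods of $\mathbf{0}$ under the functions in each $A'_n$ to form a sequence of countable cozero covers of $X$, apply the $U_{fin}$-selection to obtain a $\gamma$-cover of $X$, and pull back to produce finite $\mathcal{F}_n \subseteq A'_n \subseteq A_n$ such that $\{\min_{h \in \mathcal{F}_n}|h - f| : n \in \mathbb{N}\} \in \Gamma_{\mathbf{0}}$ for every $f \in C(X)$, yielding the required element of $w\mathcal{S}$.
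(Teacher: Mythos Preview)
Your approach matches the paper's: the paper gives no proof block at all, simply stating ``By Proposition \ref{pro4} and Theorem \ref{th1534}, we have the next'' and leaving the reader to unfold the translation, exactly as you do. Your treatment is considerably more detailed than the paper's one-line justification, and in particular your handling of the converse in part~(1) --- recognizing that a countable $1$-dense set need not lie in $\mathcal{S}$ and therefore descending to $X$ via Theorem~\ref{th32} and the $U_{fin}(\mathcal{O}^{\omega}_{cz},\Gamma)$ characterization --- fills in a genuine step that the paper suppresses.
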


\section{Projectively Hurewicz + projectively Rothberger properties}

\begin{theorem}(Theorem 50 in \cite{bcm}) The following conditions
are equivalent for a space $X$:

\begin{enumerate}

\item $X$ is both projectively Hurewicz and projectively
Rothberger;

\item every Lindel$\ddot{o}$f continuous image of $X$ is both
Hurewicz and Rothberger;

\item for every continuous mapping $f: X \mapsto
\mathbb{R}^{\omega}$, $f(X)$ is both Hurewicz and Rothberger;

\item for every continuous mapping $f: X \mapsto \mathbb{R}$,
$f(X)$ is both Hurewicz and Rothberger;

\item  For every sequence $(\mathcal{U}_n:n\in \mathbb{N})$ of
countable covers of $X$ by cozero sets, one can pick
$U_n\in\mathcal{U}_n$ so that $(U_n : n\in \mathbb{N})$ is
groupable, that is there is a strictly increasing function $f:
\omega \mapsto \omega$ such that for every $x\in X$, $x\in
\bigcup\{U_i: f(n)\leq i< f(n+1)\}$ for all but finitely many $n$.
\end{enumerate}

\end{theorem}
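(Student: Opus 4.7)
The plan is to split the chain of equivalences into two natural blocks. The first block, $(1)\Leftrightarrow(2)\Leftrightarrow(3)\Leftrightarrow(4)$, should follow by directly intersecting the analogous equivalences from Theorem \ref{bcm} (for Rothberger) and Theorem \ref{th33} (for Hurewicz). Each of these theorems asserts, for its property $\mathcal{P}$, that ``$X$ is projectively $\mathcal{P}$'' is equivalent to ``every Lindel\"of continuous image of $X$ is $\mathcal{P}$'', to ``every continuous image in $\mathbb{R}^{\omega}$ is $\mathcal{P}$'', and to ``every continuous image in $\mathbb{R}$ is $\mathcal{P}$''. Since the same test families of images appear in both lists, taking the conjunctions yields $(1)$--$(4)$ with no further work.

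The real content of the theorem is concentrated in $(1)\Leftrightarrow(5)$. My approach is to invoke the classical fact from the Scheepers-diagram analysis that a Lindel\"of space is Hurewicz and Rothberger simultaneously iff it satisfies the groupable-cover selection principle $S_{1}(\mathcal{O},\mathcal{O}^{gp})$, where $\mathcal{O}^{gp}$ consists of those open covers admitting a strictly increasing $f:\omega\to\omega$ with $\{\bigcup\{U_i:f(n)\le i<f(n+1)\}:n\in \mathbb{N}\}$ a $\gamma$-cover. In view of the cozero-cover clauses (5) of Theorem \ref{bcm} and Theorem \ref{th33}, passing to projective versions replaces the source family $\mathcal{O}$ by $\mathcal{O}^{\omega}_{cz}$, and condition (5) is exactly the statement that $X$ satisfies $S_{1}(\mathcal{O}^{\omega}_{cz},\mathcal{O}^{gp})$. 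The direction $(5)\Rightarrow(1)$ is then immediate: a groupable selection $(U_n)$ is at once a Rothberger selection from $(\mathcal{U}_n)$ and, by taking the unions $\bigcup\{U_i:f(n)\le i<f(n+1)\}$ as the finite sets, a Hurewicz selection.

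The main obstacle I expect is $(1)\Rightarrow(5)$. Here I plan a diagonal/blocking construction: given a sequence $(\mathcal{U}_n)$ of countable cozero covers, partition $\mathbb{N}$ into consecutive blocks $I_n$ whose lengths grow, feed each $\mathcal{U}_k$ with $k\in I_n$ into the projective Hurewicz property via Theorem \ref{th33}(5) to obtain finite subfamilies $\mathcal{F}_k\subseteq \mathcal{U}_k$ whose finite unions form a $\gamma$-cover, then apply the projective Rothberger property inside each block (using Theorem \ref{bcm}(5) on the countable cozero cover formed from the $\mathcal{F}_k$) to refine each $\mathcal{F}_k$ to a single member $U_k\in \mathcal{U}_k$ while preserving the property that $\{\bigcup\{U_i:i\in I_n\}:n\in \mathbb{N}\}$ remains a $\gamma$-cover. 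The delicate point is maintaining both the cozero structure and the $\gamma$-cover behaviour of the groups through the second refinement step; this should go through because finite subfamilies of cozero covers remain cozero and because the Rothberger-type selection inside each block can be required to come from a prescribed $\mathcal{U}_k$. Carrying out this bookkeeping carefully is the crux of the argument.
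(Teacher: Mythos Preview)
First, note that the present paper does not prove this statement: it is quoted from \cite{bcm} (Theorem~50 there) with no argument supplied, so there is no in-paper proof to compare against and your proposal would have to stand entirely on its own.

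Two smaller issues. Your ``intersection'' strategy for $(1)$--$(4)$ breaks down at condition~(4): Theorem~\ref{th33} contains no clause of the form ``every continuous image in $\mathbb{R}$ is Hurewicz'' (its item~(4) speaks of boundedness in $\mathbb{R}^{\omega}$, not of images in $\mathbb{R}$), so $(4)\Rightarrow(1)$ cannot be obtained merely by conjoining Theorems~\ref{bcm} and~\ref{th33}. And in $(5)\Rightarrow(1)$ the Hurewicz half is not as immediate as you say: the groups $\{U_i:f(n)\le i<f(n+1)\}$ are finite subsets of $\bigcup_{f(n)\le i<f(n+1)}\mathcal{U}_i$, not of a single $\mathcal{U}_n$, so a reindexing argument is still owed before $U_{fin}(\mathcal{O}^{\omega}_{cz},\Gamma)$ follows.

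The substantive gap is in your sketch of $(1)\Rightarrow(5)$. After the Hurewicz step yields finite $\mathcal{F}_k\subseteq\mathcal{U}_k$ with $\{\bigcup\mathcal{F}_k:k\in\mathbb{N}\}\in\Gamma$, you propose to ``apply the projective Rothberger property inside each block $I_n$'' to shrink each $\mathcal{F}_k$ to a singleton. But the blocks $I_n$ are finite, so within a block there is no infinite sequence of covers for $S_1(\mathcal{O}^{\omega}_{cz},\mathcal{O})$ to act on; and nothing guarantees that choosing one element from each $\mathcal{F}_k$ ($k\in I_n$) keeps $\bigcup_{k\in I_n}U_k$ large enough to preserve the $\gamma$-cover of block-unions. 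The argument that actually works runs in the opposite order: first use projective Rothberger (in its equivalent form with target the large covers) to select $V_n\in\mathcal{U}_n$ so that $\{V_n:n\in\mathbb{N}\}$ is a countable large cozero cover of $X$, and then invoke the projective Hurewicz property via the known characterisation that it forces every countable large cozero cover to be groupable. This delivers groupability of $(V_n)$ directly, with no block-by-block refinement needed.
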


Recall that  $add(\mathcal{M})=\min\{\mathfrak{b},
cov(\mathcal{M})\}$ \cite{mil}.

\begin{proposition}(Proposition 51 in \cite{bcm})\label{pro5}

\begin{enumerate}

\item A space is both Hurewicz and Rothberger iff it is
Lindel$\ddot{o}$f and it is both projectively Hurewicz and
projectively Rothberger  \cite{koc}.

\item Every space of cardinality less than $add(\mathcal{M})$ is
both projectively Hurewicz and projectively Rothberger.

\end{enumerate}

\end{proposition}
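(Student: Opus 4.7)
The plan is to derive both parts of Proposition \ref{pro5} directly from the corresponding single-property facts already recorded in Propositions \ref{pro1} and \ref{pro4}, i.e., as a straightforward combination. No new argument about covers is needed; the work is purely a bookkeeping exercise on the two characterizations and the two cardinality bounds.

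For part (1), I would handle the two implications by simply intersecting the statements of Proposition \ref{pro1}(1) and Proposition \ref{pro4}(1). If $X$ is both Hurewicz and Rothberger, then Proposition \ref{pro1}(1) gives Lindel\"{o}fness and projective Rothbergerness, while Proposition \ref{pro4}(1) supplies projective Hurewiczness; conversely, if $X$ is Lindel\"{o}f and both projectively Hurewicz and projectively Rothberger, then applying the same two propositions in the reverse direction yields that $X$ is Hurewicz and that $X$ is Rothberger. Since both characterizations attribute the global property to the conjunction of Lindel\"{o}fness with the projective version, no further selection argument is required.

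For part (2), I would use the definition $\mathrm{add}(\mathcal{M})=\min\{\mathfrak{b},\mathrm{cov}(\mathcal{M})\}$ recalled just before the statement. If $|X|<\mathrm{add}(\mathcal{M})$, then $|X|<\mathrm{cov}(\mathcal{M})$ and $|X|<\mathfrak{b}$ simultaneously. The first inequality together with Proposition \ref{pro1}(3) gives that $X$ is projectively Rothberger, and the second inequality together with Proposition \ref{pro4}(3) gives that $X$ is projectively Hurewicz. Both conclusions thus hold at once.

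There is no genuine obstacle here: the entire content of the proposition is already packaged in the preceding two propositions, so the only thing to be careful about is quoting each of the four items (\ref{pro1}(1), \ref{pro1}(3), \ref{pro4}(1), \ref{pro4}(3)) in the right direction and noting that $\mathrm{add}(\mathcal{M})\le \mathfrak{b}$ and $\mathrm{add}(\mathcal{M})\le \mathrm{cov}(\mathcal{M})$ by definition. The final write-up should therefore be only a few lines long.
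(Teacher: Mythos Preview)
Your derivation is correct: both parts follow immediately from Propositions~\ref{pro1} and~\ref{pro4} together with $\mathrm{add}(\mathcal{M})=\min\{\mathfrak{b},\mathrm{cov}(\mathcal{M})\}$, exactly as you outline. The paper itself does not supply a proof here---it simply quotes the result from \cite{bcm}---so your short bookkeeping argument is precisely the intended justification.
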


By Proposition \ref{pro5}, Theorem \ref{th1534} and Theorem
\ref{th1444}, we have the next result.

\begin{proposition}\label{pr43}

\begin{enumerate}

\item A space $C_p(X)$ has properties
$S_{fin}(\mathcal{D}[1],w\mathcal{S})$ and
$S_1(\mathcal{D}[1],\mathcal{D}[1])$ iff it has properties
$S_{fin}(\mathcal{S},w\mathcal{S})$ and
$S_1(\mathcal{D}^{\omega}[1],\mathcal{D}[1])$ and each $1$-dense
set in $C_p(X)$ contains a countable $1$-dense set in $C_p(X)$.

\item If a space $X$ has cardinality less than $add(\mathcal{M})$
then $C_p(X)$ has properties $S_{fin}(\mathcal{S},w\mathcal{S})$
and $S_1(\mathcal{D}^{\omega}[1],\mathcal{D}[1])$.

\end{enumerate}

\end{proposition}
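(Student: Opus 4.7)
The plan is to assemble Proposition~\ref{pr43} from the two ``one--sided'' results, Proposition~\ref{pr12}(1) for the Rothberger side and Proposition~\ref{pr42}(1) for the Hurewicz side, glued together by Proposition~\ref{pro5}. Essentially nothing new happens; the content is bookkeeping at the level of the families $\mathcal{D}[1]$, $\mathcal{D}^{\omega}[1]$, $\mathcal{S}$, $w\mathcal{S}$.

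For part (1), the forward direction ought to fall out just from inclusions of families and from the fact that $X$ must be Lindel\"of. Since every sequentially dense set is dense and every dense subset of $C_p(X)$ is $1$-dense (basic open sets of $C_p(X)$ at a single point $x$ are exactly sets of the form $\{f:f(x)\in W\}$), we have $\mathcal{S}\subseteq\mathcal{D}\subseteq\mathcal{D}[1]$, so $S_{fin}(\mathcal{D}[1],w\mathcal{S})$ implies $S_{fin}(\mathcal{S},w\mathcal{S})$; and $\mathcal{D}^{\omega}[1]\subseteq\mathcal{D}[1]$ gives $S_{1}(\mathcal{D}[1],\mathcal{D}[1])\Rightarrow S_{1}(\mathcal{D}^{\omega}[1],\mathcal{D}[1])$. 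By Theorem~\ref{th143}, $S_{1}(\mathcal{D}[1],\mathcal{D}[1])$ forces $X$ to be Rothberger, hence Lindel\"of, and Theorem~\ref{th32} then supplies the countable $1$-dense subset condition. For the backward direction I would feed the three hypotheses into Proposition~\ref{pr12}(1) and Proposition~\ref{pr42}(1) separately: the countability property together with $S_{1}(\mathcal{D}^{\omega}[1],\mathcal{D}[1])$ yields $S_{1}(\mathcal{D}[1],\mathcal{D}[1])$, and together with $S_{fin}(\mathcal{S},w\mathcal{S})$ yields $S_{fin}(\mathcal{D}[1],w\mathcal{S})$.

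For part (2) I would invoke Proposition~\ref{pro5}(2): the inequality $|X|<add(\mathcal{M})=\min\{\mathfrak{b},cov(\mathcal{M})\}$ gives that $X$ is simultaneously projectively Rothberger and projectively Hurewicz. Theorem~\ref{th144} (equivalence (1)$\Leftrightarrow$(2) there) translates the Rothberger half into $S_{1}(\mathcal{D}^{\omega}[1],\mathcal{D}[1])$, and Theorem~\ref{th1534} translates the Hurewicz half into $S_{fin}(\mathcal{S},w\mathcal{S})$.

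The only delicate point is that Theorem~\ref{th1534} is stated under the $V$-property, so its use in part (2) (and in the backward direction of part (1)) silently inherits that proviso, exactly as in Proposition~\ref{pr42}(2). I do not expect any other obstacle: once the inclusions $\mathcal{S}\subseteq\mathcal{D}\subseteq\mathcal{D}[1]$ and $\mathcal{D}^{\omega}[1]\subseteq\mathcal{D}[1]$ are in hand, the argument is a mechanical combination of the preceding theorems, parallel to how Proposition~\ref{pr12} and Proposition~\ref{pr42} were put together from their respective ``Rothberger'' and ``Hurewicz'' machinery.
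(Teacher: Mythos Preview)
Your proposal is correct and follows essentially the same route as the paper, which simply records Proposition~\ref{pr43} as a consequence of Proposition~\ref{pro5}, Theorem~\ref{th1534} and the Rothberger characterizations (the paper's citation of Theorem~\ref{th1444} here appears to be a slip for Theorems~\ref{th143}/\ref{th144}); you have merely unpacked the bookkeeping that the paper leaves implicit. Your remark about the $V$-property is well taken and is a genuine hypothesis the paper silently inherits from Theorem~\ref{th1534}, just as in Proposition~\ref{pr42}.
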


\section{The projectively Gerlits-Nagy property}

 Gerlits and Nagy \cite{gn} proved

\begin{theorem}\label{th7} For a space $X$, the following statements are
equivalent:

\begin{enumerate}

\item $C_p(X)$ satisfies $S_{1}(\Omega_{\bf 0}, \Gamma_{\bf 0})$;

\item $X$ satisfies $S_{1}(\Omega, \Gamma)$.

\end{enumerate}

\end{theorem}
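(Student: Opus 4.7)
The plan is to prove the equivalence via the standard Gerlits--Nagy duality between $\omega$-covers of $X$ and subsets of $C_p(X)$ that have $\bf 0$ in their closure. The single observation that drives everything is the correspondence $f \in [F, (-\epsilon, \epsilon)] \iff F \subseteq f^{-1}((-\epsilon,\epsilon))$, which converts statements about basic neighborhoods of $\bf 0$ into statements about when finite subsets of $X$ are covered. Both directions then amount to passing information back and forth through this dictionary.

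For $(2) \Rightarrow (1)$, I would start from a sequence $(A_n : n \in \mathbb{N})$ in $\Omega_{\bf 0}$ and associate to it, for each $n$, the open cover $\mathcal{U}_n = \{f^{-1}((-1/n, 1/n)) : f \in A_n\}$. Since $A_n \in \Omega_{\bf 0}$, every finite $F \subset X$ is contained in the preimage coming from some $f \in A_n$ with $f(F) \subset (-1/n, 1/n)$, so $\mathcal{U}_n \in \Omega$. Applying $S_1(\Omega, \Gamma)$ produces $U_n = f_n^{-1}((-1/n, 1/n)) \in \mathcal{U}_n$ with $\{U_n\} \in \Gamma$; I then verify $\{f_n\} \in \Gamma_{\bf 0}$ by observing that for any $[F, (-\epsilon, \epsilon)]$ and any $n$ with $1/n < \epsilon$, the $\gamma$-cover property gives $F \subset U_n$ for cofinitely many $n$, whence $f_n(F) \subset (-\epsilon, \epsilon)$.

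For $(1) \Rightarrow (2)$, I would reverse the dictionary using the Tychonoff assumption. Given a sequence $(\mathcal{U}_n)$ of $\omega$-covers, for each $U \in \mathcal{U}_n$ and each finite $F \subset U$ choose a continuous $f_{F,U}: X \to [0,1]$ with $f_{F,U}(F) = 0$ and $f_{F,U}(X \setminus U) = 1$. Set $A_n = \{f_{F,U} : U \in \mathcal{U}_n,\ F \in [U]^{<\omega}\}$; the $\omega$-cover property of $\mathcal{U}_n$ gives $A_n \in \Omega_{\bf 0}$. Applying $S_1(\Omega_{\bf 0}, \Gamma_{\bf 0})$ yields $g_n = f_{F_n, U_n} \in A_n$ with $g_n \to \bf 0$, and I claim $\{U_n\} \in \Gamma$: for any $x \in X$, testing convergence against $[\{x\}, (-1/2, 1/2)]$ forces $|g_n(x)| < 1/2$ cofinitely often, but $x \notin U_n$ would give $g_n(x) = 1$, so $x \in U_n$ eventually.

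The main obstacle is not conceptual but hygienic: one must repeatedly check the nontriviality clauses in the definitions. In particular, the convention $X \notin \mathcal{U}$ for covers and $\bf 0 \notin A$ for elements of $\Omega_{\bf 0}$ must be tracked so that the constructed $\mathcal{U}_n$'s really lie in $\Omega$, and so that the extracted sequences $\{f_n\}$ and $\{U_n\}$ are genuinely infinite rather than eventually constant (in each case ruled out because a constant tail at any object other than $\bf 0$ or $X$ would contradict the established convergence / covering property). With these bookkeeping points handled, both implications are routine verifications through the duality.
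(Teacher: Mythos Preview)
The paper does not supply its own proof of this statement; it merely records it as a theorem of Gerlits and Nagy with a citation to \cite{gn}. Your argument is exactly the standard Gerlits--Nagy duality proof and is correct; the bookkeeping caveats you flag (handling the possible occurrence of $X$ among the sets $f^{-1}((-1/n,1/n))$ in $(2)\Rightarrow(1)$, and verifying that the selected functions form an infinite set not containing $\bf 0$ so that the result genuinely lies in $\Gamma_{\bf 0}$) are real but routine, and your sketch indicates how each is discharged.
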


By Theorem 5.6 in \cite{os2}, we have the next theorem.

\begin{theorem}\label{th1} Let $X$ be a space with a coarser second countable topology. The
following assertions are equivalent:

\begin{enumerate}

\item $C_p(X)$ satisfies $S_{1}(\mathcal{D},\mathcal{S})$;

\item Each dense subspace of $C_p(X)$ contains a countable
sequentially dense set in $C_p(X)$;

\item $X$ satisfies $S_{1}(\Omega, \Gamma)$;

\item $C_p(X)$ satisfies $S_{1}(\Omega_{\bf 0},\Gamma_{\bf 0})$;

\item $C_p(X)$ satisfies $S_{1}(\mathcal{D},\Gamma_{\bf 0})$.

\end{enumerate}

\end{theorem}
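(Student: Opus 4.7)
The plan is to assemble the cycle $(1)\Rightarrow(2)\Rightarrow(3)\Leftrightarrow(4)\Rightarrow(5)\Rightarrow(1)$, with Theorem~\ref{th7} (Gerlits--Nagy) furnishing the hinge $(3)\Leftrightarrow(4)$, and with Theorem~\ref{th31} (Noble), combined with the hypothesis that $X$ has a coarser second countable topology, providing a countable dense set $\{g_{k}:k\in\mathbb{N}\}\subseteq C_{p}(X)$. The classical consequence of the Gerlits--Nagy theorem that $C_{p}(X)$ is Fr\'echet--Urysohn once (3) holds will be used to close the cycle.

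The easy steps come first. For $(1)\Rightarrow(2)$, given a dense $D\subseteq C_{p}(X)$, apply $S_{1}(\mathcal{D},\mathcal{S})$ to the constant sequence $A_{n}:=D$ to extract a countable sequentially dense subset of $D$. The equivalence $(3)\Leftrightarrow(4)$ is exactly Theorem~\ref{th7}. For $(4)\Rightarrow(5)$, observe that any $A\in\mathcal{D}$ satisfies $A\setminus\{\mathbf{0}\}\in\Omega_{\mathbf{0}}$, so any $S_{1}(\Omega_{\mathbf{0}},\Gamma_{\mathbf{0}})$-selector is also an $S_{1}(\mathcal{D},\Gamma_{\mathbf{0}})$-selector.

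For $(5)\Rightarrow(1)$ I would first derive (3) from (5) by the standard $\omega$-cover-to-bump-function conversion: given $\omega$-covers $(\mathcal{U}_{n})$ of $X$ and Urysohn-type functions $f_{U,F,q}\in C(X)$ equal to $q\in\mathbb{Q}^{F}$ on a chosen finite $F\subset U\in\mathcal{U}_{n}$ and $\equiv 1$ off $U$, the set $A_{n}:=\{f_{U,F,q}\}$ is dense in $C_{p}(X)$, and any $S_{1}(\mathcal{D},\Gamma_{\mathbf{0}})$-selection $f_{n}\in A_{n}$ with $f_{n}\to\mathbf{0}$ forces the corresponding $U_{n}\in\mathcal{U}_{n}$ to form a $\gamma$-cover of $X$, since $f_{n}(x)\to 0$ together with $f_{n}\equiv 1$ off $U_{n}$ puts $x$ into $U_{n}$ for all but finitely many $n$. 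This delivers (3) and hence Fr\'echet--Urysohnness of $C_{p}(X)$. To then produce $S_{1}(\mathcal{D},\mathcal{S})$ from a sequence $(A_{n})$ of dense sets, I partition $\mathbb{N}=\bigsqcup_{k\in\mathbb{N}}\mathbb{N}_{k}$ into pairwise disjoint infinite subsets, translate on each strand $(A_{n}-g_{k})_{n\in\mathbb{N}_{k}}$ (still dense by homogeneity of $C_{p}(X)$), apply $S_{1}(\mathcal{D},\Gamma_{\mathbf{0}})$ to get $b_{n}\to\mathbf{0}$, and translate back to $f_{n}:=b_{n}+g_{k}\in A_{n}$, so that $f_{n}\to g_{k}$ along $\mathbb{N}_{k}$. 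Then $\{f_{n}\}$ is dense in $C_{p}(X)$, and Fr\'echet--Urysohnness lifts density to sequential density, giving $\{f_{n}\}\in\mathcal{S}$.

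The main obstacle is $(2)\Rightarrow(3)$. Here the key observation is that, under the separability of $C_{p}(X)$ supplied by the SCT hypothesis via Theorem~\ref{th31}, condition (2) is equivalent to $C_{p}(X)$ being strongly sequentially separable in the sense defined earlier: any dense $D$ contains a countable dense subset of $C_{p}(X)$, and if every countable dense set is sequentially dense then (2) follows. To pass from this strong sequential separability to $S_{1}(\Omega,\Gamma)$ on $X$, one reuses the bump-function construction above to encode open $\omega$-covers as dense subsets of $C_{p}(X)$ and reads off a $\gamma$-subcover from any sequence inside that dense set converging to $\mathbf{0}$. The delicate part is that (2) produces only one countable sequentially dense subset of one dense set, whereas $S_{1}(\Omega,\Gamma)$ demands a simultaneous one-per-cover selection; closing this gap requires either routing through $C_{p}(X)^{\mathbb{N}}\cong C_{p}(X\times\mathbb{N})$ (whose underlying space retains a coarser second countable topology), so that the product $\prod_{n}A_{n}$ is dense there and (2) can be applied to it, or a careful diagonalization along the enumeration $\{g_{k}\}$. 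This bookkeeping is what I expect to absorb the bulk of the work.
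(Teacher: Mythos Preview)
The paper does not actually prove this theorem; it simply records it as a consequence of Theorem~5.6 in \cite{os2}. So there is no in-paper argument to compare against, and your outline has to be judged on its own.

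Your cycle $(1)\Rightarrow(2)\Rightarrow(3)\Leftrightarrow(4)\Rightarrow(5)\Rightarrow(1)$ is sound, and the arguments you sketch for $(1)\Rightarrow(2)$, $(3)\Leftrightarrow(4)$, $(4)\Rightarrow(5)$ and $(5)\Rightarrow(1)$ (first deriving $(3)$, hence Fr\'echet--Urysohnness, then the translate-and-diagonalize construction against a countable dense $\{g_k\}$) are correct and standard.

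Where you misjudge the difficulty is $(2)\Rightarrow(3)$. There is no gap to close. The Gerlits--Nagy theorem you already invoke (Theorem~\ref{th7}) is, in its original form, the equivalence of $S_1(\Omega,\Gamma)$ with the \emph{$\gamma$-property}: every open $\omega$-cover of $X$ contains a $\gamma$-subcover. Your single-cover bump-function argument extracts exactly this from $(2)$: given one open $\omega$-cover $\mathcal{U}$, the associated dense set $A\subseteq C_p(X)$ contains (by $(2)$) a countable sequentially dense set, hence a sequence converging to $\mathbf{0}$, whose supports give a $\gamma$-subcover of $\mathcal{U}$. That already yields $(3)$, with no need for a ``simultaneous one-per-cover'' selection, product spaces, or further diagonalization.

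By contrast, your proposed workaround through $C_p(X)^{\mathbb{N}}\cong C_p(X\times\mathbb{N})$ does \emph{not} work as stated: hypothesis $(2)$ is about dense subsets of $C_p(X)$, not of $C_p(X)^{\mathbb{N}}$, and you have no a priori reason that $(2)$ transfers to the product. So the detour you flag as absorbing ``the bulk of the work'' is both unnecessary and, in the form you describe, unjustified. Drop it and cite the $\gamma$-property formulation of Gerlits--Nagy instead.
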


 In (\cite{bcm}, Theorem 54), M. Bonanzinga, F. Cammaroto, M.
 Matveev proved

 \begin{theorem}  The following conditions are equivalent for a
space $X$:

\begin{enumerate}

\item $X$ satisfies projective $S_{1}(\Omega,\Gamma)$ $[$
projectively Gerlits-Nagy $]$;

\item every Lindel\"{o}f image of $X$ has property $(\gamma)$;

\item for every continuous mapping $f:X \mapsto
\mathbb{R}^{\omega}$, $f(X)$ satisfies $S_{1}(\Omega, \Gamma)$;

\item for every continuous mapping $f:X \mapsto \mathbb{R}$,
$f(X)$ satisfies $S_{1}(\Omega, \Gamma)$;

\item for every countable $\omega$-cover $\mathcal{U}$ of $X$ by
cozero sets, one can pick $U_n\in \mathcal{U}$ so that every $x\in
X$ is contained in all but finitely many $U_n$;

\item $X$ satisfies $S_{1}(\Omega_{cz}^{\omega},\Gamma)$.

\end{enumerate}

 \end{theorem}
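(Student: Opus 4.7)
The plan is to establish the six equivalences in a cycle whose central condition is (6), $S_{1}(\Omega_{cz}^{\omega},\Gamma)$, in parallel with Theorems \ref{bcm} and \ref{th33}. Several implications are essentially immediate: (1)$\Rightarrow$(3) because any continuous image $f(X)\subseteq\mathbb{R}^{\omega}$ is second countable and so satisfies $S_{1}(\Omega,\Gamma)$ by hypothesis; (3)$\Rightarrow$(4) because $\mathbb{R}$ embeds in $\mathbb{R}^{\omega}$; (2)$\Rightarrow$(1) because every second countable Tychonoff space is Lindel\"of and the Gerlits--Nagy property $(\gamma)$ coincides with $S_{1}(\Omega,\Gamma)$; and (5)$\Leftrightarrow$(6) by the standard $S_{1}$-manipulation, with (6)$\Rightarrow$(5) via constant sequences and (5)$\Rightarrow$(6) by flattening $(\mathcal{U}_{n})_{n}$ into a single countable cozero $\omega$-cover indexed on $\bigsqcup_{n}A_{n}$ for a partition of $\mathbb{N}$ into infinitely many infinite pieces.

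The first key implication is (3)$\Rightarrow$(6). Given $(\mathcal{U}_{n})_{n}$ with $\mathcal{U}_{n}=\{U^{n}_{k}:k\in\mathbb{N}\}$ and $U^{n}_{k}=\mathrm{coz}(f^{n}_{k})$ for continuous $f^{n}_{k}:X\to[0,1]$, I would form the continuous map $h:X\to[0,1]^{\mathbb{N}\times\mathbb{N}}\cong\mathbb{R}^{\omega}$ by $h(x)=(f^{n}_{k}(x))_{(n,k)}$. The sets $V^{n}_{k}=\{y\in h(X):y_{(n,k)}>0\}$ are open in $h(X)$ with $h^{-1}(V^{n}_{k})=U^{n}_{k}$, and each family $\{V^{n}_{k}\}_{k}$ is an $\omega$-cover of $h(X)$: any finite subset of $h(X)$ lifts to a finite subset of $X$, which is covered by some $U^{n}_{k}$. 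By (3), $h(X)$ satisfies $S_{1}(\Omega,\Gamma)$, yielding a selection $V^{n}_{k(n)}$ forming a $\gamma$-cover of $h(X)$; pulling back through $h$ gives the required $\gamma$-cover $\{U^{n}_{k(n)}\}_{n}$ of $X$. For (6)$\Rightarrow$(1) and (6)$\Rightarrow$(2): given an open $\omega$-cover of a second countable (respectively, Lindel\"of) image $Y=g(X)$, refine it to a countable cozero $\omega$-cover of $Y$ using Lindel\"ofness and the Tychonoff property, pull that refinement back through $g$ to a countable cozero $\omega$-cover of $X$, apply (6), and push the resulting $\gamma$-selection forward to $Y$.

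The main obstacle is closing the cycle via (4), namely proving (4)$\Rightarrow$(3) (equivalently, (4)$\Rightarrow$(6)). The difficulty is that (4) controls only $\mathbb{R}$-valued continuous images, whereas cozero $\omega$-covers of $X$ naturally encode as $\mathbb{R}^{\omega}$-valued maps, and no topological embedding $\mathbb{R}^{\omega}\hookrightarrow\mathbb{R}$ exists. I would address this by fixing $f:X\to\mathbb{R}^{\omega}$ with image $Z=f(X)$ and observing that for every continuous $\phi:Z\to\mathbb{R}$ the composition $\phi\circ f:X\to\mathbb{R}$ is real-valued, so $\phi(Z)$ satisfies $S_{1}(\Omega,\Gamma)$ by (4); since $Z$ is separable metric, its countable cozero $\omega$-covers are parametrised by countably many continuous real-valued functions on $Z$, and a diagonalisation over such $\phi$ recovers $S_{1}(\Omega,\Gamma)$ on $Z$ itself, reproducing the argument of \cite{bcm}. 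This closing step is the most delicate part of the proof.
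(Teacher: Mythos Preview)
The paper does not prove this theorem at all: it is quoted verbatim as Theorem~54 of \cite{bcm} (Bonanzinga--Cammaroto--Matveev), and no argument is supplied. So there is no ``paper's own proof'' to compare against; your sketch is being measured against a citation.

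That said, your outline is largely sound for the standard implications. The chain $(6)\Rightarrow(2)\Rightarrow(1)\Rightarrow(3)\Rightarrow(6)$ together with $(5)\Leftrightarrow(6)$ and $(3)\Rightarrow(4)$ is correct and routine, and your diagonal-map argument for $(3)\Rightarrow(6)$ is exactly the right one.

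The real content, as you correctly identify, is closing the loop through $(4)$, and here your proposal has a genuine gap. Your plan for $(4)\Rightarrow(3)$ amounts to: for $Z=f(X)\subseteq\mathbb{R}^{\omega}$, every continuous $\phi:Z\to\mathbb{R}$ has $\phi(Z)$ satisfying $S_{1}(\Omega,\Gamma)$, and then ``a diagonalisation over such $\phi$ recovers $S_{1}(\Omega,\Gamma)$ on $Z$ itself, reproducing the argument of \cite{bcm}.'' But this last sentence is not an argument; it is a deferral to the very source whose proof you are meant to supply. The claim that a separable metric space all of whose real-valued continuous images have $(\gamma)$ must itself have $(\gamma)$ is precisely the nontrivial step, and nothing in your sketch indicates how to encode a countable cozero $\omega$-cover of $Z$ (or of $X$) into a \emph{single} real-valued map so that $(4)$ can be applied. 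The analogous step in the Rothberger and Hurewicz cases (Theorems~\ref{bcm} and~\ref{th33}) already requires a specific construction in \cite{bcm}, and the Gerlits--Nagy case needs its own version. Until you spell out that construction, the implication $(4)\Rightarrow(3)$ (or $(4)\Rightarrow(5)$) remains unproved in your write-up.
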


\begin{theorem}  The following conditions are equivalent for a
space $X$:

\begin{enumerate}

\item $X$ is projective $S_{1}(\Omega,\Gamma)$;

\item $C_p(X)$ satisfies $S_{1}(\Omega^{\omega}_{\bf
0},\Gamma_{\bf 0})$.

\end{enumerate}

\end{theorem}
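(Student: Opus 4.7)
The plan is to mirror the structure of Theorem \ref{th144} (the projective Rothberger case), using the Bonanzinga--Cammaroto--Matveev characterization of projective Gerlits--Nagy via $S_{1}(\Omega^{\omega}_{cz},\Gamma)$ as the bridge between the cover-side statement on $X$ and the neighborhood-side statement at $\bf{0}$ in $C_p(X)$. The translation in each direction is the classical Gerlits--Nagy correspondence between $\omega$-covers of $X$ and subsets of $C_p(X)$ with $\bf{0}$ in their closure, specialized to the countable setting.

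For $(1)\Rightarrow(2)$, I would start with a sequence $(A_n : n\in \mathbb{N})$ of countable subsets of $C_p(X)$ with $\bf{0}\in \overline{A_n}\setminus A_n$ and, for each $n$ and $f\in A_n$, assign the cozero set $U_{f,n}=f^{-1}(-1/n,1/n)$. Because $A_n\in \Omega^{\omega}_{\bf{0}}$, for every finite $F\subset X$ there is $f\in A_n$ with $f(F)\subset(-1/n,1/n)$, so $\mathcal{U}_n=\{U_{f,n}:f\in A_n\}$ is a countable cozero $\omega$-cover of $X$. Applying $S_{1}(\Omega^{\omega}_{cz},\Gamma)$ (which is equivalent to (1) by the previous theorem) yields $f_n\in A_n$ such that $\{U_{f_n,n}:n\in \mathbb{N}\}\in \Gamma$, and this is exactly the statement $|f_n(x)|<1/n$ for all but finitely many $n$, for each $x\in X$; hence $f_n\to \bf{0}$ pointwise. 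Since $\bf{0}\notin A_n$, no $f_n$ equals $\bf{0}$, and any value repeated infinitely often would be the limit $\bf{0}$, a contradiction, so the selection lies in $\Gamma_{\bf{0}}$.

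For $(2)\Rightarrow(1)$, I would verify the characterization $S_{1}(\Omega^{\omega}_{cz},\Gamma)$. Given countable cozero $\omega$-covers $\mathcal{U}_n=\{U_{n,k}:k\in \mathbb{N}\}$, decompose each $U_{n,k}=\bigcup_{j\in \mathbb{N}}F_{n,k,j}$ with $F_{n,k,j}$ zero-sets and $F_{n,k,j}\subset F_{n,k,j+1}$, then use the Tychonoff property to pick continuous $f_{n,k,j}:X\to [0,1]$ with $f_{n,k,j}\restriction F_{n,k,j}=0$ and $f_{n,k,j}\restriction(X\setminus U_{n,k})=1$. Set $B_n=\{f_{n,k,j}:k,j\in \mathbb{N}\}$. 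For every finite $F\subset X$ and $\epsilon\in(0,1]$, picking $k$ with $F\subset U_{n,k}$ and then $j$ with $F\subset F_{n,k,j}$ forces $f_{n,k,j}\in [F,(-\epsilon,\epsilon)]$, so $B_n\in \Omega^{\omega}_{\bf{0}}$ (and $\bf{0}\notin B_n$ since each $f_{n,k,j}$ attains the value $1$ on the nonempty complement of $U_{n,k}$). Applying (2) produces $g_n=f_{n,k(n),j(n)}\in B_n$ with $g_n\to \bf{0}$. For each $x\in X$, pointwise convergence gives $g_n(x)<1$ eventually, which is equivalent to $x\in U_{n,k(n)}$ for all but finitely many $n$; thus $\{U_{n,k(n)}:n\in \mathbb{N}\}\in \Gamma$, as required.

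The only real obstacle is the Gerlits--Nagy step in $(2)\Rightarrow(1)$: one must arrange a single countable set $B_n$ accumulating at $\bf{0}$ whose members code both the choice of $U_{n,k}\in \mathcal{U}_n$ and the promotion of that membership to a $\gamma$-type selection. The key device is the double indexing by the inner zero-set decomposition $U_{n,k}=\bigcup_j F_{n,k,j}$ together with the value $1$ on $X\setminus U_{n,k}$, which automatically turns pointwise convergence to $\bf{0}$ into the $\gamma$-cover condition $x\in U_{n,k(n)}$ eventually, while the increasing chain in $j$ is what makes $B_n$ approach $\bf{0}$ on every finite set.
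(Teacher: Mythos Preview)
Your argument is correct and follows the same overall strategy as the paper---use the Bonanzinga--Cammaroto--Matveev characterization $S_1(\Omega^{\omega}_{cz},\Gamma)$ as the bridge and translate between cozero $\omega$-covers of $X$ and countable sets accumulating at $\bf 0$ in $C_p(X)$---but your execution differs from the paper in both directions. For $(1)\Rightarrow(2)$ the paper simply cites \cite[Theorem~63]{bcm}, whereas you give the direct proof; your argument is fine, though you should mention the routine case split (as in the proof of Proposition~\ref{pr1}) when $X\in\mathcal{U}_n$ for infinitely many $n$, since the paper's covers are required to be nontrivial. For $(2)\Rightarrow(1)$ the paper's argument, as written, starts from arbitrary open $\omega$-covers and builds the uncountable sets $A_n=\{f\in C(X): f\upharpoonright(X\setminus U)=0$ for some $U\in\mathcal{U}_n\}$, to which hypothesis $(2)$ (which speaks only of \emph{countable} $\Omega^{\omega}_{\bf 0}$-sets) does not literally apply; your version avoids this by working with countable cozero $\omega$-covers and using the increasing zero-set decomposition $U_{n,k}=\bigcup_j F_{n,k,j}$ to manufacture genuinely countable $B_n\in\Omega^{\omega}_{\bf 0}$. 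That device---the same one used in the proofs of Theorem~\ref{th144} and Proposition~\ref{pr1}---is exactly what is needed to make the argument go through, so your proof is in fact the more careful of the two.
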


\begin{proof}$(1)\Rightarrow(2)$. By Theorem 63 in
\cite{bcm}.

$(2)\Rightarrow(1)$. Let $(\mathcal{U}_n : n\in \mathbb{N})$ be a
sequence of open $\omega$-covers of $X$. We set $A_n=\{f\in C(X):
f\upharpoonright (X\setminus U)=0$ for some $U\in \mathcal{U}_n
\}$. It is not difficult to see that each $A_n$ is dense in $C(X)$
since each $\mathcal{U}_n$ is an $\omega$-cover of $X$ and $X$ is
Tychonoff. Let $f$ be the constant function to 1. By the
assumption there exist $f_n\in A_n$ such that $f_n \mapsto f$
($n\mapsto \infty$).

 For each $f_n$ we
take $U_n\in \mathcal{U}_n$ such that
$f_n\upharpoonright(X\setminus U_n)=0$.

 Set $\mathcal{U}=\{ U_n : n\in \mathbb{N}\}$. For each finite subset
$\{x_1,...,x_k\}$ of $X$ we consider the basic open neighborhood
of $f$ $[x_1,...,x_k; W,..., W]$, where $W=(0,2)$.

 Note that there is $n'\in \mathbb{N}$ such that
$[x_1,...,x_k; W,..., W]$ contains $f_n$ for $n>n'$. This means
$\{x_1,...,x_k\}\subset U_n$ for $n>n'$. Consequently
$\mathcal{U}$ is an $\gamma$-cover of $X$.

\end{proof}

\begin{theorem}\label{th37} Let $X$ be a space with a coarser second countable topology. The
following assertions are equivalent:

\begin{enumerate}

\item $C_p(X)$ satisfies
$S_{1}(\mathcal{D}^{\omega},\mathcal{S})$;

\item $C_p(X)$ is strongly sequentially separable;

 \item $X$ satisfies
$S_{1}(\Omega^{\omega}_{cz}, \Gamma)$;

\item $C_p(X)$ satisfies $S_{1}(\Omega^{\omega}_{\bf
0},\Gamma_{\bf 0})$;

\item $C_p(X)$ satisfies $S_{1}(\mathcal{D}^{\omega},\Gamma_{\bf
0})$;

\item $X$ is projectively $S_{1}(\Omega, \Gamma)$ $[$ projectively
Gerlits-Nagy $]$.

\end{enumerate}
\end{theorem}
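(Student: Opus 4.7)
Plan. The equivalences $(3) \Leftrightarrow (4) \Leftrightarrow (6)$ are given by the two theorems immediately above (the Bonanzinga--Cammaroto--Matveev characterization and its $C_p(X)$ translation). My task is therefore to attach $(1)$, $(2)$, and $(5)$ to this equivalence class, and I would close the cycle $(1) \Rightarrow (2) \Rightarrow (6) \Rightarrow (5) \Rightarrow (1)$. Noble's Theorem \ref{th31} supplies a countable dense subset of $C_p(X)$ throughout, which is the essential use of the coarser second countable hypothesis.

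$(1) \Rightarrow (2)$ is immediate: apply $S_{1}(\mathcal{D}^{\omega}, \mathcal{S})$ to the constant sequence $A_n \equiv A$ for any countable dense $A \subset C_p(X)$; the resulting sequentially dense selection is contained in $A$, forcing $A$ itself to be sequentially dense. For $(6) \Rightarrow (5)$, given countable dense $(A_n)_{n}$ in $C_p(X)$, the family $\mathcal{U}_n = \{g^{-1}((-1/n, 1/n)) : g \in A_n\}$ is a countable cozero $\omega$-cover of $X$ by density; applying the covering characterization $S_{1}(\Omega^{\omega}_{cz}, \Gamma)$ of $(3)$ selects $g_n \in A_n$ whose preimages form a $\gamma$-cover, so $g_n \to \mathbf{0}$ pointwise, witnessing $\{g_n\} \in \Gamma_{\mathbf{0}}$.

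For $(2) \Rightarrow (6)$ I would work through the covering characterization $(3)$. Given a countable cozero $\omega$-cover $\mathcal{U}$ of $X$, Tychonoff-ness provides, for each $U \in \mathcal{U}$, each zero-set $F \subset U$, and each $q \in \mathbb{Q}$, a continuous function equal to $q$ on $F$ and to $1$ on $X \setminus U$. Adjoining a fixed countable dense $D \subset C_p(X)$ and using the $\omega$-cover property yield a countable dense set $A_{\mathcal{U}} \subset C_p(X)$ in which each tagged element carries a witness $U \in \mathcal{U}$. By $(2)$, $A_{\mathcal{U}}$ is sequentially dense, so some sequence from $A_{\mathcal{U}}$ converges to $\mathbf{0}$; reading off the witnesses delivers a $\gamma$-subcover. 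The sequence-of-covers version is then obtained by iterating this construction across $n$ and diagonalizing inside the separable $C_p(X)$.

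For $(5) \Rightarrow (1)$, fix a countable dense $\{c_m\}_{m \in \mathbb{N}} \subset C_p(X)$ and partition $\mathbb{N} = \bigsqcup_{m} N_m$ into infinite pieces. For each $m$, apply $(5)$ to the shifted family $(A_n - c_m)_{n \in N_m}$ to obtain $h_n \in A_n - c_m$ with $h_n \to \mathbf{0}$, and set $f_n := h_n + c_m \in A_n$ for $n \in N_m$, so that $(f_n)_{n \in N_m} \to c_m$. Then $\{c_m\} \subseteq [\{f_n\}]_{seq}$; combined with $(2)$, which gives that $\{c_m\}$ is already sequentially dense, a pointwise diagonal in $C_p(X)$ yields $\{f_n\} \in \mathcal{S}$. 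The main obstacle is the dense-set construction in $(2) \Rightarrow (6)$ and the final diagonal in $(5) \Rightarrow (1)$: both are delicate because $C_p(X)$ need not be first countable, so one cannot na\"ively diagonalize against a countable neighborhood base at $\mathbf{0}$. The coarser second countable hypothesis on $X$ is essential precisely here, since via Noble it supplies the countable dense subsets of $C_p(X)$ that drive these extraction and diagonal arguments.
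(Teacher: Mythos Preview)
The paper itself offers no explicit proof of this theorem; it is stated immediately after the Bonanzinga--Cammaroto--Matveev characterization and the short $C_p$ translation, and the reader is evidently meant to fill in the details by analogy with Theorem~\ref{th1} and with the later Theorem~\ref{th17} (whose proof, for the $S_1(\Omega,\Omega)$ case, is written out in full). Your overall strategy --- take $(3)\Leftrightarrow(4)\Leftrightarrow(6)$ as given and close the loop $(1)\Rightarrow(2)\Rightarrow(6)\Rightarrow(5)\Rightarrow(1)$ --- is exactly the right shape. The steps $(1)\Rightarrow(2)$ and $(6)\Rightarrow(5)$ are fine (for the latter you should note, as in the proof of Proposition~\ref{pr1}, the trivial case where $X\in\mathcal{U}_n$ infinitely often). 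Two of your steps, however, do not work as written.

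\medskip
\textbf{The step $(2)\Rightarrow(6)$.} Your set $A_{\mathcal{U}}$ is the union of the ``tagged'' functions $f_{U,F,q}$ (constant $q$ on $F$, value $1$ off $U$) together with a fixed countable dense $D$. The problem is that the tagged functions alone are \emph{not} dense: given a basic neighbourhood $[x_1,\dots,x_k;W_1,\dots,W_k]$ with $\bigcap_i W_i=\emptyset$, no function that is constant on a zero-set containing $\{x_1,\dots,x_k\}$ can enter it. So the density of $A_{\mathcal{U}}$ comes entirely from $D$, and by $(2)$ the sequence you extract converging to $\mathbf{0}$ may lie entirely in $D$ --- in which case there are no witnesses to read off. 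The fix is the construction used in the proof of Theorem~\ref{th17}: replace the constant $q$ by the $i$-th member $h_i$ of a fixed countable dense subset $\{h_i\}$ of $C_p(X)$, i.e.\ take $f^{m}_{i}$ with $f^{m}_{i}\upharpoonright(X\setminus U_m)=1$ and $f^{m}_{i}\upharpoonright F^{m}_{i}=h_i$. This set is countable, genuinely dense (for any basic neighbourhood, first find $U_m$ covering the finite set, then $i$ large enough that both $F^{m}_{i}$ covers it and $h_i$ lies in the neighbourhood), and every element carries a witness. After this repair, the single-cover version already suffices by condition~(5) of the BCM theorem, so no further ``iteration and diagonalization'' is needed.

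\medskip
\textbf{The step $(5)\Rightarrow(1)$.} Your final move, the ``pointwise diagonal'', fails: in $C_p(X)$ the sequential closure operator is not idempotent, and there is no countable neighbourhood base at $\mathbf{0}$ against which to diagonalize. Having $\{c_m\}\subseteq[\{f_n\}]_{\mathrm{seq}}$ and $\{c_m\}$ sequentially dense does \emph{not} by itself give $\{f_n\}$ sequentially dense. The correct conclusion is much simpler and uses $(2)$ in a different way. First observe (and you should make this explicit) that $(5)\Rightarrow(2)$ directly: apply $(5)$ to the constant sequence $A$ and translate. Now, from $\{c_m\}\subseteq[\{f_n\}]_{\mathrm{seq}}\subseteq\overline{\{f_n\}}$ and the density of $\{c_m\}$ you get that $\{f_n\}$ is a countable \emph{dense} subset of $C_p(X)$; then $(2)$ immediately gives $\{f_n\}\in\mathcal{S}$. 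No diagonal argument is required.
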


Recall that $l^{*}(X)\leq \aleph_0$ ($X$ is called an
$\epsilon$-space) if all finite powers of $X$ are
Lindel$\ddot{e}$of (or, by Proposition in \cite{gn}, if every
$\omega$-cover of $X$ contains an at most countable
$\omega$-subcover of $X$).

\begin{proposition}(Proposition 55 in \cite{bcm})

\begin{enumerate}

\item  A space has property $S_1(\Omega,\Gamma)$ iff it is an
$\epsilon$-space and projectively $S_1(\Omega,\Gamma)$ \cite{koc}.

\item Every projectively $S_1(\Omega,\Gamma)$ space is
zero-dimensional.

\item Every space of cardinality less than $\mathfrak{p}$ is
projectively $S_1(\Omega,\Gamma)$.

\item The projectively $S_1(\Omega,\Gamma)$ property is preserved
by continuous images.

\end{enumerate}

\end{proposition}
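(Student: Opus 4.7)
The plan is to handle the four parts separately, exploiting the equivalence from the previous theorem that $X$ is projectively $S_1(\Omega,\Gamma)$ if and only if $X$ satisfies $S_1(\Omega^{\omega}_{cz},\Gamma)$.

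Part (1) is the substantive content. The forward direction is quick: if $X$ satisfies $S_1(\Omega,\Gamma)$, then feeding the constant sequence $(\mathcal{U},\mathcal{U},\dots)$ into the selection principle shows that every open $\omega$-cover has a countable $\omega$-subcover, so $X$ is an $\epsilon$-space by the Gerlits--Nagy characterization cited above; also $S_1(\Omega,\Gamma)$ is preserved under continuous images, since open $\omega$-covers pull back to open $\omega$-covers and $\gamma$-covers push forward along surjections, so every second countable continuous image inherits the property. For the converse, given a sequence $(\mathcal{U}_n)_{n\in\mathbb{N}}$ of open $\omega$-covers of $X$, the Tychonoff property supplies a cozero $\omega$-refinement $\mathcal{V}_n$ of each $\mathcal{U}_n$: for every finite $F\subset X$ and every $U\in\mathcal{U}_n$ with $F\subset U$, Urysohn produces a cozero set $V\subset U$ containing $F$. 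The $\epsilon$-property then extracts a countable $\omega$-subcover $\mathcal{V}'_n\subset\mathcal{V}_n$, and $S_1(\Omega^{\omega}_{cz},\Gamma)$ applied to $(\mathcal{V}'_n)_n$ produces cozero sets $V_n$ whose collection is a $\gamma$-cover. Recording parents, each $V_n$ lies in some $U_n\in\mathcal{U}_n$, and $\{U_n:n\in\mathbb{N}\}$ is the required $\gamma$-cover drawn from the original covers.

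Parts (2)--(4) follow by short general arguments. For (2), the inclusions $\Omega\subset\mathcal{O}$ and $\Gamma\subset\mathcal{O}$ yield $S_1(\Omega,\Gamma)\Rightarrow S_1(\mathcal{O},\mathcal{O})$, so projectively $S_1(\Omega,\Gamma)$ implies projectively Rothberger, whence zero-dimensionality by Proposition~\ref{pro1}(2). For (3), any second countable continuous image of $X$ is a separable metric space of cardinality at most $|X|<\mathfrak{p}$, and classically every such space is a $\gamma$-set, i.e.\ satisfies $S_1(\Omega,\Gamma)$. For (4), given $g\colon X\to Y$ continuous surjective and a second countable continuous image $Z=h(Y)$ of $Y$, the composition $h\circ g\colon X\to Z$ exhibits $Z$ as a second countable continuous image of $X$, which by hypothesis satisfies $S_1(\Omega,\Gamma)$.

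The only step requiring care is the bookkeeping in part (1): the cozero refinement must be indexed so that every selected $V_n$ points back to a definite parent in $\mathcal{U}_n$, so that the $\gamma$-selection delivered by the projective hypothesis lifts unambiguously to a selection from the $\mathcal{U}_n$. Everything else is a direct appeal to previously stated equivalences or standard covering manipulations.
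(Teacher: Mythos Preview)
The paper does not supply a proof of this proposition; it is simply quoted from \cite{bcm} (with part (1) further attributed to \cite{koc}). Your argument is along the standard lines one would expect from those references and is essentially correct, but there is one genuine slip in part (2).

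You claim that the inclusions $\Omega\subset\mathcal{O}$ and $\Gamma\subset\mathcal{O}$ yield $S_1(\Omega,\Gamma)\Rightarrow S_1(\mathcal{O},\mathcal{O})$. They do not. Monotonicity in the second coordinate gives $S_1(\Omega,\Gamma)\Rightarrow S_1(\Omega,\mathcal{O})$, but in the first coordinate the implication runs the other way: from $\Omega\subset\mathcal{O}$ one gets $S_1(\mathcal{O},\mathcal{O})\Rightarrow S_1(\Omega,\mathcal{O})$, not the reverse. The implication $S_1(\Omega,\Gamma)\Rightarrow S_1(\mathcal{O},\mathcal{O})$ is of course true---it is the path down the Scheepers Diagram (Fig.~1)---but it requires a real argument (for instance, the Gerlits--Nagy equivalence of $S_1(\Omega,\Gamma)$ with the $\gamma$-property together with the standard proof that $\gamma$-sets are Rothberger, or the usual device of partitioning $\mathbb{N}$ and passing to $\omega$-covers by finite unions). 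Once you invoke the Diagram or supply that step, your deduction of zero-dimensionality via Proposition~\ref{pro1}(2) is correct. Parts (1), (3) and (4) are fine as written.
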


\begin{proposition} Let $X$ be a space with a coarser second countable topology.  A space $X$ is an $\epsilon$-space iff each dense subset of
$C_p(X)$ consists a countable dense subset of $C_p(X)$.
\end{proposition}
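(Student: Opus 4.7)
The plan is to mirror the $1$-dense/Lindel\"{o}f equivalence of Theorem \ref{th32}, but working with full density and open $\omega$-covers. Because $X$ has a coarser second countable topology, Noble's Theorem \ref{th31} gives a countable dense set $\{f_n : n\in\mathbb{N}\}\subseteq C_p(X)$, which serves as the backbone for approximation in the forward direction.

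For the forward direction, assume $X$ is an $\epsilon$-space and let $B\subseteq C_p(X)$ be dense. For each $(n,m)\in\mathbb{N}^2$ and each $g\in B$, define
\[ W_{n,m}(g)=\{x\in X:|g(x)-f_n(x)|<1/m\}, \]
which is open since $g,f_n\in C(X)$. Density of $B$ implies that for every finite $F\subseteq X$ there is $g\in B$ with $F\subseteq W_{n,m}(g)$, so $\{W_{n,m}(g):g\in B\}$ is an open $\omega$-cover of $X$. By the $\epsilon$-space property, extract a countable $\omega$-subcover $\{W_{n,m}(g_{n,m,i}):i\in\mathbb{N}\}$ and put $A=\{g_{n,m,i}:n,m,i\in\mathbb{N}\}\subseteq B$. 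To see $A$ is dense, given $h\in C_p(X)$, a finite $F\subseteq X$, and $\epsilon>0$, pick $n$ with $|f_n-h|<\epsilon/2$ on $F$ (using density of $\{f_n\}$), choose $m$ with $1/m<\epsilon/2$, and then $i$ so that $F\subseteq W_{n,m}(g_{n,m,i})$; the triangle inequality yields $|g_{n,m,i}-h|<\epsilon$ on $F$.

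For the converse, assume every dense subset of $C_p(X)$ contains a countable dense subset, and let $\mathcal{U}$ be an open $\omega$-cover of $X$. Set
\[ B=\{f\in C(X):f\upharpoonright(X\setminus U)\equiv 0\text{ for some }U\in\mathcal{U}\}. \]
Density of $B$ follows from the Tychonoff property: given $h\in C(X)$, a finite $F=\{x_1,\dots,x_k\}$ with $F\subseteq U$ for some $U\in\mathcal{U}$, and $\epsilon>0$, use Urysohn-type bumps $\phi_i:X\to[0,1]$ with $\phi_i(x_i)=1$, $\phi_i(x_j)=0$ for $j\neq i$, and $\phi_i\equiv 0$ on $X\setminus U$; then $f=\sum_{i=1}^{k}h(x_i)\phi_i\in B$ agrees with $h$ on $F$. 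By hypothesis, fix a countable dense $A\subseteq B$, and for each $f\in A$ choose $U_f\in\mathcal{U}$ with $f\equiv 0$ off $U_f$. The countable family $\beta=\{U_f:f\in A\}\subseteq\mathcal{U}$ is an $\omega$-subcover: given any finite $F\subseteq X$, applying density of $A$ to the constant function $\mathbf{1}$ with tolerance $1/2$ on $F$ produces $f\in A$ with $f(x)>0$ for every $x\in F$, which forces $F\subseteq U_f\in\beta$.

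The only step needing real care is the forward direction, namely checking that $\{W_{n,m}(g):g\in B\}$ is honestly an $\omega$-cover (so the $\epsilon$-space hypothesis applies) and keeping the triple indexing $(n,m,i)$ straight in the triangle-inequality verification of density of $A$; the rest is a routine Tychonoff/Urysohn construction of functions supported in a prescribed open set.
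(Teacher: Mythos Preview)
Your proof is correct. The converse direction is essentially the paper's argument (in fact your write-up is more complete: the paper builds the dense set $A$ from the $\omega$-cover and stops, leaving the extraction of the countable $\omega$-subcover implicit).

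The forward direction, however, takes a genuinely different route. The paper invokes the Arhangel'skii--Pytkeev theorem: since $X$ is an $\epsilon$-space, $C_p(X)$ has countable tightness, so for each member $s$ of the Noble countable dense set $S$ one picks a countable $D_s\subseteq D$ with $s\in\overline{D_s}$, and $P=\bigcup_{s\in S}D_s$ is the desired countable dense subset. Your argument bypasses Arhangel'skii--Pytkeev entirely: you manufacture, for each pair $(n,m)$, the open $\omega$-cover $\{W_{n,m}(g):g\in B\}$ directly from density of $B$, apply the $\epsilon$-space hypothesis to thin it to a countable $\omega$-subcover, and assemble $A$ from the selected $g$'s, with density recovered by a triangle-inequality estimate against the Noble witnesses $f_n$. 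This is more elementary and self-contained; the paper's route is shorter once one accepts the tightness theorem as a black box. One cosmetic point: under the paper's convention that covers are nontrivial ($X\notin\mathcal U$), you should note that if some $W_{n,m}(g)=X$ you simply take that single $g$ for the pair $(n,m)$ and skip the $\epsilon$-space step --- a harmless case split.
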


\begin{proof} $(\Rightarrow)$. Let $X$ be an $\epsilon$-space, $D$ be a dense subset of $C_p(X)$. By the Noble's Theorem \ref{th31}, there is
a countable dense subset $S=\{s_i: i\in \mathbb{N}\}$ of $C_p(X)$.
By the Arhangel'skii-Pytkeev Theorem in \cite{arh}, $t(C_p(X))\leq
\aleph_0$. For every $s\in S$ there exists $D_s\subseteq D$ such
that $|D_s|=\aleph_0$ and $s\in \overline{D_s}$. A set
$P=\bigcup\limits_{s\in S} D_s$. Then $|P|=\aleph_0$, $P\subseteq
D$ and $\overline{P}=C_p(X)$.

$(\Leftarrow)$. Let $\mathcal{V}$ be a $\omega$-cover of $X$.
Consider a set $A_{V,K}=\{f\in C(X): f(X\setminus V)\subseteq
\{0\}$ and $f(k)=q_k$ where $k\in K$ and $q_k\in \mathbb{Q} \}$
where $V\in \mathcal{V}$, $K\in [X]^{<\omega}$ and $K\subset V$.
Then $A=\bigcup \{A_{V,K} : V\in \mathcal{V}$, $K\in
[X]^{<\omega}$ and $K\subset V\}$ is a dense subset of $C_p(X)$.

\end{proof}

\begin{proposition}

\begin{enumerate}

\item  A space $C_p(X)$ is strongly sequentially dense and
separable iff it is strongly sequentially separable and each dense
subset of $C_p(X)$ consists a countable dense subset of $C_p(X)$.

\item If $C_p(X)$ is strongly sequentially separable, then $X$ is
zero-dimensional.

\item If a space $X$ of cardinality less than $\mathfrak{p}$, then
$C_p(X)$ is strongly sequentially separable.

\item  If $f:X\mapsto Y$ is a continuous mapping from a Tychonoff
space $X$ onto a Tychonoff space $Y$ with a coarser second
countable topology and $C_p(X)$ is strongly sequentially
separable, then $C_p(Y)$ is strongly sequentially separable.

\end{enumerate}

\end{proposition}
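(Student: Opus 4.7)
The plan is to reduce each of the four parts to results already established in the excerpt. The key bridges are Theorem~\ref{th37}, which identifies strong sequential separability of $C_p(X)$ with $X$ being projectively $S_1(\Omega,\Gamma)$ (under a coarser second countable topology on $X$), the immediately preceding Proposition characterizing the $\epsilon$-space property of $X$ via dense subsets of $C_p(X)$, Theorem~\ref{th1} which links $S_1(\mathcal{D},\mathcal{S})$ on $C_p(X)$ with $S_1(\Omega,\Gamma)$ on $X$, and the preceding Proposition on projectively $S_1(\Omega,\Gamma)$.

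For part (1), I would start from the splitting \emph{$X$ satisfies $S_1(\Omega,\Gamma)$ iff $X$ is an $\epsilon$-space and $X$ is projectively $S_1(\Omega,\Gamma)$}, recorded as item (1) of the preceding Proposition. Each of the three ingredients has a $C_p$-translation: the left-hand side becomes the $S_1(\mathcal{D},\mathcal{S})$-property of $C_p(X)$ (i.e.\ every dense subset contains a countable sequentially dense subset) via Theorem~\ref{th1}; the projectivity condition becomes strong sequential separability of $C_p(X)$ via Theorem~\ref{th37}; and the $\epsilon$-space condition becomes the statement that every dense subset of $C_p(X)$ contains a countable dense subset, by the Proposition just preceding the one in question. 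Chaining these three equivalences yields (1). The only side-issue is to ensure that $X$ carries a coarser second countable topology whenever the translation results are invoked, which is automatic here because both sides imply $C_p(X)$ is separable and thus, by Noble's Theorem~\ref{th31}, $X$ condenses onto a second countable space.

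For part (2), Theorem~\ref{th37} converts the hypothesis ``$C_p(X)$ is strongly sequentially separable'' into ``$X$ is projectively $S_1(\Omega,\Gamma)$'', and item (2) of the preceding Proposition then immediately delivers zero-dimensionality of $X$. For part (3), item (3) of the preceding Proposition asserts that any $X$ with $|X|<\mathfrak{p}$ is projectively $S_1(\Omega,\Gamma)$, so provided $X$ carries a coarser second countable topology (which holds, for instance, whenever $X$ is countable Tychonoff, and otherwise must be supplied as a standing hypothesis), Theorem~\ref{th37} gives that $C_p(X)$ is strongly sequentially separable. For part (4), the property projectively $S_1(\Omega,\Gamma)$ is preserved by continuous images (item (4) of the preceding Proposition), so if $f:X\to Y$ is continuous and surjective and $C_p(X)$ is strongly sequentially separable, then $Y$ is projectively $S_1(\Omega,\Gamma)$; combined with the assumption that $Y$ admits a coarser second countable topology, Theorem~\ref{th37} concludes that $C_p(Y)$ is strongly sequentially separable.

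The main delicacy in this plan is bookkeeping the side-conditions about coarser second countable topologies that Theorem~\ref{th37} demands; once those are verified (or assumed in the statement, as in (4)), each part reduces to a direct transfer through the already-established $X$-to-$C_p(X)$ equivalences, with no new combinatorial argument required.
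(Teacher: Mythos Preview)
Your approach is correct and matches the paper's: the paper states this proposition without proof, immediately after Theorem~\ref{th37}, the $\epsilon$-space proposition, and the quoted Proposition~55 from \cite{bcm}, so the intended argument is precisely the item-by-item translation through those three bridges that you carry out. Your flagging of the coarser-second-countable side condition (needed to invoke Theorem~\ref{th37}, and automatic in (1), (2), (4) via Noble's Theorem~\ref{th31} but not obviously so in (3)) is apt; the paper leaves this implicit.
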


\medskip

By Theorem 6.1 in \cite{os1}, we have

\begin{proposition}\label{pr322} $(CH)$ There is a consistent example of  projectively $S_{1}(\Omega, \Gamma)$ space $X$ with a coarser second countable topology
 such that $X$ is not $S_{1}(\Omega, \Gamma)$.
\end{proposition}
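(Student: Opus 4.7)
The plan is to chain Theorem~\ref{th37} with the construction cited from Theorem~6.1 of \cite{os1}. Since the space we produce will have a coarser second countable topology by design, the equivalence $(2)\Leftrightarrow(6)$ in Theorem~\ref{th37} reduces the proposition to exhibiting, under $(CH)$, a Tychonoff space $X$ with a coarser second countable topology for which $C_{p}(X)$ is strongly sequentially separable while $X$ itself fails $S_{1}(\Omega,\Gamma)$.

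To construct such an $X$, I would follow the Luzin-style transfinite recursion of length $\omega_{1}$ carried out in \cite{os1}. Since $(CH)$ forces $\mathfrak{p}=\aleph_{1}=\mathfrak{c}$, the $\aleph_{1}$ many potential countable dense subsets of $C_{p}(X)$ and the $\aleph_{1}$ many potential open $\omega$-covers of $X$ can both be enumerated in order type $\omega_{1}$. At stage $\alpha<\omega_{1}$ one expands the current approximation $X_{\alpha}\subseteq\mathbb{R}$ by one carefully chosen point to (i) guarantee that every previously enumerated countable dense subset of $C_{p}(X_{\alpha})$ becomes sequentially dense at the targeted functions, using a pseudo-intersection argument that is available because $|\alpha|<\mathfrak{p}$, and (ii) diagonalize against the selection needed for Gerlits-Nagy, inserting a point that obstructs any countable selection from a previously listed $\omega$-cover from forming a $\gamma$-cover of $X$. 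The resulting $X\subseteq\mathbb{R}$ has cardinality $\aleph_{1}$, inherits the Euclidean topology as a coarser second countable topology, and satisfies both required properties simultaneously.

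The main obstacle is the two-sided bookkeeping. Strong sequential separability of $C_{p}(X)$ is a positive selection demand that pushes $X$ toward being small and well-approximable; failure of $S_{1}(\Omega,\Gamma)$ on the other hand requires a permanent obstruction, namely an $\omega$-cover from which no sequence of members forms a $\gamma$-cover. The two tasks must be interleaved without collapsing into inconsistency, and this is precisely where $(CH)$ is used: it provides both the length of the enumeration and the pseudo-intersections needed to close the sequential-density requirements at every stage. Once $X$ has been produced, Theorem~\ref{th37} translates the strong sequential separability of $C_{p}(X)$ into $X$ being projectively $S_{1}(\Omega,\Gamma)$, giving the desired consistent example.
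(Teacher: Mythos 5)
There is a genuine gap, and it is fatal to the construction as you describe it. You build $X$ as a subset of $\mathbb{R}$ obtained by a transfinite recursion and you let it ``inherit the Euclidean topology as a coarser second countable topology,'' i.e.\ your final space is a set of reals with its subspace topology. But for a space that is itself second countable, projectively $\mathcal{P}$ trivially implies $\mathcal{P}$: the identity map exhibits $X$ as a second countable continuous image of itself (the paper makes exactly this remark for $S_{1}(\Omega,\Omega)$, and Proposition 55 of \cite{bcm} gives the same conclusion via the $\epsilon$-space property, which every separable metrizable space has). Hence no subspace of $\mathbb{R}$ with the Euclidean topology can be projectively $S_{1}(\Omega,\Gamma)$ yet fail $S_{1}(\Omega,\Gamma)$; equivalently, for such $X$ strong sequential separability of $C_p(X)$ already forces $X$ to be a $\gamma$-set. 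So the ``two-sided bookkeeping'' you propose is not merely delicate: the two demands you are interleaving are contradictory for the class of spaces you are constructing in, and no amount of CH-length enumeration or pseudo-intersection closing can complete it. (A secondary problem, even setting this aside, is that defeating the countably many previously listed selections does not by itself witness failure of $S_{1}(\Omega,\Gamma)$; one needs a sequence of $\omega$-covers for which \emph{no} selection is a $\gamma$-cover, and in your setting those covers lie in the very same class the positive requirement must handle.)

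The point the example must exploit — and what the cited Theorem 6.1 of \cite{os1} actually does — is that the space carries a topology strictly finer than a second countable one. Under CH, Brendle's theorem \cite{br} gives a set of reals $Z$ of size $\mathfrak{c}$ satisfying $S_{1}(B_{\Omega},B_{\Gamma})$; one takes $Y=Z\cup(-Z)$ and retopologizes $Y$ as a subspace of the Sorgenfrey line. The Euclidean topology is the required coarser second countable topology, the Borel-cover $\gamma$-property of $Z$ yields $S_{1}(\Omega^{\omega}_{cz},\Gamma)$ for the Sorgenfrey-topologized space (hence projectively $S_{1}(\Omega,\Gamma)$, e.g.\ via Theorem~\ref{th37}), while $X^{2}$ is not Lindel\"{o}f because of the closed discrete antidiagonal, so $X$ is not an $\epsilon$-space and therefore not $S_{1}(\Omega,\Gamma)$. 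Your write-up is missing precisely this mechanism: the failure of the Gerlits--Nagy property must come from the finer (non-metrizable) topology destroying Lindel\"{o}fness of finite powers, not from a diagonalization inside a set of reals.
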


\medskip

\begin{proposition}\label{pr33} There is a projectively $S_{1}(\Omega, \Gamma)$ space
$X$ such that $X^2$ is not projectively $S_{1}(\Omega, \Gamma)$.
\end{proposition}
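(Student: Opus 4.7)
The plan is to reduce the problem to the classical phenomenon that the $\gamma$-property $S_{1}(\Omega,\Gamma)$ is not stable under squares. First, I would record the bridging observation that for any second countable space $Y$, the properties ``projectively $S_{1}(\Omega,\Gamma)$'' and ``$S_{1}(\Omega,\Gamma)$'' coincide: one direction is trivial since the identity $Y\to Y$ is a continuous map onto a second countable space, and the other uses that $S_{1}(\Omega,\Gamma)$ is preserved by continuous surjections -- given an $\omega$-cover $\mathcal{V}$ of the image, its preimage $\{f^{-1}(V):V\in\mathcal{V}\}$ is an $\omega$-cover of the domain, from which a $\gamma$-subsequence can be extracted and then pushed forward.

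Next, I would exhibit a $\gamma$-set $X\subseteq\mathbb{R}$ whose square $X^{2}$ is not a $\gamma$-set. Such a set is produced by a Galvin--Miller style transfinite induction (of length $\aleph_{1}$ under CH, or along $\mathfrak{p}$-type cardinals in weaker set-theoretic settings): enumerate in order type $\omega_{1}$ the relevant countable open $\omega$-covers of $\mathbb{R}$ and, in parallel, fix a single ``obstruction'' countable open $\omega$-cover $\mathcal{W}$ of $\mathbb{R}^{2}$; at stage $\alpha$ one adjoins a point $x_{\alpha}$ to $X$ that (i) respects all previously made $\gamma$-selections from the enumerated covers of $\mathbb{R}$, and (ii) spoils any candidate $\gamma$-selection from $\mathcal{W}$ by contributing a pair in $X^{2}$ that avoids infinitely many members of the proposed selection.

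Given such an $X$, the conclusion is immediate. Since $X$ is second countable and satisfies $S_{1}(\Omega,\Gamma)$, by the bridging observation $X$ is projectively $S_{1}(\Omega,\Gamma)$. On the other hand $X^{2}\subseteq \mathbb{R}^{2}\subseteq \mathbb{R}^{\omega}$ is a second countable continuous image of $X^{2}$ (via the identity) that fails $S_{1}(\Omega,\Gamma)$, so $X^{2}$ is not projectively $S_{1}(\Omega,\Gamma)$, as witnessed already by this one map into $\mathbb{R}^{\omega}$.

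The main obstacle is clearly the construction in the second step: the positive side (ensuring the selection principle for every countable $\omega$-cover of $X$) and the negative side (defeating it on $X^{2}$ against the fixed cover $\mathcal{W}$) must be woven together carefully so that the diagonalization choices at each stage do not interfere. The remainder of the argument is a formal rearrangement of definitions, using only continuous-image preservation of $S_{1}(\Omega,\Gamma)$ and the equivalence ``projective $\mathcal{P}$ $=$ $\mathcal{P}$'' on second countable spaces.
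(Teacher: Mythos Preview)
Your approach is correct in outline but differs from the paper's proof, which is a single sentence: it simply cites Example~58 of \cite{bcm} and stops. You instead propose to build the example from scratch by (i) observing that on second countable spaces ``projectively $S_{1}(\Omega,\Gamma)$'' and ``$S_{1}(\Omega,\Gamma)$'' coincide, and (ii) constructing, by a Galvin--Miller/Todor\v{c}evi\'c style transfinite recursion, a $\gamma$-set $X\subseteq\mathbb{R}$ whose square $X^{2}$ is not a $\gamma$-set. This has the merit of being self-contained and of exposing the set-theoretic price: note that the proposition as printed carries no extra axiom, whereas your route (and indeed any route through second countable spaces) inevitably needs one, since the existence of an uncountable $\gamma$-set of reals is independent of ZFC. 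Two practical cautions. First, the simultaneous diagonalization you sketch---forcing the $\gamma$-property on $X$ while defeating it on $X^{2}$ against a single fixed $\omega$-cover $\mathcal{W}$ of $\mathbb{R}^{2}$---is genuinely subtle; the standard literature usually produces two $\gamma$-sets with bad product (or bad union) and then passes to a single set, so you may find it cleaner to cite such a construction than to redo the induction. Second, since the paper's own argument is just a pointer to \cite{bcm}, your write-up would in effect be supplying the content that the paper outsources; make clear which hypothesis (CH, $\mathfrak{p}=\mathfrak{c}$, etc.) you are assuming.
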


\begin{proof} Example 58 in \cite{bcm}.
\end{proof}

Note that $S_{1}(\Omega, \Gamma)=S_{fin}(\Omega, \Gamma)$ (see
\cite{jmss}). It follows that the projectively $S_{1}(\Omega,
\Gamma)$ property coincides with the projectively $S_{fin}(\Omega,
\Gamma)$ property.

By Theorem 63 in \cite{bcm} and Theorem \ref{th37},

\begin{proposition} If $C_p(X)$ is strongly sequentially separable, then all countable subspaces of $C_p(X)$ are strictly Fr$\acute{e}$chet-Urysohn.
\end{proposition}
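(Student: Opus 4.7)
The plan is to chain together two results already in the paper: Theorem \ref{th37} supplies the translation from strong sequential separability of $C_p(X)$ to a projective covering property of $X$, and Theorem 63 of \cite{bcm} then translates that projective property back into a tightness-type statement about countable subspaces of $C_p(X)$.

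First I would verify that the hypothesis of Theorem \ref{th37} is available. Strong sequential separability implies separability of $C_p(X)$, so by Noble's Theorem \ref{th31} the space $X$ has a coarser second countable topology. Hence condition (2) of Theorem \ref{th37} holds for $X$, and the equivalences of that theorem apply.

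Next I would apply Theorem \ref{th37}, specifically (2)$\Leftrightarrow$(6), to conclude that $X$ is projectively $S_{1}(\Omega,\Gamma)$, i.e., projectively Gerlits--Nagy. At this stage the hypothesis has been converted into a purely covering-theoretic statement about $X$, so we no longer need to argue inside $C_p(X)$ directly.

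Finally I would invoke Theorem 63 of \cite{bcm}, which asserts that whenever $X$ is projectively $S_{1}(\Omega,\Gamma)$, every countable subspace of $C_p(X)$ is strictly Fréchet--Urysohn. Composing the two implications gives the conclusion. There is no substantive obstacle; the only delicate point is to notice that strong sequential separability forces separability and thereby supplies the coarser second countable topology needed to invoke Theorem \ref{th37}, after which the result is simply the concatenation of two previously established equivalences.
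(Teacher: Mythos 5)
Your proposal is correct and follows essentially the same route as the paper, which derives the proposition precisely by combining Theorem \ref{th37} (strong sequential separability of $C_p(X)$ gives that $X$ is projectively $S_{1}(\Omega,\Gamma)$) with Theorem 63 of \cite{bcm}. Your additional remark that separability plus Noble's Theorem \ref{th31} supplies the coarser second countable topology needed to invoke Theorem \ref{th37} is a useful explicit justification of a step the paper leaves implicit.
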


\section{The projectively $S_{1}(\Omega, \Omega)$ property}

In \cite{sak} (Lemma, Theorem 1), M. Sakai proved:

\begin{theorem} $(Sakai)$ For each   space $X$ the
following are equivalent.
\begin{enumerate}

\item $C_p(X)$ satisfies $S_{1}(\Omega_{\bf 0},\Omega_{\bf 0})$.

\item $X^n$ satisfies $S_{1}(\mathcal{O},\mathcal{O})$ $(X^n$ has
Rothberger's property $C^{''})$ for each $n\in \mathbb{N}$.

\item $X$ satisfies $S_1(\Omega, \Omega)$.

\end{enumerate}

\end{theorem}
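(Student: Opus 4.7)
The plan is to establish the three-way equivalence by proving $(1)\Leftrightarrow(3)$ and $(2)\Leftrightarrow(3)$. The first equivalence is the substantive $C_p$-theoretic content, obtained via the correspondence $f \leftrightarrow f^{-1}((-\epsilon,\epsilon))$ between families accumulating at $\mathbf{0}$ in $C_p(X)$ and $\omega$-covers of $X$; the second is the standard combinatorial identification of $\omega$-covers of $X$ with open covers of its finite powers. For $(2)\Rightarrow(3)$ I would re-enumerate a sequence $(\mathcal{U}_k)$ of $\omega$-covers of $X$ as a doubly-indexed sequence $(\mathcal{U}_{n,k})_{n,k}$, form the open covers $\mathcal{U}_{n,k}^{(n)}=\{U^n : U\in \mathcal{U}_{n,k}\}$ of $X^n$ (each of which is indeed a cover, since $\mathcal{U}_{n,k}$ is an $\omega$-cover), apply $S_1(\mathcal{O},\mathcal{O})$ for each $X^n$ separately, and check that the collected selection is an $\omega$-cover of $X$: any finite $F\subset X$ of cardinality $m$ yields a point of $X^m$ covered by some selected $U_{m,k}^m$, so $F\subset U_{m,k}$. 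The converse $(3)\Rightarrow(2)$ requires the more delicate diagonal argument due to Sakai.

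For $(3)\Rightarrow(1)$, take a sequence $(A_n)$ in $\Omega_{\mathbf{0}}$ and set $\mathcal{U}_n=\{f^{-1}((-1/n,1/n)) : f\in A_n\}$. The hypothesis $\mathbf{0}\in \overline{A_n}$, read through basic neighborhoods $[F,(-1/n,1/n)]$ of $\mathbf{0}$, says exactly that each finite $F\subset X$ lies in some member of $\mathcal{U}_n$, so $\mathcal{U}_n\in \Omega$. Applying $S_1(\Omega,\Omega)$ yields $U_n\in \mathcal{U}_n$ with $\{U_n\}\in \Omega$; selecting $f_n\in A_n$ with $U_n=f_n^{-1}((-1/n,1/n))$, I claim $\{f_n\}\in \Omega_{\mathbf{0}}$. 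Given finite $F\subset X$ and $\epsilon>0$, pick $k_0$ with $1/k_0<\epsilon$: since $\{U_n\}$ is an $\omega$-cover of $X$ by proper subsets, infinitely many $n$ satisfy $F\subset U_n$, so some $n\geq k_0$ does, whence $|f_n(x)|<1/n\leq 1/k_0<\epsilon$ for every $x\in F$.

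For $(1)\Rightarrow(3)$, given open $\omega$-covers $(\mathcal{U}_n)$ of $X$, I exploit Tychonoff separation: for each $U\in \mathcal{U}_n$ and $F\in [U]^{<\omega}$, pick $f_{U,F}\in C(X)$ with $f_{U,F}(F)=\{0\}$ and $f_{U,F}(X\setminus U)=\{1\}$, and set $A_n=\{f_{U,F} : U\in \mathcal{U}_n,\ F\in [U]^{<\omega}\}$. Then $A_n\in \Omega_{\mathbf{0}}$, since any basic neighborhood $[F',(-\epsilon,\epsilon)]$ of $\mathbf{0}$ contains $f_{U,F'}$ for some $U\in \mathcal{U}_n$ with $F'\subset U$ (such a $U$ exists by the $\omega$-cover hypothesis). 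Applying $S_1(\Omega_{\mathbf{0}},\Omega_{\mathbf{0}})$ produces $f_n=f_{U_n,F_n}\in A_n$ with $\{f_n\}\in \Omega_{\mathbf{0}}$, and $\{U_n\}\in \Omega$ follows because for any finite $F\subset X$, some $f_n$ has $|f_n(x)|<1/2$ on $F$, and the fact that $f_n\equiv 1$ off $U_n$ then forces $F\subset U_n$.

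The main obstacle is the bookkeeping in $(3)\Rightarrow(1)$: each $A_n$ contributes only a single function $f_n$, yet $\{f_n\}$ must witness $\Omega_{\mathbf{0}}$ for arbitrarily small $\epsilon$. The resolution rests on the observation that any $\omega$-cover of $X$ by proper subsets automatically contains infinitely many members covering each prescribed finite set $F$ (seen by iteratively adding points outside previously chosen cover members), which lets us restrict to indices $n$ large enough that $1/n<\epsilon$ in the verification step.
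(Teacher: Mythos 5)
The paper itself does not prove this theorem --- it is quoted from Sakai's paper \cite{sak} --- so your attempt has to stand on its own as a complete argument, and as written it does not. The genuine gap is the implication $(3)\Rightarrow(2)$: you prove $(2)\Rightarrow(3)$, $(3)\Rightarrow(1)$ and $(1)\Rightarrow(3)$, but for the remaining direction you only say that it ``requires the more delicate diagonal argument due to Sakai.'' That direction is precisely the substantive content of the theorem (that the purely $X$-level hypothesis $S_1(\Omega,\Omega)$ forces the Rothberger property in \emph{all finite powers}), and citing Sakai for it inside a proof of Sakai's theorem is circular. What you have actually established is $(1)\Leftrightarrow(3)$ together with $(2)\Rightarrow(3)$, which is not the stated three-way equivalence. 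To close the cycle you would need, for example, the lemma that for an open $\omega$-cover $\mathcal{W}$ of $X^{k}$ the family $\{V\subseteq X \mbox{ open} : V^{k}\subseteq W \mbox{ for some } W\in\mathcal{W}\}$ is an $\omega$-cover of $X$ (so that $S_1(\Omega,\Omega)$ is preserved by finite powers), combined with the equivalence $S_1(\Omega,\mathcal{O})=S_1(\mathcal{O},\mathcal{O})$; neither of these steps is routine enough to be waved at.

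There is also a smaller, fixable omission in your $(3)\Rightarrow(1)$. The families $\mathcal{U}_n=\{f^{-1}((-1/n,1/n)) : f\in A_n\}$ may contain $X$ itself (some $f\in A_n$ may satisfy $|f|<1/n$ everywhere), in which case $\mathcal{U}_n$ is not an $\omega$-cover under the paper's convention that covers are nontrivial, and your later phrase ``an $\omega$-cover of $X$ by proper subsets'' is exactly where this unhandled case bites. The standard remedy is the case split the author uses in the analogous arguments of this paper (e.g.\ in Theorem~\ref{th144}, $(2)\Rightarrow(3)$, and Proposition~\ref{pr1}, $(2)\Rightarrow(4)$): if $X\in\mathcal{U}_n$ for infinitely many $n$, choose for those $n$ functions $g_n$ with $g_n^{-1}((-1/n,1/n))=X$, which already gives a set accumulating at $\mathbf{0}$; otherwise discard finitely many indices and argue on the tail, where all members are proper. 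Your other two implications ($(1)\Rightarrow(3)$ via the functions vanishing on $F$ and equal to $1$ off $U$, and $(2)\Rightarrow(3)$ via the covers $\{U^{n}:U\in\mathcal{U}_{n,k}\}$ of $X^{n}$) are correct.
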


 In (\cite{sch}, Theorem 13) M. Scheepers proved the following
 result.

\begin{theorem}$(Scheepers)$ \label{th21} For each separable metric space $X$, the
following are equivalent:

\begin{enumerate}

\item $C_p(X)$ satisfies  $S_{1}(\mathcal{D},\mathcal{D})$;

\item $X$ satisfies $S_{1}(\Omega, \Omega)$.

\end{enumerate}

\end{theorem}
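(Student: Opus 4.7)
The plan is to carry out the standard duality between open $\omega$-covers of the finite powers $X^k$ and dense subsets of $C_p(X)$, in the same spirit as Sakai's theorem quoted immediately above. The separable metric hypothesis enters only to guarantee that $C_p(X)$ is separable (Theorem \ref{th31}) and that all finite powers of $X$ are Lindel\"of, so that countable selections from countable (sub)covers suffice.

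For $(2)\Rightarrow(1)$, given an open $\omega$-cover sequence $(\mathcal{U}_n)$, I form $A_n=\{f\in C(X): \exists U\in\mathcal{U}_n,\, \exists F\in[U]^{<\omega},\, \exists \vec q\in\mathbb{Q}^{|F|},\, f|_F=\vec q \text{ and } f|_{X\setminus U}\equiv 0\}$. Since $\mathcal{U}_n$ is an $\omega$-cover and $X$ is Tychonoff, each $A_n$ is dense in $C_p(X)$. Applying $S_1(\mathcal{D},\mathcal{D})$ produces $f_n\in A_n$ with $\{f_n\}\in\mathcal{D}$; choose $U_n\in\mathcal{U}_n$ supporting $f_n$. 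Given any finite $F\subseteq X$, the basic neighborhood $[F,(1/2,3/2)]$ of the constant function $1$ must contain some $f_n$, which forces $F\subset U_n$; hence $\{U_n\}$ is an $\omega$-cover of $X$.

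For $(1)\Rightarrow(2)$, given dense subsets $A_n\subseteq C_p(X)$, I partition $\mathbb{N}$ into infinite blocks $N_{k,\vec q,\epsilon}$ indexed by $k\in\mathbb{N}$, rational tuples $\vec q\in\mathbb{Q}^k$, and rational $\epsilon>0$. For $n\in N_{k,\vec q,\epsilon}$ and $g\in A_n$ set $U_g^n=\{(x_1,\ldots,x_k)\in X^k: g(x_i)\in(q_i-\epsilon,q_i+\epsilon) \text{ for all } i\}$, and let $Y_k=\{\vec x\in X^k: x_i\neq x_j \text{ for } i\neq j\}$. Density of $A_n$ together with Tychonoff separation makes $\{U_g^n\cap Y_k: g\in A_n\}$ an $\omega$-cover of $Y_k$. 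By Sakai's theorem, $X^k$ satisfies $S_1(\Omega,\Omega)$, hence so does the open subspace $Y_k$; apply it within each block to select $g_n\in A_n$. A routine basic-neighborhood chase, using rational approximations to target values $f(x_i)$, then shows the pooled selection $\{g_n:n\in\mathbb{N}\}$ is dense in $C_p(X)$.

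The principal obstacle is in the $(1)\Rightarrow(2)$ direction: $\{U_g^n\}$ fails to $\omega$-cover all of $X^k$ because coinciding coordinates force conflicting value constraints on $g$, so one must restrict to the off-diagonal open set $Y_k$ and verify that open subspaces of a space satisfying $S_1(\Omega,\Omega)$ inherit it (any open $\omega$-cover of $Y_k$ extends to an open $\omega$-cover of $X^k$ by adjoining a neighborhood of the diagonal). The secondary bookkeeping---ensuring the block partition captures every basic open set in $C_p(X)$---is standard once one restricts attention to rational parameters and distinct sample points.
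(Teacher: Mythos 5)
A preliminary remark: your two implications are labelled backwards. The paragraph headed $(2)\Rightarrow(1)$ in fact derives the cover selection for $X$ from $S_{1}(\mathcal{D},\mathcal{D})$ of $C_p(X)$, i.e.\ it proves $(1)\Rightarrow(2)$, and the other paragraph proves $(2)\Rightarrow(1)$. The direction that builds the dense sets $A_n$ from the $\omega$-covers and reads off $U_n$ from a dense selection via the neighborhood $[F,(1/2,3/2)]$ of the constant function $1$ is correct and standard. (The paper itself offers no proof of this theorem — it is quoted from Scheepers — so only the correctness of your argument is at issue.)

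The genuine gap is in the direction that produces a dense selection from the dense sets $A_n$. Your key claim that $\{U^n_g\cap Y_k : g\in A_n\}$ is an $\omega$-cover of $Y_k$ is false: restricting to the off-diagonal removes value conflicts \emph{within} a single $k$-tuple, but not \emph{across} the tuples of a finite subset of $Y_k$. For $k=2$, $\vec q=(0,1)$, $\epsilon=1/4$ and distinct $a,b\in X$, the two points $(a,b)$ and $(b,a)$ of $Y_2$ lie in no common $U^n_g$, since that would force $g(a)$ and $g(b)$ to be within $1/4$ of both $0$ and $1$; so $S_1(\Omega,\Omega)$ of $Y_k$ cannot be applied to these families and the selection step collapses. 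Your parenthetical justification that $Y_k$ inherits $S_1(\Omega,\Omega)$ from $X^k$ by adjoining a neighborhood $W$ of the diagonal is also faulty: $\{V\cup W : V\in\mathcal{V}\}$ is indeed an $\omega$-cover of $X^k$, but a finite subset of $Y_k$ contained in a selected $V\cup W$ need not lie in $V$, because $W$ meets $Y_k$; the correct route is that $Y_k$ is open, hence $F_\sigma$, in the metric space $X^k$, and these properties are closed-hereditary and countably additive. Both defects are repairable: (i) note that the final density check tests each basic neighborhood at a single $k$-tuple of distinct points, so per block it suffices that the selected sets merely \emph{cover} $Y_k$, and the Rothberger property of $Y_k$ (which follows from (2), since $X^k$ is Rothberger and $Y_k$ is $F_\sigma$ in it) can legitimately be applied to the open covers $\{U^n_g\cap Y_k : g\in A_n\}$; or (ii) follow Scheepers' original device and index the blocks by tuples of pairwise disjoint basic open sets $B_1,\dots,B_k$ (this is where second countability is really used) together with $\vec q$ and $\epsilon$, working inside the box $B_1\times\cdots\times B_k$, where disjointness eliminates all collisions and density of $A_n$ genuinely yields an $\omega$-cover. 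As written, however, this direction of your argument does not go through.
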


\medskip

By Theorem 57 in \cite{bbm1}, \cite{sak} and Theorem \ref{th31},
we have

\begin{theorem}\label{th11} Let $X$ be a space with a coarser second countable topology. The
following assertions are equivalent:

\begin{enumerate}

\item $C_p(X)$ satisfies $S_{1}(\mathcal{D},\mathcal{D})$ $[$
$R$-separable $]$;

\item $C_p(X)$ satisfies $S_{1}(\Omega_{\bf 0},\Omega_{\bf 0})$;

\item $C_p(X)$ satisfies $S_{1}(\mathcal{D},\Omega_{\bf 0})$;

\item $X$ satisfies $S_{1}(\Omega, \Omega)$;

\item $X^n$ satisfies $S_{1}(\mathcal{O},\mathcal{O})$ for each
$n\in \mathbb{N}$.

\end{enumerate}

\end{theorem}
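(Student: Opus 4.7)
The strategy is to assemble the equivalence from three already-available pieces plus one homogeneity-plus-density gluing argument. Sakai's theorem, stated immediately above, delivers $(2)\Leftrightarrow(4)\Leftrightarrow(5)$ at once. Since $X$ carries a coarser second countable topology, Noble's Theorem \ref{th31} tells us $C_p(X)$ is separable; this places the hypotheses of Theorem 57 of \cite{bbm1} in force, and I would invoke it to obtain $(1)\Leftrightarrow(4)$. All that remains is to insert condition $(3)$ into the cycle, for which I plan to establish $(2)\Rightarrow(3)$ and $(3)\Rightarrow(1)$.

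The implication $(2)\Rightarrow(3)$ is essentially formal: given dense sets $(D_n)$, replace each by $D_n\setminus\{\mathbf{0}\}$, which is still dense in $C_p(X)$, so $D_n\in\Omega_{\mathbf{0}}$ and $(2)$ produces the desired selection $f_n\in D_n$ with $\{f_n\}\in\Omega_{\mathbf{0}}$. The symmetric argument also yields the painless $(1)\Rightarrow(3)$, which one gets for free along the way.

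The substantive step is $(3)\Rightarrow(1)$, where I plan to combine Noble's Theorem with the translation-homogeneity of $C_p(X)$. Using Theorem \ref{th31}, fix a countable dense set $S=\{s_i:i\in\mathbb{N}\}\subseteq C_p(X)$ and partition $\mathbb{N}$ into infinite subsets $(N_i:i\in\mathbb{N})$. The translation $f\mapsto f+s_i$ is a self-homeomorphism of $C_p(X)$ that preserves $\mathcal{D}$ and carries $\Omega_{\mathbf{0}}$ onto $\Omega_{s_i}$, so property $(3)$ at $\mathbf{0}$ transports to $S_1(\mathcal{D},\Omega_{s_i})$ at each $s_i$. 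Given dense sets $(D_n:n\in\mathbb{N})$, apply $S_1(\mathcal{D},\Omega_{s_i})$ to the subsequence $(D_n:n\in N_i)$ to extract $f_n\in D_n$ with $s_i\in\overline{\{f_n:n\in N_i\}}$. Concatenating over $i$, the set $\{f_n:n\in\mathbb{N}\}$ contains every $s_i$ in its closure, and density of $S$ in $C_p(X)$ forces $\{f_n:n\in\mathbb{N}\}$ itself to be dense, which is $(1)$.

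The main obstacle will be verifying that the translation argument really does lift the $\mathbf{0}$-based selection principle $(3)$ to each $\Omega_{s_i}$ cleanly, and that the selections made separately over each $N_i$ glue into a single dense sequence indexed by $\mathbb{N}$ without interference; once this is in place, the proof reduces to an easy bookkeeping check on the partition.
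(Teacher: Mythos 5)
Your proposal is correct and takes essentially the same route as the paper, whose entire proof is the one-line assembly ``by Theorem 57 in \cite{bbm1}, \cite{sak} and Theorem \ref{th31}'': you invoke exactly the same three results for $(1)\Leftrightarrow(4)$ and $(2)\Leftrightarrow(4)\Leftrightarrow(5)$. Your explicit insertion of condition $(3)$ — deleting $\mathbf{0}$ from dense sets for $(2)\Rightarrow(3)$ (legitimate since $C_p(X)$ has no isolated points) and the translation-homogeneity plus partition-of-indices argument for $(3)\Rightarrow(1)$ (legitimate since translations are homeomorphisms carrying $\Omega_{\bf 0}$ onto $\Omega_{s_i}$ and preserving $\mathcal{D}$, and the closure of the combined selection contains the countable dense set supplied by Noble's theorem) — is sound and merely writes out details the paper leaves to the cited sources.
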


\medskip

\begin{proposition}\label{pr1} The following conditions are equivalent for a
  space $X$:

\begin{enumerate}

\item $X$ is projectively $S_{1}(\Omega, \Omega)$;

\item $X$ satisfies $S_{1}(\Omega^{\omega}_{cz}, \Omega)$;

\item for every continuous mapping $f:X \mapsto
\mathbb{R}^{\omega}$, $f(X)$ is $S_{1}(\Omega, \Omega)$.

\item $C_p(X)$ satisfies $S_{1}(\Omega^{\omega}_{\bf
0},\Omega_{\bf 0})$;

\end{enumerate}

\end{proposition}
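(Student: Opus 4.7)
The plan is to close the cycle $(1)\Rightarrow(3)\Rightarrow(2)\Rightarrow(1)$ and then establish $(2)\Leftrightarrow(4)$, following the template of the Menger and Rothberger analogues in this paper and in \cite{bcm}. Implication $(1)\Rightarrow(3)$ is immediate since, for any continuous $f:X\to\mathbb{R}^{\omega}$, the image $f(X)$ is a second countable continuous image of $X$. For $(3)\Rightarrow(2)$, given a sequence $(\mathcal{U}_n)$ of countable cozero $\omega$-covers with $\mathcal{U}_n=\{U^n_i:i\in\mathbb{N}\}$, choose continuous $\psi^n_i:X\to[0,1]$ with $U^n_i=\{x:\psi^n_i(x)\neq 0\}$ and package them into a single continuous $\Psi:X\to\mathbb{R}^{\omega}$. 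Setting $Y=\Psi(X)$ and $V^n_i=\{y\in Y:y_{(n,i)}\neq 0\}$, each $\{V^n_i:i\in\mathbb{N}\}$ is an $\omega$-cover of $Y$ with $\Psi^{-1}(V^n_i)=U^n_i$, so the $S_{1}(\Omega,\Omega)$-selection on $Y$ supplied by $(3)$ pulls back to the desired selection on $X$.

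For $(2)\Rightarrow(1)$, a second countable Tychonoff image $Y=g(X)$ is metrizable, hence every open set of $Y$ is cozero; countable $\omega$-subcovers of $\omega$-covers of $Y$ pull back through $g$ to countable cozero $\omega$-covers of $X$ to which $(2)$ applies, and the resulting selection pushes forward. The equivalence $(2)\Leftrightarrow(4)$ is a Sakai-type correspondence restricted to countable families. For $(4)\Rightarrow(2)$, given countable cozero $\omega$-covers $\mathcal{U}_n=\{U^n_i:i\in\mathbb{N}\}$, write $U^n_i=\bigcup_{j}F^n_{i,j}$ with each $F^n_{i,j}$ a zero-set, fix an enumeration $\mathbb{Q}=\{q_k:k\in\mathbb{N}\}$, and for each triple $(i,j,k)$ use the functional separation of disjoint zero-sets in a Tychonoff space to produce $f^n_{i,j,k}\in C(X)$ with $f^n_{i,j,k}\equiv q_k$ on $F^n_{i,j}$ and $f^n_{i,j,k}\equiv 1$ on $X\setminus U^n_i$. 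The countable set $A_n=\{f^n_{i,j,k}:i,j,k\in\mathbb{N}\}$ lies in $\Omega^{\omega}_{\mathbf{0}}$: for a finite $F\subset X$ and $\epsilon>0$, choose $U^n_i\supset F$, then a zero-set $F^n_{i,j}$ with $F\subset F^n_{i,j}\subset U^n_i$, then $q_k$ with $|q_k|<\epsilon$. Applying $(4)$ yields $f_n\in A_n$ with $\{f_n:n\in\mathbb{N}\}\in\Omega_{\mathbf{0}}$; testing at $\epsilon=1/2$ shows the associated $U^n_{i(n)}$ form an $\omega$-cover of $X$, because $f_n\equiv 1$ on $X\setminus U^n_{i(n)}$ forces every witnessing finite $F$ to lie inside $U^n_{i(n)}$.

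For $(2)\Rightarrow(4)$, renumber the given sequence $(A_n)\subset\Omega^{\omega}_{\mathbf{0}}$ as $(A_{i,j})_{i,j\in\mathbb{N}}$ and form, for each pair, the countable family $\mathcal{U}_{i,j}=\{f^{-1}((-1/i,1/i)):f\in A_{i,j}\}$, which is a cozero $\omega$-cover of $X$ precisely because $A_{i,j}\in\Omega^{\omega}_{\mathbf{0}}$. For each fixed $i$, apply $(2)$ to the sequence $(\mathcal{U}_{i,j})_{j\in\mathbb{N}}$ to obtain $f_{i,j}\in A_{i,j}$ such that $\{f_{i,j}^{-1}((-1/i,1/i)):j\in\mathbb{N}\}$ is an $\omega$-cover of $X$. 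Then $\{f_{i,j}:i,j\in\mathbb{N}\}\in\Omega_{\mathbf{0}}$: for any basic neighborhood $[F,(-\epsilon,\epsilon)]$ of $\mathbf{0}$, pick $m$ with $1/m<\epsilon$ and use the $\omega$-cover at $i=m$ to find $j$ with $F\subset f_{m,j}^{-1}((-1/m,1/m))$, whence $f_{m,j}\in[F,(-\epsilon,\epsilon)]$.

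The main obstacle is the tolerance control in $(2)\Rightarrow(4)$: a single blanket application of $S_{1}(\Omega^{\omega}_{cz},\Omega)$ guarantees only that some selected set contains a prescribed finite $F$, without bounding the tolerance $1/i$ attached to it. Indexing $(A_n)$ by pairs $(i,j)$ and applying $(2)$ with $i$ fixed forces the tolerance to be $\leq 1/i$; diagonalizing across $i$ then delivers arbitrarily small tolerance. This is exactly the device used in the proof of $(2)\Rightarrow(3)$ of Theorem~\ref{th144}, and its $\omega$-cover analogue is what carries the argument here.
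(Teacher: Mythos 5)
Your proposal is correct in substance but organized differently from the paper's proof, and it is worth recording the comparison. The paper runs the cycle $(1)\Rightarrow(2)\Rightarrow(3)\Rightarrow(1)$: the diagonal product $h=\prod f_U$ into $\mathbb{R}^{\omega}$ is used for $(1)\Rightarrow(2)$, and the refinement-and-pullback argument through a separable metrizable image is used for $(2)\Rightarrow(3)$; you reverse the cycle, putting the product map into $(3)\Rightarrow(2)$ and the pullback into $(2)\Rightarrow(1)$ (where you need, and correctly use, that a second countable Tychonoff image is metrizable and an $\epsilon$-space, so open $\omega$-covers may be replaced by countable cozero $\omega$-subcovers). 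Both routes rest on the same two ideas, so nothing is gained or lost there. The more interesting divergence is in $(2)\Rightarrow(4)$: the paper treats the given sequence $(A_n)$ directly, discards a tail on which some $g^{-1}[(-1/n,1/n)]=X$, applies $(2)$ once to the remaining tail, and then extracts small tolerances from the single selected $\omega$-cover (implicitly using that in an $\omega$-cover with no member equal to $X$ every finite set lies in members of arbitrarily large index); you instead re-index $(A_n)$ as $(A_{i,j})$ and apply $(2)$ row by row with the tolerance $1/i$ frozen, exactly as in the proof of $(2)\Rightarrow(3)$ of Theorem~\ref{th144}. Your device makes the tolerance control explicit and avoids the unstated ``arbitrarily large index'' step, which is a small advantage in rigor; the paper's version applies the hypothesis only once per sequence, which is slightly more economical.

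One small point you should patch: under the paper's convention a cover is nontrivial ($X\notin\mathcal{U}$), so your claim that $\mathcal{U}_{i,j}=\{f^{-1}((-1/i,1/i)):f\in A_{i,j}\}$ lies in $\Omega^{\omega}_{cz}$ fails when some $f\in A_{i,j}$ satisfies $f^{-1}((-1/i,1/i))=X$, and then $(2)$ cannot be invoked for that pair. The paper handles the analogous degeneracy explicitly (the case ``$M=\{n: X\in\mathcal{U}_n\}$ is infinite''). The fix in your scheme is immediate: if in row $i$ some $A_{i,j}$ contains such an $f$, select that $f$ (it lies in every $[F,(-\epsilon,\epsilon)]$ with $\epsilon>1/i$) and select arbitrarily from the other $A_{i,j'}$; otherwise apply $(2)$ to the row as you do. With this case added, your diagonalization over $i$ goes through verbatim, and the rest of your argument (including $(4)\Rightarrow(2)$, where you should also say the zero-sets $F^n_{i,j}$ are taken increasing in $j$ so that a finite set inside $U^n_i$ lies in a single $F^n_{i,j}$) is sound.
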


\medskip

\begin{proof}
$(1)\Rightarrow(2)$. Let $(\mathcal{U}_n: n\in \mathbb{N})$ be a
sequence of countable $\omega$-covers of $X$ by cozero sets. For
every $n\in \mathbb{N}$ and every $U\in \mathcal{U}_n$ fix a
continuous function $f_{U}: X \mapsto \mathbb{R}$ such that
$U=f^{-1}_{U}[\mathbb{R}\setminus \{0\}]$. Put $h=\prod \{f_{U} :
U\in \mathcal{U}_n, n\in \mathbb{N}\}$. Then $h$ is a continuous
mapping from $X$ onto $h(X)\subset\mathbb{R}^{\omega}$, thus by
(1), $h(X)$ satisfies $S_{1}(\Omega, \Omega)$. Since
$(h(\mathcal{U}_n): n\in \mathbb{N})$ be a sequence of open
$\omega$-covers of $h(X)$ we get (2).

$(2)\Rightarrow(3)$. Let $f$ be a continuous mapping $f:X \mapsto
\mathbb{R}^{\omega}$, and let $(\mathcal{U}_n: n\in \mathbb{N})$
be a sequence of open $\omega$-covers of $f(X)$. Since $f(X)$ is
separable metrizable space, there is a refinement $\mathcal{V}_n$
of $\mathcal{U}_n$ that countable $\omega$-cover of $f(X)$ and
consists of cozero sets. Put $\mathcal{O}_n=\{f^{-1}(V): V\in
\mathcal{V}_n \}$. Then $\mathcal{O}_n$ is a countable
$\omega$-cover of $X$ by cozero sets. By (2), there is $H_n\in
\mathcal{O}_n$ such that $\{ H_n : n\in \mathbb{N}\}$ is a
countable $\omega$-cover of $X$ and consists of cozero sets. For
every $n\in \mathbb{N}$ pick $U_{H_n}\in \mathcal{U}_n$ such that
$U_{H_n}\supset f(H_n)$. Put $\mathcal{F}=\{U_{H_n} : n\in
\mathbb{N}\}$. Then $\mathcal{F}$ is an open $\omega$-cover of
$f(X)$. This proves that $f(X)$ satisfies $S_{1}(\Omega, \Omega)$.

$(3)\Rightarrow(1)$ follows from the fact that every second
countable space can be embedded into $\mathbb{R}^{\omega}$.

$(2)\Rightarrow(4)$.  Let $f\in \bigcap\limits_n\overline{A_n}$,
where $A_n$ is a countable subset of $C(X)$. Since $C(X)$ is
homogeneous, we may think that $f$ is the constant function to the
zero. We set $\mathcal{U}_n=\{g^{-1}[(-1/n, 1/n)] : g\in A_n\}$
for each $n\in \mathbb{N}$. For each $n\in \mathbb{N}$ and each
finite subset $\{x_1,..., x_k\}$ of $X$ a neighborhood
$[x_1,...,x_k; W,..., W]$ of $f$, where $W=(-1/n, 1/n)$, contains
some $g\in A_n$. This means that each $\mathcal{U}_n$ is a
countable cozero $\omega$-cover of $X$. In case the set $M=\{n\in
\mathbb{N}: X\in \mathcal{U}_n \}$ is infinite, choose $g_{m}\in
A_m$ $m\in M$ so that $g^{-1}(-1/m, 1/m)=X$, then $g_m \mapsto f$.
So we may assume that there exists $n\in \mathbb{N}$ such that for
each $m\geq n$ and $g\in A_m$ $g^{-1}[(-1/m, 1/m)]$ is not $X$.
For the sequence $\{\mathcal{U}_m : m\geq n\}$ of cozero
$\omega$-covers there exist $f_m\in A_m$ such that
$\mathcal{U}=\{f^{-1}_m[(-1/m,1/m)]: m>n\}$ is a $\omega$-cover of
$X$. Let $[x_1,...,x_k; W,...,W]$ be any basic open neighborhood
of $f$, where $W=(-\epsilon, \epsilon)$, $\epsilon>0$. There
exists $m\geq n$  such that $\{x_1,...,x_k\}\subset
f^{-1}_m[(-1/m, 1/m)]$ and $1/m<\epsilon$. This means $f\in
\overline{\{f_m : m\in \mathbb{N}\}}$.

$(4)\Rightarrow(2)$. Let $\{\mathcal{U}_n : n\in \mathbb{N}\}$ be
a sequence of countable cozero $\omega$-covers of $X$. Let
$\mathcal{U}_n=\{U_{n,m} : m\in\mathbb{N} \}$. Since $U_{n,m}$ is
cozero set, $U_{n,m}=\bigcup\limits_{i\in \mathbb{N}} F_{i}^{n,m}$
where $F_{i}^{n,m}$ is zero set of $X$ and $F_{i}^{n,m}\subset
F_{i+1}^{n,m}$ for each $i\in \mathbb{N}$.

We set $A_n=\{f_{i}^{n,m}\in C(X): f_i^{n,m}\upharpoonright
(X\setminus U_{n,m})=1$  and $f_{i}^{n,m}\upharpoonright
F_{i}^{n,m}=0$ for $ m,i\in\mathbb{N} \}$. It is not difficult to
see that $f_0\in \overline{A_n}$ ($f_0$ is the constant function
to the zero) for each $n\in \mathbb{N}$  since each
$\mathcal{U}_n$ is an $\omega$-cover of $X$ and $X$ is Tychonoff.

By the assumption, there exists $f_{i(n)}^{n,m(n)}\in A_n$ such
that $f_0\in \overline{\{f_{i(n)}^{n,m(n)} : n\in \mathbb{N}\}}$.

 For each $f_{i(n)}^{n,m(n)}$ we
take $U_{n,m(n)}\in \mathcal{U}_n$.  Set $\mathcal{U}=\{
U_{n,m(n)} : n\in \mathbb{N}\}$.

 For each finite subset
$\{x_1,...,x_k\}$ of $X$ we consider the basic open neighborhood
of $f_0$ $[x_1,...,x_k; W,..., W]$, where $W=(-1,1)$.

 Note that there is $n\in \mathbb{N}$ such that
$[x_1,...,x_k; W,..., W]$ contains $f_{i(n)}^{n,m(n)}$. This means
$\{x_1,...,x_k\}\subset U_{n,m(n)}$. Consequently $\mathcal{U}$ is
an $\omega$-cover of $X$.
\end{proof}

\begin{definition} A space $X$ is {\it $R_{\omega}$-separable} if for every sequence $(D_n:
n\in \mathbb{N})$ of countable dense subspaces of $X$ one can pick
$p_n\in D_n$ so that $\{p_n : n\in \mathbb{N}\}$ is dense in $X$,
i.e $X$ satisfies $S_{1}(\mathcal{D}^{\omega},\mathcal{D})$.

\end{definition}

\begin{theorem}\label{th17} For a space $X$ with a coarser second countable topology, the
following are equivalent:

\begin{enumerate}

\item $C_p(X)$ satisfies $S_{1}(\mathcal{D}^{\omega},\mathcal{D})$
$[$ $R_{\omega}$-separable $]$;

\item $X$ satisfies $S_{1}(\Omega^{\omega}_{cz}, \Omega)$;

\item $C_p(X)$ satisfies $S_{1}(\Omega^{\omega}_{\bf
0},\Omega_{\bf 0})$;

\item $C_p(X)$ satisfies $S_{1}(\mathcal{D}^{\omega}_{\bf
0},\Omega_{\bf 0})$;

\item $X$ is  projectively $S_{1}(\Omega, \Omega)$.

\end{enumerate}

\end{theorem}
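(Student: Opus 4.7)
The plan is to graft items (1) and (4) onto the equivalences $(2)\Leftrightarrow (3)\Leftrightarrow(5)$ already supplied by Proposition \ref{pr1} (whose items (2), (4), (1) match items (2), (3), (5) of the present theorem). The short step $(3)\Leftrightarrow(4)$ is formal: since $\Omega^{\omega}_{\bf{0}}\subseteq \mathcal{D}^{\omega}_{\bf{0}}$, (4) implies (3) immediately; conversely, for $A_n\in\mathcal{D}^{\omega}_{\bf{0}}$ the truncation $A'_n:=A_n\setminus\{\bf{0}\}$ still satisfies $\bf{0}\in\overline{A'_n}\setminus A'_n$ by $T_1$-ness, so $A'_n\in \Omega^{\omega}_{\bf{0}}$, and a selector furnished by (3) for $(A'_n)$ also works for $(A_n)$.

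For $(1)\Rightarrow(2)$, I first use Noble's Theorem \ref{th31} to fix a countable dense $Q=\{q_k:k\in\mathbb{N}\}\subseteq C_p(X)$. Given a sequence $(\mathcal{U}_n)$ of countable cozero $\omega$-covers of $X$, enumerate $\mathcal{U}_n=\{U^n_m:m\in\mathbb{N}\}$ and decompose each $U^n_m=\bigcup_i F^n_{m,i}$ as an increasing union of zero-sets. Since $F^n_{m,i}$ and $X\setminus U^n_m$ are disjoint zero-sets, they are functionally separated by a continuous $\chi^n_{m,i}\colon X\to[0,1]$ equal to $1$ on $F^n_{m,i}$ and $0$ off $U^n_m$. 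Put $f_{n,k,m,i}:=q_k\cdot \chi^n_{m,i}$ and $A_n:=\{f_{n,k,m,i}:k,m,i\in\mathbb{N}\}$. This countable set is dense in $C_p(X)$: given $g\in C_p(X)$, finite $F\subseteq X$, and $\epsilon>0$, density of $Q$ supplies $k$ with $|g-q_k|<\epsilon$ on $F$, the $\omega$-cover property supplies $m$ with $F\subseteq U^n_m$, and cofinality of $(F^n_{m,i})_i$ supplies $i$ with $F\subseteq F^n_{m,i}$, so $f_{n,k,m,i}\equiv q_k$ on $F$ and sits in the basic neighborhood of $g$. Applying (1) to $(A_n)$ returns $h_n=f_{n,k(n),m(n),i(n)}\in A_n$ with $\{h_n:n\in\mathbb{N}\}$ dense; set $U_n:=U^n_{m(n)}\in\mathcal{U}_n$. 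For any finite $F\subseteq X$ the open neighborhood $[F,(\frac{1}{2},\frac{3}{2})]$ of the constant function $\bf{1}$ is hit by some $h_n$; as $h_n$ vanishes on $X\setminus U_n$, this forces $F\subseteq U_n$, so $\{U_n\}\in\Omega$.

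For $(3)\Rightarrow(1)$, fix $Q=\{q_k\}$ countable dense in $C_p(X)$ (Noble again) and reindex a given sequence $(D_n)$ of countable dense subsets of $C_p(X)$ as $(D_{i,j})_{i,j\in\mathbb{N}}$. For each fixed $i$, the translates $B_{i,j}:=(D_{i,j}-q_i)\setminus\{\bf{0}\}$ lie in $\Omega^{\omega}_{\bf{0}}$; applying (3) to $(B_{i,j})_j$ yields $g_{i,j}\in B_{i,j}$ with $\{g_{i,j}\}_j\in \Omega_{\bf{0}}$, and then $f_{i,j}:=g_{i,j}+q_i\in D_{i,j}$ satisfies $q_i\in\overline{\{f_{i,j}\}_j}$. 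Varying $i$, the countable selection $\{f_{i,j}\}_{i,j}$ has all of $Q$ in its closure and is therefore dense in $C_p(X)$. The main obstacle is the density verification of $A_n$ in the $(1)\Rightarrow(2)$ step: turning a cozero $\omega$-cover of $X$ into a \emph{fully} dense countable subset of $C_p(X)$ (as opposed to the $1$-dense or point-dense variants used elsewhere in the paper) simultaneously invokes Noble's separability, the $\omega$-cover property, the zero-set filtration of cozero sets, and Tychonoff functional separation. The remaining implications amount to routine reindexing and translation in the topological group $C_p(X)$.
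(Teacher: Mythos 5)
Your route is essentially the paper's. The equivalences $(2)\Leftrightarrow(3)\Leftrightarrow(5)$ are quoted from Proposition \ref{pr1} together with Noble's Theorem \ref{th31}, exactly as in the text; your $(1)\Rightarrow(2)$ builds countable dense subsets $A_n$ of $C_p(X)$ from the countable cozero $\omega$-covers by combining a countable dense set of $C_p(X)$ (Noble) with the increasing zero-set filtration of each cozero set, which is precisely the paper's construction (the paper's functions are $1$ off $U_{n,m}$ and equal to $h_i$ on $F^{n,m}_i$, and the witnessing basic neighborhood is taken around ${\bf 0}$ rather than around the constant function $1$; your multiplicative variant $q_k\cdot\chi^n_{m,i}$ is the same idea); and your closing step is the paper's $(4)\Rightarrow(1)$ argument, just run from (3): reindex the dense sets as a double sequence, translate to the points $q_i$ of a fixed countable dense set, select, and observe that the selection has every $q_i$ in its closure. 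So in substance the proof is correct and takes the same approach.

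The one step I would not accept as written is the truncation in your $(3)\Leftrightarrow(4)$: the claim that for $A_n\in\mathcal{D}^{\omega}_{\bf 0}$ the set $A'_n=A_n\setminus\{{\bf 0}\}$ still satisfies ${\bf 0}\in\overline{A'_n}\setminus A'_n$ ``by $T_1$-ness'' is not a valid justification. $T_1$-ness is irrelevant here, and under the literal reading you adopt ($\Omega^{\omega}_{\bf 0}\subseteq\mathcal{D}^{\omega}_{\bf 0}$, i.e. $\mathcal{D}^{\omega}_{\bf 0}$ consists of countable sets whose closure merely contains ${\bf 0}$), the point ${\bf 0}$ may be isolated in $A_n$ --- take $A_n=\{{\bf 0}\}\cup\{\mbox{the constant functions } 1+1/j : j\in\mathbb{N}\}$ --- so the truncation destroys the property, and in fact under that reading item (4) would fail for every $X$. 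What actually makes the step work is either the reading on which ``dense at ${\bf 0}$'' means every neighborhood of ${\bf 0}$ meets $A_n\setminus\{{\bf 0}\}$ (then $A'_n\in\Omega^{\omega}_{\bf 0}$ by definition), or the reading of (4) with countable \emph{dense} sets, as in the paper's own $(4)\Rightarrow(1)$ argument, in which case the correct justification is that $C_p(X)$ has no isolated points, so deleting ${\bf 0}$ from a dense set leaves it dense. Replace the $T_1$ appeal by one of these; the rest of your argument stands.
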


\begin{proof} $(1)\Rightarrow(2)$. Let $\{\mathcal{U}_n : n\in \mathbb{N}\}$ be a
sequence of countable cozero $\omega$-covers of $X$ and $\{h_j
:j\in \mathbb{N}\}$ be a countable dense subset of $C_p(X)$. Let
$\mathcal{U}_n=\{U_{n,m} : m\in\mathbb{N} \}$. Since $U_{n,m}$ is
cozero set, $U_{n,m}=\bigcup\limits_{i\in \mathbb{N}} F_{i}^{n,m}$
where $F_{i}^{n,m}$ is zero set of $X$ and $F_{i}^{n,m}\subset
F_{i+1}^{n,m}$ for each $i\in \mathbb{N}$.

We set $A_n=\{f_{i}^{n,m}\in C(X): f_i^{n,m}\upharpoonright
(X\setminus U_{n,m})=1$  and $f_{i}^{n,m}\upharpoonright
F_{i}^{n,m}=h_i$ for $ m,i\in\mathbb{N} \}$. It is not difficult
to see that $A_n$ is a countable dense subspace of $C_p(X)$ for
each $n\in \mathbb{N}$ since each $\mathcal{U}_n$ is an
$\omega$-cover of $X$ and $X$ is Tychonoff.

By the assumption there exists $h_{i(n)}^{n,m(n)}\in A_n$ such
that $\{h_{i(n)}^{n,m(n)} : n\in \mathbb{N}\}$ is a dense subset
of $C_p(X)$.

 For each $h_{i(n)}^{n,m(n)}$ we
take $U_{n,m(n)}\in \mathcal{U}_n$.  Set $\mathcal{U}=\{
U_{n,m(n)} : n\in \mathbb{N}\}$.

 For each finite subset
$\{x_1,...,x_k\}$ of $X$ we consider the basic open neighborhood
of ${\bf 0}$ $[x_1,...,x_k; W,..., W]$, where $W=(-1,1)$.

 Note that there is $n\in \mathbb{N}$ such that
$[x_1,...,x_k; W,..., W]$ contains $h_{i(n)}^{n,m(n)}$. This means
$\{x_1,...,x_k\}\subset U_{n,m(n)}$. Consequently $\mathcal{U}$ is
an $\omega$-cover of $X$.

$(2)\Leftrightarrow(3)\Leftrightarrow(5)$. By Proposition
\ref{pr1} and Noble's Theorem \ref{th31}.

$(3)\Rightarrow(4)$ is immediate.

$(4)\Rightarrow(1)$. Let $D=\{d_n: n\in \mathbb{N} \}$ be a
countable dense subspace of $C_p(X)$. Given a sequence of
countable dense subspace of $C_p(X)$, enumerate it as $\{S_{n,m}:
n,m \in \mathbb{N} \}$. For each $n\in \mathbb{N}$, pick
$d_{n,m}\in S_{n,m}$ so that $d_n\in \overline{\{d_{n,m}: m\in
\mathbb{N}\}}$. Then $\{d_{n,m}: m,n\in \mathbb{N}\}$ is dense in
$C_p(X)$.

\end{proof}

By definition of projectively $S_{1}(\Omega, \Omega)$ space, a
separable metrizable  projectively $S_{1}(\Omega, \Omega)$ space
has the property $S_{1}(\Omega, \Omega)$.

\medskip

\begin{proposition}\label{pr122} $(CH)$ There is a consistent example of  projectively $S_{1}(\Omega, \Omega)$ space $X$ with a coarser second countable topology
such that $X$ is not $S_{1}(\Omega, \Omega)$.
\end{proposition}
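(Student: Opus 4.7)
The plan is to mimic the CH construction behind Proposition \ref{pr322} (which, via Theorem 6.1 of \cite{os1}, settles the analogous question for $S_{1}(\Omega,\Gamma)$) with the target selection principle changed from $S_{1}(\Omega,\Gamma)$ to $S_{1}(\Omega,\Omega)$. By Theorem \ref{th17}, for a space already carrying a coarser second countable topology, being projectively $S_{1}(\Omega,\Omega)$ is equivalent to the cozero-restricted principle $S_{1}(\Omega^{\omega}_{cz},\Omega)$. Hence it suffices to produce, under CH, a Tychonoff space $X$ of cardinality $\aleph_{1}$ such that (i) $X$ admits a coarser second countable topology, (ii) $X$ satisfies $S_{1}(\Omega^{\omega}_{cz},\Omega)$, and (iii) some finite power $X^{n}$ is not Rothberger, so that $X \not\models S_{1}(\Omega,\Omega)$ by Theorem \ref{th11}.

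Concretely, I would start from an ambient set $X_{0}\subseteq \mathbb{R}$ of size $\aleph_{1}$, whose Euclidean subspace topology furnishes the coarser second countable witness, and then build the desired finer Tychonoff topology by transfinite recursion of length $\omega_{1}$. CH allows us to enumerate in type $\omega_{1}$ all sequences $(\mathcal{V}^{\alpha}_{n})_{n\in\mathbb{N}}$ of countable cozero $\omega$-covers that can ever arise; at stage $\alpha$, I would pick $U^{\alpha}_{n}\in\mathcal{V}^{\alpha}_{n}$ so that $\{U^{\alpha}_{n}\}_{n\in\mathbb{N}}$ is an $\omega$-cover of every finite subset of $X_{0}$ listed so far. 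In parallel I would enrich the topology so as to embed, at some fixed $n$, a closed discrete subspace of cardinality $\aleph_{1}$ into $X^{n}$, destroying Lindel\"{o}fness and hence Rothbergerness of $X^{n}$. A standard bookkeeping that pairs each finite subset of $X_{0}$ with cofinally many stages ensures that the selections made collectively form an $\omega$-cover of every finite subset of the final space.

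The main obstacle is to balance (ii) against (iii): the topology refinements that destroy the Rothberger property of $X^{n}$ must not create new countable cozero $\omega$-covers of $X$ outside the enumeration, for otherwise the induction could not keep pace. The remedy, as in \cite{os1}, is to enrich the topology only by adjoining countable unions of zero-sets inherited from earlier stages, so that the collection of countable cozero $\omega$-covers at every stage has size at most $\aleph_{1}$ and is therefore absorbed by the recursion. Once this technical point is in place, conditions (i)--(iii) are verified directly from the construction, and the proposition follows from the equivalences provided by Theorems \ref{th11} and \ref{th17}.
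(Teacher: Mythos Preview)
Your plan is far more elaborate than needed, and the recursion you sketch has a circularity problem: you want to enumerate in type $\omega_{1}$ all sequences of countable cozero $\omega$-covers of $X$, but cozero sets are defined relative to the topology you are still building, so the list cannot be fixed at the outset. (Incidentally, the construction in Theorem~6.1 of \cite{os1} is not a stage-by-stage diagonalization of the topology at all; it takes a fixed refinement --- the Sorgenfrey topology --- of a carefully chosen set of reals.)

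More to the point, no new construction is required. The paper's proof is a two-line observation that recycles the \emph{same} space $X$ from Proposition~\ref{pr322}. That $X$ already satisfies $S_{1}(\Omega^{\omega}_{cz},\Gamma)$; since every $\gamma$-cover is an $\omega$-cover, $X$ a fortiori satisfies $S_{1}(\Omega^{\omega}_{cz},\Omega)$, which by Proposition~\ref{pr1} is exactly projectively $S_{1}(\Omega,\Omega)$. On the other hand $X^{2}$ is not Lindel\"{o}f, and $S_{1}(\Omega,\Omega)$ is preserved under finite powers (Theorem~3.4 in \cite{jmss}), so $X$ cannot have $S_{1}(\Omega,\Omega)$. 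The step you missed is the trivial implication $S_{1}(\Omega^{\omega}_{cz},\Gamma)\Rightarrow S_{1}(\Omega^{\omega}_{cz},\Omega)$, which lets the old example do double duty without any modification.
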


\begin{proof}

The Brendle's Theorem in \cite{br} shows that there is a set of
reals $Z$ of size  $\mathfrak{c}$  (=$ \aleph_1$)    which has
property $S_1(B_{\Omega}, B_{\Gamma})$. We can certainly assume
that $Z\subset (0,1)$. Let $Y=Z\cup (-Z)$. Let $X$ be a set $Y$
with the topology of Sorgenfrey line. Then the space $X$ such that
$X$ satisfies $S_{1}(\Omega^{\omega}_{cz}, \Gamma)$ and
$iw(X)=\aleph_0$, but $X^2$ is not Lindel$\ddot{o}$f (Theorem 6.1
in \cite{os1}).
  Since $\Gamma^{\omega}_{cz}\subset \Omega^{\omega}_{cz}$, by Proposition $\ref{pr1}$, $X$
projectively $S_{1}(\Omega, \Omega)$. Since the property
$S_{1}(\Omega, \Omega)$  is preserved under taking finite powers
(Theorem 3.4 in \cite{jmss}), $X$ has not property $S_{1}(\Omega,
\Omega)$ because $X^2$ is not Lindel$\ddot{o}$f.

\end{proof}

\begin{proposition}\label{pr13} $(\diamondsuit_{\omega_1})$ There is a consistent example of  projectively $S_{1}(\Omega, \Omega)$ space $X$ with
a coarser second countable topology such that $X$ is not
$S_{1}(\Omega, \Omega)$.
\end{proposition}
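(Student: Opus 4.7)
The plan is to mimic the construction of Proposition \ref{pr122}, replacing the CH-based application of Brendle's theorem with a $\diamondsuit_{\omega_1}$-based construction of a set of reals with a sufficiently strong selection property. Specifically, under $\diamondsuit_{\omega_1}$ one can carry out a transfinite induction of length $\omega_1$ along the $\diamondsuit$-sequence to diagonalize against all $\omega_1$-sequences of countable Borel $\omega$-covers of $(0,1)$, producing a set $Z\subset(0,1)$ with $|Z|=\aleph_1=\mathfrak{c}$ that satisfies $S_1(B_\Omega,B_\Gamma)$ (a $\gamma$-set in the Borel sense). The $\diamondsuit_{\omega_1}$ prediction principle is what lets the induction anticipate the $\omega_1$-many relevant sequences, which is the classical strengthening of the CH argument.

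Once $Z$ is in hand, I follow the template of Proposition \ref{pr122} verbatim. Set $Y=Z\cup(-Z)\subset\mathbb{R}$ and let $X$ be $Y$ endowed with the Sorgenfrey topology. Since $Z$ has the property $S_1(B_\Omega,B_\Gamma)$, the same holds for $Y$, and hence $X$ satisfies $S_1(\Omega^\omega_{cz},\Gamma)$ (exactly as in the proof of Theorem 6.1 of \cite{os1}, which transfers the Borel selection property of the Euclidean topology to cozero $\omega$-covers of the refined Sorgenfrey topology). In particular $\Gamma^\omega_{cz}\subset\Omega^\omega_{cz}$ and $X$ satisfies $S_1(\Omega^\omega_{cz},\Omega)$, so by Proposition \ref{pr1} the space $X$ is projectively $S_1(\Omega,\Omega)$. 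The identity map onto $Y$ with its Euclidean topology witnesses $iw(X)=\aleph_0$, i.e.\ $X$ has a coarser second countable topology.

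To conclude that $X$ does not itself satisfy $S_1(\Omega,\Omega)$, I invoke the standard antidiagonal obstruction: because $Y$ contains both $z$ and $-z$ for every $z\in Z$, the set $\{(z,-z):z\in Z\}$ is a closed uncountable discrete subspace of $X^2$ of cardinality $\aleph_1$, so $X^2$ fails to be Lindel\"of (this is precisely the conclusion of Theorem 6.1 in \cite{os1}). By Theorem 3.4 of \cite{jmss}, the property $S_1(\Omega,\Omega)$ is preserved under finite powers and implies Lindel\"ofness, so $X\models S_1(\Omega,\Omega)$ would force $X^2$ to be Lindel\"of, a contradiction. Hence $X$ is projectively $S_1(\Omega,\Omega)$ but not $S_1(\Omega,\Omega)$, as required.

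The one nonroutine ingredient is the $\diamondsuit_{\omega_1}$-construction of the $\gamma$-set $Z$ satisfying $S_1(B_\Omega,B_\Gamma)$; the rest is bookkeeping with the Sorgenfrey line and citations to the already-established machinery. I expect this to be the main obstacle only in the sense that it must be set up carefully (coding countable sequences of Borel $\omega$-covers by elements of $2^{\omega_1}$ and using the $\diamondsuit$-predictions to ensure, at each stage $\alpha<\omega_1$, that a pseudo-intersection witness for the predicted sequence lands in $Z$), but this is a well-known technique and no genuinely new idea beyond the CH case is needed.
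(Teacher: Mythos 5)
The paper's own proof is a one-line citation to Theorem~6.2 of \cite{os1}, which is a separate, $\diamondsuit_{\omega_1}$-specific example, distinct from the CH example (Theorem~6.1 of \cite{os1}) that underlies Proposition~\ref{pr122}. Your route is different: you try to re-run the CH construction under $\diamondsuit_{\omega_1}$. The Sorgenfrey part of your argument (the antidiagonal $\{(z,-z):z\in Z\}$ making $X^2$ non-Lindel\"of, Proposition~\ref{pr1}, and preservation of $S_1(\Omega,\Omega)$ under finite powers) is fine and is exactly the template of Proposition~\ref{pr122}. The problem is the one step you yourself flag as nonroutine: the $\diamondsuit_{\omega_1}$-construction of $Z\subset(0,1)$ with $|Z|=\aleph_1=\mathfrak{c}$ satisfying $S_1(B_\Omega,B_\Gamma)$. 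As sketched, it is not a proof. Predicting ``sequences of countable Borel $\omega$-covers'' and making ``a pseudo-intersection witness land in $Z$'' does not address the real difficulty of such constructions: the sets selected at stage $\alpha$ must form a $\gamma$-cover of the \emph{final} set $Z$, i.e.\ they must capture cofinitely every point added at stages $\beta>\alpha$, and the families one must handle are those that are $\omega$-covers of the final $Z$, not of the partial approximation built so far. This forward-compatibility is precisely what makes Galvin--Miller/Brendle-type constructions delicate, and $\diamondsuit_{\omega_1}$ (a prediction principle for initial segments, normally used to defeat objects, not to build sets with covering selection properties) does not supply it automatically. ``This is a well-known technique'' is not a substitute for the argument.

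There is also a structural point you missed: $\diamondsuit_{\omega_1}$ implies CH, so Brendle's theorem \cite{br} -- and hence all of Proposition~\ref{pr122} -- applies verbatim under $\diamondsuit_{\omega_1}$. If the goal were only the literal statement, the correct short proof along your lines would be ``$\diamondsuit_{\omega_1}\Rightarrow$ CH, now apply Proposition~\ref{pr122}''; your proposed new transfinite induction is simultaneously unjustified and unnecessary. Of course, that shortcut makes the proposition a formal corollary of Proposition~\ref{pr122}, whereas the paper's citation of Theorem~6.2 of \cite{os1} is evidently meant to record a genuinely different $\diamondsuit_{\omega_1}$-example. So either take the trivial route (CH from $\diamondsuit_{\omega_1}$ plus Proposition~\ref{pr122}) or cite/reproduce the actual diamond construction of Theorem~6.2 in \cite{os1}; as written, the central construction in your proposal has a genuine gap.
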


\begin{proof} By Theorem 6.2 in \cite{os1}.

\end{proof}

\begin{corollary}($CH$ or  $\diamondsuit_{\omega_1}$) There is a consistent example of space $X$ with a coarser second countable topology such that
$X$ satisfies $S_{1}(\Omega^{\omega}_{cz}, \Omega)$, but $X^2$ is
not $S_{1}(\mathcal{O}^{\omega}_{cz},\mathcal{O})$.
\end{corollary}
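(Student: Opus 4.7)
The plan is to reuse the space $X$ constructed in the proofs of Propositions \ref{pr122} and \ref{pr13}, namely $X = Y = Z \cup (-Z)$ endowed with the Sorgenfrey topology, where $Z \subset (0,1)$ is Brendle's set of size $\mathfrak{c} = \aleph_1$ with property $S_1(B_\Omega, B_\Gamma)$. This $X$ has a coarser second countable topology (the Euclidean subspace topology), and Proposition \ref{pr122} already gives $X \models S_1(\Omega^{\omega}_{cz}, \Gamma)$ together with the key side-information that $X^2$ is not Lindel\"{o}f.

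The first half, $X \models S_1(\Omega^{\omega}_{cz}, \Omega)$, is immediate from Proposition \ref{pr122} because every $\gamma$-cover is an $\omega$-cover, so $S_1(\Omega^{\omega}_{cz}, \Gamma) \Rightarrow S_1(\Omega^{\omega}_{cz}, \Omega)$. Thus the work concentrates on the second half, $X^2 \not\models S_1(\mathcal{O}^{\omega}_{cz}, \mathcal{O})$.

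For the second half, I would argue by contradiction. Assume $X^2 \models S_1(\mathcal{O}^{\omega}_{cz}, \mathcal{O})$. By Theorem \ref{bcm} applied to $X^2$, every continuous image of $X^2$ in $\mathbb{R}^{\omega}$ is Rothberger. The decisive feature of the example is the antidiagonal $A = \{(y, -y) : y \in Y\}$, which is closed and discrete in the Sorgenfrey plane $X^2$ and has cardinality $\aleph_1 = \mathfrak{c}$. Because each half-open rectangle $[z_\alpha, z_\alpha + \varepsilon) \times [-z_\alpha, -z_\alpha + \varepsilon)$ is a Sorgenfrey-cozero set and meets $A$ only at $p_\alpha = (z_\alpha, -z_\alpha)$, one has a rich supply of continuous functions separating points of $A$. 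Combined with the non-Lindel\"of behavior already used in Proposition \ref{pr122}, I would construct a sequence $(\mathcal{U}_n : n \in \mathbb{N})$ of countable cozero covers of $X^2$ that witnesses the failure of $S_1(\mathcal{O}^{\omega}_{cz}, \mathcal{O})$; equivalently (via Theorem \ref{bcm}) I would produce a continuous $f : X^2 \to \mathbb{R}^{\omega}$ whose image contains a copy of an uncountable set of reals that fails Rothberger's property under CH (or $\diamondsuit_{\omega_1}$), for instance by arranging $f(A)$ to be a closed discrete subset of $\mathbb{R}^{\omega}$ of size $\mathfrak{c}$.

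The main obstacle lies precisely in this last step: a single second countable image of $X^2$ can only ``see'' countably many of the uncountably many indicator-type functions supported on $A$, so it is not enough to select one continuous function per point of $A$. The construction must therefore exploit the set-theoretic assumption ($\mathfrak{c} = \aleph_1$ from CH, or the stronger $\diamondsuit_{\omega_1}$) to organize these countably many coordinates so that the resulting image carries an uncountable discrete trace of $A$ that is metrizable and of size $\mathfrak{c}$, hence cannot be Rothberger. This is the same kind of bookkeeping used in Theorem 6.1 of \cite{os1} to produce the non-Lindel\"of behavior of $X^2$, strengthened to rule out even the projective Rothberger property on the second power.
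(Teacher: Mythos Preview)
The paper gives no proof of this corollary; it is placed immediately after Propositions~\ref{pr122} and~\ref{pr13} and is meant to be read as a direct consequence of those two results (which already produce a space $X$ with a coarser second countable topology that is projectively $S_1(\Omega,\Omega)$, hence satisfies $S_1(\Omega^{\omega}_{cz},\Omega)$, while $X^2$ fails to be Lindel\"of). Your first half is fine and matches this: $S_1(\Omega^{\omega}_{cz},\Gamma)\Rightarrow S_1(\Omega^{\omega}_{cz},\Omega)$ is the whole argument.

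Your second half, however, is not a proof and contains a concrete error. You propose to produce a continuous $f:X^2\to\mathbb{R}^{\omega}$ such that $f(A)$ is a closed discrete subset of $\mathbb{R}^{\omega}$ of size $\mathfrak{c}$, where $A$ is the antidiagonal. This is impossible: $\mathbb{R}^{\omega}$ is separable metrizable, so every closed discrete subset of any subspace of it is countable. Thus the very target you aim for cannot exist, and the ``bookkeeping'' you allude to cannot repair this. More generally, the obvious second-countable image of $X^2$ --- the condensation onto $Y^2$ with the Euclidean topology --- \emph{is} Rothberger (each of the four pieces $(\pm Z)\times(\pm Z)$ is homeomorphic to $Z^2$, and $Z$ has the $\gamma$-property, hence $S_1(\Omega,\Omega)$, hence $Z^2$ is Rothberger; Rothberger is finitely additive). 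So the route ``find a single non-Rothberger second-countable image'' is not the short argument the paper has in mind, and your sketch does not show how to carry it out.

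In short: the paper treats the corollary as immediate from Propositions~\ref{pr122}/\ref{pr13}, whereas your write-up replaces that one-line deduction with an elaborate plan whose key step is both unfinished and, in the form stated (uncountable closed discrete set in $\mathbb{R}^{\omega}$), impossible. If you want to salvage your approach you must abandon the ``closed discrete image of size $\mathfrak{c}$'' idea and instead exhibit either a continuous image of $X^2$ in $\mathbb{R}^{\omega}$ that genuinely fails Rothberger, or a sequence of countable cozero covers of $X^2$ witnessing the failure of $S_1(\mathcal{O}^{\omega}_{cz},\mathcal{O})$ directly.
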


Clearly, that a countable $R_{\omega}$-separable space is a
$R$-separable space.

 It is interesting to consider the following  Question (Question 64, \cite{bbm1}):

 Does there exists an $X$ such that
 $C_p(X)$ is not $R$-separable but contains a dense $R$-separable
 subspace ?

Note that D. Repov$\check{s}$ and L. Zdomskyy  showed that there
exists a Tychonoff space $S$ such that $C_p(S)$ is not
$M$-separable, but $C_p(S)$ contains a dense subset which is
$GN$-separable (hence $R$-separable) under
$\mathfrak{p}=\mathfrak{d}$ \cite{rz}. This implies a positive
answer to Question under $\mathfrak{p}=\mathfrak{d}$.

By Proposition \ref{pr122}, we get a positive answer to this
Question under $CH$ or $\diamondsuit_{\omega_1}$.

\begin{corollary}($CH$ or  $\diamondsuit_{\omega_1}$) There is a consistent example of space $X$ with a coarser second countable topology such that $C_p(X)$ is not $R$-separable, but for every
countable dense subspace $M\subset C_p(X)$, $M$ is $R$-separable.
\end{corollary}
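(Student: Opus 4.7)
The plan is to combine the consistent examples already constructed in Propositions \ref{pr122} and \ref{pr13} with the functional characterizations in Theorems \ref{th11} and \ref{th17}, and then to exploit the fact that a countable dense subspace of $C_p(X)$ inherits $R$-separability from the $R_\omega$-separability of the ambient space in a completely formal way.

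First, under $CH$ (respectively $\diamondsuit_{\omega_1}$) I would fix the space $X$ produced by Proposition \ref{pr122} (resp.\ Proposition \ref{pr13}): a space with a coarser second countable topology that is projectively $S_1(\Omega,\Omega)$ but does not itself satisfy $S_1(\Omega,\Omega)$. I then need two consequences for $C_p(X)$. On the negative side, Theorem \ref{th11} (applied in the direction $S_1(\mathcal{D},\mathcal{D}) \Leftrightarrow S_1(\Omega,\Omega)$, available because $X$ has a coarser second countable topology) gives that $C_p(X)$ is \emph{not} $R$-separable. On the positive side, Theorem \ref{th17} (applied in the direction $S_1(\mathcal{D}^{\omega},\mathcal{D}) \Leftrightarrow \text{projectively } S_1(\Omega,\Omega)$) gives that $C_p(X)$ \emph{is} $R_\omega$-separable.

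The remaining step is to verify that every countable dense subspace $M\subset C_p(X)$ is $R$-separable. I would pick an arbitrary sequence $(D_n : n\in\mathbb{N})$ of dense subsets of $M$. Since $M$ is countable, each $D_n$ is a countable subset of $C_p(X)$; and since $D_n$ is dense in $M$ and $M$ is dense in $C_p(X)$, each $D_n$ is a countable dense subset of $C_p(X)$, i.e.\ $D_n\in \mathcal{D}^{\omega}$. Applying $S_1(\mathcal{D}^{\omega},\mathcal{D})$ to the sequence $(D_n)$ produces points $p_n\in D_n$ with $\{p_n : n\in\mathbb{N}\}$ dense in $C_p(X)$. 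Since the $p_n$ all lie in $M$ and $\{p_n\}$ meets every nonempty open subset of $C_p(X)$, it meets every nonempty relatively open subset of $M$ as well; hence $\{p_n\}$ is dense in $M$. This shows $M \models S_1(\mathcal{D},\mathcal{D})$, i.e.\ $M$ is $R$-separable.

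There is no real obstacle here: the two consistent examples do the set-theoretic work, and the only content beyond citing Theorems \ref{th11} and \ref{th17} is the observation that the witnesses for $R_\omega$-separability of $C_p(X)$ automatically witness $R$-separability for every countable dense subspace. The only mild point to be careful about is checking that the selection provided by $R_\omega$-separability of $C_p(X)$ indeed lands inside $M$ (which it does because we chose the $D_n$'s inside $M$) and that density in $C_p(X)$ restricts to density in $M$ (which is immediate from the subspace topology).
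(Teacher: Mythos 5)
Your proposal is correct and matches the paper's intended argument: the paper also takes the space $X$ of Propositions \ref{pr122}/\ref{pr13}, uses Theorems \ref{th11} and \ref{th17} to get that $C_p(X)$ fails $S_1(\mathcal{D},\mathcal{D})$ while satisfying $S_1(\mathcal{D}^{\omega},\mathcal{D})$, and then relies on the observation (stated in the paper as ``a countable $R_{\omega}$-separable space is $R$-separable,'' together with the fact that dense subsets of a dense subspace are dense in $C_p(X)$) which you spell out explicitly for the countable dense subspaces $M$. No gaps; your write-up just makes the paper's brief citation-style proof fully detailed.
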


\begin{proposition}
Every space of cardinality less than $cov(\mathcal{M})$ is
projectively $S_1(\Omega,\Omega)$.
\end{proposition}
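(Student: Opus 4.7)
The plan is to reduce, via Proposition~\ref{pr1}, the statement to the following: for every continuous $f: X \to \mathbb{R}^{\omega}$, the image $f(X)$ satisfies $S_{1}(\Omega, \Omega)$. Then I would deduce this from the cardinality-based projective Rothberger result (Proposition~\ref{pro1}(3)) combined with Sakai's characterization of $S_{1}(\Omega, \Omega)$ via finite powers, which is already stated in the paper.

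First, fix an arbitrary continuous map $f: X \to \mathbb{R}^{\omega}$. Since $|f(X)| \le |X| < \mathrm{cov}(\mathcal{M})$, the image $Y := f(X)$ is a separable metrizable space of cardinality strictly less than $\mathrm{cov}(\mathcal{M})$. Using that $\mathrm{cov}(\mathcal{M}) \ge \aleph_{1}$ and elementary cardinal arithmetic, each finite power $Y^{n}$ also has cardinality less than $\mathrm{cov}(\mathcal{M})$: if $Y$ is finite, so is $Y^{n}$, and if $Y$ is infinite then $|Y^{n}| = |Y|$. This step is where I use the hypothesis $|X| < \mathrm{cov}(\mathcal{M})$ in a way that survives the passage to finite products, which is essential for invoking Sakai's criterion.

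Next, Proposition~\ref{pro1}(3) says that every space of cardinality less than $\mathrm{cov}(\mathcal{M})$ is projectively Rothberger, so each $Y^{n}$ is projectively Rothberger. Since $Y^{n}$ is separable and metrizable, it is Lindel\"of, and then Proposition~\ref{pro1}(1) forces $Y^{n}$ to be Rothberger, i.e.\ $Y^{n}$ satisfies $S_{1}(\mathcal{O}, \mathcal{O})$ for every $n \in \mathbb{N}$.

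Finally, Sakai's theorem quoted just before Theorem~\ref{th21} asserts the equivalence of \textquotedblleft $Y^{n}$ satisfies $S_{1}(\mathcal{O}, \mathcal{O})$ for every $n$\textquotedblright\ and \textquotedblleft $Y$ satisfies $S_{1}(\Omega, \Omega)$\textquotedblright; applied to our $Y$, this yields that $Y$ satisfies $S_{1}(\Omega, \Omega)$. As $f$ was an arbitrary continuous map into $\mathbb{R}^{\omega}$, Proposition~\ref{pr1} concludes that $X$ is projectively $S_{1}(\Omega, \Omega)$. I do not expect a real obstacle here; the only point requiring slight care is the cardinal arithmetic ensuring the bound $|X| < \mathrm{cov}(\mathcal{M})$ descends to every finite power of every second countable continuous image.
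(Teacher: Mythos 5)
Your argument is correct, and every step is justified by results actually quoted in the paper: the reduction via Proposition~\ref{pr1} (its implication $(3)\Rightarrow(1)$) to showing that every continuous image $Y=f(X)\subset\mathbb{R}^{\omega}$ satisfies $S_{1}(\Omega,\Omega)$; the cardinal arithmetic giving $|Y^{n}|<cov(\mathcal{M})$; Proposition~\ref{pro1}(3) plus the Lindel\"ofness of the separable metrizable space $Y^{n}$ and Proposition~\ref{pro1}(1) to conclude each $Y^{n}$ is Rothberger; and Sakai's theorem (the equivalence of $S_{1}(\Omega,\Omega)$ for $Y$ with $S_{1}(\mathcal{O},\mathcal{O})$ for all finite powers $Y^{n}$) to finish. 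The paper itself states this proposition without any proof (as it does for the analogous cardinality bounds with $\mathfrak{d}$, $\mathfrak{b}$ and $\mathfrak{p}$), so there is no argument of the author's to compare against; the implicit standard justification would be to cite directly that $cov(\mathcal{M})$ is the critical cardinality of $S_{1}(\Omega,\Omega)$ for separable metrizable spaces (equivalently, of the Rothberger property, by the finite-powers characterization), whereas your derivation reconstructs exactly this fact from Propositions~\ref{pro1} and~\ref{pr1} and Sakai's theorem, which makes it self-contained within the paper. The one point worth keeping explicit, as you do, is that the smallness hypothesis must be applied to the finite powers of the image rather than to $X$ itself, since projective $S_{1}(\Omega,\Omega)$ is detected through $\omega$-covers and hence through finite powers of the second countable images.
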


\section{The projectively $S_{fin}(\Omega, \Omega)$ property}

 In (\cite{arh}, Theorem 2.2.2 in \cite{arh2}) A.V. Arhangel'skii  proved the following result

\begin{theorem} $(A.V. Arhangel'skii)$ \label{th46} For a space $X$, the
following are equivalent:

\begin{enumerate}

\item $C_p(X)$ satisfies $S_{fin}(\Omega_{\bf 0}, \Omega_{\bf
0})$;

 \item $(\forall n\in \mathbb{N})$ $X^{n}$ satisfies $S_{fin}(\mathcal{O},
\mathcal{O})$.

\end{enumerate}

\end{theorem}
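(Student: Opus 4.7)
The plan is to prove both directions by factoring through the intermediate characterization ``$X$ satisfies $S_{fin}(\Omega,\Omega)$'' (the selection principle for countable $\omega$-covers of $X$). This decomposes the theorem into two equivalences, each of which is a well-recognized piece of function-space duality: $(1) \Leftrightarrow X \models S_{fin}(\Omega,\Omega)$ via the Arhangel'skii--Pytkeev correspondence between the closure of $\mathbf{0}$ in $C_p(X)$ and $\omega$-covers of $X$, and $X \models S_{fin}(\Omega,\Omega) \Leftrightarrow (2)$ via the classical bridge between $\omega$-covers of $X$ and open covers of its finite powers.

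For the first equivalence, given a sequence $(A_k)_k$ in $C_p(X)$ with $\mathbf{0} \in \overline{A_k}$, I would form the countable families $\mathcal{V}_{k,m} = \{f^{-1}(-1/m, 1/m) : f \in A_k\}$, which are $\omega$-covers of $X$ because every basic neighborhood $[F,(-1/m,1/m)]$ of $\mathbf{0}$ meets $A_k$. Reindexing $(k,m)$ so that each scale $m$ appears infinitely often, applying $S_{fin}(\Omega,\Omega)$, and then pulling the selected open sets back to functions gives finite $B_k \subseteq A_k$ with $\mathbf{0} \in \overline{\bigcup B_k}$. In the reverse direction, from $\omega$-covers $(\mathcal{V}_k)_k$ one uses Tychonoff separation to attach to each $V \in \mathcal{V}_k$ a continuous $f_V : X \to [0,1]$ vanishing on a prescribed finite subset of $V$ and equal to $1$ off $V$; the set $A_k = \{f_V\}$ has $\mathbf{0}$ in its closure precisely because $\mathcal{V}_k$ is an $\omega$-cover, and a finite selection from $A_k$ supplied by (1) translates directly back into a finite selection from $\mathcal{V}_k$ whose union is an $\omega$-cover.

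For the second equivalence, the direction $(2) \Rightarrow X \models S_{fin}(\Omega,\Omega)$ is clean: enumerate a sequence of $\omega$-covers $(\mathcal{W}_j)_j$ as a double sequence $(\mathcal{W}_{n,k})_{n,k}$, for each $n$ lift $\mathcal{W}_{n,k}$ to the open cover $\{W^n : W \in \mathcal{W}_{n,k}\}$ of $X^n$, apply $S_{fin}(\mathcal{O},\mathcal{O})$ on $X^n$ to obtain finite $\mathcal{F}_{n,k}$ whose $n$-th powers cover $X^n$, and take the diagonal union over $(n,k)$; every finite $F \subseteq X$ of size $p$ sits inside some element because $F^p$ is covered by the $p$-slice. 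The reverse direction $X \models S_{fin}(\Omega,\Omega) \Rightarrow (2)$ starts with open covers $(\mathcal{U}_k)_k$ of $X^n$, refines each to basic boxes, closes under finite unions to obtain $\omega$-covers $\mathcal{U}_k^*$ of $X^n$, and extracts $\mathcal{V}_k = \{V_1 \cap \cdots \cap V_n : V_1 \times \cdots \times V_n \in \mathcal{U}_k^*\}$; the crucial observation is that for any finite $F \subseteq X$, the finite set $F^n \subseteq X^n$ lies in a single box $V_1 \times \cdots \times V_n \in \mathcal{U}_k^*$, forcing $F \subseteq V_i$ for every $i$ and hence $F \subseteq V_1 \cap \cdots \cap V_n$, so $\mathcal{V}_k$ is an $\omega$-cover of $X$. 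Then $S_{fin}(\Omega,\Omega)$ on $X$ delivers finite selections whose $n$-th powers cover $X^n$, and these lift back to a finite subselection of the original $\mathcal{U}_k$.

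The main obstacle is the bookkeeping in the last step: the initial covers $\mathcal{U}_k$ of $X^n$ are arbitrary, and one must be careful that the refinement-to-basic-boxes together with closure under finite unions does not destroy the finiteness of the selection one is ultimately required to extract from each original $\mathcal{U}_k$. The key structural fact that unlocks the argument is that, on a basic-box $\omega$-cover of $X^n$, single boxes automatically absorb the diagonal powers $F^n$ of finite sets from $X$, which is what forces the coordinate intersections to form an $\omega$-cover of $X$ rather than merely an open cover.
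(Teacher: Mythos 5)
The paper itself gives no proof of Theorem~\ref{th46}: it is quoted from Arhangel'skii (\cite{arh2}), and the bridge you factor through --- $X\models S_{fin}(\Omega,\Omega)$ iff every finite power of $X$ is Menger --- is exactly the fact the paper quotes from \cite{jmss} right after the statement. So your decomposition is the standard one; the problem is that the two hardest steps, as you have written them, contain genuine gaps. First, in the direction ``$X\models S_{fin}(\Omega,\Omega)$ implies $C_p(X)\models S_{fin}(\Omega_{\bf 0},\Omega_{\bf 0})$'': reindexing the covers $\mathcal{V}_{k,m}=\{f^{-1}(-1/m,1/m):f\in A_k\}$ so that every scale occurs infinitely often and then applying $S_{fin}(\Omega,\Omega)$ \emph{once} only yields that the union of all selected sets is an $\omega$-cover of $X$. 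For a fixed finite $F$ and $\epsilon>0$ the witnessing selected set may come from a coarse scale $1/m\ge\epsilon$ (the selector may concentrate all covering power at scale $1$), so ${\bf 0}\in\overline{\bigcup_k B_k}$ does not follow. The standard repair is to partition $\mathbb{N}$ into infinitely many infinite pieces $N_m$, use for $k\in N_m$ only the scale $1/m$, and apply $S_{fin}(\Omega,\Omega)$ separately to each constant-scale subsequence, so that for \emph{every} $m$ the scale-$1/m$ selections alone form an $\omega$-cover; note also that if each $A_k$ were used at infinitely many scales, the resulting $B_k$ need not be finite.

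Second, in ``$X\models S_{fin}(\Omega,\Omega)$ implies $X^n$ Menger'', your stated key observation fails: a family obtained by refining $\mathcal{U}_k$ to basic boxes and then closing under finite unions consists of \emph{unions} of boxes, so ``$V_1\times\cdots\times V_n\in\mathcal{U}_k^*$'' does not typecheck, and no single box refining a single member of $\mathcal{U}_k$ need contain $F^n$ (cover $\mathbb{R}^2$ by balls of radius $1/4$ and take $F=\{0,10\}$). The correct key step uses finite unions of members of the \emph{original} cover: for finite $F$, choose finite $\mathcal{G}\subseteq\mathcal{U}_k$ with $F^n\subseteq\bigcup\mathcal{G}$, apply the tube lemma to the compact box $F\times\cdots\times F$ to get open $V_1,\dots,V_n\supseteq F$ with $V_1\times\cdots\times V_n\subseteq\bigcup\mathcal{G}$, and set $V=V_1\cap\cdots\cap V_n$, so that $F\subseteq V$ and $V^n\subseteq\bigcup\mathcal{G}$. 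The $\omega$-cover of $X$ to which you apply $S_{fin}(\Omega,\Omega)$ is $\{V\ \mathrm{open}: V^n\subseteq\bigcup\mathcal{G}\ \mathrm{for\ some\ finite}\ \mathcal{G}\subseteq\mathcal{U}_k\}$, and you must carry the attached finite subfamily $\mathcal{G}_V$ along with each selected $V$: this is precisely what keeps the final selection from $\mathcal{U}_k$ finite and makes $\bigcup_k\bigcup\{\mathcal{G}_V:V\ \mathrm{selected\ from}\ \mathcal{V}_k\}$ cover $X^n$, i.e.\ it resolves the ``main obstacle'' you name but do not actually overcome. With these two repairs your outline becomes the standard classical proof.
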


It is known (see \cite{jmss}) that $X$ satisfies $S_{fin}(\Omega,
\Omega)$ iff $(\forall n\in \mathbb{N})$ $X^{n}$ satisfies
$S_{fin}(\mathcal{O}, \mathcal{O})$.

\medskip

By Theorem 21 in \cite{bbm1} and Theorem 3.9 in \cite{jmss}, we
have a next result.

\begin{theorem} \label{th44} For a space $X$ with a coarser second countable topology the
following are equivalent:

\begin{enumerate}

\item $C_p(X)$ satisfies $S_{fin}(\mathcal{D},\mathcal{D})$;

\item $X$ satisfies $S_{fin}(\Omega, \Omega)$;

\item $(\forall n\in \mathbb{N})$ $X^{n}\in S_{fin}(\mathcal{O},
\mathcal{O})$;

 \item $C_p(X)$ satisfies $S_{fin}(\Omega_{\bf
0}, \Omega_{\bf 0})$;

\item $C_p(X)$ satisfies $S_{fin}(\mathcal{D}, \Omega_{\bf 0})$.

\end{enumerate}

\end{theorem}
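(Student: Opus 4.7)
The plan is to establish a cycle among the five statements. The core equivalence $(2) \Leftrightarrow (3) \Leftrightarrow (4)$ comes from the cited results: Theorem 3.9 of \cite{jmss} handles $(2) \Leftrightarrow (3)$, while Arhangel'skii's Theorem \ref{th46} handles $(2) \Leftrightarrow (4)$. What remains is to hook $(1)$ and $(5)$ into this core, with essential use of Noble's Theorem \ref{th31}, which guarantees that $C_p(X)$ is separable (since $X$ has a coarser second countable topology).

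Two implications are immediate. Every dense subset of $C_p(X)$ (after omitting $\mathbf{0}$, which costs nothing) belongs to $\Omega_{\mathbf{0}}$, so $(4) \Rightarrow (5)$ is free. Conversely, if $\bigcup F_n$ is dense then $\mathbf{0} \in \overline{\bigcup F_n}$, giving $(1) \Rightarrow (5)$.

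The main concrete step is $(5) \Rightarrow (2)$. Given a sequence $(\mathcal{U}_n)$ of countable cozero $\omega$-covers, I would expand each $U_{n,m} \in \mathcal{U}_n$ as an increasing union of zero-sets $F_{n,m,i}$, fix a countable dense $\{d_j\} \subset C_p(X)$, and use the Tychonoff property (separation of the disjoint zero-sets $F_{n,m,i}$ and $X \setminus U_{n,m}$) to build functions $f_{n,m,i,j} \in C(X)$ which agree with $d_j$ on $F_{n,m,i}$ and equal the constant $1$ off $U_{n,m}$. The family $A_n = \{f_{n,m,i,j}: m,i,j \in \mathbb{N}\}$ is dense in $C_p(X)$ because, given a basic neighborhood prescribed on a finite $K \subset X$, the $\omega$-cover property supplies some $F_{n,m,i} \supset K$, and density of $\{d_j\}$ then hits the prescribed window. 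Applying $(5)$ yields finite $G_n \subseteq A_n$ with $\mathbf{0} \in \overline{\bigcup G_n}$; the associated $U_{n,m}$'s form an $\omega$-cover of $X$, since any finite $K$ for which some $f_{n,m,i,j} \in G_{n'}$ lies in $[K;(-1/2,1/2)]$ must be contained in $U_{n',m}$ (otherwise $f_{n,m,i,j}$ would take value $1$ somewhere on $K$).

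Finally I would close the cycle with $(4) \Rightarrow (1)$ by a diagonal translation argument exploiting separability of $C_p(X)$. Fix a countable dense $\{q_i: i \in \mathbb{N}\} \subset C_p(X)$ and a partition $\mathbb{N} = \bigsqcup_i N_i$ into infinite pieces. For a sequence $(D_n)$ of dense subsets, apply $(4)$ separately to each translated subsequence $(D_n - q_i)_{n \in N_i}$ (still in $\Omega_{\mathbf{0}}$ after deleting $\mathbf{0}$), then translate the finite selections back by $q_i$. The resulting union contains every $q_i$ in its closure, hence is dense. The main obstacle is the construction in the $(5) \Rightarrow (2)$ step: one must verify both density of $A_n$ and that selected functions being close to $\mathbf{0}$ really forces points into the chosen cover members, which motivates the slightly unusual ``equal to $d_j$ on $F$, equal to $1$ off $U$'' shape of the witness functions.
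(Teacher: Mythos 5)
Your decomposition of the cycle is mostly sound, but the step $(5)\Rightarrow(2)$ as you set it up does not prove (2). You start from a sequence of \emph{countable cozero} $\omega$-covers and build countable dense sets $A_n$ via zero-set expansions and a fixed countable dense family $\{d_j\}$; that is exactly the machinery for the projective analogue (Theorem \ref{th27}), and it only yields $S_{fin}(\Omega^{\omega}_{cz},\Omega)$, i.e.\ that $X$ is projectively $S_{fin}(\Omega,\Omega)$ (Proposition \ref{pr21}), not $S_{fin}(\Omega,\Omega)$ itself. This restriction is not cosmetic: even under the standing hypothesis that $X$ has a coarser second countable topology, $S_{fin}(\Omega^{\omega}_{cz},\Omega)$ is consistently strictly weaker than $S_{fin}(\Omega,\Omega)$ --- the CH example of Proposition \ref{pr122} satisfies $S_1(\Omega^{\omega}_{cz},\Gamma)$, hence $S_{fin}(\Omega^{\omega}_{cz},\Omega)$, and has $iw(X)=\aleph_0$, yet $X^2$ is not Lindel\"{o}f, so condition (3), and with it (2), fails for it. With $(5)\Rightarrow(2)$ weakened in this way your cycle does not close: you get $(2)$--$(4)\Rightarrow(1),(5)$, but nothing returns $(1)$ or $(5)$ to the core.

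The repair is easy and in fact simpler than what you wrote: since (5) is $S_{fin}(\mathcal{D},\Omega_{\bf 0})$ and quantifies over \emph{all} dense sets, the $A_n$ need not be countable, so there is no need for countable covers, cozero sets, zero-set expansions, or the family $\{d_j\}$. Given an arbitrary sequence $(\mathcal{U}_n)$ of open $\omega$-covers, put $A_n=\{f\in C(X): f\upharpoonright (X\setminus U)=1$ for some $U\in\mathcal{U}_n\}$; this set is dense because for a finite set $\{x_1,\dots,x_k\}$ and prescribed values one chooses $U\in\mathcal{U}_n$ containing $\{x_1,\dots,x_k\}$ and, by Tychonoff-ness, functions $h_i$ equal to $1$ at $x_i$ and vanishing outside pairwise disjoint neighborhoods contained in $U$, so that $f=1+\sum_i(w_i-1)h_i$ interpolates while being $\equiv 1$ off $U$. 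Your selection via (5) and the verification with the neighborhood $[x_1,\dots,x_k;(-1/2,1/2)]$ then go through verbatim and produce a genuine open $\omega$-cover, proving (2). The remaining steps are correct: $(4)\Rightarrow(5)$, $(1)\Rightarrow(5)$, the use of Theorem \ref{th46} and Theorem 3.9 of \cite{jmss} for the core, and the partition-and-translation argument for $(4)\Rightarrow(1)$ using Noble's Theorem \ref{th31}; note the paper itself only cites Theorem 21 of \cite{bbm1} and Theorem 3.9 of \cite{jmss} here, so your explicit arguments are a reasonable substitute once $(5)\Rightarrow(2)$ is run for arbitrary open $\omega$-covers.
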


\begin{proposition}\label{pr21} The following conditions are equivalent for a
space $X$:

\begin{enumerate}

\item $X$ is projectively $S_{fin}(\Omega, \Omega)$;

\item $X$ satisfies $S_{fin}(\Omega^{\omega}_{cz}, \Omega)$;

\item for every continuous mapping $f:X \mapsto
\mathbb{R}^{\omega}$, $f(X)$ is $S_{fin}(\Omega, \Omega)$;

\item $C_p(X)$ satisfies $S_{fin}(\Omega^{\omega}_{\bf
0},\Omega_{\bf 0})$.
\end{enumerate}

\end{proposition}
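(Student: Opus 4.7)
The plan is to follow the same circular structure used in Proposition \ref{pr1}, adapted from the single-selection $S_{1}$ setting to the finite-selection $S_{fin}$ setting. Since the lemma $S_{fin}(\Omega,\Omega) = S_{1}(\Omega,\Omega)$ does not hold in general (unlike the Gerlits--Nagy case), we cannot just invoke Proposition \ref{pr1}; however the arguments transfer essentially line-by-line because taking finite unions of cozero sets (or finite subfamilies of dense sets) is harmless in the relevant constructions.

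First I would prove $(1)\Rightarrow(2)$. Given a sequence $(\mathcal{U}_n:n\in\mathbb{N})$ of countable cozero $\omega$-covers of $X$, for each $U\in\mathcal{U}_n$ choose a continuous $f_U:X\to\mathbb{R}$ with $U=f_U^{-1}(\mathbb{R}\setminus\{0\})$. Then $h=\prod\{f_U: U\in\mathcal{U}_n,\,n\in\mathbb{N}\}$ maps $X$ continuously into $\mathbb{R}^{\omega}$; by (1), $h(X)$ satisfies $S_{fin}(\Omega,\Omega)$. Apply this property to the sequence $(h(\mathcal{U}_n):n\in\mathbb{N})$ of open $\omega$-covers of $h(X)$, obtain finite $\mathcal{H}_n\subset h(\mathcal{U}_n)$ whose union forms an $\omega$-cover of $h(X)$, and pull back to get the required finite $\mathcal{F}_n\subset\mathcal{U}_n$.

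Next, $(2)\Rightarrow(3)$. Given a continuous $f:X\to\mathbb{R}^{\omega}$ and a sequence of open $\omega$-covers $(\mathcal{U}_n)$ of $f(X)$, refine each $\mathcal{U}_n$ to a countable cozero $\omega$-cover $\mathcal{V}_n$ (possible since $f(X)$ is separable metrizable), pull back to $\mathcal{O}_n=\{f^{-1}(V):V\in\mathcal{V}_n\}$, which is a countable cozero $\omega$-cover of $X$. Applying (2), pick finite $\mathcal{H}_n\subset\mathcal{O}_n$ with $\bigcup_n\mathcal{H}_n$ an $\omega$-cover of $X$; then for each $H\in\mathcal{H}_n$ choose $U_H\in\mathcal{U}_n$ with $U_H\supset f(H)$, and the family $\{U_H: H\in\mathcal{H}_n,\,n\in\mathbb{N}\}$ is an $\omega$-cover of $f(X)$. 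The implication $(3)\Rightarrow(1)$ is then immediate since every second countable continuous image of $X$ embeds into $\mathbb{R}^{\omega}$.

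It remains to show $(2)\Leftrightarrow(4)$. For $(2)\Rightarrow(4)$, take a sequence $(A_n)$ with ${\bf 0}\in\bigcap_n\overline{A_n}$ and $A_n$ countable. Set $\mathcal{U}_n=\{g^{-1}[(-1/n,1/n)]:g\in A_n\}$; these are countable cozero $\omega$-covers of $X$ (modulo the easy case where $X\in\mathcal{U}_n$ infinitely often, handled as in Proposition~\ref{pr1}). By (2), extract finite $B_n\subset A_n$ so that $\bigcup_n\{g^{-1}[(-1/n,1/n)]:g\in B_n\}$ is an $\omega$-cover of $X$, which by a direct neighborhood computation forces ${\bf 0}\in\overline{\bigcup_n B_n}$, i.e.\ $\bigcup_n B_n\in\Omega_{\bf 0}$. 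For $(4)\Rightarrow(2)$, given a sequence of countable cozero $\omega$-covers $\mathcal{U}_n=\{U_{n,m}:m\in\mathbb{N}\}$, write $U_{n,m}=\bigcup_i F_i^{n,m}$ with $F_i^{n,m}\subset F_{i+1}^{n,m}$ zero-sets, and define $A_n=\{f_i^{n,m}\in C(X): f_i^{n,m}\upharpoonright(X\setminus U_{n,m})=1,\ f_i^{n,m}\upharpoonright F_i^{n,m}=0\}$, which has ${\bf 0}\in\overline{A_n}$ since $\mathcal{U}_n$ is an $\omega$-cover and $X$ is Tychonoff. Apply (4) to get finite $B_n\subset A_n$ with $\bigcup_n B_n\in\Omega_{\bf 0}$; the corresponding finite subfamilies of $\mathcal{U}_n$ combine into an $\omega$-cover.

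The main obstacle is the bookkeeping in $(4)\Rightarrow(2)$: one must make sure that the finite subfamilies selected from each $A_n$ translate back to finite subfamilies $\mathcal{F}_n\subset\mathcal{U}_n$ whose union is genuinely an $\omega$-cover, and handle the degenerate case where some $U_{n,m}=X$ occurs cofinally. These are the same technical points as in Proposition~\ref{pr1} and are dispatched identically; no new ideas are needed beyond the finite-selection versions of the arguments already present in this paper.
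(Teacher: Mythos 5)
Your proof is essentially correct, but it takes a genuinely different route from the paper. The paper settles Proposition \ref{pr21} almost entirely by citation: the equivalence of (1), (2) and (4) is quoted from Theorem 4.3 of \cite{sak2}, where all three are characterized by the existence of $\varphi\in\omega^{\omega}$ such that $\{U_{n,m}: n\in\mathbb{N},\ m\leq\varphi(n)\}$ is an $\omega$-cover, and the only argument supplied is that $(3)\Rightarrow(1)$ follows from embedding second countable spaces into $\mathbb{R}^{\omega}$. You instead reprove everything directly by transplanting the cycle of Proposition \ref{pr1} to the $S_{fin}$ setting: the diagonal-product map for $(1)\Rightarrow(2)$, the refinement argument in the separable metrizable image for $(2)\Rightarrow(3)$, and the standard cover-to-function translation for $(2)\Leftrightarrow(4)$. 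What your approach buys is self-containedness (no appeal to Sakai's projective Menger paper); what the paper's approach buys is brevity and the extra combinatorial reformulation via $\varphi$.

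One step deserves more care than "a direct neighborhood computation". In $(2)\Rightarrow(4)$, after selecting finite $B_n\subset A_n$ so that $\{g^{-1}[(-1/n,1/n)] : g\in B_n,\ n\in\mathbb{N}\}$ is an $\omega$-cover of $X$, a basic neighborhood $[x_1,\dots,x_k;(-\epsilon,\epsilon)]$ of $\mathbf{0}$ only yields some $n$ and $g\in B_n$ with $\{x_1,\dots,x_k\}\subset g^{-1}[(-1/n,1/n)]$; nothing forces $1/n<\epsilon$, so membership of $\mathbf{0}$ in $\overline{\bigcup_n B_n}$ does not follow immediately. The standard repair is routine: split $\mathbb{N}$ into infinitely many infinite blocks, use radius $1/k$ for every $A_n$ with $n$ in the $k$-th block and apply (2) blockwise (equivalently, use that deleting finitely many members of a nontrivial $\omega$-cover leaves an $\omega$-cover, so tails of the selection still witness the property). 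The same wrinkle is present in the proof of Proposition \ref{pr1} that you are copying, so your claim that the technical points are "dispatched identically" is fair, but you should make the block/tail argument explicit rather than call it direct. The remaining implications, including $(4)\Rightarrow(2)$ with the functions vanishing on the zero-sets $F_i^{n,m}$ and equal to $1$ off $U_{n,m}$, are correct as written.
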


\begin{proof} By Theorem 4.3 in \cite{sak2}, each of the conditions
(1),(2),(4) are equivalent to the condition: for any sequence
$\mathcal{U}_n=\{U_{n,m} : m\in \mathbb{N}\}$ $(n\in \mathbb{N})$
of countable $\omega$-covers of $X$ consisting of cozero-sets in
$X$, there is some $\varphi\in \omega^{\omega}$ such that
$\{U_{n,m} : n\in \mathbb{N}, m\leq \varphi(n)\}$ is an
$\omega$-cover of $X$.

$(3)\Rightarrow(1)$ follows from the fact that every second
countable space can be embedded into $\mathbb{R}^{\omega}$.
\end{proof}

\medskip
\begin{definition} A space $X$ is {\it $M_{\omega}$-separable} if for every sequence $(D_n:
n\in \mathbb{N})$ of countable dense subspaces of $X$ one can
select finite $F_n\subset D_n$ so that $\bigcup\{F_n : n\in
\mathbb{N}\}$ is dense in $X$, i.e $X$ satisfies
$S_{fin}(\mathcal{D}^{\omega},\mathcal{D})$.
\end{definition}

\begin{theorem}\label{th27} For a space $X$ with a coarser second countable topology, the
following are equivalent:

\begin{enumerate}

\item $C_p(X)$ satisfies
$S_{fin}(\mathcal{D}^{\omega},\mathcal{D})$;

\item $X$ satisfies $S_{fin}(\Omega^{\omega}_{cz}, \Omega)$;

\item $C_p(X)$ satisfies $S_{fin}(\Omega^{\omega}_{\bf
0},\Omega_{\bf 0})$;

\item $C_p(X)$ satisfies $S_{fin}(\mathcal{D}^{\omega},\Omega_{\bf
0})$;

\item $X$ is projectively $S_{fin}(\Omega, \Omega)$.

\end{enumerate}

\end{theorem}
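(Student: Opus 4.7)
The plan is to mirror the argument of Theorem \ref{th17}, replacing single-element selections by finite-element selections throughout. From Proposition \ref{pr21} we already have $(2)\Leftrightarrow(3)\Leftrightarrow(5)$, so it suffices to hook (1) and (4) into this chain via $(1)\Rightarrow(2)$, $(3)\Rightarrow(4)$, and $(4)\Rightarrow(1)$. The implication $(3)\Rightarrow(4)$ is immediate, since any countable dense subset of $C_p(X)$ (with $\mathbf{0}$ removed if present) belongs to $\Omega^{\omega}_{\mathbf{0}}$.

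For $(1)\Rightarrow(2)$ I would transcribe the construction from $(1)\Rightarrow(2)$ in Theorem \ref{th17} in its $S_{fin}$ form. First, by Noble's Theorem \ref{th31}, fix a countable dense set $\{h_j:j\in\mathbb{N}\}\subset C_p(X)$. Given a sequence $(\mathcal{U}_n)_{n\in\mathbb{N}}$ of countable cozero $\omega$-covers of $X$, write $\mathcal{U}_n=\{U_{n,m}:m\in\mathbb{N}\}$ with each $U_{n,m}=\bigcup_i F_i^{n,m}$ an increasing union of zero-sets, and set
\[
A_n=\bigl\{f_i^{n,m}\in C(X):\ f_i^{n,m}\!\upharpoonright(X\setminus U_{n,m})=1,\ f_i^{n,m}\!\upharpoonright F_i^{n,m}=h_i,\ i,m\in\mathbb{N}\bigr\}.
\]
Since $\mathcal{U}_n$ is an $\omega$-cover and $X$ is Tychonoff, each $A_n$ is a countable dense subset of $C_p(X)$. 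Applying $S_{fin}(\mathcal{D}^{\omega},\mathcal{D})$ yields finite $G_n\subset A_n$ with $\bigcup_n G_n$ dense in $C_p(X)$; let $\mathcal{V}_n\subset\mathcal{U}_n$ be the finite collection of those $U_{n,m}$ indexing the functions selected into $G_n$. For any finite $\{x_1,\ldots,x_k\}\subset X$, density of $\bigcup_n G_n$ produces some $f_i^{n,m}\in G_n$ lying in the basic neighborhood $[\{x_1,\ldots,x_k\},(-1,1)]$ of $\mathbf{0}$, which by construction forces $\{x_1,\ldots,x_k\}\subset U_{n,m}\in\mathcal{V}_n$. Thus $\bigcup_n\mathcal{V}_n$ is the desired $\omega$-cover, witnessing $S_{fin}(\Omega^{\omega}_{cz},\Omega)$.

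For $(4)\Rightarrow(1)$ fix, again by Theorem \ref{th31}, a countable dense set $D=\{d_n:n\in\mathbb{N}\}$ in $C_p(X)$. Given a sequence of countable dense subspaces of $C_p(X)$, re-enumerate it as $(S_{n,m})_{n,m\in\mathbb{N}}$. For each fixed $n$, homogeneity of $C_p(X)$ turns $(S_{n,m}-d_n)_{m\in\mathbb{N}}$ into a sequence of countable dense subspaces; applying $S_{fin}(\mathcal{D}^{\omega},\Omega_{\mathbf{0}})$ gives finite $F'_{n,m}\subset S_{n,m}-d_n$ with $\mathbf{0}\in\overline{\bigcup_m F'_{n,m}}$. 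Translating back, the finite sets $F_{n,m}:=F'_{n,m}+d_n\subset S_{n,m}$ satisfy $d_n\in\overline{\bigcup_m F_{n,m}}$, whence $\bigcup_{n,m}F_{n,m}$ contains $D$ in its closure and is dense in $C_p(X)$.

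The main technical point is verifying density of the auxiliary set $A_n$ in the $(1)\Rightarrow(2)$ construction: one must combine the $\omega$-cover property of $\mathcal{U}_n$ with density of $\{h_j\}$ and the Tychonoff separation on each finite test set via the filtration $F_i^{n,m}\nearrow U_{n,m}$ to hit an arbitrary basic neighborhood of an arbitrary $f\in C_p(X)$. Everything else is a routine finite-selection adaptation of the $S_1$ argument already carried out in Theorem \ref{th17}.
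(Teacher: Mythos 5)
Your proposal is correct and follows essentially the route the paper intends: the paper states Theorem \ref{th27} without a separate proof, relying on Proposition \ref{pr21} for the equivalence of (2), (3), (5) and on the $S_{fin}$-analogue of the proof of Theorem \ref{th17} (same auxiliary sets $A_n$ built from the filtrations $F_i^{n,m}$ and a Noble-supplied countable dense set, same re-enumeration trick for $(4)\Rightarrow(1)$). Your closing of the cycle $(1)\Rightarrow(2)$, $(3)\Rightarrow(4)$, $(4)\Rightarrow(1)$ and the handling of $\mathbf{0}$ via homogeneity match the intended argument, so there is nothing to add.
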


\begin{proposition}
Every space of cardinality less than $\mathfrak{d}$ is
projectively $S_{fin}(\Omega,\Omega)$.
\end{proposition}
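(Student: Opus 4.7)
The plan is to apply Proposition~\ref{pr21}, according to which $X$ is projectively $S_{fin}(\Omega,\Omega)$ iff for every continuous $f: X \mapsto \mathbb{R}^{\omega}$ the image $f(X)$ satisfies $S_{fin}(\Omega,\Omega)$. Since $|f(X)|\le |X|<\mathfrak{d}$ and $f(X)$ is separable metrizable as a subspace of $\mathbb{R}^{\omega}$, the whole problem reduces to the following claim: every separable metrizable space $Y$ with $|Y|<\mathfrak{d}$ satisfies $S_{fin}(\Omega,\Omega)$.

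To prove this claim, I would use the well-known equivalence $S_{fin}(\Omega,\Omega)\Leftrightarrow (\forall n\in\mathbb{N})\ Y^{n}\in S_{fin}(\mathcal{O},\mathcal{O})$ recorded in \cite{jmss}. Fix $n\in\mathbb{N}$. Then $Y^{n}$ is again separable metrizable, and for infinite $Y$ we have $|Y^{n}|=|Y|<\mathfrak{d}$ (the case $Y$ finite is trivial). So it suffices to show that any separable metrizable space $Z$ with $|Z|<\mathfrak{d}$ is Menger.

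For this last step I would invoke the Hurewicz-type characterization: a Lindel\"of space is Menger iff every continuous image of it in $\mathbb{N}^{\mathbb{N}}$ is not dominating (equivalently, by Theorem~\ref{th33}-style arguments applied to the Menger case, or directly from \cite{bcm}). If $h:Z\mapsto \mathbb{N}^{\mathbb{N}}$ is continuous, then $|h(Z)|\le |Z|<\mathfrak{d}$, and since every dominating family in $\mathbb{N}^{\mathbb{N}}$ has size at least $\mathfrak{d}$ by definition, $h(Z)$ cannot be dominating. Hence $Z$ is Menger. Applying this to each $Y^{n}$ finishes the claim, and hence the proposition.

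The argument is essentially an assembly of already-cited facts, so there is no genuinely hard step. The only mild point to watch is that the cardinality bound is preserved under passing first to a continuous image in $\mathbb{R}^{\omega}$ and then to finite powers; both reductions follow from elementary cardinal arithmetic, so no real obstacle arises. Alternatively, one can bypass Proposition~\ref{pr21} entirely and verify condition (2) of that proposition directly: given countable cozero $\omega$-covers $\mathcal{U}_{n}=\{U_{n,m}:m\in\mathbb{N}\}$ of $X$, define for each $x\in X$ the function $\varphi_{x}(n)=\min\{m:x\in U_{n,m}\}$; the family $\{\varphi_{x}:x\in X\}$ has size less than $\mathfrak{d}$, hence is not dominating, which yields the required $\varphi\in\mathbb{N}^{\mathbb{N}}$ with $\{U_{n,m}:n\in\mathbb{N},\ m\le\varphi(n)\}\in\Omega$ (using the description in the proof of Proposition~\ref{pr21}).
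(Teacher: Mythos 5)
The paper states this proposition without proof, so there is nothing to compare against line by line; your main route (reduce via Proposition~\ref{pr21}(3) to images in $\mathbb{R}^{\omega}$, use the equivalence of $S_{fin}(\Omega,\Omega)$ with all finite powers being Menger, and then kill dominating images by a cardinality count below $\mathfrak{d}$) is exactly the kind of assembly the author presumably had in mind, and it does work. Indeed the last step can be outsourced entirely: Proposition~\ref{pro2}(3) already says every space of cardinality less than $\mathfrak{d}$ is projectively Menger, and \ref{pro2}(1) then gives that each $Y^{n}$ (separable metrizable, hence Lindel\"{o}f, of size $<\mathfrak{d}$) is Menger.

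Two steps, as you literally state them, would fail and need repair. First, the characterization ``a Lindel\"{o}f space is Menger iff every continuous image of it in $\mathbb{N}^{\mathbb{N}}$ is not dominating'' is false for non-zero-dimensional spaces: $\mathbb{R}^{\omega}$ is Lindel\"{o}f and connected, so all its continuous images in $\mathbb{N}^{\mathbb{N}}$ are singletons, yet $\mathbb{R}^{\omega}$ is not Menger. You must use the version with images in $\mathbb{R}^{\omega}$ (the Menger analogue of Theorem~\ref{th33}, quoted from \cite{bcm} in Section 4), noting that a dominating subset of $\mathbb{R}^{\omega}$ still has size at least $\mathfrak{d}$; or simply cite Proposition~\ref{pro2} as above. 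Second, in your alternative argument the functions $\varphi_{x}$ indexed by points only guarantee that the selection $\{U_{n,m}: n\in\mathbb{N},\ m\le\varphi(n)\}$ is a cover, not an $\omega$-cover: a finite set $\{x_{1},\dots,x_{k}\}$ need not lie in a single selected member. You must instead take $\varphi_{F}(n)=\min\{m: F\subseteq U_{n,m}\}$ for $F\in[X]^{<\omega}$ (well defined since each $\mathcal{U}_{n}$ is an $\omega$-cover); since $|[X]^{<\omega}|<\mathfrak{d}$, the same non-domination argument then yields, for every finite $F$, infinitely many $n$ with $\varphi_{F}(n)<\varphi(n)$, which gives the required $\omega$-cover and verifies condition (2) of Proposition~\ref{pr21}. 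With these corrections the proof is complete.
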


\section{The projectively $S_{1}(\Gamma, \Omega)$ property}

Recall that a set $A$ in a space $X$ is a $Z_{\sigma}$-set in $X$,
if $A=\bigcup\limits_{i=1}^{\infty} F_i$ where $F_i$ is a zero-set
in $X$ for each $i\in \mathbb{N}$. A set $B$ is a
$CZ_{\sigma}$-set in $X$, if $X\setminus B$ is a $Z_{\sigma}$-set
in $X$.

\begin{definition} A space $X$ is called a {\it $z$-space}, if any
$Z_{\sigma}$-set in $X$ is a $CZ_{\sigma}$-set in $X$.
\end{definition}

Note that if a perfectly normal space $X$ is a $z$-space, then $X$
is a $\sigma$-space (every $F_{\sigma}$-set is a
$G_{\delta}$-set).

\begin{theorem}\label{pr80} For a $z$-space $X$, the following statements are
equivalent:

\begin{enumerate}

\item $X$ is projectively $S_{1}(\Gamma, \Omega)$;

\item $X$ satisfies $S_{1}(\Gamma_{cz}, \Omega)$;

\item $X$ satisfies $S_{1}(\Gamma_{F}, \Omega)$;

\item $C_p(X)$ satisfies $S_{1}(\Gamma_{\bf 0},\Omega_{\bf 0})$.

\end{enumerate}

\end{theorem}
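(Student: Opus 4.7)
The plan is to run the cycle $(1)\Rightarrow(2)\Rightarrow(3)\Rightarrow(4)\Rightarrow(1)$, with the $z$-space hypothesis concentrated in a single shrinking lemma: \emph{in any $z$-space $X$, every countable cozero $\gamma$-cover $\{U_m:m\in\mathbb{N}\}$ of $X$ is $\gamma_F$-shrinkable}. To prove the lemma, set $Y_N=\bigcap_{m\geq N}U_m$; its complement $\bigcup_{m\geq N}(X\setminus U_m)$ is a $Z_\sigma$-set, so the $z$-space axiom forces $Y_N$ itself to be a $Z_\sigma$-set, say $Y_N=\bigcup_l F^N_l$ with $F^N_l$ zero sets, which we may take increasing in $l$. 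Then $F_m=\bigcup_{N\leq m}F^N_m$ is a zero set contained in $U_m$, and the family $\{F_m\}$ is a $\gamma$-cover: for any $x$, choose $N_0$ with $x\in Y_{N_0}$ and $l_0$ with $x\in F^{N_0}_{l_0}$, and note that $x\in F^{N_0}_m\subset F_m$ for every $m\geq\max(N_0,l_0)$.

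For $(1)\Rightarrow(2)$ I use the standard $\mathbb{R}^\omega$-diagonal argument from Proposition \ref{pr1}: given countable cozero $\gamma$-covers $\mathcal{U}_n$ of $X$, pick defining functions for each member, combine them into $h:X\to\mathbb{R}^\omega$, and transfer the selection to the second countable image $h(X)$ where one may apply the hypothesis $S_1(\Gamma,\Omega)$ before pulling back. The implication $(2)\Rightarrow(3)$ is immediate from $\Gamma_F\subset\Gamma_{cz}$.

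For $(3)\Rightarrow(4)$ I follow the template of the proof of Theorem \ref{th144}$(2)\Rightarrow(3)$: given $A_n\in\Gamma_{\bf 0}$, reindex as $\{A_{i,j}\}$ and form $\mathcal{U}_{i,j}=\{f^{-1}((-1/i,1/i)):f\in A_{i,j}\}$, which are cozero $\gamma$-covers that are $\gamma_F$-shrinkable by their zero-set preimages $f^{-1}([-1/(2i),1/(2i)])$ (this step is independent of the $z$-space axiom). Applying $S_1(\Gamma_F,\Omega)$ separately to each level-$i$ subsequence $(\mathcal{U}_{i,j})_j$ yields $f_{i,j}\in A_{i,j}$ whose level-$i$ preimages form an $\omega$-cover; the degenerate case ``$X\in\mathcal{U}_{i,j}$ for cofinally many $i$'' is settled exactly as in Theorem \ref{th144}, and any basic neighborhood $[F,\epsilon]$ of $\bf 0$ is matched by choosing $i>1/\epsilon$ and then $j$ from the level-$i$ cover, giving $\{f_{i,j}\}\in\Omega_{\bf 0}$. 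For $(4)\Rightarrow(1)$, let $h:X\to Z$ be a continuous surjection onto a second countable space $Z$ and let $\mathcal{V}_n$ be open $\gamma$-covers of $Z$ (countable since $Z$ is Lindel\"of); the pullbacks $h^{-1}(\mathcal{V}_n)$ are countable cozero $\gamma$-covers of $X$, the Shrinking Lemma provides zero-set shrinkings $F^n_m\subset U^n_m=h^{-1}(V^n_m)$, and Tychonoff separation supplies $g^n_m\in C(X,[0,1])$ with $g^n_m|_{F^n_m}=0$ and $g^n_m|_{X\setminus U^n_m}=1$. The set $A_n=\{g^n_m:m\in\mathbb{N}\}$ lies in $\Gamma_{\bf 0}$ because each $x$ eventually lies in $F^n_m$; so the hypothesis $(4)$ yields $g^n_{m(n)}$ with $\{g^n_{m(n)}\}\in\Omega_{\bf 0}$. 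Evaluating at $[E,1/2]$ for an arbitrary finite $E\subset X$ forces $E\subset U^n_{m(n)}$ for some $n$, whence $\{V^n_{m(n)}\}$ is an $\omega$-cover of $Z$.

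The main obstacle is the Shrinking Lemma: identifying the right $Z_\sigma$-set ($Y_N$, not $U_m$) on which the $z$-space axiom acts nontrivially, and then collapsing the resulting two-parameter zero-set resolution $\{F^N_l\}$ into a single cover-indexed family $\{F_m\}$ while preserving the $\gamma$-property, is the only genuinely new ingredient; every other implication is a direct adaptation of methods already present in the paper.
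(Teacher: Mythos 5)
Your argument is correct, but it takes a genuinely more self-contained route than the paper. The paper proves only $(1)\Leftrightarrow(2)$ directly and outsources the other two equivalences to citations: $(2)\Leftrightarrow(3)$ to Proposition 3.3 of Sakai \cite{sak1} and $(3)\Leftrightarrow(4)$ to Proposition 6.4 of \cite{os2}. You instead close the cycle $(1)\Rightarrow(2)\Rightarrow(3)\Rightarrow(4)\Rightarrow(1)$ yourself, and your key new ingredient, the Shrinking Lemma (in a $z$-space every countable cozero $\gamma$-cover is $\gamma_F$-shrinkable), is proved correctly: applying the $z$-property to the $Z_\sigma$-complements of the tails $Y_N=\bigcap_{m\ge N}U_m$ and diagonalizing the resulting increasing zero-set decompositions $Y_N=\bigcup_l F^N_l$ into $F_m=\bigcup_{N\le m}F^N_m$ does give a zero-set $\gamma$-shrinking with $F_m\subset U_m$. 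Your $(3)\Rightarrow(4)$ and $(4)\Rightarrow(1)$ are faithful adaptations of the templates of Theorem \ref{th144} and Proposition \ref{pr1}, and they are sound at the same level of rigor as the paper's own arguments (the usual glossed trivialities: reducing arbitrary $\gamma$-covers to countably infinite subfamilies, discarding repeated or trivial members, checking that the sets $A_n$ of separating functions are infinite and omit $\bf 0$). Two cosmetic slips: the countable reduction of $\gamma$-covers of $Z$ comes from the fact that any countably infinite subfamily of a $\gamma$-cover is again a $\gamma$-cover, not from Lindel\"ofness (a Lindel\"of-extracted subcover could be finite); and the functions $g^n_m$ come from complete separation of the disjoint zero-sets $F^n_m$ and $X\setminus U^n_m$ rather than from Tychonoff point--closed-set separation. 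What your route buys is transparency about the hypothesis: the $z$-property is used only in $(4)\Rightarrow(1)$ via the Shrinking Lemma, so $(1)\Rightarrow(2)\Rightarrow(3)\Rightarrow(4)$ hold for arbitrary Tychonoff $X$; in effect you reprove in this special case the content of the two cited propositions, whereas the paper's proof is shorter but leaves the role of the $z$-assumption hidden inside the references.
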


\begin{proof}

$(1)\Rightarrow(2)$. Assume that $X$ is projectively
$S_{1}(\Gamma, \Omega)$. Let $(\mathcal{U}_n: n\in \mathbb{N})$ be
a sequence of countable covers of $X$ such that $\mathcal{U}_n\in
\Gamma_{cz}$ for each $n\in \mathbb{N}$. For every $n\in
\mathbb{N}$ and $U\in \mathcal{U}_n$, fix a continuous function
$f_U: X \mapsto \mathbb{R}$ such that
$U=f^{-1}_U(\mathbb{R}\setminus \{0\})$. Put $f=\prod\{f_U: U\in
\mathcal{U}_n, n\in \mathbb{N}\}$. Then $f$ is a continuous
mapping from $X$ to $\mathbb{R}^{\omega}$, and thus by (1),
$Y=f(X)$ has the property $S_{1}(\Gamma, \Omega)$. Put
$\mathcal{V}_n=\{f(U): U\in \mathcal{U}_n\}$. Then $\mathcal{V}_n$
is an $\gamma$-cover of $Y$. Since $Y$ has the property
$S_{1}(\Gamma, \Omega)$, there is $H_n\in \mathcal{V}_n$ such that
$\{H_n : n\in \mathbb{N}\}$ is $\omega$-cover of $Y$. Put
$F_n=f^{-1}(H_n)$. Then $F_n\in \mathcal{U}_n$, and $\{F_n : n\in
\mathbb{N}\}$ is $\omega$-cover of $X$.

$(2)\Rightarrow(1)$. Let $f:X\mapsto Y$ be continuous mapping from
$X$ onto a second countable space $Y$, and let $(\mathcal{U}_n:
n\in \mathbb{N})$ be a sequence of $\gamma$-covers of $Y$. Since
$Y$ is second countable, there is a countable subcover
$\mathcal{W}_n\subset \mathcal{V}_n$. Put
$\mathcal{O}_n=\{f^{-1}(W): W\in \mathcal{W}_n\}$. Then
$\mathcal{O}_n$ is a countable $\gamma$-cover of $X$ by cozero
sets. By (2), there is $H_n\in \mathcal{O}_n$ such that $\{H_n :
n\in \mathbb{N}\}$ is $\omega$-cover of $X$. For every $n\in
\mathbb{N}$, pick $U_H\in \mathcal{U}_n$ such that $U_H\supset
f(H)$. Put $\mathcal{F}=\{U_{H_n}: n\in \mathbb{N} \}$. Then
$\mathcal{F}$ is $\omega$-cover of $Y$. This proves that $Y$ has
the property $S_{1}(\Gamma, \Omega)$.

$(2)\Leftrightarrow(3)$. By Proposition 3.3 in \cite{sak1}.

$(3)\Leftrightarrow(4)$. By Proposition 6.4 in \cite{os2}.
\end{proof}

By Theorem 6.6 in \cite{os2} and Proposition \ref{pr80}, we have
the next result.

\begin{theorem}\label{th88} For a $z$-space $X$ with a coarser second countable topology, the following
statements are equivalent:

\begin{enumerate}

\item $C_p(X)$ satisfies $S_{1}(\mathcal{S},\mathcal{D})$;

\item $X$ satisfies $S_{1}(\Gamma_F, \Omega)$;

\item $C_p(X)$ satisfies $S_{1}(\Gamma_{\bf 0}, \Omega_{\bf 0})$;

\item $C_p(X)$ satisfies $S_{1}(\mathcal{S}, \Omega_{\bf 0})$;

\item $X$ is projectively $S_{1}(\Gamma, \Omega)$.

\end{enumerate}

\end{theorem}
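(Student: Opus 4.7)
The plan is to derive Theorem~\ref{th88} by combining Proposition~\ref{pr80} above with Theorem~6.6 in \cite{os2}. The common item in both results is statement (3), $C_p(X)\in S_{1}(\Gamma_{\mathbf{0}},\Omega_{\mathbf{0}})$, which serves as the bridge between the two chains of equivalences.

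First I would apply Proposition~\ref{pr80} to the $z$-space $X$; this immediately gives the equivalences $(2)\Leftrightarrow(3)\Leftrightarrow(5)$. The $z$-space hypothesis is used precisely here: it collapses $S_{1}(\Gamma_{cz},\Omega)$ and $S_{1}(\Gamma_{F},\Omega)$ via Proposition~3.3 of \cite{sak1} (as in the proof of Proposition~\ref{pr80}), and the passage to the projective formulation is carried out through continuous images into $\mathbb{R}^{\omega}$ by the same argument used in the $(1)\Leftrightarrow(2)$ portion of Proposition~\ref{pr80}.

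Next I would invoke Theorem~6.6 of \cite{os2}, whose hypothesis matches the second standing assumption here, namely that $X$ has a coarser second countable topology. That result supplies the chain $(1)\Leftrightarrow(2)\Leftrightarrow(3)\Leftrightarrow(4)$, linking the ``$\mathcal{S}$--$\mathcal{D}$'' and ``$\mathcal{S}$--$\Omega_{\mathbf{0}}$'' versions on the function-space side with the base-space principle $S_{1}(\Gamma_{F},\Omega)$. The coarser second countable topology is essential at this step: by Noble's Theorem~\ref{th31} it guarantees separability of $C_p(X)$, so that the families $\mathcal{S}$ and $\mathcal{D}$ carry enough content, and by Velichko's Theorem~\ref{th38} combined with the $V$-property/$z$-space structure one obtains the sequential separability needed to convert $\Gamma_{F}$-covers of $X$ into sequences in $C_p(X)$ accumulating at $\mathbf{0}$.

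Concatenating the two chains through the shared pivot (3) yields the full five-way equivalence. The main obstacle is one of verification rather than invention: the work lies in checking that Theorem~6.6 of \cite{os2} indeed lists all four of (1)--(4) among its equivalent conditions in precisely these forms, so that splicing with Proposition~\ref{pr80} produces a coherent list. Should a link be formally missing, it could be supplied by a short direct diagonalization argument of the same flavour as $(4)\Rightarrow(1)$ in the proof of Theorem~\ref{th17}, but no new combinatorial idea beyond what is already developed in the two cited results should be necessary.
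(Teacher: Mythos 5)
Your proposal matches the paper's proof: the paper derives Theorem~\ref{th88} exactly by combining Proposition~\ref{pr80} (giving the equivalences among (2), (3), (5) for a $z$-space) with Theorem~6.6 of \cite{os2} (giving (1)--(4) under the coarser second countable topology hypothesis), spliced through the common condition $S_{1}(\Gamma_{\bf 0},\Omega_{\bf 0})$ for $C_p(X)$. So your route is essentially identical to the author's, with your closing caveat about checking the exact form of Theorem~6.6 in \cite{os2} being the only verification left.
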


\begin{proposition}
Every space of cardinality less than $\mathfrak{d}$ is
projectively $S_{1}(\Gamma,\Omega)$.
\end{proposition}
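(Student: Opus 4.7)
The plan is to unwind the definition of projectively $S_1(\Gamma,\Omega)$ and reduce the proposition to a direct application of the dominating number $\mathfrak{d}$. Since any second countable continuous image $Y$ of $X$ satisfies $|Y|\le |X|<\mathfrak{d}$, it suffices to show that every second countable space $Y$ with $|Y|<\mathfrak{d}$ has the property $S_1(\Gamma,\Omega)$. The case of finite $Y$ is trivial, so I assume $Y$ is infinite; then $|[Y]^{<\omega}|=|Y|<\mathfrak{d}$.

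Let $(\mathcal{U}_n:n\in\mathbb{N})$ be a sequence of $\gamma$-covers of $Y$. Because $Y$ is Lindel\"{o}f, each $\mathcal{U}_n$ contains a countable $\gamma$-subcover: take a countable subcover and, if it is finite, enlarge it with further elements of the infinite family $\mathcal{U}_n$. So I may assume $\mathcal{U}_n=\{U_{n,m}:m\in\mathbb{N}\}$. For every finite $F\subseteq Y$, the set $\{m:F\not\subset U_{n,m}\}\subseteq \bigcup_{y\in F}\{m:y\notin U_{n,m}\}$ is finite, so the recipe $g_F(n):=\max\{m:F\not\subset U_{n,m}\}$ (with an empty maximum set to $0$) defines $g_F\in\mathbb{N}^{\mathbb{N}}$.

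Because $\{g_F:F\in[Y]^{<\omega}\}$ has cardinality strictly below $\mathfrak{d}$, it is not a dominating family in $(\mathbb{N}^{\mathbb{N}},\le^{*})$. I pick $h\in\mathbb{N}^{\mathbb{N}}$ with $h\not\le^{*} g_F$ for every finite $F\subseteq Y$; equivalently, the set $\{n:h(n)>g_F(n)\}$ is infinite for every such $F$. Put $U_n:=U_{n,h(n)}\in\mathcal{U}_n$. Then for each finite $F\subseteq Y$ there is some $n$ (indeed, infinitely many) with $h(n)>g_F(n)$, which by the definition of $g_F$ forces $F\subseteq U_{n,h(n)}$. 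Hence $\{U_n:n\in\mathbb{N}\}$ is an $\omega$-cover of $Y$, completing the verification of $S_1(\Gamma,\Omega)$.

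The delicate points are bookkeeping only: extracting countable $\gamma$-subcovers using Lindel\"{o}f-ness, and checking that $g_F$ is well-defined from the $\gamma$-cover condition. The substantive step, parallel to the classical proof that a set of reals of size less than $\mathfrak{d}$ is Menger, is the single use of the dominating number to produce the selector $h$.
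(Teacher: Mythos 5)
Your argument is correct. Note, however, that the paper states this proposition without any proof at all: it is one of several ``critical cardinality'' remarks that the author leaves to the reader, implicitly resting on the known fact that the minimal cardinality of a space (or set of reals) failing $S_{1}(\Gamma,\Omega)$ is $\mathfrak{d}$, in the style of the quoted Propositions 8, 31 and 55 from Bonanzinga--Cammaroto--Matveev. So your contribution is a genuinely self-contained route: you reduce to second countable images $Y$ with $|Y|\le|X|<\mathfrak{d}$, extract countable $\gamma$-covers, encode each finite set $F$ by the function $g_F(n)=\max\{m: F\not\subset U_{n,m}\}$, and use that a family of size $<\mathfrak{d}$ cannot dominate to find a selector $h$, exactly the classical $\mathfrak{d}$-argument for Menger adapted to $\gamma$-covers; all the steps check out, including the well-definedness of $g_F$ and the negation of domination giving infinitely many $n$ with $F\subseteq U_{n,h(n)}$. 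Two small simplifications are available: Lindel\"{o}fness of $Y$ is not needed, since any countably infinite subfamily of a $\gamma$-cover is again a $\gamma$-cover (every point misses only finitely many members), and for the same reason your argument in fact proves the stronger statement that \emph{every} space of cardinality less than $\mathfrak{d}$ satisfies $S_{1}(\Gamma,\Omega)$ outright, of which the projective version is an immediate consequence.
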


\section{The projectively $S_{fin}(\Gamma, \Omega)$ property}

\begin{proposition}\label{pr81} For a $z$-space $X$, the following statements are
equivalent:

\begin{enumerate}

\item $X$ is projectively $S_{fin}(\Gamma, \Omega)$;

\item $X$ satisfies $S_{fin}(\Gamma_{cz}, \Omega)$;

\item $X$ satisfies $S_{fin}(\Gamma_{F}, \Omega)$;

\item $C_p(X)$ satisfies $S_{fin}(\Gamma_{\bf 0},\Omega_{\bf 0})$.

\end{enumerate}

\end{proposition}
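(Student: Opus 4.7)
The plan is to mirror, almost verbatim, the proof of Theorem \ref{pr80}, replacing single selections with finite selections. The four-way equivalence has the same shape: the core implications $(1)\Leftrightarrow(2)$ are pure ``push-forward/pull-back'' arguments that do not distinguish between $S_{1}$ and $S_{fin}$, while $(2)\Leftrightarrow(3)$ and $(3)\Leftrightarrow(4)$ can be handed off to the same Sakai / $C_p$-theory equivalences that were invoked in the $S_{1}$ case, since those results are stated at the level of selection principles and apply to $S_{fin}$ as well (this is where the $z$-space hypothesis is genuinely used, to pass between $\Gamma_{cz}$ and $\Gamma_F$).

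For $(1)\Rightarrow(2)$ I would start with a sequence $(\mathcal{U}_n:n\in\mathbb{N})$ of countable cozero $\gamma$-covers of $X$, pick for each $U\in\mathcal{U}_n$ a continuous $f_U:X\to\mathbb{R}$ with $U=f_U^{-1}(\mathbb{R}\setminus\{0\})$, and form $f=\prod\{f_U:U\in\mathcal{U}_n,\,n\in\mathbb{N}\}:X\to\mathbb{R}^\omega$. Setting $Y=f(X)$, the property projectively $S_{fin}(\Gamma,\Omega)$ gives $Y\in S_{fin}(\Gamma,\Omega)$. The images $\mathcal{V}_n=\{f(U):U\in\mathcal{U}_n\}$ are $\gamma$-covers of $Y$, so I can select finite $\mathcal{G}_n\subseteq\mathcal{V}_n$ with $\bigcup_n\mathcal{G}_n$ an $\omega$-cover of $Y$; pulling these back through $f$ yields finite $\mathcal{F}_n\subseteq\mathcal{U}_n$ with $\bigcup_n\mathcal{F}_n$ an $\omega$-cover of $X$, which is what $S_{fin}(\Gamma_{cz},\Omega)$ demands.

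For $(2)\Rightarrow(1)$ I would take a continuous surjection $f:X\to Y$ onto a second countable space $Y$ and a sequence $(\mathcal{U}_n:n\in\mathbb{N})$ of $\gamma$-covers of $Y$. Second countability gives countable subcovers $\mathcal{W}_n\subseteq\mathcal{U}_n$ which are still $\gamma$-covers; pulling back produces countable cozero $\gamma$-covers $\mathcal{O}_n=\{f^{-1}(W):W\in\mathcal{W}_n\}$ of $X$. Applying (2), I obtain finite $\mathcal{H}_n\subseteq\mathcal{O}_n$ with $\bigcup_n\mathcal{H}_n$ an $\omega$-cover of $X$; for each $H\in\mathcal{H}_n$ choose some $U_H\in\mathcal{U}_n$ with $U_H\supseteq f(H)$, and the finite sets $\{U_H:H\in\mathcal{H}_n\}$ together form an $\omega$-cover of $Y$, establishing $Y\in S_{fin}(\Gamma,\Omega)$.

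For $(2)\Leftrightarrow(3)$ I would invoke Proposition 3.3 in \cite{sak1} exactly as in the proof of Theorem \ref{pr80}: in a $z$-space, a cozero $\gamma$-cover is automatically $\gamma_F$-shrinkable, which makes the two selection principles coincide regardless of whether single or finite selection is used. Finally, $(3)\Leftrightarrow(4)$ is the $C_p$-theoretic duality from Proposition 6.4 in \cite{os2}, which likewise translates $S_{fin}$-statements on $\gamma_F$-covers of $X$ into $S_{fin}$-statements on $\Gamma_{\mathbf{0}}$ at $\mathbf{0}\in C_p(X)$. The main (minor) obstacle is checking that the cited Sakai and $C_p$-duality lemmas are indeed stated, or readily extend, to the $S_{fin}$ variant; since both lemmas are proved cover-by-cover and the finite-union upgrade is formal, this should not require any new argument beyond remarking on it.
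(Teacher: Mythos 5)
Your proposal is correct and is essentially the paper's own argument: the paper's proof is literally ``similar to the proof of Proposition~\ref{pr80}'', with the $(2)\Leftrightarrow(3)\Leftrightarrow(4)$ part delegated to the $S_{fin}$-specific results Theorem~7.2 in \cite{os2} and Theorem~76 in \cite{bcm}. The only (cosmetic) difference is that where you remark that the $S_{1}$ lemmas (Proposition~3.3 in \cite{sak1} and Proposition~6.4 in \cite{os2}) upgrade formally to finite selections, the author instead cites the already-available $S_{fin}$ counterparts, so no new argument is required.
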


\begin{proof}
Similar the proof in Proposition \ref{pr80} and by Theorem 7.2 in
\cite{os2} and Theorem 76 in \cite{bcm}.
\end{proof}

By Theorem 7.2 in \cite{os2} and Proposition \ref{pr81}, we have
the next result.

\begin{theorem}\label{th888} For a $z$-space $X$ with a coarser second countable topology, the following
statements are equivalent:

\begin{enumerate}

\item $C_p(X)$ satisfies $S_{fin}(\mathcal{S},\mathcal{D})$;

\item $X$ satisfies $S_{fin}(\Gamma_F, \Omega)$;

\item $C_p(X)$ satisfies $S_{fin}(\Gamma_{\bf 0}, \Omega_{\bf
0})$;

\item $C_p(X)$ satisfies $S_{fin}(\mathcal{S}, \Omega_{\bf 0})$;

\item $X$ is projectively $S_{fin}(\Gamma, \Omega)$.

\end{enumerate}

\end{theorem}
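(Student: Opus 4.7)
The plan is to chain two already-established results, exactly in parallel with the proof of Theorem \ref{th88}. First, Proposition \ref{pr81} gives, for any $z$-space $X$, the equivalences $(2)\Leftrightarrow(3)\Leftrightarrow(5)$ (together with the auxiliary equivalence to $S_{fin}(\Gamma_{cz},\Omega)$). Second, Theorem 7.2 of \cite{os2}, which is the $S_{fin}$-counterpart of Theorem 6.6 of \cite{os2} used in the proof of Theorem \ref{th88}, supplies the $C_p$-side equivalences $(1)\Leftrightarrow(3)\Leftrightarrow(4)$ under the extra hypothesis of a coarser second countable topology on $X$. Concatenating these two rings yields the full equivalence of $(1)$--$(5)$.

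The two hypotheses of the theorem play complementary roles. The $z$-space assumption is precisely what allows one to identify $\gamma_F$-shrinkable cozero covers with arbitrary countable cozero $\gamma$-covers, as used via Proposition 3.3 in \cite{sak1} inside Proposition \ref{pr81}. The coarser second countable topology assumption is used via Noble's Theorem \ref{th31} so that $C_p(X)$ is separable, and, combined with Velichko's Theorem \ref{th38}, so that countable sequentially dense subsets of $C_p(X)$ actually exist; without this the selection principles $S_{fin}(\mathcal{S},\cdot)$ in (1) and (4) would be vacuous.

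The step that would be the main obstacle, if one wanted to rebuild Theorem 7.2 of \cite{os2} from scratch rather than quote it, is the direction $(1)\Rightarrow(2)$. Starting from a sequence of $\gamma_F$-shrinkable cozero covers $\mathcal{U}_n$ with zero-set shrinkings $\{F_U : U\in\mathcal{U}_n\}$, one would build an auxiliary countable sequentially dense subset $A_n\subseteq C_p(X)$ by taking, for each $U\in\mathcal{U}_n$ and each rational $q$, a continuous function equal to $q$ on $F_U$ and equal to $0$ on $X\setminus U$; the $\gamma$-cover property of $\{F_U\}$ ensures that $A_n$ is sequentially dense at every constant rational function, and then (by the coarser second countable topology and a diagonalization) sequentially dense in $C_p(X)$. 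Applying (1) to $(A_n)$ produces finite $T_n\subseteq A_n$ whose union is dense in $C_p(X)$, from which finite sets $\mathcal{F}_n\subseteq\mathcal{U}_n$ with $\bigcup_n\mathcal{F}_n\in\Omega$ can be read off.

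The converse direction $(2)\Rightarrow(1)$ and the implications $(3)\Rightarrow(4)$ and $(1)\Rightarrow(4)$ are then routine: the former uses the standard translation $\mathcal{U}_n=\{h^{-1}[(-1/n,1/n)] : h\in S_n\}$, which produces $\gamma_F$-shrinkable cozero $\gamma$-covers at $\mathbf{0}$; the latter two follow from the inclusions $\Gamma_{\mathbf{0}}\subseteq\mathcal{S}$ and $\mathcal{S}\subseteq\mathcal{D}$ together with $\Omega_{\mathbf{0}}\subseteq\mathcal{D}$. Thus the entire theorem reduces to invoking the two quoted results.
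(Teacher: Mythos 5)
Your proposal is correct and follows essentially the same route as the paper: the theorem is obtained there precisely by combining Proposition \ref{pr81} (which handles $(2)\Leftrightarrow(3)\Leftrightarrow(5)$ for $z$-spaces) with Theorem 7.2 in \cite{os2} (which supplies the $C_p$-side equivalences under the coarser second countable topology hypothesis), in exact analogy with the derivation of Theorem \ref{th88} from Theorem 6.6 in \cite{os2} and Proposition \ref{pr80}. Your additional sketch of how Theorem 7.2 of \cite{os2} would be reproved is extra detail beyond what the paper records, but the core argument coincides.
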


\begin{proposition}
Every space of cardinality less than $\mathfrak{d}$ is
projectively $S_{fin}(\Gamma,\Omega)$.
\end{proposition}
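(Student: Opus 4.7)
The plan is to deduce this statement from earlier results in the paper. The quickest route uses the preceding Proposition of Section~10, which asserts that every space of cardinality less than $\mathfrak{d}$ is projectively $S_{1}(\Gamma,\Omega)$. Since $S_{1}(\mathcal{A},\mathcal{B})\Rightarrow S_{fin}(\mathcal{A},\mathcal{B})$ in general (any $S_{1}$-selection $(b_n)$ may be regarded as the sequence of singleton finite subsets $B_n=\{b_n\}$), one concludes that projectively $S_{1}(\Gamma,\Omega)\Rightarrow$ projectively $S_{fin}(\Gamma,\Omega)$. Equivalently, one may invoke the final Proposition of Section~9 (every space of cardinality less than $\mathfrak{d}$ is projectively $S_{fin}(\Omega,\Omega)$) together with the trivial inclusion $\Gamma\subseteq\Omega$, which gives $S_{fin}(\Omega,\Omega)\Rightarrow S_{fin}(\Gamma,\Omega)$ as properties of spaces; this implication is preserved under passing to second countable continuous images.

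For a self-contained direct argument, I would proceed as follows. Let $g\colon X\to Y$ be continuous with $Y$ second countable; then $|Y|\leq|X|<\mathfrak{d}$ and $Y$ is a Lindel\"{o}f separable metric space. Given a sequence $(\mathcal{U}_n)$ of $\gamma$-covers of $Y$, Lindel\"{o}fness lets me assume each $\mathcal{U}_n=\{V_n^k:k\in\mathbb{N}\}$ is countable. For each finite $F\subseteq Y$ and each $n$, the $\gamma$-cover property yields a threshold $h_F(n)=\min\{k:F\subseteq V_n^j\text{ for every }j\geq k\}\in\mathbb{N}$. The family $\{h_F:F\in[Y]^{<\omega}\}$ has cardinality $\max(|Y|,\aleph_0)<\mathfrak{d}$, so it is not dominating in $(\mathbb{N}^{\mathbb{N}},\leq^{*})$. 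Picking $\varphi\in\mathbb{N}^{\mathbb{N}}$ such that $\varphi(n)\geq h_F(n)$ for infinitely many $n$ for every finite $F$, the finite sets $B_n=\{V_n^k:k\leq\varphi(n)\}\subseteq\mathcal{U}_n$ satisfy $\bigcup_n B_n\in\Omega$: for every finite $F\subseteq Y$ there are infinitely many $n$ with $F\subseteq V_n^{\varphi(n)}\in B_n$.

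The only subtlety is that the non-dominating argument must be indexed by finite subsets of $Y$ rather than by single points, so as to yield an $\omega$-cover rather than just a cover; apart from this the argument is routine, which is why the preceding routes via the previously established Propositions are preferable.
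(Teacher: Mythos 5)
Your proposal is correct. The paper in fact states this proposition without proof, treating it as immediate, and the intended justification is exactly your first route: it follows from the proposition of the preceding section (cardinality less than $\mathfrak{d}$ implies projectively $S_{1}(\Gamma,\Omega)$) together with the trivial implication $S_{1}\Rightarrow S_{fin}$, or equally from the Section on $S_{fin}(\Omega,\Omega)$ via the monotonicity $\Gamma\subseteq\Omega$; both reductions are valid since the implications hold for every space and hence for every second countable continuous image. Your self-contained argument is also sound: the threshold functions $h_F$ indexed by finite subsets of $Y$ form a family of size $\max(|Y|,\aleph_0)<\mathfrak{d}$, a non-dominated $\varphi$ yields the finite selections $B_n$, and the $\omega$-cover property follows as you say. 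One small remark: the appeal to Lindel\"{o}fness to make the $\gamma$-covers countable is not quite the right reason (a countable subcover could in principle be finite); what one actually uses is that every countably infinite subfamily of a $\gamma$-cover is again a $\gamma$-cover, which gives the reduction directly. This does not affect the correctness of the argument.
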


\section{The projectively $S_{1}(\Gamma, \Gamma)$ property}

In \cite{sak1} (Theorem 2.5),  M. Sakai proved:

\begin{theorem} $(Sakai)$ \label{th151} For a space $X$, the following statements are
equivalent:

\begin{enumerate}

\item  $C_p(X)$ satisfies $S_{1}(\Gamma_{\bf 0}, \Gamma_{\bf 0})$;

\item $X$ satisfies $S_{1}(C_\Gamma, C_\Gamma)$ and it is strongly
zero-dimensional.

\end{enumerate}

\end{theorem}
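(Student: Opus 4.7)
The plan is to establish both implications via the standard translation between sequences in $C_p(X)$ converging to $\mathbf{0}$ and cozero $\gamma$-covers of $X$, with strong zero-dimensionality controlling the passage between cozero and clopen structures. Both directions parallel the patterns already seen for Theorem~\ref{th22} and Theorem~\ref{pr80}.

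For $(1)\Rightarrow(2)$, suppose $C_p(X)$ satisfies $S_{1}(\Gamma_{\mathbf{0}},\Gamma_{\mathbf{0}})$. First I would show $X\models S_{1}(C_\Gamma,C_\Gamma)$. Given a sequence $(\mathcal{U}_n)$ of $\gamma$-covers of the appropriate type, enumerate $\mathcal{U}_n=\{U_{n,k}:k\in\mathbb{N}\}$ and pick continuous $h_{n,k}\colon X\to[0,1]$ adapted to $U_{n,k}$ via the cozero/zero-set structure. Because each $\mathcal{U}_n$ is a $\gamma$-cover, a mild technical modification (taking suitable finite combinations of the $h_{n,k}$) yields a set $A_n\in\Gamma_{\mathbf{0}}$ in $C_p(X)$: each $x\in X$ lies in all but finitely many $U_{n,k}$, so only finitely many members of $A_n$ are far from zero at $x$. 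Applying the hypothesis to $(A_n)$ produces a convergent selection $g_n\in A_n$, from which one reads off $U_{n,k(n)}\in\mathcal{U}_n$ forming a $\gamma$-cover of $X$.

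For strong zero-dimensionality, I would argue that any two disjoint zero-sets $F_0, F_1$ are separated by a clopen set. Fix a Urysohn function $f\colon X\to[0,1]$ with $f(F_0)=\{0\}$ and $f(F_1)=\{1\}$, and construct sequences $A_n$ of truncations and rescalings of $f$ at height $1/n$ concentrated on $F_0$ (so that each $A_n\in\Gamma_{\mathbf{0}}$ but captures the geometry near $F_1$). The $S_{1}(\Gamma_{\mathbf{0}},\Gamma_{\mathbf{0}})$-selection, being forced to converge uniformly to $\mathbf{0}$ on finite subsets of $F_0\cup F_1$, will pin down a continuous $\{0,1\}$-valued function that separates $F_0$ from $F_1$, giving the required clopen partition.

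For $(2)\Rightarrow(1)$, given $A_n=\{f_{n,k}:k\in\mathbb{N}\}\in\Gamma_{\mathbf{0}}$, the sets $U_{n,k}=f_{n,k}^{-1}((-1/n,1/n))$ form countable cozero $\gamma$-covers of $X$, since $f_{n,k}\to\mathbf{0}$ pointwise means each $x$ lies in all but finitely many $U_{n,k}$. Strong zero-dimensionality lets me, when $C_\Gamma$ demands it, refine these to clopen $\gamma$-covers. Applying $S_{1}(C_\Gamma,C_\Gamma)$ picks $k(n)$ with $\{U_{n,k(n)}:n\in\mathbb{N}\}$ a $\gamma$-cover, and then the routine basic-neighborhood argument shows $f_{n,k(n)}\to\mathbf{0}$ in $C_p(X)$: for any $[F,(-\epsilon,\epsilon)]$, pick $n$ with $1/n<\epsilon$ and use the $\gamma$-cover property to get $F\subseteq U_{n,k(n)}$ for all but finitely many $n$.

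The main obstacle I expect is extracting strong zero-dimensionality in $(1)\Rightarrow(2)$: the other translations are routine extensions of techniques already used in the paper, but converting a convergent selection of real-valued continuous functions into a combinatorial clopen-separation property of $X$ requires a delicate design of the $A_n$'s whose diagonal selection is forced to land in $\{0,1\}$ on the prescribed pair of disjoint zero-sets.
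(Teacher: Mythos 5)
The paper itself gives no proof of this statement: it is quoted verbatim from Sakai \cite{sak1} (Theorem 2.5), so there is no in-paper argument to measure you against; your proposal has to stand on its own. The two ``translation'' halves of your plan are essentially fine. In $(2)\Rightarrow(1)$ the phrase ``strong zero-dimensionality lets me refine these to clopen $\gamma$-covers'' hides the one step that needs care: an arbitrary cozero $\gamma$-cover need not admit a clopen $\gamma$-refinement; what works here is that $U_{n,k}=f_{n,k}^{-1}((-1/n,1/n))$ contains the zero-set $f_{n,k}^{-1}([-1/2n,1/2n])$, and these zero-sets already form a $\gamma$-cover, so strong zero-dimensionality lets you interpose a clopen set between the two; that is routine and repairable. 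Likewise the first half of $(1)\Rightarrow(2)$ (characteristic functions of complements of clopen members give sets in $\Gamma_{\bf 0}$, and a convergent selection reads back a clopen $\gamma$-cover) is standard, modulo the usual degenerate cases.

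The genuine gap is exactly where you yourself locate ``the main obstacle'': deriving strong zero-dimensionality from (1). The sketch --- truncations and rescalings of a single Urysohn function $f$ for $F_0,F_1$, with the selection ``forced'' to pin down a continuous $\{0,1\}$-valued separating function --- cannot work as described: any such family of truncations converges to $\bf{0}$ for every Tychonoff $X$, so applying $S_1(\Gamma_{\bf 0},\Gamma_{\bf 0})$ to it yields only another sequence converging to $\bf{0}$ and carries no information from which a two-valued continuous function could be extracted; nothing in the sketch uses hypothesis (1) in a way that could fail for, say, $X=[0,1]$, yet the implication holds there only because (1) fails for $C_p([0,1])$, and detecting that is itself a nontrivial fact. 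The actual argument runs differently: the property $S_1(\Gamma_{\bf 0},\Gamma_{\bf 0})$ is inherited by $C_p(f(X))$, which embeds into $C_p(X)$ (compose with $f$) as a subspace containing $\bf{0}$; hence every continuous image $f(X)\subseteq\mathbb{R}$ satisfies $S_1(\Gamma,\Gamma)$ (via the cozero characterization, cf.\ Theorem \ref{th67}, since for metrizable images cozero $=$ open). One then needs the separate, nontrivial fact that no subset of $\mathbb{R}$ containing an interval has this property (equivalently, $C_p([0,1])$ fails the sequence selection property --- the interval is not a wQN-space), so every continuous real-valued image of $X$ has empty interior, i.e.\ is zero-dimensional; finally, this characterizes strong zero-dimensionality: for completely separated $F_0,F_1$ choose $r\in(0,1)\setminus f(X)$ and take the clopen set $f^{-1}([0,r))=f^{-1}([0,r])$. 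Passing to images, the failure of the property for the unit interval, and the omitted-value characterization of strong zero-dimensionality are the missing ideas; truncations of a single $f$ cannot substitute for them.
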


\begin{theorem}(Theorem 67 in \cite{bcm})\label{th67}  The following properties
are equivalent for a space $X$:

\begin{enumerate}

\item  $C_p(X)$ satisfies $S_{1}(\Gamma_{\bf 0}, \Gamma_{\bf 0})$;

\item $X$ satisfies $S_{1}(\Gamma_{cz}, \Gamma)$;

\item $X$ is projectively $S_1(\Gamma, \Gamma)$.

\end{enumerate}

\end{theorem}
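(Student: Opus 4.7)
The plan is to establish the three equivalences by first setting $(2)\Leftrightarrow(3)$ by a pull-back/push-forward argument in the style of Theorem \ref{pr80}, and then $(1)\Leftrightarrow(2)$ via the standard translation between countable cozero $\gamma$-covers of $X$ and infinite subsets of $C_p(X)$ that $\gamma$-converge to the constant function $\mathbf{0}$, invoking Sakai's Theorem \ref{th151}.

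For $(2)\Rightarrow(3)$, I would fix a continuous map $f\colon X\to Y$ with $Y$ second countable and a sequence $(\mathcal{U}_n)$ of $\gamma$-covers of $Y$; since $Y$ is Lindel\"of, thinning each $\mathcal{U}_n$ to a countable $\mathcal{W}_n$ leaves a $\gamma$-cover (each $y\in Y$ misses only finitely many members of $\mathcal{U}_n$, hence only finitely many of $\mathcal{W}_n$). Pulling back gives countable cozero $\gamma$-covers $\mathcal{O}_n=\{f^{-1}(W):W\in \mathcal{W}_n\}$ of $X$; applying $S_1(\Gamma_{cz},\Gamma)$ yields $H_n\in \mathcal{O}_n$ forming a $\gamma$-cover of $X$, and the matching $W_n\in\mathcal{W}_n$ then form a $\gamma$-cover of $f(X)=Y$. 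For $(3)\Rightarrow(2)$, given countable cozero $\gamma$-covers $(\mathcal{U}_n)$ of $X$, for each $U\in\bigcup_n\mathcal{U}_n$ pick $f_U\in C(X)$ with $U=f_U^{-1}(\mathbb{R}\setminus\{0\})$, set $f=\prod f_U\colon X\to\mathbb{R}^\omega$, and observe that the pushforwards $\mathcal{V}_n=\{f(U):U\in\mathcal{U}_n\}$ are open $\gamma$-covers of $f(X)$ (each $f(U)$ is the trace on $f(X)$ of a subbasic open set, and the $\gamma$-property is transferred by $f$). Applying projective $S_1(\Gamma,\Gamma)$ to $f(X)$ and pulling the selection back through $f^{-1}$ produces the desired selection in $\mathcal{U}_n$.

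For $(1)\Leftrightarrow(2)$, the plan is to use the standard correspondence. In one direction, if $\{f_n\}\in\Gamma_{\mathbf 0}$ in $C_p(X)$, then $U_n=f_n^{-1}((-1,1))$ is a cozero $\gamma$-cover of $X$, since for any finite $F\subset X$ (in particular any singleton) almost all $f_n$ send $F$ into $(-1,1)$. Conversely, given a countable cozero $\gamma$-cover $\{U_n\}$ of $X$, write each $U_n=\bigcup_k F_{n,k}$ with $F_{n,k}$ a zero-set contained in $U_n$ and use Urysohn separation to construct $f_n\in C(X)$ vanishing on a suitable zero-subset of $U_n$ and taking value $1$ outside $U_n$, arranged so that $\{f_n\}\in\Gamma_{\mathbf 0}$. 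Combining this with Sakai's Theorem \ref{th151} plus the trivial observation that $S_1(\Gamma_{cz},\Gamma_{cz})=S_1(\Gamma_{cz},\Gamma)$ (the selected sets are automatically cozero and hence open) yields the equivalence.

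The main obstacle is the $(2)\Rightarrow(1)$ direction of the $C_p$-translation: producing a sequence in $C_p(X)$ which actually $\gamma$-converges to $\mathbf 0$ requires choosing the witnessing functions so that for every \emph{finite} test set $F$ and every $\epsilon>0$ the sequence is eventually bounded by $\epsilon$ on $F$, not merely pointwise. This is delicate because the $\gamma$-cover property of $\{U_n\}$ only provides eventual containment of single points, so one must bootstrap from pointwise to uniform-on-finite-sets behavior, which is precisely where Sakai's construction and the strongly zero-dimensional component of his hypothesis intervene; it may be necessary to verify separately that $S_1(\Gamma_{cz},\Gamma)$ forces zero-dimensionality, analogously to Proposition \ref{pro1}(2).
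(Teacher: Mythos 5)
The paper offers no proof of this statement at all (it is quoted verbatim as Theorem 67 of \cite{bcm}), so your proposal has to stand on its own. Its solid parts are: the equivalence $(2)\Leftrightarrow(3)$, which you prove by the same pull-back/push-forward scheme the paper uses for Theorems \ref{pr80} and \ref{th422} (your observation that $f(U)$ is the trace of a subbasic open set, so that $f^{-1}(f(U))=U$, makes that argument correct, modulo the routine nontriviality/infiniteness checks); and the direction $(2)\Rightarrow(1)$ of the $C_p$-translation, which is in fact the easy one: given $A_n\in\Gamma_{\bf 0}$, the families $\{g^{-1}((-1/n,1/n)):g\in A_n\}$ are cozero $\gamma$-covers, and the functions selected via $S_1(\Gamma_{cz},\Gamma)$ converge to ${\bf 0}$ pointwise, hence in $C_p(X)$. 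Note that your stated worry about ``finite test sets versus single points'' is a red herring: convergence in $C_p(X)$ is pointwise convergence, and finitely many eventually-small quantities are simultaneously eventually small.

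The genuine gap is the opposite direction, $(1)\Rightarrow(2)$ (you mislabel it as $(2)\Rightarrow(1)$): to feed a countable cozero $\gamma$-cover $\{U_m\}$ into the hypothesis $S_1(\Gamma_{\bf 0},\Gamma_{\bf 0})$ you must manufacture continuous $f_m$ equal to $1$ off $U_m$ with $\{f_m\}\in\Gamma_{\bf 0}$, and this forces you to pick zero-sets $F_m\subset U_m$ (where $f_m$ is small) such that $\{F_m\}$ is itself a $\gamma$-cover; that is, you need the given cover to be $\gamma_F$-shrinkable in Sakai's sense. This is not automatic: $\{\mathbb{R}\setminus\{q_m\}:m\in\mathbb{N}\}$, with $\{q_m\}$ an enumeration of $\mathbb{Q}$, is a cozero $\gamma$-cover of $\mathbb{R}$ admitting no closed (hence no zero-set) $\gamma$-shrinking, since any closed $C_m\subset\mathbb{R}\setminus\{q_m\}$ misses an interval around $q_m$ and a Baire-category argument yields a point lying in infinitely many of these intervals. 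So the phrase ``arranged so that $\{f_n\}\in\Gamma_{\bf 0}$'' is exactly the missing step, and your fallback via Theorem \ref{th151} is not carried out: to use it you would need to prove that $S_1(\Gamma_{cz},\Gamma)$ (or the $C_p$-property) forces strong zero-dimensionality, and then to convert clopen-cover selections into selections from the given cozero covers, which is the same shrinking problem in another guise. That bridge is precisely the nontrivial content of Theorem 67 in \cite{bcm}; the paper's own related results (Theorems \ref{pr80}, \ref{th158}) avoid it by working with the shrinkable families $\Gamma_F$ and restricting to $z$-spaces, which is why your sketch, as it stands, does not yet prove the stated equivalence.
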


By Theorem 8.8 in \cite{os2} and Theorem \ref{th67}, we have the
next result.

\begin{theorem}\label{th158} For a $z$-space $X$ and $X$ $\models$ $V$, the following statements are equivalent:

\begin{enumerate}

\item $C_p(X)$ satisfies $S_{1}(\mathcal{S},\mathcal{S})$;

\item $X$ satisfies $S_{1}(\Gamma_F, \Gamma)$;

\item  $C_p(X)$ satisfies $S_{1}(\Gamma_{\bf 0}, \Gamma_{\bf 0})$;

\item  $C_p(X)$ satisfies $S_{1}(\mathcal{S}, \Gamma_{\bf 0})$;

\item $X$ is projectively $S_1(\Gamma, \Gamma)$;

\item $X$ is projectively $S_{fin}(\Gamma, \Gamma)$.

\end{enumerate}

\end{theorem}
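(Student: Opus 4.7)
The strategy is to split the six equivalences into three blocks matching the citations given just before the statement. First, the equivalences $(1) \Leftrightarrow (2) \Leftrightarrow (3) \Leftrightarrow (4)$ are exactly the content of Theorem 8.8 of \cite{os2}, whose hypotheses ($X$ a $z$-space with the $V$-property) coincide with ours; so this block is obtained by a direct citation, no work required. Second, the equivalences $(2) \Leftrightarrow (3) \Leftrightarrow (5)$ are supplied by Theorem \ref{th67}, which does not even need the $z$-space or $V$-property assumption. (If one wants to go through the intermediate characterization $S_1(\Gamma_{cz},\Gamma)$ that appears in Theorem \ref{th67} rather than $S_1(\Gamma_F,\Gamma)$, one invokes the $z$-space hypothesis to pass from cozero $\gamma$-covers to $\gamma_F$-shrinkable covers, as in the argument for $(2)\Leftrightarrow(3)$ in Theorem \ref{pr80}.) These two citations already weld items $(1)$--$(5)$ into a single equivalence class.

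The only implication that truly needs argument is $(5) \Leftrightarrow (6)$. The direction $(5) \Rightarrow (6)$ is immediate from the inclusion $S_1 \subseteq S_{fin}$. For the converse I would proceed as follows: fix a continuous mapping $f : X \to Y$ with $Y$ second countable; by $(6)$ we have $Y \in S_{fin}(\Gamma,\Gamma)$, and the goal is to promote this to $Y \in S_1(\Gamma,\Gamma)$. The key fact is the combinatorial collapse $S_1(\Gamma,\Gamma) = S_{fin}(\Gamma,\Gamma)$ noted in \cite{jmss}. Given countable $\gamma$-covers $\mathcal{U}_n = \{U_{n,k} : k \in \mathbb{N}\}$ of $Y$, one partitions each $\mathcal{U}_n$ into infinitely many disjoint infinite pieces, each of which is still a $\gamma$-cover (since removing a finite set from a $\gamma$-cover preserves the $\gamma$-cover property), applies $S_{fin}(\Gamma,\Gamma)$ to the doubly-indexed sequence, and then diagonalises to pull a single element of $\mathcal{U}_n$ out of the $n$-th finite selection.

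The main obstacle, should one attempt a fully self-contained proof, is the combinatorial bookkeeping in this last step; the cleanest route in the paper itself is simply to invoke the collapse $S_1(\Gamma,\Gamma) = S_{fin}(\Gamma,\Gamma)$ from \cite{jmss} rather than reprove it inside the proof of Theorem \ref{th158}. Once it is in hand, $(6) \Rightarrow (5)$ is immediate by applying the collapse in each second countable continuous image of $X$, and the full chain $(1) \Leftrightarrow (2) \Leftrightarrow (3) \Leftrightarrow (4) \Leftrightarrow (5) \Leftrightarrow (6)$ closes.
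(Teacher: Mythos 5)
Your proposal matches the paper's proof, which is likewise a pure assembly of citations: Theorem 8.8 of \cite{os2} supplies the $C_p$-equivalences (1)--(4) under the $z$-space and $V$-property hypotheses, and Theorem \ref{th67} supplies the link to projective $S_1(\Gamma,\Gamma)$. The only step you make explicit that the paper leaves implicit is $(5)\Leftrightarrow(6)$ via the collapse $S_1(\Gamma,\Gamma)=S_{fin}(\Gamma,\Gamma)$, and citing that known fact is the right move (your inline partition-and-diagonalise sketch is looser than it should be, since the finite selections may be empty or concentrate on pieces of a single cover --- the standard fix is to refine the covers by finite intersections first --- but you correctly defer to the citation, so this does not affect the argument).
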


\begin{proposition}
Every space of cardinality less than $\mathfrak{b}$ is
projectively $S_{1}(\Gamma,\Gamma)$.
\end{proposition}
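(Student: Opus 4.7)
The plan is to apply the characterization in Theorem~\ref{th67}: $X$ is projectively $S_{1}(\Gamma,\Gamma)$ if and only if $X$ satisfies $S_{1}(\Gamma_{cz},\Gamma)$. Thus for a space $X$ with $|X|<\mathfrak{b}$, it suffices to show that from every sequence $(\mathcal{U}_n:n\in\mathbb{N})$ of cozero $\gamma$-covers of $X$ one can choose $V_n\in\mathcal{U}_n$ so that $\{V_n:n\in\mathbb{N}\}\in\Gamma$.

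First, I would reduce to the case where each $\mathcal{U}_n$ is countable: any countably infinite subfamily of a $\gamma$-cover is itself a $\gamma$-cover, since each $x\in X$ avoids only finitely many members of the original cover. So we may enumerate each $\mathcal{U}_n$ injectively as $\{U^n_k:k\in\mathbb{N}\}$. For $x\in X$, the $\gamma$-cover property of $\mathcal{U}_n$ allows us to define $f_x(n):=\min\{m:x\in U^n_k \text{ for all } k\geq m\}$, giving a function $f_x\in\mathbb{N}^{\mathbb{N}}$. Since $|\{f_x:x\in X\}|\leq|X|<\mathfrak{b}$, the definition of $\mathfrak{b}$ produces $g\in\mathbb{N}^{\mathbb{N}}$ with $f_x\leq^{*}g$ for every $x\in X$. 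Setting $V_n:=U^n_{g(n)}$, each $x\in X$ then belongs to $V_n$ for all but finitely many $n$.

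The only real obstacle is verifying that $\{V_n:n\in\mathbb{N}\}$ is infinite as a set, which is part of the definition of a $\gamma$-cover. I would handle this by contradiction: if the set were finite, say $\{W_1,\dots,W_p\}$, then some $W_i$ must equal $V_n$ for infinitely many $n$, and combined with the ``cofinitely many'' property above, every $x\in X$ would have to lie in $W_i$, forcing $W_i=X$. This contradicts the nontriviality convention $X\notin\mathcal{U}_n$ stated in the introduction. Therefore $\{V_n:n\in\mathbb{N}\}\in\Gamma$, completing the verification of $S_{1}(\Gamma_{cz},\Gamma)$, and hence of projective $S_{1}(\Gamma,\Gamma)$.
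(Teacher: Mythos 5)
Your argument is correct, and it is in fact more than the paper itself supplies: the proposition is stated there without proof, being the $S_{1}(\Gamma,\Gamma)$ analogue of item (3) of Proposition~\ref{pro4} (quoted from \cite{bcm}). The implicit route behind the paper's statement is to pass to the image: any second countable continuous image $f(X)$ has cardinality at most $|X|<\mathfrak{b}$, and a separable metrizable (indeed any Lindel\"{o}f) space of size less than $\mathfrak{b}$ satisfies $S_{1}(\Gamma,\Gamma)$ by the standard bounding argument. You instead work directly in $X$ with cozero $\gamma$-covers and invoke the equivalence of $S_{1}(\Gamma_{cz},\Gamma)$ with projective $S_{1}(\Gamma,\Gamma)$ from Theorem~\ref{th67}; the combinatorial core (assigning to each $x$ the function $f_x(n)=\min\{m: x\in U^n_k \mbox{ for all } k\ge m\}$ and bounding the family $\{f_x : x\in X\}$ by some $g$ since $|X|<\mathfrak{b}$) is the same in both routes, so the two proofs differ only in where the diagonalization is performed. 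Your reduction to countable subfamilies is legitimate, since an infinite subfamily of a $\gamma$-cover is again a $\gamma$-cover, and your treatment of the only delicate point --- that $\{V_n : n\in\mathbb{N}\}$ must be infinite as a set --- is handled correctly by the paper's convention that covers are nontrivial ($X\notin\mathcal{U}_n$), which rules out a member occurring cofinally and swallowing all of $X$. The only stylistic remark is that citing Theorem~\ref{th67} is slightly heavier machinery than needed: the image argument gives the result from the definition of projective $S_{1}(\Gamma,\Gamma)$ alone, whereas your version buys a direct verification of the cozero selection principle $S_{1}(\Gamma_{cz},\Gamma)$, which is itself one of the equivalent conditions the paper cares about.
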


\section{The projectively $U_{fin}(\mathcal{O}, \Omega)$ property}

By Theorem 3.4 in \cite{os10},  $X$ satisfies
$U_{fin}(\mathcal{O}, \Omega)$ if and only if $X$ satisfies
$U_{fin}(\Gamma_F, \Omega)$ and $X$ is Lindel$\ddot{o}$f.

\begin{theorem}\label{th422} For a space $X$, the following statements are
equivalent:

\begin{enumerate}

\item $X$ is projectively $U_{fin}(\mathcal{O}, \Omega)$;

\item $X$ satisfies $U_{fin}(\mathcal{O}^{\omega}_{cz}, \Omega)$;

\item $X$ satisfies $U_{fin}(\Gamma_F, \Omega)$.

\end{enumerate}

\end{theorem}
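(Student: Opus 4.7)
The plan is to establish the three-way equivalence by proving $(3) \Rightarrow (2)$, $(2) \Rightarrow (3)$, and $(1) \Leftrightarrow (2)$.

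For $(3) \Rightarrow (2)$, I will mirror the zero-set decomposition used in the proof of Theorem \ref{th22}. Given a sequence $(\mathcal{V}_i : i \in \mathbb{N})$ of countable cozero covers with $\mathcal{V}_i = \{V^n_i : n \in \mathbb{N}\}$, I would write each cozero set $V^n_i$ as an increasing union of zero-sets $V^n_i = \bigcup_j F^n_{i,j}$ and set $S^n_i = \bigcup_{p=1}^n F^p_{i,n}$ (a zero-set) and $W^n_i = \bigcup_{p=1}^n V^p_i$ (a cozero set containing $S^n_i$). Then $\{W^n_i : n \in \mathbb{N}\}$ is a $\gamma_F$-shrinkable cover of $X$ with witness $\{S^n_i : n\}$; applying $(3)$ gives finite $\mathcal{D}_i \subseteq \{W^n_i : n\}$ whose unions form an $\omega$-cover, and each member of $\mathcal{D}_i$ is already a finite union from $\mathcal{V}_i$, so this yields finite subfamilies of the $\mathcal{V}_i$'s with the desired property.

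For $(2) \Rightarrow (3)$, the pivotal observation is that any countably infinite subfamily $\mathcal{U}^\ast$ of a $\gamma_F$-shrinkable cover $\mathcal{U}$ is itself $\gamma_F$-shrinkable (with the inherited zero-set witness): if for each $x \in X$ the set $\{U \in \mathcal{U} : x \notin F_U\}$ is finite, then the same bound holds inside $\mathcal{U}^\ast$, and both $\mathcal{U}^\ast$ and $\{F_U : U \in \mathcal{U}^\ast\}$ are $\gamma$-covers. Hence, given a sequence $(\mathcal{U}_n)$ of $\gamma_F$-shrinkable covers (none with a finite subcover), I extract countable subfamilies $\mathcal{U}_n^\ast \subseteq \mathcal{U}_n$, which are countable cozero covers of $X$ without finite subcover, and apply $(2)$ to obtain finite $\mathcal{F}_n \subseteq \mathcal{U}_n^\ast \subseteq \mathcal{U}_n$ with $\{\bigcup \mathcal{F}_n : n\} \in \Omega$.

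For $(1) \Leftrightarrow (2)$, the direction $(1) \Rightarrow (2)$ is the familiar diagonal projection: given countable cozero covers $(\mathcal{V}_n)$ of $X$, choose for each $V \in \bigcup_n \mathcal{V}_n$ a continuous $f_V : X \to \mathbb{R}$ with $V = f_V^{-1}(\mathbb{R}\setminus\{0\})$, form $h = \prod_V f_V : X \to \mathbb{R}^\omega$, apply $U_{fin}(\mathcal{O}, \Omega)$ to the second countable image $h(X)$, and lift back through the identity $h^{-1}(h(V)) = V$ (which holds because $f_V$ is a coordinate of $h$). Conversely, $(2) \Rightarrow (1)$ follows by noting that any second countable Tychonoff continuous image $Y = f(X)$ is metrizable and Lindel\"of; every open cover of $Y$ admits a countable cozero subcover, whose $f$-preimages form a countable cozero cover of $X$, and the finite selections delivered by $(2)$ transfer back to $Y$ by surjectivity. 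The main subtlety, which I regard as the crux of the theorem, is the cofiniteness-inheritance used in $(2) \Rightarrow (3)$: it is what lets one replace a possibly uncountable $\gamma_F$-shrinkable cover by a countable cozero cover without losing the selection structure. Modulo this observation, the remaining steps follow well-established templates in the paper: the $S^n_i / W^n_i$ construction adapts Theorem \ref{th22}, and the diagonal projection echoes Proposition \ref{pr1}.
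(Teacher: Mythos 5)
Your proposal is correct and follows essentially the same route as the paper: $(1)\Leftrightarrow(2)$ via the diagonal product into $\mathbb{R}^{\omega}$ and, conversely, pulling countable (cozero) subcovers back along a map onto a second countable image, and $(3)\Rightarrow(2)$ via the increasing zero-set decomposition $S^n_i=\bigcup_{p\le n}F^p_{i,n}$ exactly as in Theorem \ref{th22}, which is all the paper itself records (it dispatches $(2)\Leftrightarrow(3)$ as ``proved analogously to Theorem \ref{th22}''). Your explicit $(2)\Rightarrow(3)$ step --- extracting a countably infinite subfamily of a $\gamma_F$-shrinkable cover and noting it remains a ($\gamma_F$-shrinkable, hence countable cozero) cover with no finite subcover --- correctly supplies a direction the paper leaves implicit, and your use of the cozero sets $W^n_i=\bigcup_{p\le n}V^p_i$ with zero-set witnesses $S^n_i$ tidies the paper's assertion that the shrunken cover lies in $\Gamma_F$.
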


\begin{proof}

$(1)\Rightarrow(2)$. Assume that $X$ is projectively
$U_{fin}(\mathcal{O}, \Omega)$. Let $(\mathcal{U}_n: n\in
\mathbb{N})$ be a sequence of countable covers of $X$ by cozero
sets. For every $n\in \mathbb{N}$ and $U\in \mathcal{U}_n$, fix a
continuous function $f_U: X \mapsto \mathbb{R}$ such that
$U=f^{-1}_U(\mathbb{R}\setminus \{0\})$. Put $f=\prod\{f_U: U\in
\mathcal{U}_n, n\in \mathbb{N}\}$. Then $f$ is a continuous
mapping from $X$ to $\mathbb{R}^{\omega}$, and thus by (1),
$Y=f(X)$ has the property $U_{fin}(\mathcal{O}, \Omega)$. Put
$\mathcal{V}_n=\{f(U): U\in \mathcal{U}_n\}$. Then $\mathcal{V}_n$
is an open cover of $Y$. Since $Y$ has the property
$U_{fin}(\mathcal{O}, \Omega)$, there are finite subfamilies
$\mathcal{H}_n\subset \mathcal{V}_n$ such that $\bigcup
\{\mathcal{H}_n : n\in \mathbb{N}\}$ is $\omega$-cover of $Y$. Put
$\mathcal{F}_n=\{f^{-1}(H): H\in \mathcal{H}_n\}$. Then
$\mathcal{F}_n$ is a finite subfamily of $\mathcal{U}_n$, and
$\bigcup \{\mathcal{F}_n : n\in \mathbb{N}\}$ is $\omega$-cover of
$X$.

$(2)\Rightarrow(1)$. Let $f:X\mapsto Y$ be continuous mapping from
$X$ onto a second countable space $Y$, and let $(\mathcal{U}_n:
n\in \mathbb{N})$ be a sequence of open covers of $Y$. Since $Y$
is Lindel$\ddot{o}$f, there is a countable subcover
$\mathcal{W}_n\subset \mathcal{V}_n$. Put
$\mathcal{O}_n=\{f^{-1}(W): W\in \mathcal{W}_n\}$. Then
$\mathcal{O}_n$ is a countable cover of $X$ by cozero sets. By
(2), there are finite subfamilies $\mathcal{H}_n\subset
\mathcal{O}_n$ such that $\bigcup \{\mathcal{H}_n : n\in
\mathbb{N}\}$ is $\omega$-cover of $X$. For every $n\in
\mathbb{N}$ and every $H\in \mathcal{H}_n$, pick $U_H\in
\mathcal{U}_n$ such that $U_H\supset f(H)$. Put
$\mathcal{F}_n=\{U_H: H\in \mathcal{H}_n\}$. Then $\mathcal{F}_n$
is a finite subfamily of $\mathcal{U}_n$, and $\bigcup
\{\mathcal{F}_n : n\in \mathbb{N}\}$ is $\omega$-cover of $Y$.
This proves that $Y$ has the property $U_{fin}(\mathcal{O},
\Omega)$.

$(2)\Leftrightarrow(3)$. Proved analogously to the proof of
Theorem \ref{th22}.
\end{proof}

By Theorem \ref{th422}, Theorem 3.1 in \cite{os10} we have the
next result.

\begin{theorem}\label{th198} For a space $X$, the following statements are
equivalent:

\begin{enumerate}

\item $C_p(X)$ satisfies $F_{fin}(\Gamma_{\bf 0},\Omega_{\bf 0})$;

\item $X$ satisfies $U_{fin}(\Gamma_F, \Omega)$.

\item $X$ is projectively $U_{fin}(\mathcal{O}, \Omega)$;

\item $X$ satisfies $U_{fin}(\mathcal{O}^{\omega}_{cz}, \Omega)$.

\end{enumerate}

\end{theorem}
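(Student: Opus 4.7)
The plan is to observe that this theorem requires essentially no new work: Theorem~\ref{th422} already establishes the three-way equivalence $(2)\Leftrightarrow(3)\Leftrightarrow(4)$, so the only new content is linking in condition~$(1)$, the $C_p(X)$ side. For this I would invoke Theorem~3.1 of~\cite{os10}, which, in the spirit of the companion results used to prove Theorems~\ref{th194}, \ref{th158}, and~\ref{th888} above, supplies the equivalence between the function-space selection principle $F_{fin}(\Gamma_{\bf 0},\Omega_{\bf 0})$ on $C_p(X)$ and the cover-theoretic principle $U_{fin}(\Gamma_F,\Omega)$ on $X$.

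Concretely, I would write the proof in two short steps. First, cite Theorem~\ref{th422} to get $(2)\Leftrightarrow(3)\Leftrightarrow(4)$ verbatim. Second, cite Theorem~3.1 of~\cite{os10} to get $(1)\Leftrightarrow(2)$. Chaining these yields $(1)\Leftrightarrow(2)\Leftrightarrow(3)\Leftrightarrow(4)$, which is the statement to prove.

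If one wished to make the link $(1)\Leftrightarrow(2)$ self-contained instead of quoting~\cite{os10}, the approach would parallel the arguments used several times in this paper: for $(1)\Rightarrow(2)$, given a sequence of $\gamma_F$-shrinkable covers $\mathcal{U}_n$ of $X$ with the associated zero-set shrinkings $\{F_U:U\in\mathcal{U}_n\}$, build countable sets $A_n\subset C(X)$ consisting of functions that are $0$ on $F_U$ and $1$ off $U$; these sets will converge to $\mathbf{0}$ in the $\Gamma_{\bf 0}$ sense, so applying $F_{fin}$ produces finite subcollections whose unions form the desired $\omega$-cover. Conversely, for $(2)\Rightarrow(1)$, starting from $A_n\in\Gamma_{\bf 0}$ one translates each $A_n$ into a $\gamma_F$-shrinkable cozero cover via the preimages $g^{-1}[(-1/n,1/n)]$ with zero-set shrinkings $g^{-1}[[-1/(2n),1/(2n)]]$, applies $U_{fin}(\Gamma_F,\Omega)$, and reads off finite subsets of the $A_n$ whose unions witness $\Omega_{\bf 0}$.

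The only potential obstacle is the precise statement of Theorem~3.1 in~\cite{os10}: one must make sure the class $F_{fin}(\Gamma_{\bf 0},\Omega_{\bf 0})$ as defined there is the one used here, but given the uniform usage of this formalism throughout the surrounding theorems (\ref{th194}, \ref{th1534}, \ref{th158}, \ref{th888}), this is a routine check rather than a genuine difficulty.
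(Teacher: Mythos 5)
Your proposal matches the paper exactly: the paper derives this theorem precisely by combining Theorem~\ref{th422} (for the equivalence of (2), (3), (4)) with Theorem~3.1 of~\cite{os10} (for the equivalence of (1) and (2)), with no further argument given. Your additional sketch of a self-contained proof of $(1)\Leftrightarrow(2)$ goes beyond what the paper records but does not change the fact that the route is the same.
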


By Theorem \ref{th422}, Theorem 3.3 in \cite{os10} and Theorem
\ref{th198} we have the next result.

\begin{theorem}\label{th195} Let $X$ be a space with a coarser second countable topology. Then the following statements are equivalent:

\begin{enumerate}

\item $C_p(X)$ satisfies $S_{fin}(\mathcal{S},w\mathcal{D})$;

\item $X$ satisfies $U_{fin}(\Gamma_F, \Omega)$;

\item $C_p(X)$ satisfies $S_{fin}(\Gamma_{\bf 0}, w\Omega_{\bf
0})$;

\item $C_p(X)$ satisfies $S_{fin}(\mathcal{S}, w\Omega_{\bf 0})$;

\item $X$ is projectively $U_{fin}(\mathcal{O}, \Omega)$;

\item $X$ satisfies $U_{fin}(\mathcal{O}^{\omega}_{cz}, \Omega)$.

\end{enumerate}

\end{theorem}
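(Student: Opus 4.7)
The plan is to organize the six conditions as a star centered on condition (2), namely $X \models U_{fin}(\Gamma_F,\Omega)$, and to bolt together the three already-established blocks cited in the lead-in.

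First I would invoke Theorem \ref{th422} to read off the equivalences (2) $\Leftrightarrow$ (5) $\Leftrightarrow$ (6). This settles the purely covering-theoretic half of the statement and needs no appeal to the coarser second countable assumption; the proof is the usual ``diagonal'' push of the cozero covers through the product map $f=\prod f_U\colon X\to \mathbb{R}^{\omega}$ that already appears in the proof of Theorem \ref{th422}.

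Next I would apply Theorem \ref{th198}, whose role is to translate (2) into a $C_p(X)$-level statement and thereby bridge the covering side to the function-space side. It already supplies the equivalence of (2) with $F_{fin}(\Gamma_{\bf 0},\Omega_{\bf 0})$, and along the way confirms (2) $\Leftrightarrow$ (5) $\Leftrightarrow$ (6) again, so there are no compatibility issues between the two blocks.

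Third, I would invoke Theorem 3.3 of \cite{os10}, which is the analog -- with $\Omega$ in place of $\Gamma$ in the second coordinate -- of Theorem 4.5 of \cite{os10} that was used in Theorem \ref{th1534} for the projectively Hurewicz case. This delivers (1) $\Leftrightarrow$ (2) $\Leftrightarrow$ (3) $\Leftrightarrow$ (4). The hypothesis that $X$ has a coarser second countable topology enters exactly here: by Noble's Theorem \ref{th31} it guarantees that $C_p(X)$ is separable, so the classes $\mathcal{S}$ and $w\mathcal{D}$ contain countable members at all, and the $S_{fin}$-selection statements over them become non-trivial and genuinely equivalent to their cozero-cover counterparts on $X$.

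The main obstacle is not internal to the present statement -- it is already absorbed in the cited literature -- but in the proof of Theorem 3.3 of \cite{os10} itself. There the delicate direction is (2) $\Rightarrow$ (1): given countable sequentially dense $A_n\subset C_p(X)$, one must manufacture from the functions in $A_n$ a sequence of $\gamma_F$-shrinkable cozero covers of $X$ (using zero-level sets of the $h\in A_n$), run $U_{fin}(\Gamma_F,\Omega)$ on them, and then repackage the chosen finite subfamilies as members of $w\mathcal{D}$ whose induced ``min $|h-f|$'' sequences lie in $\Omega_{\bf 0}$. Because that argument is precisely what Theorem 3.3 of \cite{os10} supplies, the present theorem reduces to a clean concatenation: (2) $\Leftrightarrow$ (5), (6) by Theorem \ref{th422}; (2) $\Leftrightarrow$ $F_{fin}(\Gamma_{\bf 0},\Omega_{\bf 0})$ by Theorem \ref{th198}; and (2) $\Leftrightarrow$ (1), (3), (4) by Theorem 3.3 of \cite{os10}.
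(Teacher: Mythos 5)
Your proposal matches the paper's proof, which consists precisely of citing Theorem \ref{th422}, Theorem 3.3 in \cite{os10}, and Theorem \ref{th198} and concatenating the resulting equivalences around condition (2), $U_{fin}(\Gamma_F,\Omega)$. Your added remarks about where the coarser second countable hypothesis and the real work (inside Theorem 3.3 of \cite{os10}) enter are reasonable glosses and do not change the fact that this is essentially the same argument.
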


\begin{proposition}
Every space of cardinality less than $\mathfrak{d}$ is
projectively $U_{fin}(\mathcal{O}, \Omega)$.
\end{proposition}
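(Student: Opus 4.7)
The plan is to bootstrap off the preceding proposition stating that every space of cardinality less than $\mathfrak{d}$ is projectively $S_{fin}(\Omega,\Omega)$. The Scheepers Diagram (Fig.~1) contains the implication $S_{fin}(\Omega,\Omega) \Rightarrow U_{fin}(\mathcal{O},\Omega)$, and the operator ``projectively~$(\cdot)$'' is tautologically monotone: if $\mathcal{P} \Rightarrow \mathcal{Q}$ holds as a property of topological spaces, then every second countable continuous image of $X$ that satisfies $\mathcal{P}$ also satisfies $\mathcal{Q}$, so projectively $\mathcal{P}$ implies projectively $\mathcal{Q}$. Applied here, this immediately yields that $X$ is projectively $U_{fin}(\mathcal{O},\Omega)$.

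For readers who prefer a concrete verification, one can instead use Theorem~\ref{th198}, which reduces the statement to showing that $X$ satisfies $U_{fin}(\mathcal{O}^{\omega}_{cz},\Omega)$. Given a sequence $(\mathcal{U}_n : n\in\mathbb{N})$ of countable cozero covers of $X$, one may assume each $\mathcal{U}_n$ has no finite subcover (else the conclusion is trivial for that coordinate). Then $\mathcal{V}_n = \{\bigcup \mathcal{F} : \mathcal{F} \in [\mathcal{U}_n]^{<\omega}\}$ is a countable cozero $\omega$-cover of $X$. By the preceding proposition together with the equivalence (1)$\Leftrightarrow$(2) of Proposition~\ref{pr21}, $X$ satisfies $S_{fin}(\Omega^{\omega}_{cz},\Omega)$, so one can select finite $\mathcal{W}_n \subseteq \mathcal{V}_n$ with $\bigcup_n \mathcal{W}_n \in \Omega$. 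Replacing each $W \in \mathcal{W}_n$ by the finite subfamily of $\mathcal{U}_n$ whose union is $W$ produces a finite $\mathcal{F}_n \subseteq \mathcal{U}_n$ such that $\{\bigcup \mathcal{F}_n : n \in \mathbb{N}\}$ is still an $\omega$-cover of $X$, as required.

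No genuine obstacle arises here: the cardinal hypothesis has already done its work in the preceding proposition, and the remaining content is the routine ``passage to finite unions'' that converts $S_{fin}(\Omega,\Omega)$-type selections into $U_{fin}(\mathcal{O},\Omega)$-type selections. The bound $\mathfrak{d}$ is the natural threshold, matching the analogous bounds already recorded in this paper for the Menger and $S_{fin}(\Omega,\Omega)$ projective versions.
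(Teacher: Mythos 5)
Your argument is correct, but note that the paper itself states this proposition with no proof at all (as it does for the analogous small-cardinal propositions), so there is no authorial proof to match; the implicitly intended argument is the direct one, in the style of Proposition 8 of \cite{bcm}: a second countable continuous image of $X$ has cardinality at most $|X|<\mathfrak{d}$, and $\mathfrak{d}$ is the critical cardinality of $U_{fin}(\mathcal{O},\Omega)$ for separable metrizable spaces, so every such image satisfies $U_{fin}(\mathcal{O},\Omega)$. Your route is different but sound: the projective operator is indeed monotone, second countable images are Lindel\"{o}f so the Scheepers Diagram implication $S_{fin}(\Omega,\Omega)\Rightarrow U_{fin}(\mathcal{O},\Omega)$ applies to them, and your concrete verification via Theorem \ref{th198} and Proposition \ref{pr21} is carried out correctly --- in particular you rightly use the no-finite-subcover hypothesis to ensure $X\notin\mathcal{V}_n$, so the families of finite unions are genuine ($\omega$-)covers in the paper's nontrivial sense, and the passage back from $\mathcal{W}_n$ to finite $\mathcal{F}_n\subseteq\mathcal{U}_n$ is the standard finite-union refinement. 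What your approach buys is economy (the cardinal $\mathfrak{d}$ enters only once, through the earlier proposition); what it costs is self-containedness, since the proposition you bootstrap from (cardinality $<\mathfrak{d}$ implies projectively $S_{fin}(\Omega,\Omega)$) is itself stated in the paper without proof. If you want a fully self-contained argument, observe that for countable cozero covers $\mathcal{U}_n=\{U^n_m : m\in\mathbb{N}\}$ of $X$ without finite subcovers, the functions $g_F(n)=\min\{m : F\subseteq\bigcup_{k\leq m}U^n_k\}$ for $F\in[X]^{<\omega}$ form a family of size $<\mathfrak{d}$, hence not dominating; any $h$ witnessing this gives finite selections $\mathcal{F}_n=\{U^n_k : k\leq h(n)\}$ whose unions form an $\omega$-cover, which by Theorem \ref{th198} (or Theorem \ref{th422}) yields the projective property directly.
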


\section{The projectively $S_{1}(\Gamma, \mathcal{O})$ property}

Similarly to the proof  of Theorem \ref{pr80} we have the next
result.

\begin{proposition}\label{pr84} For a $z$-space $X$, the following statements are
equivalent:

\begin{enumerate}

\item $X$ is projectively $S_{1}(\Gamma, \mathcal{O})$;

\item $X$ satisfies $S_{1}(\Gamma_{cz}, \mathcal{O})$;

\item $X$ satisfies $S_{1}(\Gamma_{F}, \mathcal{O})$;

\item $C_p(X)$ satisfies $S_{1}(\Gamma_{\bf 0},\mathcal{D}_{\bf
0}[1])$.

\end{enumerate}

\end{proposition}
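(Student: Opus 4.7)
The plan is to follow the same structural pattern as in the proof of Theorem \ref{pr80}, but with $\mathcal{O}$ replacing $\Omega$ throughout, and to invoke known theorems for the bridges to $C_p(X)$ and between $\Gamma_{cz}$ and $\Gamma_F$.

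For $(1)\Rightarrow(2)$, I would mimic the diagonal-map construction from \ref{pr80}. Given a sequence $(\mathcal{U}_n : n \in \mathbb{N})$ of countable cozero $\gamma$-covers of $X$, for each $U \in \mathcal{U}_n$ fix a continuous $f_U : X \to \mathbb{R}$ with $U = f_U^{-1}(\mathbb{R}\setminus\{0\})$, and set $f = \prod\{f_U : U \in \mathcal{U}_n,\ n \in \mathbb{N}\}: X \to \mathbb{R}^\omega$. Then $Y = f(X)$ is second countable, so by (1) it satisfies $S_1(\Gamma, \mathcal{O})$; the images $\mathcal{V}_n = \{f(U) : U \in \mathcal{U}_n\}$ are $\gamma$-covers of $Y$ because $f_U$ factors through $f$. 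Apply $S_1(\Gamma,\mathcal{O})$ on $Y$ to choose $H_n \in \mathcal{V}_n$ with $\{H_n\}$ covering $Y$, and pull back to obtain $F_n = f^{-1}(H_n) \in \mathcal{U}_n$ with $\{F_n\}$ a cover of $X$.

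For $(2)\Rightarrow(1)$, let $f: X \to Y$ be continuous with $Y$ second countable, and $(\mathcal{U}_n : n \in \mathbb{N})$ a sequence of $\gamma$-covers of $Y$. Since $Y$ is second countable (hence Lindel\"of), each $\mathcal{U}_n$ admits a countable $\gamma$-subcover $\mathcal{W}_n$; pulling back through $f$ yields a sequence of countable cozero $\gamma$-covers of $X$. Apply (2) to extract single elements whose preimages cover $X$, then push forward to members of $\mathcal{U}_n$ covering $Y$.

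For the equivalences $(2)\Leftrightarrow(3)$ I would invoke Proposition 3.3 in \cite{sak1}, exactly as in \ref{pr80}; this is where the $z$-space hypothesis is consumed, ensuring that every countable cozero $\gamma$-cover admits a $\gamma$-shrinking by zero sets. For $(3)\Leftrightarrow(4)$, the strategy parallels Proposition 6.4 in \cite{os2}: from a sequence of $\gamma_F$-shrinkable covers with zero-set shrinkings, build functions $f_{n,m}$ with $f_{n,m}\upharpoonright F_{n,m}=0$ and $f_{n,m}\upharpoonright (X\setminus U_{n,m})=1$, so that each $A_n=\{f_{n,m}:m\in\mathbb{N}\}$ lies in $\Gamma_{\bf 0}$; picking $f_{n,m(n)}\in A_n$ with $\{f_{n,m(n)}\}\in \mathcal{D}_{\bf 0}[1]$ translates directly to picking $U_{n,m(n)}\in \mathcal{U}_n$ whose union is an open cover of $X$. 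The reverse direction converts a sequence $A_n\in \Gamma_{\bf 0}$ into shrinkable cozero $\gamma$-covers $\mathcal{U}_n=\{h^{-1}(-\tfrac{1}{n},\tfrac{1}{n}): h\in A_n\}$ with zero-set shrinkings $h^{-1}[-\tfrac{1}{2n},\tfrac{1}{2n}]$, and then uses that $\{f_n\}$ being $1$-dense at $\bf 0$ gives, for each $x\in X$, some $n$ with $x\in f_n^{-1}(-\tfrac{1}{n},\tfrac{1}{n})$.

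The principal obstacle is the $(3)\Leftrightarrow(4)$ equivalence: one must be careful that the $n$-dependent scale $1/n$ used on the $C_p$-side matches the $\gamma$-convergence required of the $A_n$, and that the $z$-space hypothesis (used implicitly to pass between cozero and zero-set data) is only needed in the $(2)\Leftrightarrow(3)$ step. Everything else is routine and parallels \ref{pr80} line-by-line.
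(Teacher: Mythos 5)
Your route is the same as the paper's: the paper disposes of this proposition with the single line ``similarly to the proof of Theorem \ref{pr80}'', i.e.\ the diagonal-product argument into $\mathbb{R}^{\omega}$ for $(1)\Leftrightarrow(2)$, Proposition 3.3 of \cite{sak1} for $(2)\Leftrightarrow(3)$, and the $C_p$-bridge of \cite{os2} for $(3)\Leftrightarrow(4)$; your $(1)\Leftrightarrow(2)$ and your direction $(4)\Rightarrow(3)$ (functions with $f_{n,m}\upharpoonright F_{n,m}=0$, $f_{n,m}\upharpoonright(X\setminus U_{n,m})=1$, then $\epsilon=1$) are exactly that.

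The one step that does not work as written is $(3)\Rightarrow(4)$. Applying $S_{1}(\Gamma_F,\mathcal{O})$ once, with the scale $1/n$ tied to the index $n$, only yields that $\{f_n^{-1}(-1/n,1/n):n\in\mathbb{N}\}$ covers $X$, i.e.\ for each $x$ there is \emph{some} $n$ with $|f_n(x)|<1/n$; this does not make $\{f_n\}$ $1$-dense at ${\bf 0}$, since for a given $\epsilon>0$ the witnessing $n$ may have $1/n\geq\epsilon$ (your closing sentence also states the implication backwards: the cover is what you have, $1$-density is what you must prove). In the $\Omega$-versions this mismatch is harmless because every finite set lies in infinitely many members of an $\omega$-cover, so arbitrarily small scales are available; for ordinary covers a point may lie in only one selected set, so the obstacle you flagged is real and must actually be removed. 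The standard repair, used verbatim in the paper's Theorem \ref{th144}, is to renumber the given sequence $(A_n)$ as a double array $(A_{i,j})_{i,j\in\mathbb{N}}$, for each fixed $i$ form the $\gamma_F$-shrinkable covers $\mathcal{U}_{i,j}=\{h^{-1}(-1/i,1/i):h\in A_{i,j}\}$ with zero-set shrinkings $h^{-1}[-1/(2i),1/(2i)]$ (treating separately the trivial case in which some $h^{-1}(-1/i,1/i)=X$), apply $S_{1}(\Gamma_F,\mathcal{O})$ along $j$ for each $i$ to get $f_{i,j}\in A_{i,j}$ with $\{f_{i,j}^{-1}(-1/i,1/i):j\in\mathbb{N}\}$ a cover of $X$, and then, given $x$ and $\epsilon>0$, pick $i$ with $1/i<\epsilon$ and $j$ with $x\in f_{i,j}^{-1}(-1/i,1/i)$; the family $\{f_{i,j}:i,j\in\mathbb{N}\}$ is a legitimate selection from the original sequence and is $1$-dense at ${\bf 0}$. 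With that re-indexing inserted, your proposal coincides with the paper's intended argument.
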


\begin{theorem}\label{th8888} For a $z$-space $X$ with a coarser second countable topology, the following
statements are equivalent:

\begin{enumerate}

\item $C_p(X)$ satisfies $S_{1}(\mathcal{S},\mathcal{D}[1])$;

\item $X$ satisfies $S_{1}(\Gamma_F, \mathcal{O})$;

\item $C_p(X)$ satisfies $S_{1}(\Gamma_{\bf 0}, \mathcal{D}_{\bf
0}[1])$;

\item $C_p(X)$ satisfies $S_{1}(\mathcal{S}, \mathcal{O}_{\bf
0})$;

\item $X$ is projectively $S_{1}(\Gamma, \mathcal{O})$.

\end{enumerate}

\end{theorem}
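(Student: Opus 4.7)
The plan is to use the covering property $S_1(\Gamma_F,\mathcal{O})$ on $X$ (condition~(2)) as the central hub and hang each of the $C_p$-theoretic conditions around it. The equivalences $(2)\Leftrightarrow(3)\Leftrightarrow(5)$ are delivered directly by Proposition~\ref{pr84}, since $X$ is assumed to be a $z$-space, so the remaining task is to fit (1) and (4) into the chain, with the coarser second countable topology providing separability of $C_p(X)$ via Noble's Theorem~\ref{th31}.

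For the implication $(1)\Rightarrow(2)$, which I expect to be the main obstacle, I would adapt the construction used in the proof of Theorem~\ref{th144}. Given a sequence $(\mathcal{U}_n)$ of $\gamma_F$-shrinkable covers of $X$ with zero-set shrinkings $F^n_U\subseteq U$ for each $U\in\mathcal{U}_n$, build countable families $A_n\subseteq C_p(X)$ whose members are continuous functions equal to $1$ off $U$ and to rationals $q_k$ on $F^n_U$. The crucial point is to verify that each $A_n$ is not merely dense but sequentially dense in $C_p(X)$, which is where the $z$-space hypothesis and the coarser second countable topology intervene to ensure the right supply of convergent sequences. Once this is in hand, applying $S_1(\mathcal{S},\mathcal{D}[1])$ yields selections $f_n\in A_n$ forming a 1-dense set; the associated $U_n\in\mathcal{U}_n$ then cover $X$ by the neighborhood-of-$\mathbf{0}$ test $[x,(-1/2,1/2)]$ familiar from Theorem~\ref{th144}.

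The converse $(2)\Rightarrow(1)$ is dual: countable sequentially dense sets $S_n\subseteq C_p(X)$ furnish cozero covers $\{g^{-1}[(-1/n,1/n)]:g\in S_n\}$ whose standard zero-set decomposition yields $\gamma_F$-shrinkable refinements, and the $S_1(\Gamma_F,\mathcal{O})$-selection lifts back to the $S_n$. Finally, the equivalences $(3)\Leftrightarrow(4)$ follow from the homogeneity of $C_p(X)$ combined with a direct comparison between $\Gamma_{\mathbf 0}$-families and sequentially dense families, handling separately the trivial case where $X$ itself appears among the cozero sets $g^{-1}[(-1/n,1/n)]$ (the same splitting that appears in the proofs of Theorems~\ref{th144} and~\ref{th88}).
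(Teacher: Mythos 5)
Your overall frame agrees with the paper as far as it goes: the paper's (implicit) proof of Theorem~\ref{th8888} is Proposition~\ref{pr84} for the equivalences $(2)\Leftrightarrow(3)\Leftrightarrow(5)$ on the $z$-space side, plus the already established functional characterization of $S_{1}(\Gamma_F,\mathcal{O})$ (the analogue of the results of \cite{os2} invoked for Theorems~\ref{th88}, \ref{th888} and \ref{th158}), with the coarser second countable topology entering only through Noble's Theorem~\ref{th31}. You use Proposition~\ref{pr84} in the same way, but you then try to re-prove the functional characterization directly, and your argument breaks at the step you yourself flag as crucial, namely $(1)\Rightarrow(2)$.

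The gap is concrete: the families $A_n$ you propose (continuous functions equal to a rational $q_k$ on the zero-set shrinkage $F^n_U$ and equal to $1$ off $U$) are $1$-dense but are \emph{not} sequentially dense, so $S_{1}(\mathcal{S},\mathcal{D}[1])$ simply cannot be applied to them. Indeed, suppose a sequence $f_{U_{m_i},k_i}$ from $A_n$ converges pointwise to $f$. If infinitely many distinct members $U_{m_i}$ occur, then, since $\{F^n_U\}$ is a $\gamma$-cover, any two points of $X$ eventually lie simultaneously in $F^n_{U_{m_i}}$, where the functions are the constants $q_{k_i}$; hence $f$ is constant. If some $U$ recurs infinitely often, then $f$ is constant on $F^n_U$ and equal to $1$ off $U$. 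So $[A_n]_{seq}$ misses, for instance, every nonconstant strictly negative function, and neither the $z$-space hypothesis nor the coarser second countable topology can repair this: the latter gives only separability of $C_p(X)$ (Theorem~\ref{th31}), sequential separability would already require $X\models V$ (Theorem~\ref{th38}), and in any case these hypotheses say nothing about the sequential closure of this particular countable family. The standard repair is to index the construction over all of $C(X)$ rather than over rational constants: choose $\phi_U\in C(X)$ with $\phi_U=0$ on $F_U$ and $\phi_U=1$ on $X\setminus U$, put $f_{g,U}=g(1-\phi_U)+\phi_U$ and $A_n=\{f_{g,U}: g\in C(X),\ U\in\mathcal{U}_n\}$; then for fixed $g$ the $\gamma$-cover property of the shrinkage gives $f_{g,U_m}\to g$, so $A_n\in\mathcal{S}$ with no extra hypotheses, and your selection argument with the test neighborhood $[x,(-\frac{1}{2},\frac{1}{2})]$ does produce a cover, since a selected function equals $1$ at $x$ whenever $x$ misses the corresponding cover element. (A smaller point: in $(2)\Rightarrow(1)$ the class $\mathcal{S}$ consists of all sequentially dense sets, not only countable ones, so you should extract from each $S_n$ sequences converging to the rational constant functions and run the cover argument for each pair consisting of a rational and a radius $\frac{1}{k}$, rather than assuming the $S_n$ countable.)
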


\medskip

\begin{proposition}
Every space of cardinality less than $\mathfrak{d}$ is
projectively $S_{1}(\Gamma, \mathcal{O})$.
\end{proposition}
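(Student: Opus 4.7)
I will reduce the statement to the preceding proposition of the paper, which asserts that every space of cardinality less than $\mathfrak{d}$ is projectively $S_1(\Gamma,\Omega)$, together with the trivial Scheepers-diagram implication $S_1(\Gamma,\Omega)\Rightarrow S_1(\Gamma,\mathcal{O})$ coming from the inclusion $\Omega\subseteq\mathcal{O}$. No direct combinatorial argument with $\gamma$-covers will be needed.

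First I fix a space $X$ with $|X|<\mathfrak{d}$ and let $Y$ be an arbitrary second countable continuous image of $X$. Then $|Y|\le|X|<\mathfrak{d}$, so $Y$ is a separable metrizable space of cardinality less than $\mathfrak{d}$. Applying the previous proposition to $Y$ itself shows that $Y$ is projectively $S_1(\Gamma,\Omega)$, and for a second countable $Y$ this immediately upgrades to $Y\in S_1(\Gamma,\Omega)$: indeed, the identity map is a second countable continuous image of $Y$, so by the definition of the projective version the space $Y$ must satisfy $S_1(\Gamma,\Omega)$.

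Next, because every open $\omega$-cover of $Y$ is \emph{a fortiori} an open cover, any selection from a sequence of open $\gamma$-covers that produces an $\omega$-cover also produces an open cover; hence $S_1(\Gamma,\Omega)\subseteq S_1(\Gamma,\mathcal{O})$ and therefore $Y\in S_1(\Gamma,\mathcal{O})$. Since $Y$ was an arbitrary second countable continuous image of $X$, this shows that $X$ is projectively $S_1(\Gamma,\mathcal{O})$.

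The main (and really the only) point to check is the self-application observation that, for a second countable Tychonoff space $Y$, being projectively $\mathcal{P}$ collapses to satisfying $\mathcal{P}$ whenever $\mathcal{P}$ is witnessed through the identity map; all the genuine combinatorial content is already packaged in the earlier $\mathfrak{d}$-bound for projectively $S_1(\Gamma,\Omega)$.
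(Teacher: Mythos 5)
Your argument is correct: a second countable continuous image $Y$ of $X$ satisfies $|Y|\le|X|<\mathfrak{d}$, the self-application of ``projectively $\mathcal{P}$'' to a second countable space via the identity map is legitimate, and $S_{1}(\Gamma,\Omega)\Rightarrow S_{1}(\Gamma,\mathcal{O})$ is immediate because every $\omega$-cover is in particular an open cover. Note, however, that the paper states this proposition (like the analogous cardinality propositions throughout) without any proof, so there is nothing to compare line by line; moreover the earlier proposition you lean on (cardinality less than $\mathfrak{d}$ implies projectively $S_{1}(\Gamma,\Omega)$) is itself left unproved there, so your reduction shifts rather than discharges the combinatorial content. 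Two small remarks: you can skip the detour through $Y$ entirely, since applying that earlier proposition to $X$ already gives that every second countable image satisfies $S_{1}(\Gamma,\Omega)$, hence $S_{1}(\Gamma,\mathcal{O})$; and if a self-contained argument is wanted, the standard one is direct: given $\gamma$-covers $\mathcal{U}_n=\{U^n_m : m\in\mathbb{N}\}$ of the image $Y$, assign to each $y\in Y$ the function $f_y(n)=\min\{m : y\in U^n_k \text{ for all } k\ge m\}$; since $|Y|<\mathfrak{d}$ the family $\{f_y\}$ is not dominating, and any witness $g$ yields the selection $(U^n_{g(n)})_n$, which covers $Y$. This is the argument implicitly underlying the whole family of such propositions in the paper, and it keeps the proof independent of the other unproved statements.
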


We can summarize the relationships between considered notions in
next diagrams.

\begin{center}
\ingrw{90}{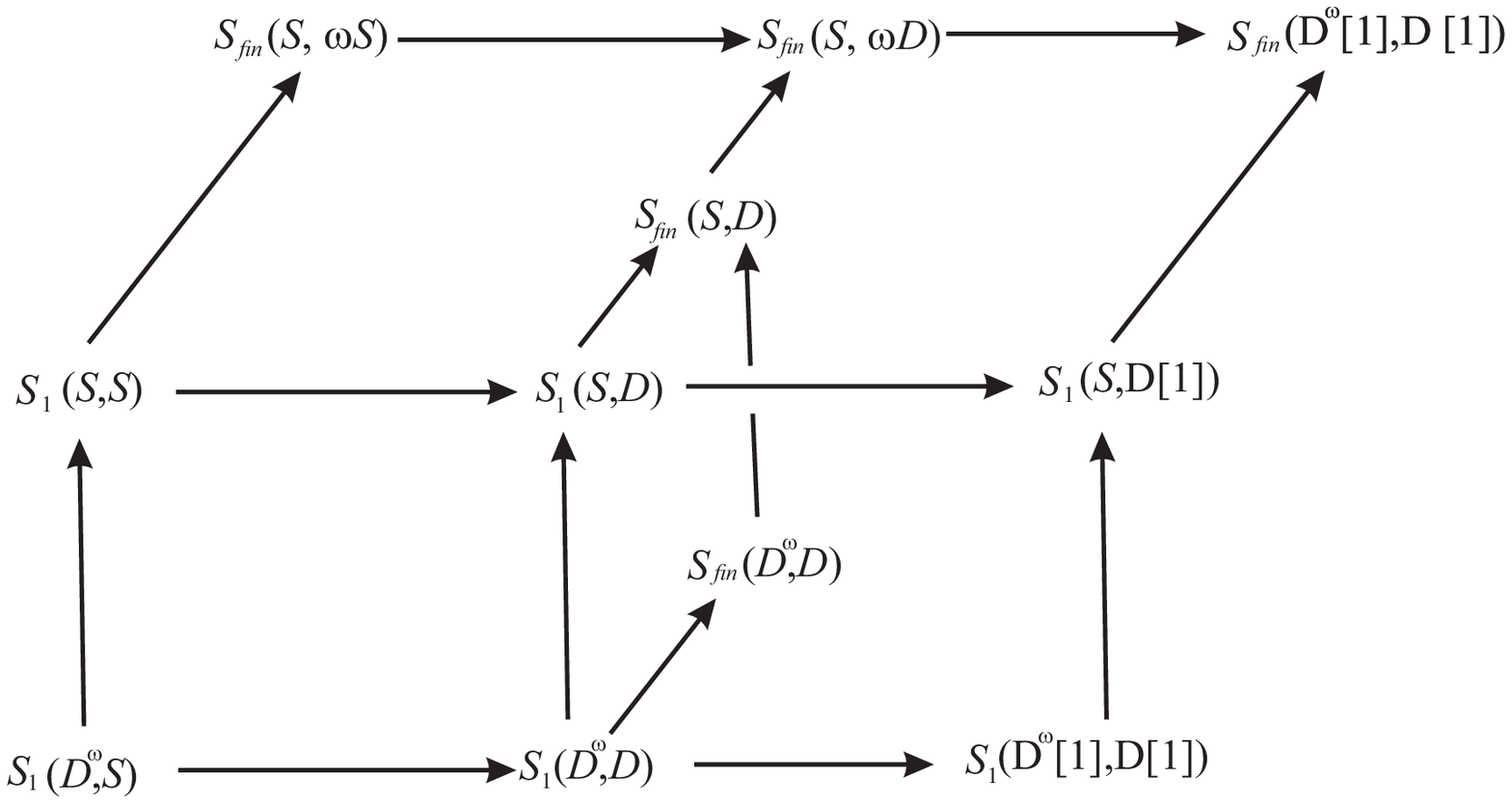}

\medskip

Fig.~2. The Diagram of selectors for sequences of dense
($1$-dense) sets of $C_p(X)$.

\end{center}

\bigskip

\begin{center}
\ingrw{90}{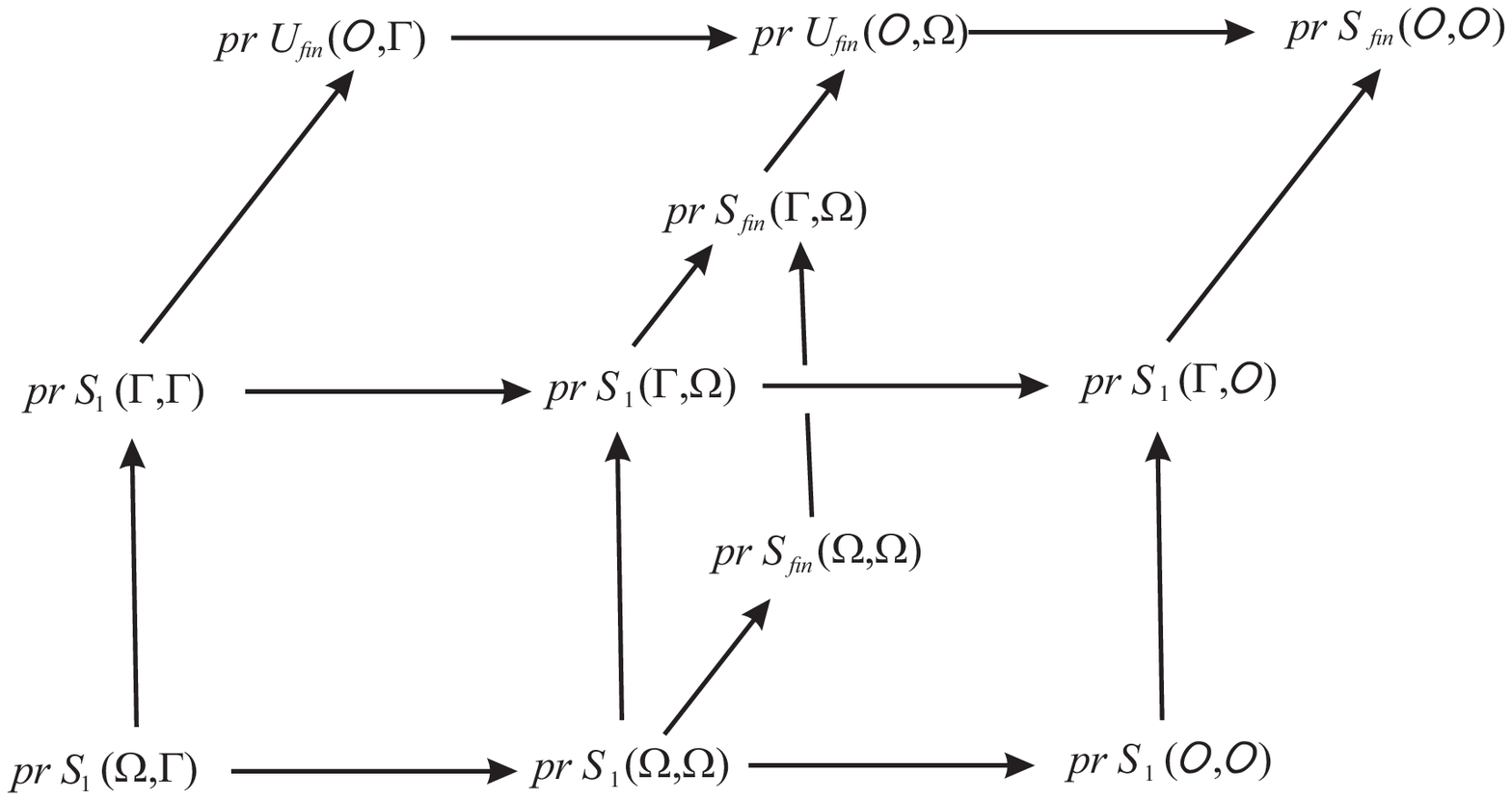}

Fig.~3. The Diagram of projective selection principles for a space
$X$ (with corresponding conditions) corresponding to selectors for
sequences of dense sets of $C_p(X)$.

\end{center}

\section{Acknowledgements}

The author would like to thank the referee for their careful
reading of this paper.

\bibliographystyle{model1a-num-names}
\bibliography{<your-bib-database>}







\end{document}